\begin{document}

\makeatletter
\def\subsection{\@startsection{subsection}{3}%
  \z@{.5\linespacing\@plus.7\linespacing}{.1\linespacing}%
  {\rm\bf}}
\makeatother

\newtheorem{definition}{Definition}[subsection]
\newtheorem{definitions}[definition]{Definitions}
\newtheorem{deflem}[definition]{Definition and Lemma}
\newtheorem{lemma}[definition]{Lemma}
\newtheorem{pro}[definition]{Proposition}
\newtheorem{theorem}[definition]{Theorem}
\newtheorem{cor}[definition]{Corollary}
\newtheorem{cors}[definition]{Corollaries}
\theoremstyle{remark}
\newtheorem{remark}[definition]{Remark}
\theoremstyle{remark}
\newtheorem{remarks}[definition]{Remarks}
\theoremstyle{remark}
\newtheorem{notation}[definition]{Notation}
\theoremstyle{remark}
\newtheorem{example}[definition]{Example}
\theoremstyle{remark}
\newtheorem{examples}[definition]{Examples}
\theoremstyle{remark}
\newtheorem{dgram}[definition]{Diagram}
\theoremstyle{remark}
\newtheorem{fact}[definition]{Fact}
\theoremstyle{remark}
\newtheorem{illust}[definition]{Illustration}
\theoremstyle{remark}
\newtheorem{rmk}[definition]{Remark}
\theoremstyle{definition}
\newtheorem{que}[definition]{Question}
\theoremstyle{definition}
\newtheorem{conj}[definition]{Conjecture}

\newcommand{\stac}[2]{\genfrac{}{}{0pt}{}{#1}{#2}}
\newcommand{\stacc}[3]{\stac{\stac{\stac{}{#1}}{#2}}{\stac{}{#3}}}
\newcommand{\staccc}[4]{\stac{\stac{#1}{#2}}{\stac{#3}{#4}}}
\newcommand{\stacccc}[5]{\stac{\stacc{#1}{#2}{#3}}{\stac{#4}{#5}}}

\renewenvironment{proof}{\noindent {\bf{Proof.}}}{\hspace*{3mm}{$\Box$}{\vspace{9pt}}}
\title{Grothendieck Rings of Theories of Modules}

	\keywords{Grothendieck ring; model theory; module; positive primitive formula; abstract simplicial complex, monoid ring}
	\subjclass[2010]{03C60, 55U05, 16Y60, 20M25, 06A12}

\maketitle
\begin{center}
		AMIT KUBER\footnote{Email \texttt{amit.kuber@postgrad.manchester.ac.uk}.
		Research partially supported by a School of Mathematics, University of Manchester Scholarship.},

		\medskip

		School of Mathematics, \ University of Manchester, \\
		Manchester M13 9PL, \ England.
	\end{center}

\begin{abstract}
The model-theoretic Grothendieck ring of a first order structure, as defined by Krajic\v{e}k and Scanlon, captures some combinatorial properties of the definable subsets of finite powers of the structure. In this paper we compute the Grothendieck ring, $K_0(M_\mathcal R)$, of a right $R$-module $M$, where $\mathcal R$ is any unital ring. As a corollary we prove a conjecture of Prest that $K_0(M)$ is non-trivial, whenever $M$ is non-zero. The main proof uses various techniques from the homology theory of simplicial complexes.
\end{abstract}

\section{Introduction}
In \cite{Kra}, Krajic\v{e}k and Scanlon introduced the concept of the model-theoretic Grothendieck ring of a structure. Amongst many other results, they proved that such a Grothendieck ring is nontrivial if and only if the definable subsets of the structure satisfy a version of the combinatorial pigeonhole principle, called the \textquotedblleft onto pigeonhole principle'' ($onto PHP$). Grothendieck rings have been studied for various rings and fields considered as models of a first order theory (see \cite{Kra}, \cite{Clu}, \cite{CluHask}, \cite{DenLoes1} and \cite{DenLoes2}) and they are found to be trivial in many cases (see \cite{Clu},\cite{CluHask}).

Prest conjectured that in stark contrast to the case of rings, for any ring $\mathcal R$, the Grothendieck ring of a nonzero right $\mathcal R$ module $M_\mathcal R$, denoted $K_0(M_\mathcal R)$, is nontrivial. Perera (\cite{Perera}) investigated the problem in his doctoral thesis and found that elementarily equivalent modules have isomorphic Grothendieck rings, which is not the case for general structures. He computed the Grothendieck ring for modules over semisimple rings and showed that they are polynomial rings in finitely many variables over the ring of integers.

In this paper we compute the Grothendieck ring for arbitrary modules and show that they are quotients of monoid rings $\mathbb Z[\mathcal X]$, where $\mathcal X$ is the multiplicative monoid of isomorphism classes of fundamental definable subsets of the module - the $pp$-definable subgroups. This is the content of the main theorem, theorem \ref{FINALgeneral}, which also describes the `invariants ideal' - the ideal of the monoid ring that codes indices of pairs of $pp$-definable subgroups. We further show (corollary \ref{MAINRESULTgeneral}) that there is a split embedding $\mathbb Z\rightarrow K_0(M)$, whenever the module $M$ is nonzero,  proving Prest's conjecture.

The proof of the main theorem uses inputs from various mathematical areas like model theory, algebra, combinatorics and algebraic topology. A special case of the main theorem (theorem \ref{FINAL}) is proved at the end of section \ref{spcasemult}. The special case assumes that the theory $T$ of the module $M$ satisfies the model theoretic condition $T=T^{\aleph_0}$. This condition is equivalent to the statement that the invariants ideal is trivial. The reader should note that the proof of the general case of the main theorem is not given in full detail since it develops along lines similar to the special case and uses only a few modifications to incorporate the invariants ideal.

The fundamental theorem of the model theory of modules (theorem \ref{PPET}) states that every definable set is a boolean combination of $pp$-definable sets, but such a boolean combination is far from being unique. We achieve a `uniqueness' result as a by-product of the theory we develop. We call this result the `cell decomposition theorem' (Theorem \ref{CDT1}) which states that definable sets can be represented uniquely using $pp$-definable sets provided the theory $T$ of the module satisfies $T=T^{\aleph_0}$. Though this theorem is not used directly in any other proof, its underlying idea is one of the most important ingredients of the main proof. Based on this idea, we define various classes of definable sets of increasing complexity, namely $pp$-sets, convex sets, blocks and cells. Our strategy to prove every result about a general definable set is to prove it first for convex sets, then blocks and then cells.

An important theme of the paper is the use of geometric and topological ideas in the setting of definable sets. We use the idea of a `neighbourhood' and `localization' to understand the structure of definable sets. We develop a notion of `connectedness' of a definable set in \ref{C} and prove theorem \ref{topconn} which clearly shows the analogy with its topological counterpart.

The main proof takes place at two different levels, which we name `local' and `global' following geometric intuition. We try to describe the ``shape'' of each definable set in terms of integer valued functions called `local characteristics'. These numbers are computed using Euler characteristics of various abstract simplicial complexes which code the ``local geometry'' of the given set. The local data is combined to get a family of integer valued functions, each of which is called a `global characteristic'. The global characteristics enjoy the property of being preserved under definable bijections. The family of such functions is indexed by the elements of the monoid $\mathcal X$ and the functions collate to give the necessary monoid ring.

The rest of the paper is organized as follows. Section \ref{prelim} contains the background material on Grothendieck rings and the model theory of modules. It also describes some important theorems in the homology theory of simplicial complexes. The core part of the proof of the special case is the content of section \ref{spcaseadd}. It introduces the terminology that we use and proves important facts about local and global characteristics. The highlights of this section are theorems \ref{t1} and \ref{t4}. Section \ref{spcasemult} contains proofs of the multiplicative properties of the global characteristics, completing the proof of the special case. Section \ref{gencase} introduces new terminology and the modifications in the proof of the special case necessary to handle the general case. Some applications of the main theorem are discussed in section \ref{appl}. The maps between modules which fit with model theory are called pure embeddings. We study their effect on Grothendieck rings in \ref{pure}. We also show the existence of Grothendieck rings containing nontrivial torsion elements in \ref{tors}. The cell decomposition theorem is proved in \ref{CDT}, whereas the discussion on connectedness is included in \ref{C}. We conclude the paper with section \ref{rmkcom} which contains further remarks on the technique of the proof and mentions some directions for further research in this area.

\section{Preliminaries}\label{prelim}
\subsection{Semirings and Grothendieck rings}\label{SGR}

We recall the notion of a semiring and  how to construct a ring in a canonical fashion from a given semiring. A detailed exposition on this material can be found in \cite{Lee}.

Let $L_{ring}=\langle0,1,+,\cdotp\rangle$ be the language of rings.
\begin{definitions}
Any $L_{ring}$ structure $S$ satisfying the following conditions is a commutative \textbf{semiring} with unity.
\begin{itemize}
	\item $(S,+,0)$ is a commutative monoid
	\item $(S,\cdotp,1)$ is a commutative monoid
	\item $a\cdotp 0=0$ for all $a\in S$
	\item multiplication ($\cdotp$) distributes over addition ($+$)
\end{itemize}

A \textbf{semiring homomorphism} is an $L_{ring}$-homomorphism.

A semiring $S$ is said to be \textbf{cancellative} if $a+c=b+c\ \Rightarrow\ a=b$ for all $a,b,c\in S$.
\end{definitions}

In \cite{Lee}, a cancellative semiring is called a \emph{halfring}. All the semirings considered here are commutative semirings with unity, allowing the possibility $0=1$.

\begin{definition}
A binary relation $\thicksim$ on a semiring $S$ is said to be a \textbf{congruence relation} if the following properties hold.
\begin{itemize}
	\item $\thicksim$ is an equivalence relation
	\item $a\thicksim b,c\thicksim d$ for $a,b,c,d\in S$ $\Rightarrow (a+c)\thicksim(b+d),a\cdotp c\thicksim b\cdotp d$
\end{itemize}
\end{definition}

There is a canonical way of constructing a cancellative semiring from any semiring $S$ as stated in the following theorem.
\begin{theorem}\label{QUOCONST}\textbf{Quotient construction}:
Let $S$ be a semiring and let $\thicksim$ be the binary relation defined as follows.
\begin{equation}\label{CANCEL}
For\ a,b\in S,\ a\thicksim b\ \Leftrightarrow\ \exists c\in S,\ a+c=b+c
\end{equation}
Then $\thicksim$ is a congruence relation. If $\tilde{a}$ denotes the $\thicksim$ equivalence class of $a\in S$, then $\tilde{S}:=\{\tilde{a}:a\in S\}$ is a cancellative semiring with respect to the induced addition and multiplication operations. There is a surjective semiring homomorphism $q:S\rightarrow\tilde{S}$ given by $a\mapsto \tilde{a}$. Furthermore, given any cancellative semiring $T$ and a semiring homomorphism $f:S\rightarrow T$, there exists a unique semiring homomorphism $\tilde{f}:\tilde{S}\rightarrow T$ such that the diagram $\xymatrix{{S}\ar[rr] ^{q}\ar[dr]^f & & {\tilde{S}}\ar@{->}[dl]^{\exists !\tilde{f}} \\ & {T}}$ commutes.
\end{theorem}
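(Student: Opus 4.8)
The plan is to verify the four assertions in turn, each reducing to a short equational manipulation. First I would check that $\thicksim$ is an equivalence relation: reflexivity follows from \eqref{CANCEL} by taking $c=0$, symmetry is immediate from the symmetric shape of the defining condition, and transitivity uses the familiar device of adding witnesses — if $a+c=b+c$ and $b+d=e+d$, then $a+(c+d)=b+c+d=e+(c+d)$, so $a\thicksim e$. For the congruence property, additive compatibility is obtained the same way: summing $a+c=b+c$ and $a'+c'=b'+c'$ gives $(a+a')+(c+c')=(b+b')+(c+c')$. Multiplicative compatibility is the one step that needs a small idea: multiplying $a+c=b+c$ by $a'$ and using distributivity gives $aa'\thicksim ba'$, multiplying $a'+c'=b'+c'$ by $b$ gives $ba'\thicksim bb'$, and transitivity yields $aa'\thicksim bb'$. (Only distributivity over $+$ is needed here, not the annihilation axiom.)

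Once $\thicksim$ is a congruence, the induced addition and multiplication on $\tilde S$ are well defined, and every semiring axiom — each being a universally quantified identity — descends from $S$ to $\tilde S$ along the surjection $q$; thus $\tilde S$ is a commutative semiring with unity and $q:S\rightarrow\tilde S$ is a surjective semiring homomorphism. To see $\tilde S$ is cancellative, assume $\tilde a+\tilde c=\tilde b+\tilde c$, i.e.\ $a+c\thicksim b+c$; then $a+c+e=b+c+e$ for some $e\in S$, and setting $d=c+e$ gives $a+d=b+d$, so $\tilde a=\tilde b$.

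For the universal property, given a cancellative semiring $T$ and a semiring homomorphism $f:S\rightarrow T$, I would define $\tilde f:\tilde S\rightarrow T$ by $\tilde f(\tilde a)=f(a)$. The crucial — and only nontrivial — point is that this is well defined: if $\tilde a=\tilde b$ then $a+c=b+c$ for some $c$, whence $f(a)+f(c)=f(b)+f(c)$ in $T$, and cancellativity of $T$ forces $f(a)=f(b)$. That $\tilde f$ preserves $0$, $1$, $+$ and $\cdotp$ and satisfies $\tilde f\circ q=f$ is then immediate from the definition of the operations on $\tilde S$ together with the homomorphism property of $f$. Uniqueness follows from surjectivity of $q$: any semiring homomorphism $g$ with $g\circ q=f$ must satisfy $g(\tilde a)=g(q(a))=f(a)=\tilde f(\tilde a)$ for every $a\in S$.

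I anticipate no genuine difficulty in this argument; the only two places that call for attention are the multiplicative compatibility of $\thicksim$, handled by the two-step comparison above, and the well-definedness of $\tilde f$ — which is precisely the reason the target semiring is required to be cancellative.
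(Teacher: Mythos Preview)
Your proof is correct and complete; every step is the standard verification and there are no gaps. Note, however, that the paper does not actually supply a proof of this theorem: it is stated in the preliminaries as a known background fact, with a pointer to \cite{Lee} for details. So there is no ``paper's own proof'' to compare against, but your argument is exactly the routine verification one would expect, and it would serve perfectly well as the omitted proof.
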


One can embed a cancellative semiring in a ring in a canonical fashion as stated in the following theorem.
\begin{theorem}\label{GRCONSTR}
\textbf{Ring of Differences for a Cancellative Semiring}:
Let $R$ denote a cancellative semiring and let $E$ denote the binary relation on the set $R\times R$ of ordered pairs of elements from $R$ defined as follows.
\begin{equation}\label{NEGADD}
For\ (a,b),(c,d)\in R\times R,\ (a,b) E (c,d)\ \Leftrightarrow\ a+d=b+c
\end{equation}
Then $R$ is an equivalence relation. If $(a,b)_E$ denotes the $E$-equivalence class of $(a,b)$, then the quotient structure $(R\times R)/E:=\{(a,b)_E:(a,b)\in R\times R\}$ is a ring with respect to the operations given by
\begin{eqnarray}\label{RINGOPER}
(a,b)_E+(c,d)_E&:=&(a+c,b+d)_E\\
(a,b)_E\cdotp(c,d)_E&:=&(a\cdotp c+b\cdotp d,a\cdotp d+b\cdotp c)_E\\
-(a,b)_E&:=&(b,a)_E
\end{eqnarray}
for $(a,b)_E,(c,d)_E\in(R\times R)/E$. We denote the ring $(R\times R)/E$ by $K_0(R)$ following the conventions of K-theory. The semiring $R$ can be embedded into the ring $K_0(R)$ by the semiring homomorphism $i$ given by $a\mapsto (a,0)$. Furthermore, given any ring $T$ and a semiring homomorphism $g:R\rightarrow T$, there exists a unique ring homomorphism $\overline g:K_0(R)\rightarrow T$ such that the diagram $\xymatrix{{R}\ar[rr] ^{i}\ar[dr]^g & & {K_0(R)}\ar@{->} [dl]^{\exists !\overline{g}} \\ & {T}}$ commutes.
\end{theorem}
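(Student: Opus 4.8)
The plan is to treat this as the familiar group-completion construction, verifying the assertions in the order they appear, with the single non-formal ingredient being that cancellativity of $R$ is precisely what is needed to make $E$ transitive.

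First I would check that $E$ is an equivalence relation. Reflexivity and symmetry are immediate from the symmetric form of the defining condition in \eqref{NEGADD}. For transitivity, suppose $(a,b)\,E\,(c,d)$ and $(c,d)\,E\,(e,f)$, so $a+d=b+c$ and $c+f=d+e$; adding these equations gives $(a+f)+(c+d)=(b+e)+(c+d)$, and cancellativity yields $a+f=b+e$, i.e.\ $(a,b)\,E\,(e,f)$. Next I would show that the operations descend to the quotient; it suffices to check well-definedness in one argument at a time. For addition this is trivial, and for multiplication, if $a+b'=b+a'$ then $ac+bd+a'd+b'c=c(a+b')+d(a'+b)=c(b+a')+d(b'+a)=ad+bc+a'c+b'd$, which is exactly $(ac+bd,\,ad+bc)\,E\,(a'c+b'd,\,a'd+b'c)$; the second argument is handled symmetrically.

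Then I would verify the ring axioms for $(R\times R)/E$. Commutativity and associativity of $+$ and $\cdot$, and distributivity, are inherited directly from the semiring axioms of $R$ by unwinding the definitions of the operations; $(0,0)_E$ and $(1,0)_E$ act as the additive and multiplicative identities, and $(b,a)_E$ is an additive inverse of $(a,b)_E$ because $(a,b)_E+(b,a)_E=(a+b,\,a+b)_E=(0,0)_E$. That $i\colon a\mapsto(a,0)_E$ is a semiring homomorphism is a direct computation using $(a,0)_E\cdot(b,0)_E=(ab,0)_E$, and it is injective since $(a,0)\,E\,(b,0)$ forces $a=b$.

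Finally, for the universal property, given a ring $T$ and a semiring homomorphism $g\colon R\to T$ I would define $\overline g\big((a,b)_E\big):=g(a)-g(b)$, which makes sense because $T$ is a ring. It is well-defined since $(a,b)\,E\,(c,d)$ gives $g(a)+g(d)=g(b)+g(c)$ in $T$; it respects $+$ obviously, it respects $\cdot$ via $g(ac+bd)-g(ad+bc)=(g(a)-g(b))(g(c)-g(d))$, and it sends $(1,0)_E$ to $g(1)-g(0)=1$, so it is a ring homomorphism. It satisfies $\overline g\circ i=g$ because $g(0)=0$, and it is the unique such map because any ring homomorphism $h$ with $h\circ i=g$ must send $(a,b)_E=i(a)-i(b)$ to $g(a)-g(b)$. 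I expect no genuine obstacle: the only place a hypothesis is used essentially is the appeal to cancellativity in proving transitivity of $E$ — which is exactly why, for an arbitrary semiring, one first passes to the cancellative quotient $\tilde S$ of Theorem \ref{QUOCONST} and then applies the present construction — and everything else is routine bookkeeping.
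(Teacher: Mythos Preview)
Your proof is correct and complete. The paper does not actually prove this theorem: it is stated without proof as background material in the preliminaries, with a pointer to \cite{Lee} for details. Your argument is the standard one, and the one point you single out --- that cancellativity is used exactly for transitivity of $E$ --- is the right observation, and also explains why the paper first passes to the cancellative quotient via Theorem~\ref{QUOCONST} before applying this construction.
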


Note that each of the $E$-equivalence classes of the elements from $R\times R$, as constructed in the previous theorem, contains a pair of the form $(a,0)$ or $(0,a)$ for some $a\in R$.

For a semiring $S$, let $K_0(S)$ denote the ring $K_0(\tilde S)$ for simplicity, where $\tilde S$ is the cancellative semiring obtained from $S$ as stated in the theorem \ref{QUOCONST} and let the canonical map $S\rightarrow K_0(S)$ be denoted by $\eta_S$. We finally note the following result which combines the previous two theorems.
\begin{cor}\label{GrRngAdj}
A semiring $S$ can be embedded in a ring if and only if $S$ is cancellative. Given any ring $T$ and a semiring homomorphism $g:S\rightarrow T$, there exists a unique ring homomorphism $\overline g:K_0(S)\rightarrow T$ such that the diagram $\xymatrix{S\ar[rr] ^{\eta_S}\ar[dr]^g & & {K_0(S)}\ar@{->}[dl]^{\exists !\overline{g}} \\ & {T}}$ commutes.
\end{cor}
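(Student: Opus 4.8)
The plan is to obtain Corollary \ref{GrRngAdj} by chaining together the two universal constructions already in hand, Theorem \ref{QUOCONST} and Theorem \ref{GRCONSTR}, so that the universal property of $K_0(S)$ appears as a composite of the two.

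First I would settle the embedding criterion. One direction is immediate: the additive group of a ring is cancellative, hence so is every subsemiring, so any semiring embeddable in a ring is cancellative. For the converse, the point is that when $S$ is cancellative the relation $\thicksim$ of $(\ref{CANCEL})$ collapses to equality — if $a+c=b+c$ then $a=b$ — so the quotient map $q:S\rightarrow\tilde S$ of Theorem \ref{QUOCONST} is an isomorphism. Composing $q$ with the embedding $i:\tilde S\rightarrow K_0(\tilde S)$ of Theorem \ref{GRCONSTR} then exhibits $\eta_S=i\circ q$ as an injective semiring homomorphism of $S$ into the ring $K_0(S)$.

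Next I would establish the factorisation. Given a ring $T$ and a semiring homomorphism $g:S\rightarrow T$, note that $T$, being a ring, is in particular a cancellative semiring; so Theorem \ref{QUOCONST} produces a unique semiring homomorphism $\tilde g:\tilde S\rightarrow T$ with $\tilde g\circ q=g$. Applying Theorem \ref{GRCONSTR} to the cancellative semiring $\tilde S$ and the homomorphism $\tilde g$ yields a unique ring homomorphism $\overline g:K_0(\tilde S)=K_0(S)\rightarrow T$ with $\overline g\circ i=\tilde g$, and then $\overline g\circ\eta_S=\overline g\circ i\circ q=\tilde g\circ q=g$, which is the desired commutative triangle.

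The only step needing a little care is uniqueness, and this is hardly an obstacle. The key fact is that $\eta_S(S)$ generates $K_0(S)$ as a ring — indeed as an abelian group — since, by the remark following Theorem \ref{GRCONSTR}, every element of $K_0(\tilde S)$ can be written $(a,b)_E=i(a)-i(b)$ with $a,b\in\tilde S$, and $i(\tilde S)=i(q(S))=\eta_S(S)$. Consequently any two ring homomorphisms $K_0(S)\rightarrow T$ agreeing on $\eta_S(S)$ are equal, so the $\overline g$ constructed above is the unique ring homomorphism with $\overline g\circ\eta_S=g$. Together with the embedding criterion this establishes both halves of the corollary.
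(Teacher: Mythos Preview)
Your proof is correct and follows exactly the approach the paper intends: the corollary is stated immediately after Theorems \ref{QUOCONST} and \ref{GRCONSTR} as the result ``which combines the previous two theorems'', and your argument carries out precisely that combination. The only minor remark is that uniqueness could alternatively be obtained by chaining the uniqueness clauses of the two theorems directly (if $h\circ\eta_S=g$ then $h\circ i$ factors $g$ through $q$, forcing $h\circ i=\tilde g$ and hence $h=\overline g$), but your generation argument is equally valid.
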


This result can be stated in category theoretic language as follows. Let $\mathbf{CSemiRing}$ denote the category of commutative semirings with unity and semiring homomorphisms preserving unity. Let $\mathbf{CRing}$ denote its full subcategory consisting of commutative rings with unity and let $I:\mathbf{CRing}\rightarrow\mathbf{CSemiRing}$ be the inclusion functor. Then $I$ admits a left adjoint, namely $K_0:\mathbf{CSemiRing}\rightarrow\mathbf{CRing}$. For each semiring $S$, the ring $K_0(S)$ is called the Grothendieck Ring constructed from $S$. If $\eta$ is the unit of the adjunction, the diagram in the previous corollary represents the universal property of the adjunction.

\subsection{Grothendieck rings of first order structures}\label{GRFOS}

We aim to introduce the notion of the model theoretic Grothendieck ring of a first order structure in this section. This account is based on \cite{Kra}. After setting some background in model theory, we state how to construct the semiring of definable isomorphism classes of definable subsets of finite cartesian powers of the given structure $M$. Following the method described in the previous section, we then construct the Grothendieck ring $K_0(M)$.

Let $L$ denote any language and $M$ denote any first order $L$-structure. The term definable will always mean definable with parameters from $M$.

\begin{definitions}
For every $n\geq 1$, we define $\mathrm{Def}(M^n)$ to be the collection of all definable subsets of $M^n$. We define $\overline{\mathrm{Def}}(M):=\bigcup_{n\geq 1}\mathrm{Def}(M^n)$.
\end{definitions}

\begin{definition}\label{defiso}
We say that two definable sets $A,B\in\overline{\mathrm{Def}}(M)$ are \textbf{definably isomorphic} if there exists a definable bijection between them, i.e., a bijection $f:A\rightarrow B$ such that the graph $Gr(f)\in\overline{\mathrm{Def}}(M)$. This is an equivalence relation on $\overline{\mathrm{Def}}(M)$ and the equivalence class of a set $A$ is denoted by $[A]$. We use $\widetilde{\mathrm{Def}}(M)$ to denote the set of all equivalence classes with respect to this relation. We use $[-]:\overline{\mathrm{Def}}(M)\rightarrow\widetilde{\mathrm{Def}}(M)$ to denote the surjective map defined by $A\mapsto[A]$.
\end{definition}

We can regard $\widetilde{\mathrm{Def}}(M)$ as an $L_{ring}$-structure. In fact, it is a semiring with respect to the operations defined as follows.
\begin{itemize}
	\item $0 := [\emptyset]$
	\item $1 := [\{*\}]$ for any singleton subset $\{*\}$ of $M$
	\item $[A]+[B] := [A'\sqcup B']$ for $A'\in[A],B'\in[B]$ such that $A'\cap B'=\emptyset$
	\item $[A]\cdotp[B] := [A\times B]$
\end{itemize}
(NB: We use $\sqcup$ to denote disjoint unions.)

Now we are ready to give the most important definition.
\begin{definition}
We define the \textbf{model-theoretic Grothendieck ring of the first order structure} $M$, denoted by $K_0(M)$, to be the ring $K_0(\widetilde{\mathrm{Def}}(M))$ obtained from corollary \ref{GrRngAdj}, where the semiring structure on $\widetilde{\mathrm{Def}}(M)$ is as defined above.
\end{definition}

This ring captures the definable combinatorics of the structure $M$. We are interested to know whether $K_0(M)=\{0\}$. It is useful to consider some definable combinatorial aspects to tackle this problem.
\begin{definition}
We say that an infinite structure $M$ satisfies the \textbf{pigeonhole principle} if for each $A\in\overline{\mathrm{Def}}(M)$, each definable injection $f:A\rightarrowtail A$ is an isomorphism. We write this as $M\vDash PHP$.
\end{definition}

This condition is very strong to be true for many structures. As an example, consider the additive group of integers $\mathbb Z$ in the language of abelian groups. The function $\mathbb Z\xrightarrow{(-)\times 2}\mathbb Z$ is a definable injection but not an isomorphism. So it is useful to consider some weaker forms. Though there are several of them (see \cite{Kra}), we note the one important for us.
\begin{definition}
We say that an infinite structure $M$ satisfies the \textbf{onto pigeonhole principle} if for each $A\in\overline{\mathrm{Def}}(M)$ and each definable injection $f:A\rightarrowtail A$, we have $f(A)\neq A\setminus\{a\}$ for any $a\in A$. We write this as $M\vDash ontoPHP$.
\end{definition}

The following proposition gives the necessary and sufficient condition for $K_0(M)$ to be nontrivial (i.e. $0\neq 1$ in $K_0(M)$). We include a proof for the sake of completeness.
\begin{pro}
Given any infinite structure $M$, $K_0(M)\neq\{0\}$ if and only if $M\vDash ontoPHP$.
\end{pro}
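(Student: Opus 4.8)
The plan is to prove both implications directly from the definitions, using the ring-of-differences description from Theorem \ref{GRCONSTR}. Recall that $K_0(M) = K_0(\widetilde{\mathrm{Def}}(M)) = (\tilde R \times \tilde R)/E$ where $\tilde R$ is the cancellative quotient of $R := \widetilde{\mathrm{Def}}(M)$. The key translation is: $0 = 1$ in $K_0(M)$ if and only if $(0,0)\,E\,(1,0)$ after passing to $\tilde R$, which by \eqref{CANCEL} and \eqref{NEGADD} unwinds to the existence of $C \in \overline{\mathrm{Def}}(M)$ with $[\{*\}] + [C] = [C]$ in $R$, i.e.\ $[\{*\} \sqcup C'] = [C']$ for a copy $C'$ of $C$ disjoint from the point. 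In other words, $K_0(M) = \{0\}$ if and only if there is a definable set $C$ admitting a definable bijection with $C \sqcup \{*\}$, equivalently a definable injection $g : C \rightarrowtail C$ whose image omits exactly one point.

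For the ``only if'' direction I would assume $M \not\vDash ontoPHP$ and produce such a $C$ directly: by negation of the definition there is $A \in \overline{\mathrm{Def}}(M)$, a definable injection $f : A \rightarrowtail A$, and a point $a \in A$ with $f(A) = A \setminus \{a\}$. Then $f$ itself is a definable bijection $A \to A \setminus \{a\}$, so $[A] = [A \setminus \{a\}]$ in $R$; adding $[\{a\}]$ to both sides (using that $A \setminus \{a\}$ and $\{a\}$ are disjoint, and that $(A\setminus\{a\})\sqcup\{a\} = A$) gives $[A] + [\{a\}] = [A]$, hence $1 + [A] = [A]$ in $R$, so $(1,0)\,E\,(0,0)$ and $1 = 0$ in $K_0(M)$, forcing $K_0(M) = \{0\}$ since then every element equals $0\cdot x = 0$.

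For the ``if'' direction, suppose $K_0(M) = \{0\}$, so $0 = 1$ and by the above there is a definable bijection $h : C \sqcup \{*\} \to C$ for some $C \in \overline{\mathrm{Def}}(M)$; write $C \subseteq M^n$. Composing $h$ with the inclusion $C \hookrightarrow C \sqcup \{*\}$ yields a definable injection $g : C \rightarrowtail C$ with $g(C) = C \setminus \{h(*)\}$, exhibiting a failure of $ontoPHP$ witnessed by $C$. The one subtlety to check is that $C$ is nonempty (it is, since it bijects with something containing a point) so that $h(*)$ is a genuine element and $C \setminus \{h(*)\} \subsetneq C$; then $M \not\vDash ontoPHP$, completing the contrapositive.

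The main obstacle is purely bookkeeping: one must be careful that the relation \eqref{CANCEL} used to form $\tilde R$ does not introduce spurious collapses, i.e.\ that $1 = 0$ in $K_0(M)$ really is equivalent to the existence of a single witnessing definable set $C$ as above, rather than only to some statement about the cancellative quotient. This is handled by the remark following Theorem \ref{GRCONSTR} — every $E$-class contains a representative of the form $(x,0)$ or $(0,x)$ — together with the fact that $\eta_S$ is a composite of $q$ and $i$, so that $\eta_S(1) = \eta_S(0)$ in $K_0(M)$ unwinds, through both the $\thicksim$-collapse and the $E$-collapse, to a single equation $1 + [C] = [C]$ holding already in $R = \widetilde{\mathrm{Def}}(M)$. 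Once this translation is in place, both directions are immediate.
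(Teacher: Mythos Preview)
Your proof is correct and follows essentially the same approach as the paper: both argue that $K_0(M)=\{0\}$ is equivalent to $0=1$, which via the construction of the cancellative quotient \eqref{CANCEL} unwinds to the existence of some $[A]$ with $1+[A]=0+[A]$ in $\widetilde{\mathrm{Def}}(M)$, and this is exactly the negation of $ontoPHP$. The paper compresses this into two sentences, while you spell out the translation and both contrapositives explicitly; the content is the same.
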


\begin{proof}
Recall the construction of the cancellative semiring from (\ref{CANCEL}). The condition $0=1$ in $K_0(M)$ is thus equivalent to the statement that for some $A\in\overline{\mathrm{Def}}(M)$, we have $0+[A]=1+[A]$. This is precisely the statement that $M\nvDash ontoPHP$.
\end{proof}

\textbf{A brief survey of known Grothendieck Rings:} Very few examples of Grothendieck rings are known in general. If $M$ is a finite structure, then $K_0(M)\cong\mathbb Z$. Kraji\v{c}ek and Scanlon have shown in \cite[Example\,3.6]{Kra} that $K_0(\mathbb R)\cong\mathbb Z$ using the dimension theory and cell decomposition theorem for o-minimal structures, where $\mathbb R$ denotes the real closed field. Cluckers and Haskell (\cite{Clu}, \cite{CluHask}) proved that the fields of p-adic numbers have trivial Grothendieck rings, by constructing definable bijections from a set to the same set minus a point. Denef and Loeser (\cite{DenLoes1},\cite{DenLoes2}) have found that the Grothendieck ring $K_0(\mathbb C)$ of the field $\mathbb C$ of complex numbers regarded as an $L_{ring}$-structure admits the ring $\mathbb Z[X;Y]$ as a quotient. Kraji\v{c}ek and Scanlon have strengthened this result and shown that $K_0(\mathbb C)$ contains an algebraically independent set of size continuum, and hence the ring $\mathbb Z[X_i:i\in\mathfrak{c}]$ embeds into $K_0(\mathbb C)$. Perera showed in \cite[Theorem\,4.3.1]{Perera} that $K_0(M)\cong\mathbb Z[X]$ whenever $M$ is an infinite module over an infinite division ring. Prest conjectured \cite[Ch.\,8,\,Conjecture A]{Perera} that $K_0(M)$ is nontrivial for all nonzero right $\mathcal R$-modules $M$. We prove that $K_0(M)$ is actually a quotient of a monoid ring and furthermore it is nontrivial. Most of the paper is devoted to the proof of this statement.

\subsection{Euler characteristic of simplicial complexes}\label{ECSC}

We introduce the concept of an abstract simplicial complex and a couple of ways to calculate its Euler characteristic. We also state some important results in the homology theory of simplicial complexes. The material on homology and relative homology presented in this section is taken from \cite[II.4]{FerPic}. This theory provides the basis for the analysis of `local characteristics' in \ref{LC}.

\begin{definition}
An \textbf{abstract simplicial complex} is a pair $(X,\mathcal{K})$ where $X$ is a finite set and $\mathcal{K}$ is a collection of subsets of $X$ satisfying the following properties.
\begin{itemize}
	\item $\emptyset\notin\mathcal{K}$
	\item $\{x\}\in\mathcal{K}$ for each $x\in X$
	\item if $F\in\mathcal{K}$ and $\emptyset\neq F'\subsetneq F$, then $F'\in\mathcal{K}$
\end{itemize}
\end{definition}
We usually identify the simplicial complex $(X,\mathcal{K})$ with $\mathcal{K}$. The elements $F\in\mathcal{K}$ are called the \textbf{faces} of the complex and the singleton faces are called the \textbf{vertices} of the complex. We use $\mathcal V(\mathcal K)$ to denote the set of vertices of $\mathcal K$.

Let $\Delta^k:=\mathbb{P}([k+1])\setminus\{\emptyset\}$ denote the \textbf{standard $k$-simplex}, where $\mathbb P$ denotes the power set operator and $[k+1]=\{1,2,\hdots,k+1\}$ for $k\geq 0$. We define the \textbf{geometric realization} of the standard $k$-simplex, denoted $|\Delta^k|$, to be the set of all points of $\mathbb R^{k+1}$ which can be expressed as a convex linear combination of the standard basis vectors of $\mathbb R^{k+1}$. In fact we can associate to every abstract simplicial complex a topological space $|\mathcal K|$, called its geometric realization. This topological space is constructed by `gluing together' the geometric realizations of its simplices.

We assign dimension to every face $F\in\mathcal{K}$ by stating $\mathrm{dim} F:=|F|-1$ and we say that the \textbf{dimension of the complex} is the maximum of the dimensions of its faces.

\begin{definition}\label{Euler}
We define the \textbf{Euler characteristic} of the complex $\mathcal{K}$, denoted $\chi(\mathcal{K})$, to be the integer $\Sigma_{n=0}^{\mathrm{dim}\mathcal{K}}(-1)^n v_n$ where $v_n$ is the number of faces in $\mathcal{K}$ with dimension $n$.
\end{definition}

It is easy to check that $\chi(\Delta^k)=1$ for each $k\geq 0$. Since we also allow our complex to be empty, we define $\chi(\emptyset):=0$ though $\mathrm{dim}\emptyset$ is undefined.

There is yet another way to obtain the Euler characteristics of simplicial complexes, via homology. The word homology will always mean simplicial homology with integer coefficients in this paper. If $b_n$ denotes the $n^{th}$ Betti number of the simplicial complex $\mathcal{K}$ (i.e. the rank of the $n^{th}$ homology group $H_n(\mathcal{K})$), then we have the identity $\chi(\mathcal{K})=\Sigma_{n=0}^\infty(-1)^n b_n$ where the sum on the right hand side is finite. We use the notation $C_*(\mathcal K)$ to denote the chain complex $C_n(\mathcal K)_{n\geq 0}$ and $H_*(\mathcal K)$ to denote the chain complex $(H_n(\mathcal K))_{n\geq0}$, where $C_n(\mathcal K)$ is the free abelian group generated by the set of $n$-simplices in $\mathcal K$.

The following result states that homology is a homotopy invariant. It will be useful in proving a key result (proposition \ref{p1}).
\begin{theorem}\label{HTPYINV}
If $\mathcal K_1$ and $\mathcal K_2$ (meaning, their geometric realizations) are homotopy equivalent, then $H_*(\mathcal K_1)\cong H_*(\mathcal K_2)$.
\end{theorem}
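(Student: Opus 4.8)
The plan is to route the argument through singular homology, where homotopy invariance is standard, and then to transfer the conclusion back to simplicial homology. The two ingredients are: (i) a natural isomorphism between the simplicial homology $H_*(\mathcal K)$ of an abstract simplicial complex and the singular homology $H_*^{\mathrm{sing}}(|\mathcal K|)$ of its geometric realization; and (ii) the fact that homotopic continuous maps induce the same homomorphism on singular homology. Granting both, the theorem follows formally.

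First I would recall (ii). Singular homology is a functor from topological spaces and continuous maps to graded abelian groups, and if $f,g\colon X\to Y$ are homotopic then $f_*=g_*\colon H_*^{\mathrm{sing}}(X)\to H_*^{\mathrm{sing}}(Y)$. The proof produces an explicit chain homotopy between $f_\#$ and $g_\#$ via the prism operator, obtained by triangulating $|\Delta^n|\times[0,1]$; this is precisely the sort of material collected in \cite[II.4]{FerPic}. Consequently, if $|\mathcal K_1|$ and $|\mathcal K_2|$ are homotopy equivalent, say via $u\colon|\mathcal K_1|\to|\mathcal K_2|$ and $v\colon|\mathcal K_2|\to|\mathcal K_1|$ with $vu\simeq\mathrm{id}$ and $uv\simeq\mathrm{id}$, then $v_*u_*=\mathrm{id}$ and $u_*v_*=\mathrm{id}$, so $u_*$ is an isomorphism $H_*^{\mathrm{sing}}(|\mathcal K_1|)\xrightarrow{\ \sim\ }H_*^{\mathrm{sing}}(|\mathcal K_2|)$.

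Next I would establish (i). Fix a total order on the vertex set $\mathcal V(\mathcal K)$; each $n$-face, listed in increasing order as $(v_0<\dots<v_n)$, determines an affine singular $n$-simplex $|\Delta^n|\to|\mathcal K|$, and extending linearly gives a chain map $C_*(\mathcal K)\to C_*^{\mathrm{sing}}(|\mathcal K|)$ commuting with the boundary operators. To see it is a quasi-isomorphism I would filter $|\mathcal K|$ by its skeleta $|\mathcal K|^{(0)}\subseteq|\mathcal K|^{(1)}\subseteq\cdots$ and induct on dimension: the relative homology groups $H_*(|\mathcal K|^{(n)},|\mathcal K|^{(n-1)})$ are concentrated in degree $n$ and free on the set of $n$-faces in both the simplicial and the singular theory, so the five lemma applied to the long exact sequences of the pairs propagates the isomorphism up the filtration, which is finite since $X$ is finite. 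This produces a natural isomorphism $H_*(\mathcal K)\cong H_*^{\mathrm{sing}}(|\mathcal K|)$.

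Combining the two steps, $H_*(\mathcal K_1)\cong H_*^{\mathrm{sing}}(|\mathcal K_1|)\cong H_*^{\mathrm{sing}}(|\mathcal K_2|)\cong H_*(\mathcal K_2)$, as required. The only step demanding genuine work is (i), the comparison of simplicial and singular homology; the alternative of arguing entirely inside simplicial homology via the simplicial approximation theorem — every continuous map of realizations becomes simplicial after finitely many barycentric subdivisions, and subdivision induces an isomorphism on simplicial homology — is at least as laborious and is usually deduced from the same technical apparatus. Given that the homology background of this section is imported from \cite[II.4]{FerPic}, I would simply cite the comparison theorem and the prism-operator chain homotopy from there and assemble them as above.
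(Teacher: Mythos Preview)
Your argument is correct and is the standard route to this result. However, the paper itself does not prove this theorem: it is stated as a background fact in the preliminaries on simplicial homology, with the material explicitly attributed to \cite[II.4]{FerPic}, so there is no ``paper's own proof'' to compare against.
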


The definition of Euler characteristic in terms of Betti numbers gives the following corollary.
\begin{cor}\label{EULHTPY}
If $\mathcal K_1$ and $\mathcal K_2$ are homotopy equivalent, then $\chi(\mathcal K_1)=\chi(\mathcal K_2)$.
\end{cor}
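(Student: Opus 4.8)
The plan is to deduce this directly from Theorem \ref{HTPYINV} together with the Euler--Poincar\'e identity recorded just above it in Section \ref{ECSC}. First I would invoke Theorem \ref{HTPYINV}: since $\mathcal K_1$ and $\mathcal K_2$ (more precisely, their geometric realizations $|\mathcal K_1|$ and $|\mathcal K_2|$) are homotopy equivalent, the chain complexes of homology groups are isomorphic, $H_*(\mathcal K_1)\cong H_*(\mathcal K_2)$; in particular $H_n(\mathcal K_1)\cong H_n(\mathcal K_2)$ for every $n\geq 0$.

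Next I would pass to Betti numbers. Isomorphic abelian groups have equal rank, so $b_n(\mathcal K_1)=b_n(\mathcal K_2)$ for all $n$, where $b_n(\mathcal K_i)$ denotes the rank of $H_n(\mathcal K_i)$. Both sequences are eventually zero, since $H_n(\mathcal K_i)$ vanishes above $\mathrm{dim}\,\mathcal K_i$, so the alternating sums below are finite.

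Finally I would apply the identity $\chi(\mathcal K)=\sum_{n=0}^\infty(-1)^n b_n$ from Section \ref{ECSC}, which ties the combinatorial Euler characteristic of Definition \ref{Euler} to the ranks of the homology groups. Applied to each of $\mathcal K_1$ and $\mathcal K_2$ and combined with the equality of Betti numbers, it yields
\[
\chi(\mathcal K_1)=\sum_{n=0}^\infty(-1)^n b_n(\mathcal K_1)=\sum_{n=0}^\infty(-1)^n b_n(\mathcal K_2)=\chi(\mathcal K_2),
\]
as desired. I do not expect any genuine obstacle: all the mathematical content lives in Theorem \ref{HTPYINV}, and the corollary is just the observation that $\chi$ depends only on the homology groups, hence is a homotopy invariant. (The degenerate case where one of the complexes is empty is covered by the convention $\chi(\emptyset):=0$ and causes no difficulty.)
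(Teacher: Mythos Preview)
Your proposal is correct and follows exactly the approach indicated in the paper: the corollary is stated immediately after Theorem~\ref{HTPYINV} with the remark that it follows from the definition of Euler characteristic in terms of Betti numbers, and your argument is simply a clean unpacking of that one-line justification.
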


The homology groups $H_n(\mathcal K)$, for $n\geq 1$, calculate the number of ``$n$-dimensional holes'' in the geometric realization of the complex $\mathcal K$. But sometimes it is important to ignore the data present in a smaller part of the given structure. This can be done in two ways, viz. using the cone construction for a subcomplex or by using relative homology. Given a complex $\mathcal{K}$ and a subcomplex $\mathcal{Q}\subseteq\mathcal{K}$, we write $\mathcal K\cup\mathrm{Cone}(\mathcal{Q})$ for the simplicial complex whose vertex set is $\mathcal V(\mathcal K)\cup\{x\}$, where $x\notin\mathcal V(\mathcal K)$, and the faces are $\mathcal{K}\cup\{\{x\}\cup F:F\in\mathcal{Q}\}$. We say that $x$ is the \textbf{apex} of the cone. In the same situation, we use the notation $H_n(\mathcal K;\mathcal{Q})$ to denote the $n^{th}$ homology of $\mathcal K$ relative to $\mathcal{Q}$.

The following theorem connects the relative homologies with the homologies of the original complexes.
\begin{theorem}(see \cite[Theorem\,2.16]{Hatcher})\label{LONGEXACT}
Given a pair of simplicial complexes $\mathcal{Q}\subset\mathcal K$, we have the following long exact sequence of homologies.\\
\begin{equation*}
\cdots\rightarrow H_n(\mathcal{Q})\rightarrow H_n(\mathcal{K})\rightarrow H_n(\mathcal K;\mathcal{Q})\rightarrow H_{n-1}(\mathcal{Q})\rightarrow\cdots\rightarrow H_0(\mathcal K;\mathcal{Q})\rightarrow 0	
\end{equation*}
\end{theorem}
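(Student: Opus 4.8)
The plan is to deduce the long exact sequence from the standard algebraic fact that a short exact sequence of chain complexes induces a long exact sequence in homology (the \emph{zig-zag lemma}), applied to the sequence of chain complexes attached to the pair $\mathcal Q\subset\mathcal K$.

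First I would set up the relevant chain complexes. Since every $n$-simplex of $\mathcal Q$ is an $n$-simplex of $\mathcal K$, the free abelian group $C_n(\mathcal Q)$ is canonically a subgroup of $C_n(\mathcal K)$, and this inclusion commutes with the simplicial boundary maps; thus $C_*(\mathcal Q)$ is a subcomplex of $C_*(\mathcal K)$. Define the relative chain complex by $C_n(\mathcal K;\mathcal Q):=C_n(\mathcal K)/C_n(\mathcal Q)$; concretely this is free abelian on the $n$-simplices of $\mathcal K$ not lying in $\mathcal Q$, and the boundary map of $\mathcal K$ descends to the quotient because $\partial C_n(\mathcal Q)\subseteq C_{n-1}(\mathcal Q)$. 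Its homology is, by definition, $H_n(\mathcal K;\mathcal Q)$. By construction we obtain, for each $n$, a short exact sequence
\begin{equation*}
0\longrightarrow C_n(\mathcal Q)\longrightarrow C_n(\mathcal K)\longrightarrow C_n(\mathcal K;\mathcal Q)\longrightarrow 0,
\end{equation*}
the first map being the inclusion and the second the quotient projection, and these assemble into a short exact sequence of chain complexes.

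Next I would invoke the zig-zag lemma. The maps $H_n(\mathcal Q)\to H_n(\mathcal K)$ and $H_n(\mathcal K)\to H_n(\mathcal K;\mathcal Q)$ are those induced on homology by the two chain maps above. The connecting homomorphism $\partial\colon H_n(\mathcal K;\mathcal Q)\to H_{n-1}(\mathcal Q)$ is obtained by the usual diagram chase: represent a class by a relative cycle $z\in C_n(\mathcal K)$, so that $\partial z$ already lies in $C_{n-1}(\mathcal Q)$; then $\partial z$ is a cycle in $C_{n-1}(\mathcal Q)$ whose homology class is independent of all choices. The bulk of the work is the verification of exactness at each of the three types of spot, each step being a routine diagram chase using the degreewise exactness of the sequence of chain complexes; I expect this bookkeeping, rather than any genuine difficulty, to be the only point requiring care. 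Finally, since $C_n(\mathcal K)=C_n(\mathcal Q)=0$ for $n<0$ we have $H_{-1}(\mathcal Q)=0$, so the sequence terminates with $H_0(\mathcal Q)\to H_0(\mathcal K)\to H_0(\mathcal K;\mathcal Q)\to 0$, which is exactly the displayed form; when $\mathcal Q=\emptyset$ it degenerates harmlessly, the maps $H_n(\mathcal K)\to H_n(\mathcal K;\mathcal Q)$ becoming isomorphisms. As all of this is classical, one may simply cite \cite[Theorem\,2.16]{Hatcher}; the sketch above records the self-contained route.
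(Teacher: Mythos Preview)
Your proposal is correct and is exactly the standard argument; the paper does not give its own proof of this statement but simply cites it from Hatcher, so there is nothing to compare against beyond noting that your sketch is the same route Hatcher takes.
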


We shall also make use of the following result.
\begin{theorem}
Given a pair of simplicial complexes $\mathcal{Q}\subseteq\mathcal{K}$, we have $H_n(\mathcal K;\mathcal{Q}) \cong H_n(\mathcal K\cup\mathrm{Cone}(\mathcal{Q}))$ for $n\geq 1$ and $H_0(\mathcal K\cup \mathrm{Cone}(\mathcal{Q}))\cong H_0(\mathcal K;\mathcal{Q})\oplus \mathbb Z$.
\end{theorem}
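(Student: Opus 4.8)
The plan is to exhibit $\mathcal K\cup\mathrm{Cone}(\mathcal Q)$ as the union of $\mathcal K$ with the subcomplex $\mathrm{Cone}(\mathcal Q)$ having vertex set $\mathcal V(\mathcal Q)\cup\{x\}$ and faces $\mathcal Q\cup\{\{x\}\}\cup\{\{x\}\cup F:F\in\mathcal Q\}$, to note that $\mathcal K\cap\mathrm{Cone}(\mathcal Q)=\mathcal Q$ (the faces of $\mathrm{Cone}(\mathcal Q)$ not containing $x$ are exactly the faces of $\mathcal Q$, and no face containing $x$ lies in $\mathcal K$), and then to compare the long exact sequence of the pair $\bigl(\mathcal K\cup\mathrm{Cone}(\mathcal Q),\,\mathrm{Cone}(\mathcal Q)\bigr)$ with the data of the pair $(\mathcal K,\mathcal Q)$. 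The first ingredient I would record is that $\mathrm{Cone}(\mathcal Q)$ is contractible: its geometric realization is the topological cone on $|\mathcal Q|$, which deformation retracts onto the apex $x$ (and if $\mathcal Q=\emptyset$ then $\mathrm{Cone}(\mathcal Q)$ is the single vertex $\{x\}$), so by Theorem \ref{HTPYINV} we have $H_n(\mathrm{Cone}(\mathcal Q))=0$ for $n\geq 1$ and $H_0(\mathrm{Cone}(\mathcal Q))\cong\mathbb Z$.

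The second ingredient is a chain-level excision. In each degree $n$, the $n$-simplices of $\mathcal K\cup\mathrm{Cone}(\mathcal Q)$ that do not lie in $\mathrm{Cone}(\mathcal Q)$ are precisely the $n$-simplices of $\mathcal K$ that do not lie in $\mathcal Q$, and the induced boundary maps agree; hence $C_*(\mathcal K)/C_*(\mathcal Q)$ and $C_*(\mathcal K\cup\mathrm{Cone}(\mathcal Q))/C_*(\mathrm{Cone}(\mathcal Q))$ are isomorphic chain complexes, which gives $H_n(\mathcal K;\mathcal Q)\cong H_n(\mathcal K\cup\mathrm{Cone}(\mathcal Q);\mathrm{Cone}(\mathcal Q))$ for all $n$.

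Now I would apply Theorem \ref{LONGEXACT} to the pair $\bigl(\mathcal K\cup\mathrm{Cone}(\mathcal Q),\,\mathrm{Cone}(\mathcal Q)\bigr)$. For $n\geq 2$ the segment $0=H_n(\mathrm{Cone}(\mathcal Q))\to H_n(\mathcal K\cup\mathrm{Cone}(\mathcal Q))\to H_n(\mathcal K\cup\mathrm{Cone}(\mathcal Q);\mathrm{Cone}(\mathcal Q))\to H_{n-1}(\mathrm{Cone}(\mathcal Q))=0$ is exact, giving the middle map an isomorphism. For $n=1$ the same holds once the connecting map $H_1(\mathcal K\cup\mathrm{Cone}(\mathcal Q);\mathrm{Cone}(\mathcal Q))\to H_0(\mathrm{Cone}(\mathcal Q))$ is shown to vanish, which follows because the subsequent map $H_0(\mathrm{Cone}(\mathcal Q))\to H_0(\mathcal K\cup\mathrm{Cone}(\mathcal Q))$ is injective: $\mathrm{Cone}(\mathcal Q)$ is nonempty and connected, so its class generates a free summand of $H_0$. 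Combining with the excision isomorphism above yields $H_n(\mathcal K;\mathcal Q)\cong H_n(\mathcal K\cup\mathrm{Cone}(\mathcal Q))$ for $n\geq 1$. For $n=0$, the tail of the sequence is $0\to H_0(\mathrm{Cone}(\mathcal Q))\to H_0(\mathcal K\cup\mathrm{Cone}(\mathcal Q))\to H_0(\mathcal K\cup\mathrm{Cone}(\mathcal Q);\mathrm{Cone}(\mathcal Q))\to 0$; since $H_0(\mathcal K\cup\mathrm{Cone}(\mathcal Q);\mathrm{Cone}(\mathcal Q))\cong H_0(\mathcal K;\mathcal Q)$ is free abelian (it is free on the connected components of $\mathcal K$ disjoint from $\mathcal Q$), the sequence splits and $H_0(\mathcal K\cup\mathrm{Cone}(\mathcal Q))\cong\mathbb Z\oplus H_0(\mathcal K;\mathcal Q)$.

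The conceptual content here — contractibility of the cone plus the long exact sequence — is entirely standard; the only points needing care are the degree $0$ and $1$ bookkeeping (verifying that the relevant connecting homomorphism dies and that the degree-$0$ sequence splits), and checking that the relative chain complexes are genuinely identified. An alternative would be to argue topologically via $(\mathcal K\cup\mathrm{Cone}(\mathcal Q))/\mathrm{Cone}(\mathcal Q)\cong\mathcal K/\mathcal Q$ and the good-pair isomorphism $H_n(\mathcal K;\mathcal Q)\cong\widetilde H_n(\mathcal K/\mathcal Q)$, but the direct chain-level excision is cleaner in the simplicial setting and avoids introducing reduced homology.
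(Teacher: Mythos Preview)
Your argument is correct: the chain-level identification $C_*(\mathcal K)/C_*(\mathcal Q)\cong C_*(\mathcal K\cup\mathrm{Cone}(\mathcal Q))/C_*(\mathrm{Cone}(\mathcal Q))$ is valid since the simplices outside the cone are exactly those of $\mathcal K\setminus\mathcal Q$, the contractibility of the cone kills the outer terms of the long exact sequence for $n\geq 2$, your injectivity argument at $H_0$ correctly forces the connecting map at $n=1$ to vanish, and the splitting at $n=0$ is justified since $H_0(\mathcal K;\mathcal Q)$ is indeed free on the components of $\mathcal K$ disjoint from $\mathcal Q$.

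There is nothing to compare against: the paper states this theorem as a background fact in the preliminaries (immediately after Theorem~\ref{LONGEXACT}) and gives no proof of its own, so your proposal simply supplies the omitted standard argument.
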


\begin{illust}
Let $\mathcal K=\{\{1\},\{2\},\{3\},\{1,2\},\{2,3\}\}$ and $\mathcal Q$ denote the subcomplex $\{\{1\},\{3\}\}$. Then
\begin{eqnarray*}
H_n(\mathcal K)&=&\begin{cases}
				                \mathbb Z, &\mbox{if } n=0,\\
								0, &\mbox{otherwise}
								\end{cases}\\
H_n(\mathcal Q)&=&\begin{cases}
								\mathbb Z\oplus\mathbb Z, &\mbox{if } n=0,\\
								0, &\mbox{otherwise}
								\end{cases}\\
H_n(\mathcal K;\mathcal Q)&=&\begin{cases}
								\mathbb Z, &\mbox{if } n=1,\\
								0, &\mbox{otherwise}
								\end{cases}\\
H_n(\mathcal K\cup\mathrm{Cone}(\mathcal Q))&=&\begin{cases}
								\mathbb Z, &\mbox{if } n=0,1,\\
								0, &\mbox{otherwise}
								\end{cases}
\end{eqnarray*}
\end{illust}

Combining the previous two results with the definition of Euler characteristic, we get
\begin{cor}\label{HMLGCONE}
For a pair of simplicial complexes $\mathcal{Q}\subseteq\mathcal K$, $\chi(\mathcal K\cup \mathrm{Cone}(\mathcal{Q}))+\chi(\mathcal{Q})=\chi(\mathcal K)+1$.
\end{cor}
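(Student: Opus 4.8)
The plan is to reduce the identity to the additivity of ranks of finitely generated abelian groups along the long exact sequence of Theorem \ref{LONGEXACT}, combined with the comparison between relative homology and the cone in the theorem immediately preceding this corollary. Throughout I will use the homological formula for the Euler characteristic, $\chi(\mathcal K)=\sum_{n\geq 0}(-1)^n b_n(\mathcal K)$, which is available since all complexes in sight are finite, and I will write $b_n(\mathcal K;\mathcal Q):=\mathrm{rk}\,H_n(\mathcal K;\mathcal Q)$ and $\chi(\mathcal K;\mathcal Q):=\sum_{n\geq 0}(-1)^n b_n(\mathcal K;\mathcal Q)$ for the relative Euler characteristic.

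First I would record the ``Euler--Poincar\'e'' principle for the long exact sequence: since $\mathcal Q\subseteq\mathcal K$ are finite, the groups $H_n(\mathcal Q)$, $H_n(\mathcal K)$, $H_n(\mathcal K;\mathcal Q)$ are all finitely generated and vanish for $n$ large, and rank is additive on short exact sequences of finitely generated abelian groups. Splicing the long exact sequence of Theorem \ref{LONGEXACT} into short exact pieces and taking the alternating sum of ranks then yields
\begin{equation*}
\chi(\mathcal Q)-\chi(\mathcal K)+\chi(\mathcal K;\mathcal Q)=0,
\end{equation*}
i.e.\ $\chi(\mathcal K;\mathcal Q)=\chi(\mathcal K)-\chi(\mathcal Q)$.

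Next I would feed in the relationship between the cone and relative homology. By the theorem preceding this corollary, $H_n(\mathcal K\cup\mathrm{Cone}(\mathcal Q))\cong H_n(\mathcal K;\mathcal Q)$ for $n\geq 1$, while $H_0(\mathcal K\cup\mathrm{Cone}(\mathcal Q))\cong H_0(\mathcal K;\mathcal Q)\oplus\mathbb Z$, so $b_0(\mathcal K\cup\mathrm{Cone}(\mathcal Q))=b_0(\mathcal K;\mathcal Q)+1$. Summing with signs gives $\chi(\mathcal K\cup\mathrm{Cone}(\mathcal Q))=\chi(\mathcal K;\mathcal Q)+1$. Combining with the previous display, $\chi(\mathcal K\cup\mathrm{Cone}(\mathcal Q))=\chi(\mathcal K)-\chi(\mathcal Q)+1$, which rearranges to the claimed identity $\chi(\mathcal K\cup\mathrm{Cone}(\mathcal Q))+\chi(\mathcal Q)=\chi(\mathcal K)+1$.

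The only genuinely delicate point is the bookkeeping in the first step: one must be careful that the long exact sequence really does terminate (it ends at $H_0(\mathcal K;\mathcal Q)\to 0$, as stated) and that all terms are finitely generated, so that the telescoping alternating-sum argument is valid; this is routine but worth stating explicitly. I would also note the degenerate case $\mathcal Q=\emptyset$ separately if needed, where $\mathrm{Cone}(\emptyset)$ is a single extra vertex, $\chi(\mathcal Q)=0$, and the identity reduces to $\chi(\mathcal K\cup\{\mathrm{pt}\})=\chi(\mathcal K)+1$, which is immediate from the definition of $\chi$. An alternative, purely combinatorial proof is also available by counting faces directly: $\mathcal K\cup\mathrm{Cone}(\mathcal Q)$ adds one vertex (the apex $x$) and, for each face $F\in\mathcal Q$ of dimension $n$, one face $\{x\}\cup F$ of dimension $n+1$; tallying the contributions to $\chi$ gives $\chi(\mathcal K\cup\mathrm{Cone}(\mathcal Q))=\chi(\mathcal K)+1-\chi(\mathcal Q)$ at once, giving a self-contained check of the homological computation.
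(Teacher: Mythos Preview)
Your argument is correct and is precisely the approach the paper intends: the corollary is stated as an immediate consequence of ``combining the previous two results with the definition of Euler characteristic,'' i.e.\ the long exact sequence (Theorem~\ref{LONGEXACT}) together with the cone/relative-homology comparison, exactly as you carry out. Your closing face-counting remark is a nice alternative but goes beyond what the paper records.
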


\subsection{Products of simplicial complexes}\label{PRODSIMPCOMP}

We define various products of simplicial complexes and study their interrelations. The inclusion-exclusion principle stated in lemma \ref{ecdpl} is equivalent to the statement that `local characteristics are multiplicative' (lemma \ref{localcharmult}).

Let $\mathcal K$ and $\mathcal Q$ be two simplicial complexes with vertex sets $\mathcal{V}(\mathcal K)$ and $\mathcal{V}(\mathcal Q)$ respectively and let $\pi_1:\mathcal{V}(\mathcal K)\times\mathcal{V}(\mathcal Q)\rightarrow \mathcal{V}(\mathcal K)$ and $\pi_2:\mathcal{V}(\mathcal K)\times\mathcal{V}(\mathcal Q)\rightarrow \mathcal{V} (\mathcal Q)$ denote the projection maps. We define two simplicial complexes with the vertex set $\mathcal V(\mathcal K)\times\mathcal V(\mathcal Q)$. The following product is defined in \cite[\S 3]{EilZil}.

\begin{definition}
The \textbf{simplicial product} $\mathcal K\vartriangle\mathcal Q$ of two simplicial complexes $\mathcal K$ and $\mathcal Q$ is a simplicial complex with vertex set $\mathcal V(\mathcal K)\times\mathcal V(\mathcal Q)$ where a nonempty set $F\subseteq\mathcal V(\mathcal K)\times\mathcal V(\mathcal Q)$ is a face of $\mathcal K\vartriangle\mathcal Q$ if and only if $\pi_1(F)\in\mathcal K$ and $\pi_2(F)\in\mathcal Q$.
\end{definition}

\begin{definition}
The \textbf{disjunctive product} $\mathcal K\boxtimes\mathcal Q$ of two simplicial complexes $\mathcal K$ and $\mathcal Q$ is a simplicial complex with vertex set $\mathcal V(\mathcal K)\times\mathcal V(\mathcal Q)$ where a nonempty set $F\subseteq\mathcal V(\mathcal K)\times\mathcal V(\mathcal Q)$ is a face of $\mathcal K\boxtimes\mathcal Q$ if and only if $\pi_1(F)\in\mathcal K$ or $\pi_2(F)\in\mathcal Q$.
\end{definition}

Observe that the previous two definitions are identical except for the word `and' in the former is replaced by the word `or' in the latter. Thus the simplicial product $\mathcal K\vartriangle\mathcal Q$ is always contained in the disjunctive product $\mathcal K\boxtimes\mathcal Q$.

\begin{illust}\label{spdp}
Let $\mathcal K=\{\{1\},\{2\}\}$ denote the complex consisting precisely of two vertices. Then $\mathcal K\vartriangle\mathcal K$ contains only the vertices of the `square' $\mathcal K\boxtimes\mathcal K$ given by $\{\{(1,1)\},\{(1,2)\},\{(2,1)\},\{(2,2)\},\{(1,1),(1,2)\},\{(2,1),(2,2)\},\{(1,1),(2,1)\},\\\{(1,2),(2,2)\}\}$. For each $k\geq 0$ the complex $\mathcal K\vartriangle\Delta^k$ is the union of two disjoint copies of $\Delta^k$, whereas the complex $\mathcal K\boxtimes\Delta^k$ is a copy of $\Delta^{2k+1}$.
\end{illust}

The main aim of this section is to prove the following lemma about the Euler characteristic of the disjunctive product.

\begin{lemma}\label{ecdpl}
The Euler characteristics of two simplicial complexes $\mathcal K$ and $\mathcal Q$ satisfy
\begin{equation}\label{ecdp}
\chi(\mathcal K\boxtimes\mathcal Q)=\chi(\mathcal K)+\chi(\mathcal Q)-\chi(\mathcal K)\chi(\mathcal Q).
\end{equation}
\end{lemma}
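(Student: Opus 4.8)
The plan is to compute $\chi(\mathcal{K} \boxtimes \mathcal{Q})$ by a careful face count, organizing the faces of the disjunctive product according to which of the two conditions ``$\pi_1(F) \in \mathcal{K}$'' or ``$\pi_2(F) \in \mathcal{Q}$'' they satisfy. A nonempty $F \subseteq \mathcal{V}(\mathcal{K}) \times \mathcal{V}(\mathcal{Q})$ is a face of $\mathcal{K} \boxtimes \mathcal{Q}$ iff $\pi_1(F) \in \mathcal{K}$ or $\pi_2(F) \in \mathcal{Q}$; it lies in the simplicial product $\mathcal{K} \vartriangle \mathcal{Q}$ iff both hold. So by inclusion--exclusion on the signed face counts, $\chi(\mathcal{K} \boxtimes \mathcal{Q}) = \chi(\mathcal{A}) + \chi(\mathcal{B}) - \chi(\mathcal{K} \vartriangle \mathcal{Q})$, where $\mathcal{A}$ is the family of nonempty $F$ with $\pi_1(F) \in \mathcal{K}$ (no condition on $\pi_2(F)$) and $\mathcal{B}$ is the family with $\pi_2(F) \in \mathcal{Q}$. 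Strictly speaking $\mathcal{A}$ and $\mathcal{B}$ need not be simplicial complexes on the full vertex set, but the alternating sum $\sum (-1)^{\dim F}$ over each family is still a well-defined integer, and the inclusion--exclusion identity for these alternating sums holds at the level of the underlying sets of faces, so I will phrase everything in terms of such signed counts rather than insisting on the simplicial-complex structure.

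The next step is to identify each of these three signed counts with a familiar Euler characteristic. For $\mathcal{A}$: a nonempty $F$ with $\pi_1(F) \in \mathcal{K}$ is exactly a nonempty subset of $\mathcal{V}(\mathcal{K}) \times \mathcal{V}(\mathcal{Q})$ whose first projection is a face of $\mathcal{K}$; writing $G = \pi_1(F) \in \mathcal{K}$, the fibre over each vertex of $G$ must be a nonempty subset of $\mathcal{V}(\mathcal{Q})$. A standard computation (or: recognizing $\mathcal{A}$ as the simplicial product $\mathcal{K} \vartriangle \sigma$, where $\sigma$ is the full simplex on $\mathcal{V}(\mathcal{Q})$, which deformation retracts to a point, combined with an Euler-characteristic product formula) gives $\sum_{F \in \mathcal{A}} (-1)^{\dim F} = \chi(\mathcal{K})$; symmetrically the $\mathcal{B}$-count is $\chi(\mathcal{Q})$. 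The cleanest route is probably the direct generating-function identity: for fixed $G \in \mathcal{K}$ with $|G| = j$, the number of $F$ with $\pi_1(F) = G$ and $|F| = i$ is the number of surjections-counting data, and summing $(-1)^{|F|-1}$ over all such $F$ collapses, via $\sum_{k \geq 1}(-1)^{k-1}\binom{m}{k} = 1$ for $m \geq 1$ applied fibrewise, to $(-1)^{|G|-1}$; summing over $G \in \mathcal{K}$ yields $\chi(\mathcal{K})$.

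For the third term I will invoke the Eilenberg--Zilber-type fact, or prove the analogous collapsing identity directly, that $\chi(\mathcal{K} \vartriangle \mathcal{Q}) = \chi(\mathcal{K})\,\chi(\mathcal{Q})$: indeed $|\mathcal{K} \vartriangle \mathcal{Q}|$ is homeomorphic to $|\mathcal{K}| \times |\mathcal{Q}|$ (this is precisely the point of the simplicial product construction in \cite{EilZil}), and the Euler characteristic is multiplicative under products of finite CW complexes. Substituting the three values into $\chi(\mathcal{K} \boxtimes \mathcal{Q}) = \chi(\mathcal{K}) + \chi(\mathcal{Q}) - \chi(\mathcal{K} \vartriangle \mathcal{Q})$ gives \eqref{ecdp}. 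The main obstacle, and the step requiring the most care, is the bookkeeping in the fibrewise sign-cancellation argument — in particular making sure the ``$F = \emptyset$'' degenerate case is excluded correctly and that the inclusion--exclusion over the non-closed families $\mathcal{A}, \mathcal{B}$ is legitimate at the level of alternating sums; the multiplicativity $\chi(\mathcal{K} \vartriangle \mathcal{Q}) = \chi(\mathcal{K})\chi(\mathcal{Q})$, while essential, is a black box I would simply cite.
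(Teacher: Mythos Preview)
Your proposal is correct and follows essentially the same route as the paper: your families $\mathcal{A}$ and $\mathcal{B}$ are precisely $\mathcal{K} \vartriangle \Delta^{|\mathcal{V}(\mathcal{Q})|-1}$ and $\Delta^{|\mathcal{V}(\mathcal{K})|-1} \vartriangle \mathcal{Q}$ (as you yourself observe), and the paper proceeds by the same inclusion--exclusion identity \eqref{ecintermediate} followed by the multiplicativity $\chi(\mathcal{K} \vartriangle \mathcal{Q}) = \chi(\mathcal{K})\chi(\mathcal{Q})$ via Eilenberg--Zilber and Proposition~\ref{euprod}. Your concern that $\mathcal{A}$ and $\mathcal{B}$ might fail to be simplicial complexes on the full vertex set is unfounded---every singleton $\{(v,w)\}$ has $\pi_1(\{(v,w)\})=\{v\}\in\mathcal{K}$---but this does not affect the argument.
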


\begin{illust}
Let $\mathcal K$ be as defined in \ref{spdp}. Then we observe that $\chi(\mathcal K)=2$. Since $\mathcal K\boxtimes\mathcal K$ contains $4$ vertices and $4$ edges, we get $\chi(\mathcal K\boxtimes\mathcal K)=0=2\chi(\mathcal K)-\chi(\mathcal K)^2$ verifying equation (\ref{ecdp}) in this case.
\end{illust}

The proof of the lemma uses tensor products of chain complexes.
\begin{definition}
Let $C_*=\{C_n,\partial_n\}_{n\geq 0}$ and $D_*=\{D_n,\delta_n\}_{n\geq 0}$ denote two bounded chain complexes of abelian groups. The \textbf{tensor product complex} $C_*\otimes D_*=\{(C_*\otimes D_*)_n,d_n\}_{n\geq 0}$ is defined by
\begin{eqnarray*}
(C_*\otimes D_*)_n&=&\bigoplus_{i+j=n}C_i\otimes D_j,\\
d_n(a_i\otimes b_j)&=&\partial_i(a_i)\otimes b_j+(-1)^{i}a_i\otimes\delta_j(b_j).
\end{eqnarray*}
\end{definition}

\begin{illust}\label{tensor}
We compute the tensor product $C_*(\partial\Delta^2)\otimes C_n(\Delta^1)$ as an example, where $\partial\Delta^2$ denotes the boundary of $\Delta^2$.
\begin{eqnarray*}
C_n(\partial\Delta^2)&=&\begin{cases}
								\mathbb Z\oplus\mathbb Z\oplus\mathbb Z, &\mbox{if } n=0,1,\\
								0, &\mbox{otherwise}
								\end{cases}\\
C_n(\Delta^1)&=&\begin{cases}
								\mathbb Z\oplus\mathbb Z, &\mbox{if } n=0,\\
								\mathbb Z, &\mbox{if } n=1,\\
                                0, &\mbox{otherwise}
								\end{cases}\\
C_n(\partial\Delta^2)\otimes C_n(\Delta^1)&=&\begin{cases}
                                \oplus_{i=1}^6\mathbb Z, &\mbox{if } n=0,\\
                                \oplus_{i=1}^9\mathbb Z, &\mbox{if } n=1,\\
								\oplus_{i=1}^3\mathbb Z, &\mbox{if } n=2,\\
								0, &\mbox{otherwise}
								\end{cases}
\end{eqnarray*}
\end{illust}

There is yet one more product of simplicial complexes, viz., the cartesian product, defined in the literature (see \cite{EilZil}). We avoid its use by dealing with the product of geometric realizations (with the product topology). The homology of such (finite) product spaces is easily computed using triangulation. We first note that the Euler characteristic is multiplicative.
\begin{pro}\label{euprod}(see \cite[p.205,\,Ex.\,B.4]{Spa})
Let $\mathcal K$ and $\mathcal Q$ be any simplicial complexes. Then
\begin{equation*}\chi(|\mathcal K|\times|\mathcal Q|)=\chi(\mathcal K)\chi(\mathcal Q).\end{equation*}
\end{pro}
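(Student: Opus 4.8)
The plan is to combine the homotopy invariance of the Euler characteristic (Corollary~\ref{EULHTPY}) with the fact that the combinatorial Euler characteristic of Definition~\ref{Euler} is additive under unions of subcomplexes, and then induct on the number of faces of $\mathcal K$. First I would fix, for each pair $(\mathcal K,\mathcal Q)$, the cartesian product triangulation of $|\mathcal K|\times|\mathcal Q|$ (see~\cite{EilZil}); its key feature is functoriality in the pair, so that for all subcomplexes $\mathcal K'\subseteq\mathcal K$ and $\mathcal Q'\subseteq\mathcal Q$ the induced triangulation of $|\mathcal K'|\times|\mathcal Q'|$ is a subcomplex of the one on $|\mathcal K|\times|\mathcal Q|$. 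Since any two triangulations of a polyhedron are homeomorphic, hence homotopy equivalent, Corollary~\ref{EULHTPY} shows that the resulting integer, which I write $\chi(|\mathcal K|\times|\mathcal Q|)$, does not depend on the choice of triangulation. Next, directly from Definition~\ref{Euler}, if a simplicial complex $\mathcal L$ is the union of two subcomplexes $\mathcal L_1,\mathcal L_2$, then sorting faces according to which subcomplex contains them gives $v_n(\mathcal L)=v_n(\mathcal L_1)+v_n(\mathcal L_2)-v_n(\mathcal L_1\cap\mathcal L_2)$ for all $n$, whence $\chi(\mathcal L)=\chi(\mathcal L_1)+\chi(\mathcal L_2)-\chi(\mathcal L_1\cap\mathcal L_2)$. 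Applying this to the functorial triangulation above: if $\mathcal K=\mathcal A\cup\mathcal B$ with $\mathcal A,\mathcal B$ subcomplexes, then $|\mathcal A|\times|\mathcal Q|$ and $|\mathcal B|\times|\mathcal Q|$ cover $|\mathcal K|\times|\mathcal Q|$ and meet in $|\mathcal A\cap\mathcal B|\times|\mathcal Q|$, so
\begin{equation*}
\chi(|\mathcal K|\times|\mathcal Q|)=\chi(|\mathcal A|\times|\mathcal Q|)+\chi(|\mathcal B|\times|\mathcal Q|)-\chi(|\mathcal A\cap\mathcal B|\times|\mathcal Q|).
\end{equation*}

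I would then induct on the number $N$ of faces of $\mathcal K$, with $\mathcal Q$ arbitrary. If $N=0$ then $|\mathcal K|=\emptyset$, the product is empty, and both sides are $0$. If $\mathcal K$ has a single maximal face $\sigma$, then $\mathcal K=\mathbb P(\sigma)\setminus\{\emptyset\}$ is a full simplex, its realization is convex hence contractible, the projection $|\mathcal K|\times|\mathcal Q|\to|\mathcal Q|$ is a homotopy equivalence, and Corollary~\ref{EULHTPY} gives $\chi(|\mathcal K|\times|\mathcal Q|)=\chi(\mathcal Q)=1\cdot\chi(\mathcal Q)=\chi(\mathcal K)\chi(\mathcal Q)$ (recall $\chi(\Delta^k)=1$); in particular this covers the base case $\mathcal K=\Delta^0$. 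Otherwise $\mathcal K$ has at least two maximal faces; choose one, $\sigma$, let $\bar\sigma:=\mathbb P(\sigma)\setminus\{\emptyset\}$ be the full simplex it spans, and set $\mathcal A:=\mathcal K\setminus\{\sigma\}$ and $\mathcal B:=\bar\sigma$. Both are subcomplexes with strictly fewer than $N$ faces ($\bar\sigma\subsetneq\mathcal K$ because $\mathcal K\neq\bar\sigma$), and $\mathcal A\cap\mathcal B=\partial\bar\sigma$ is the boundary complex, also with fewer than $N$ faces. Invoking the displayed identity, then the inductive hypothesis for $\mathcal A,\mathcal B,\mathcal A\cap\mathcal B$, and finally additivity of $\chi$ for $\mathcal K=\mathcal A\cup\mathcal B$:
\begin{equation*}
\chi(|\mathcal K|\times|\mathcal Q|)=\bigl(\chi(\mathcal A)+\chi(\mathcal B)-\chi(\mathcal A\cap\mathcal B)\bigr)\chi(\mathcal Q)=\chi(\mathcal K)\chi(\mathcal Q),
\end{equation*}
which completes the induction.

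The one genuinely topological input, and the main obstacle, is the existence of the functorial triangulation of $|\mathcal K|\times|\mathcal Q|$ --- equivalently, that ``Euler characteristic of the product'' is additive under decomposing $\mathcal K$ into subcomplexes. This is exactly what the Eilenberg--Zilber cartesian product of simplicial complexes provides; I would quote it rather than reprove the staircase triangulation of a product of simplices. A shortcut bypassing triangulations uses the K\"unneth theorem: the Tor summands are torsion and so contribute nothing to ranks, giving $b_n(|\mathcal K|\times|\mathcal Q|)=\sum_{i+j=n}b_i(\mathcal K)b_j(\mathcal Q)$; then, by the Betti-number identity for $\chi$ recalled above,
\begin{equation*}
\chi(|\mathcal K|\times|\mathcal Q|)=\sum_{n}(-1)^n\sum_{i+j=n}b_i(\mathcal K)b_j(\mathcal Q)=\Bigl(\sum_i(-1)^ib_i(\mathcal K)\Bigr)\Bigl(\sum_j(-1)^jb_j(\mathcal Q)\Bigr)=\chi(\mathcal K)\chi(\mathcal Q).
\end{equation*}
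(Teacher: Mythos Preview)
Your proposal is correct; both the inductive argument and the K\"unneth shortcut are valid. Note, however, that the paper does not supply its own proof of this proposition: it is stated with a citation to Spanier \cite[p.205,\,Ex.\,B.4]{Spa} and treated as a standard fact from algebraic topology, so there is no ``paper's own proof'' to compare against. Your K\"unneth argument is the cleanest and closest in spirit to how the result is usually justified; the inductive argument via functorial triangulations is also fine but, as you acknowledge, leans on exactly the Eilenberg--Zilber machinery that the paper invokes in the surrounding discussion anyway.
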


A famous theorem of Eilenberg and Zilber (see \cite{EilZil}) connects the homologies of two semi-simplicial complexes (a term used in 1950 that includes the class of simplicial complexes) with that of their cartesian product. We state this result below using the cartesian product of their geometric realizations. More details can be found in \cite[\S 2.1]{Hatcher} and \cite[\S III.6]{FerPic}.

\begin{theorem}\label{EZT}(see. \cite[\S III.6.2]{FerPic})
Let $\mathcal K$ and $\mathcal Q$ be any two simplicial complexes. Then we have \\
$H_*(|\mathcal K|\times|\mathcal Q|)\cong H_*(C_*(\mathcal K)\otimes C_*(\mathcal Q))$.
\end{theorem}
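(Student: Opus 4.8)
The plan is to prove this by the method of \emph{acyclic models}, which simultaneously manufactures the comparison map and a chain-homotopy inverse for it. First fix arbitrary linear orderings of $\mathcal V(\mathcal K)$ and $\mathcal V(\mathcal Q)$; this turns $\mathcal K$ and $\mathcal Q$ into ordered simplicial complexes, hence simplicial sets, and lets one form the product simplicial set $\mathcal K\times\mathcal Q$. As $\mathcal K$ and $\mathcal Q$ are finite, $|\mathcal K|$ and $|\mathcal Q|$ are compact, so $|\mathcal K\times\mathcal Q|$ is canonically homeomorphic to $|\mathcal K|\times|\mathcal Q|$ and the simplicial chain complex $C_*(\mathcal K\times\mathcal Q)$ computes $H_*(|\mathcal K|\times|\mathcal Q|)$. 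Hence it suffices to produce a chain-homotopy equivalence $C_*(\mathcal K\times\mathcal Q)\simeq C_*(\mathcal K)\otimes C_*(\mathcal Q)$, since such an equivalence induces an isomorphism on homology.

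Consider the two functors $F$ and $G$, from the category of pairs of ordered simplicial complexes to that of non-negatively graded chain complexes, defined by $F(\mathcal K,\mathcal Q)=C_*(\mathcal K\times\mathcal Q)$ and $G(\mathcal K,\mathcal Q)=C_*(\mathcal K)\otimes C_*(\mathcal Q)$, and take as models the pairs of standard simplices $\{(\Delta^p,\Delta^q):p,q\geq 0\}$. I would first check that $F$ and $G$ are \emph{free on these models}: in degree $n$, $F$ is the free functor on the representable $\mathrm{Hom}((\Delta^n,\Delta^n),(-,-))$ — every $n$-simplex of $\mathcal K\times\mathcal Q$ being the image of the tautological one under a unique such morphism — while $G$ in degree $n$ is free on the pairs of simplices $(\alpha,\beta)$ with $\dim\alpha+\dim\beta=n$. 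Next I would verify \emph{acyclicity on the models in positive degrees}: $F(\Delta^p,\Delta^q)=C_*(\Delta^p\times\Delta^q)$ has the homology of a point since $|\Delta^p|\times|\Delta^q|$ is contractible, and $G(\Delta^p,\Delta^q)=C_*(\Delta^p)\otimes C_*(\Delta^q)$ does too, by the algebraic K\"unneth theorem together with the fact that each $C_*(\Delta^k)$ has homology $\mathbb Z$ concentrated in degree $0$ (so all Tor terms vanish). Finally, $F$ and $G$ agree canonically in degree $0$, each being the free abelian group on $\mathcal V(\mathcal K)\times\mathcal V(\mathcal Q)$, so the degree-$0$ identity descends to a natural isomorphism $H_0F\cong H_0G$.

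Granting these hypotheses, the acyclic models theorem supplies natural chain maps $\Phi\colon F\to G$ and $\Psi\colon G\to F$ extending the degree-$0$ identity. Applying its uniqueness clause to the two natural self-maps $\Psi\Phi$ and $\mathrm{id}_F$ of $F$ (which is free on the models and acyclic on them), and likewise to $\Phi\Psi$ and $\mathrm{id}_G$, one obtains natural chain homotopies $\Psi\Phi\simeq\mathrm{id}_F$ and $\Phi\Psi\simeq\mathrm{id}_G$. Thus $\Phi$ is a chain-homotopy equivalence, so $H_*(F(\mathcal K,\mathcal Q))\cong H_*(G(\mathcal K,\mathcal Q))$, which is the assertion. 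If explicit maps are preferred, one may take $\Phi$ to be the Alexander--Whitney map, sending an $n$-simplex $(\sigma,\tau)$ to $\sum_{p+q=n}\,{}_p\sigma\otimes\tau_q$ (front $p$-face tensored with back $q$-face), and $\Psi$ the shuffle map $a\otimes b\mapsto\sum_{(\mu,\nu)}\mathrm{sgn}(\mu,\nu)\,(s_\nu a\times s_\mu b)$ over $(p,q)$-shuffles $(\mu,\nu)$; one then checks $\Phi\Psi=\mathrm{id}$ directly, a short combinatorial computation.

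The main obstacle is the relation $\Psi\Phi\simeq\mathrm{id}_F$: unlike $\Phi\Psi=\mathrm{id}$, no clean closed form for the required homotopy is available, so one essentially has to appeal to acyclic models, and the real substance of the argument then lies in the verifications behind that theorem — that $F$ and $G$ genuinely are free on the simplex models and that every $|\Delta^p|\times|\Delta^q|$ is acyclic (contractibility on one side, K\"unneth with vanishing Tor on the other). The remaining ingredients — choosing the orderings, the homeomorphism $|\mathcal K\times\mathcal Q|\cong|\mathcal K|\times|\mathcal Q|$, and passing from a chain-homotopy equivalence to a homology isomorphism — are routine.
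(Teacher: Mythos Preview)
The paper does not supply its own proof of this theorem: it is quoted as a background result with a reference to \cite[\S III.6.2]{FerPic}, and is used only as an input to the proof of Lemma~\ref{ecdpl}. Your argument via acyclic models is the standard proof of the Eilenberg--Zilber theorem and is correct; there is nothing in the paper to compare it against beyond the citation.
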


Furthermore, Eilenberg and Zilber state the following corollary of the previous theorem in \cite{EilZil}.
\begin{cor}\label{simpprod}
Let $\mathcal K$ and $\mathcal Q$ be any two simplicial complexes. Then\\
$H_*(\mathcal K\vartriangle\mathcal Q)\cong H_*(C_*(\mathcal K)\otimes C_*(\mathcal Q))$.
\end{cor}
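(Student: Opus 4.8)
The plan is to reduce the statement to Theorem~\ref{EZT}, which already gives $H_*(|\mathcal K|\times|\mathcal Q|)\cong H_*(C_*(\mathcal K)\otimes C_*(\mathcal Q))$, so that it suffices to produce an isomorphism $H_*(\mathcal K\vartriangle\mathcal Q)\cong H_*(|\mathcal K|\times|\mathcal Q|)$. One should note at the outset that $|\mathcal K\vartriangle\mathcal Q|$ is \emph{not} in general homeomorphic to $|\mathcal K|\times|\mathcal Q|$ --- for instance $\Delta^1\vartriangle\Delta^1$ is a solid $3$-simplex rather than a square --- so the argument must be homological, not geometric: the simplicial product over-triangulates the product space, and the task is to show this does no harm to homology.

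First I would fix arbitrary linear orders on $\mathcal V(\mathcal K)$ and $\mathcal V(\mathcal Q)$ and introduce the classical ``staircase'' product triangulation $\mathcal K\times^{\mathrm{ord}}\mathcal Q$: its faces are the chains $\{(a_0,b_0)<\dots<(a_k,b_k)\}$ that are strictly increasing in the product order and for which $\{a_0,\dots,a_k\}\in\mathcal K$ and $\{b_0,\dots,b_k\}\in\mathcal Q$. It is a standard fact (this is exactly the prism decomposition of $|\Delta^p|\times|\Delta^q|$, assembled over $\mathcal K\times\mathcal Q$; see \cite{EilZil}) that $|\mathcal K\times^{\mathrm{ord}}\mathcal Q|$ is homeomorphic to $|\mathcal K|\times|\mathcal Q|$, so $H_*(\mathcal K\times^{\mathrm{ord}}\mathcal Q)\cong H_*(|\mathcal K|\times|\mathcal Q|)$. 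Since $\pi_1$ and $\pi_2$ of such a chain are precisely $\{a_0,\dots,a_k\}$ and $\{b_0,\dots,b_k\}$, every face of $\mathcal K\times^{\mathrm{ord}}\mathcal Q$ is a face of $\mathcal K\vartriangle\mathcal Q$ on the same vertex set; thus there is a subcomplex inclusion $\iota\colon\mathcal K\times^{\mathrm{ord}}\mathcal Q\hookrightarrow\mathcal K\vartriangle\mathcal Q$, and the whole problem becomes: show $\iota$ induces an isomorphism on homology.

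For this I would invoke the acyclic carrier theorem. Given a face $F$ of $\mathcal K\vartriangle\mathcal Q$, set $\sigma_F:=\pi_1(F)\in\mathcal K$ and $\tau_F:=\pi_2(F)\in\mathcal Q$; since $\mathcal K$ and $\mathcal Q$ are downward closed, the full simplex $\Delta_F$ on the vertex set $\sigma_F\times\tau_F$ is a contractible subcomplex of $\mathcal K\vartriangle\mathcal Q$ containing $F$, and its own staircase triangulation $\nabla_F$ (the ordered product of the full complexes on $\sigma_F$ and on $\tau_F$) is a contractible subcomplex of both $\mathcal K\times^{\mathrm{ord}}\mathcal Q$ and $\Delta_F$. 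The assignment $F\mapsto\nabla_F$ is monotone in $F$ and hence an acyclic carrier from $\mathcal K\vartriangle\mathcal Q$ to $\mathcal K\times^{\mathrm{ord}}\mathcal Q$, so there is an augmentation-preserving chain map $\phi\colon C_*(\mathcal K\vartriangle\mathcal Q)\to C_*(\mathcal K\times^{\mathrm{ord}}\mathcal Q)$ carried by it. Now $\iota\phi$ and $\mathrm{id}$ are both carried by the acyclic carrier $F\mapsto\Delta_F$ on $\mathcal K\vartriangle\mathcal Q$, and $\phi\iota$ and $\mathrm{id}$ are both carried by the acyclic carrier $G\mapsto\nabla_G$ on $\mathcal K\times^{\mathrm{ord}}\mathcal Q$; by the uniqueness clause of the acyclic carrier theorem, $\iota\phi\simeq\mathrm{id}$ and $\phi\iota\simeq\mathrm{id}$. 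Hence $\iota$ is a chain homotopy equivalence, and $H_*(\mathcal K\vartriangle\mathcal Q)\cong H_*(\mathcal K\times^{\mathrm{ord}}\mathcal Q)\cong H_*(|\mathcal K|\times|\mathcal Q|)\cong H_*(C_*(\mathcal K)\otimes C_*(\mathcal Q))$, the last isomorphism being Theorem~\ref{EZT}.

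The main obstacle is the middle step: checking that $\mathcal K\times^{\mathrm{ord}}\mathcal Q$ genuinely triangulates $|\mathcal K|\times|\mathcal Q|$ and that $F\mapsto\Delta_F$ and $G\mapsto\nabla_G$ really carry the relevant composites --- routine but fiddly bookkeeping that I would lean on a reference for where possible. A cleaner-to-state but heavier alternative avoids the choice of orders entirely: the map $\bigl(|\pi_1|,|\pi_2|\bigr)\colon|\mathcal K\vartriangle\mathcal Q|\to|\mathcal K|\times|\mathcal Q|$ is a surjection each of whose point-preimages is a nonempty convex transportation polytope, hence contractible, so the Vietoris--Begle mapping theorem forces an isomorphism on homology; this route trades the acyclic carrier bookkeeping for a stronger black box.
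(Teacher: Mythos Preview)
Your argument is correct, and in fact you have done considerably more than the paper does: the paper gives no proof of this corollary at all --- it simply attributes the result to Eilenberg and Zilber \cite{EilZil} and moves on to an illustration. Your acyclic-carrier argument comparing $\mathcal K\vartriangle\mathcal Q$ with the ordered (staircase) product $\mathcal K\times^{\mathrm{ord}}\mathcal Q$ is essentially the original argument from that reference, so you are supplying precisely the proof the paper chose to outsource. The key observation that for any face $F$ the full simplex on $\pi_1(F)\times\pi_2(F)$ lies inside $\mathcal K\vartriangle\mathcal Q$ is what makes the carriers acyclic, and you have identified this correctly. Your Vietoris--Begle alternative is also sound; either route would serve as a legitimate expansion of what the paper leaves as a citation.
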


\begin{illust}
We continue the example in \ref{tensor}. The computation of the boundary operators yields
\begin{equation*}
H_n(C_*(\partial\Delta^2)\otimes C_*(\Delta^1))=\begin{cases}
                \mathbb Z, &\mbox{if } n=0,1,\\
								0, &\mbox{otherwise}
								\end{cases}
\end{equation*}
The space $|\partial\Delta^2|\times|\Delta^1|$ is a cylinder which is homotopy equivalent to $S^1$. Hence $H_n(|\partial\Delta^2|\times|\Delta^1|)=\mathbb Z$ for $n=0,1$ and is zero for other values of $n$. This completes the illustration of theorem \ref{EZT}.

Furthermore the complex $\partial\Delta^2\vartriangle\Delta^1$ is the union of three copies of $\Delta^3$ each of which shares exactly one edge (i.e. a copy of $\Delta^1$) with every other copy and these three edges are pairwise disjoint. It can be easily see that this complex (i.e. its geometric realization) is homotopy equivalent to the circle and hence the conclusions of the corollary \ref{simpprod} hold.
\end{illust}

\begin{proof} (Lemma \ref{ecdpl})
We first observe that there is an embedding of simplicial complexes $\iota_1:\mathcal K\vartriangle(\Delta^{|\mathcal V(\mathcal Q)|-1})\rightarrow\mathcal K\boxtimes\mathcal Q$ induced by some fixed enumeration of $\mathcal V(\mathcal Q)$. Similarly there is an embedding $\iota_2:(\Delta^{|\mathcal V(\mathcal K)|-1})\vartriangle\mathcal Q\rightarrow\mathcal K\boxtimes\mathcal Q$ induced by some fixed enumeration of $\mathcal V(\mathcal K)$. Furthermore, the intersection $\iota_1(\mathcal K\vartriangle(\Delta^{|\mathcal V(\mathcal Q)|-1}))\cap\iota_2((\Delta^{|\mathcal V(\mathcal K)|-1})\vartriangle\mathcal Q)$ is precisely the complex $\mathcal K\vartriangle\mathcal Q$.

This gives us, using the counting definition of the Euler characteristics, that the identity
\begin{equation}\label{ecintermediate}
\chi(\mathcal K\boxtimes\mathcal Q)=\chi(\mathcal K\vartriangle(\Delta^{|\mathcal V(\mathcal Q)|-1}))+\chi((\Delta^{|\mathcal V(\mathcal K)|-1})\vartriangle\mathcal Q)-\chi(\mathcal K\vartriangle\mathcal Q)
\end{equation}
holds.

If we can prove that $\chi(\mathcal K\vartriangle\mathcal Q)=\chi(\mathcal K)\chi(\mathcal Q)$ for any simplicial complexes $\mathcal K$ and $\mathcal Q$, then (\ref{ecintermediate}) will yield (\ref{ecdp}) since $\chi(\Delta^k)=1$ for each $k\geq 0$.

Now we have
$H_*(\mathcal K\vartriangle\mathcal Q)\cong H_*(C_*(\mathcal K)\otimes C_*(\mathcal Q))\cong H_*(C_*(|\mathcal K|\times|\mathcal Q|))$, where the first isomorphism is by \ref{simpprod} and the second by \ref{EZT}.

Hence we have $\chi(\mathcal K\vartriangle\mathcal Q)=\chi(|\mathcal K|\times|\mathcal Q|)=\chi(\mathcal K)\chi(\mathcal Q)$ by \ref{euprod} as required. This completes the proof.
\end{proof}

\subsection{Model theory of modules}\label{MTM}

We introduce the terminology and some basic results in the model theory of modules in this section. A detailed exposition can be found in \cite{PreBk}. Instead of working with formulas all the time, we fix a structure and work with the definable subsets of finite cartesian powers of that structure.

Let $\mathcal{R}$ be a fixed ring with unity. Then every right $\mathcal{R}$-module $M$ is a structure for the first order language $L_\mathcal{R}=\langle 0,+,-,m_r:r\in\mathcal{R}\rangle$, where each $m_r$ is a unary function symbol representing the action of right multiplication by the element $r$. When we are working in a fixed module $M$, we usually write the element $m_r(a)$ in formulas as $ar$ for each $a\in M$.

First we note the following result of Perera which states that the Grothendieck ring of a module is an invariant of its theory. A proof of this proposition can be found at the end of section \ref{gencase} as a corollary of theorem \ref{FINALgeneral}.

\begin{pro}(see \cite[Corollary\,5.3.2]{Perera})\label{eleequivmod}
Let $M$ and $N$ be two right $\mathcal R$-modules such that $M\equiv N$, then $K_0(M)\cong K_0(N)$.
\end{pro}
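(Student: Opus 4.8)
The plan is to prove the stronger statement that the semiring $\widetilde{\mathrm{Def}}(P)$ of a module $P$ depends, up to isomorphism, only on the complete theory of $P$; since $K_0$ is a functor $\mathbf{CSemiRing}\to\mathbf{CRing}$ (corollary \ref{GrRngAdj}), any such semiring isomorphism will induce a ring isomorphism. Because $M\equiv N$, a routine compactness argument (elementary amalgamation over $\emptyset$) produces a module $P$ together with elementary embeddings $M\preceq P$ and $N\preceq P$, so it is enough to treat the case of a single elementary extension $M\preceq P$ and to show that the \emph{canonical lift} $\Lambda\colon\widetilde{\mathrm{Def}}(M)\to\widetilde{\mathrm{Def}}(P)$ is an isomorphism; applying this to $M\preceq P$ and to $N\preceq P$ gives $K_0(M)\cong K_0(P)\cong K_0(N)$. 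To define $\Lambda$: if $X\subseteq M^n$ is cut out by $\theta(\bar x,\bar a)$ with $\bar a\in M^k$, set $X^P:=\theta(P^n,\bar a)$. This does not depend on the chosen defining formula (if $\theta(M^n,\bar a)=\theta'(M^n,\bar a')$ then $M$, hence $P$, satisfies $\forall\bar x\,(\theta(\bar x,\bar a)\leftrightarrow\theta'(\bar x,\bar a'))$), it commutes with $\emptyset$, singletons, disjoint unions and products, and it carries the graph of a definable bijection $X\to Y$ to the graph of a definable bijection $X^P\to Y^P$ — because ``$\theta_G(\cdot,\cdot,\bar a_G)$ is the graph of a bijection from $\theta_X(\cdot,\bar a_X)$ to $\theta_Y(\cdot,\bar a_Y)$'' is a first-order property of the displayed parameters and so transfers up to $P$. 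Hence $\Lambda([X]):=[X^P]$ is a well-defined semiring homomorphism. Injectivity of $\Lambda$ is again pure elementarity: if $X,Y\subseteq M^n$ with $[X^P]=[Y^P]$, a witnessing definable bijection in $P$ has a graph of the form $\theta_G(\cdot,\cdot,\bar c)$ with $\bar c\in P$, so $P$ satisfies $\exists\bar y$ ``$\theta_G(\cdot,\cdot,\bar y)$ is the graph of a bijection from $\theta_X(\cdot,\bar a_X)$ to $\theta_Y(\cdot,\bar a_Y)$''; this sentence has parameters only from $M$, so it already holds in $M$, and $[X]=[Y]$.

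The real work is the surjectivity of $\Lambda$, and this is exactly where the special nature of modules enters, through the Baur--Monk theorem \ref{PPET}. Given $W\subseteq P^n$ definable, first write it, by \ref{PPET}, as a boolean combination of cosets $\bar d_i+\phi_i(P^n)$ of pp-definable subgroups. Two observations make $W$ transportable to $M$. First, all pp-pair indices $[\phi(P^n):\psi(P^n)]$ for the relevant $\phi_i$ and for their (pp-definable) sums and intersections are finite-or-infinite invariants of $T$, hence agree with the corresponding indices computed in $M$. Second, the mutual containments, equalities and disjointnesses among the cosets are governed by finitely many conditions of the form ``$\bar d_i-\bar d_j\in\rho(P^n)$'' with $\rho$ pp, so ``there exist parameters realizing this finite pp-incidence pattern'' is a single $L_\mathcal R$-sentence true in $P$ and therefore true in $M$. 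Choosing such parameters $\bar d_i'\in M^n$ and letting $X\subseteq M^n$ be the boolean combination of the $\bar d_i'+\phi_i(M^n)$ assembled by the same recipe, one has that $X^P$ is the boolean combination of the $\bar d_i'+\phi_i(P^n)$ (lifting commutes with a boolean recipe in a fixed power), and it remains to prove that $[X^P]=[W]$ in $\widetilde{\mathrm{Def}}(P)$. This reduces everything to the following lemma: a boolean combination of cosets of pp-definable subgroups is determined, up to definable bijection, by the pp-subgroups occurring in it, the pp-pair indices among them, and the pp-incidence pattern of the chosen representatives.

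I expect this last lemma to be the main obstacle. Passing from a punctured coset to a punctured subgroup is an easy translation, but actually exhibiting a definable bijection between two such configurations carrying the same data — especially when the subgroups involved have infinite index — requires a careful, uniform bookkeeping of how the pieces fit together, and this is precisely what the paper's apparatus of pp-sets, convex sets, blocks and cells, together with the local and global characteristics, is built to supply. For this reason the efficient route, and the one the excerpt indicates the paper takes, is to deduce the proposition from the main computation rather than to prove this lemma in isolation: Theorem \ref{FINALgeneral} presents $K_0(M)$ as the monoid ring $\mathbb Z[\mathcal X]$ modulo the invariants ideal, where $\mathcal X$ is the monoid of (definable-isomorphism classes of) pp-definable subgroups. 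Since definable isomorphism of pp-definable subgroups and all pp-pair indices are decided by $T$ (again by \ref{PPET}, together with the elementarity argument of the first paragraph), both $\mathcal X$ and the invariants ideal — hence the entire presentation — depend only on $T$, so $M\equiv N$ forces $K_0(M)\cong K_0(N)$ at once.
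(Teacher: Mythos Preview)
Your final paragraph is exactly the paper's proof: invoke Theorem~\ref{FINALgeneral} to present $K_0(M)$ as $\mathbb Z[\overline{\mathcal X}^*]/\mathcal J$, then observe that both the colour monoid $\overline{\mathcal X}^*$ and the invariants ideal $\mathcal J$ are determined by the lattice of $pp$-definable subgroups together with the invariants $\mathrm{Inv}(M;\phi,\psi)$, all of which depend only on $Th(M)$. The paper's two-line proof cites \cite[Corollary\,2.18]{PreBk} for precisely this and is done.

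The earlier part of your proposal --- attempting a direct isomorphism $\widetilde{\mathrm{Def}}(M)\cong\widetilde{\mathrm{Def}}(P)$ for an elementary extension via lifting and transfer --- is a genuinely different route, and you correctly diagnose its sticking point: surjectivity of the lift reduces to showing that a boolean combination of cosets is determined up to definable bijection by the underlying $pp$-subgroups, their indices, and the coset incidence pattern. That lemma is not obvious in isolation (it is more or less equivalent to the injectivity statement in Lemma~\ref{INJEVgeneral}), so your decision to fall back on Theorem~\ref{FINALgeneral} is the right call and matches the paper.
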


Let us fix a right $\mathcal{R}$-module $M$. Then every definable subset of $M^n$ for any $n\geq 1$ can be expressed in terms of certain fundamental definable subsets of $M^n$. In order to state this partial quantifier elimination result, we first define the formulas which define these fundamental subsets.

\begin{definition}
A \textbf{positive primitive formula} (\textbf{pp-formula} for short) is a formula in the language $L_\mathcal{R}$ which is equivalent to one of the form
\begin{equation*}
\phi(x_1,x_2,\hdots,x_n)=\exists y_1\exists y_2\hdots\exists y_m\bigwedge_{i=1}^t\left(\sum_{j=1}^n x_j r_{ij}+\sum_{k=1}^m y_ks_{ik}+c_i=0\right),
\end{equation*}
where $r_{ij},s_{ik}\in\mathcal R$ and the $c_i$ are parameters from $M$.
\end{definition}

A subset of $M^n$ which is defined by a $pp$-formula (with parameters) will be called a $pp$-set. If a subgroup of $M^n$ is $pp$-definable, then its cosets are also $pp$-definable. The following lemma is well known and a proof can be found in \cite[Corollary\,2.2]{PreBk}.

\begin{lemma}
Every parameter-free $pp$-formula $\phi(\overline x)$ defines a subgroup of $M^n$, where $n$ is the length of $\overline x$. If $\phi(\overline x)$ contains parameters from $M$, then it defines either the empty set or a coset of a $pp$-definable subgroup of $M^n$. Furthermore, the conjunction of two $pp$-formulas is (equivalent to) a $pp$-formula.
\end{lemma}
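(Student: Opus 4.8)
The plan is to reduce all three assertions to one elementary observation: a homogeneous system of $\mathcal R$-linear equations defines a subgroup, while an inhomogeneous one defines a coset or the empty set; the rest is bookkeeping with homomorphic images and renaming of bound variables.

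For the first assertion I would take a parameter-free $pp$-formula in standard form, $\phi(\overline x)=\exists y_1\cdots\exists y_m\,\bigwedge_{i=1}^t\bigl(\sum_j x_j r_{ij}+\sum_k y_k s_{ik}=0\bigr)$. The quantifier-free matrix $\theta(\overline x,\overline y)$ defines exactly the kernel $H\leq M^{n+m}$ of the $\mathcal R$-linear (hence group) homomorphism $M^{n+m}\to M^t$ sending $(\overline x,\overline y)$ to the tuple of left-hand sides, so $H$ is a subgroup. Then $\phi(M^n)$ is the image of $H$ under the coordinate projection $M^{n+m}\to M^n$, and the image of a subgroup under a group homomorphism is a subgroup. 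Concretely, $\overline 0$ is witnessed by $\overline y=\overline 0$; if $\overline a,\overline b$ are witnessed by $\overline c,\overline d$ then $\overline a+\overline b$ is witnessed by $\overline c+\overline d$ and $-\overline a$ by $-\overline c$, using that the equations are homogeneous.

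For the second assertion, suppose $\phi(\overline x)=\exists\overline y\,\bigwedge_i\bigl(\sum_j x_j r_{ij}+\sum_k y_k s_{ik}+c_i=0\bigr)$ has parameters and $\phi(M^n)\neq\emptyset$; fix $\overline a_0\in\phi(M^n)$ with a witness $\overline c_0$, and let $\psi$ be the parameter-free $pp$-formula obtained by deleting the constants $c_i$. By the first part $\psi(M^n)$ is a $pp$-definable subgroup, and subtracting the fixed solution $(\overline a_0,\overline c_0)$ from an arbitrary solution $(\overline a,\overline y)$ of $\phi$ (and adding it back) gives mutually inverse maps between the two solution sets witnessing $\phi(M^n)=\overline a_0+\psi(M^n)$; again the constants cancel by homogeneity. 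Thus $\phi(M^n)$ is either empty or a coset of $\psi(M^n)$.

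For the last assertion I would first rename the existentially quantified variables of two given $pp$-formulas so that the two existential blocks are disjoint from each other and from $\overline x$, writing them as $\exists\overline y_1\,\theta_1(\overline x,\overline y_1)$ and $\exists\overline y_2\,\theta_2(\overline x,\overline y_2)$ with $\theta_1,\theta_2$ conjunctions of $\mathcal R$-linear equations. Their conjunction is then logically equivalent to $\exists\overline y_1\exists\overline y_2\,(\theta_1\wedge\theta_2)$, because $\overline y_1$ does not occur in $\theta_2$ nor $\overline y_2$ in $\theta_1$; and $\theta_1\wedge\theta_2$ is again a conjunction of $\mathcal R$-linear equations, so the result is equivalent to a formula in the required form. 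None of the steps is a genuine obstacle; the only places needing a little care are ensuring disjointness of the bound variables before pulling the existential quantifiers out of the conjunction, and producing the base point $\overline a_0$ in the coset argument, which is precisely why the coset conclusion is replaced by "empty" exactly when $\phi(M^n)=\emptyset$.
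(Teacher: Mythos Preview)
Your proof is correct and is the standard argument. The paper does not give its own proof of this lemma; it simply remarks that the result is well known and cites \cite[Corollary~2.2]{PreBk}, and your reasoning is essentially what one finds there.
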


Let $\mathcal{L}_n$ denote the meet-semilattice of all $pp$-subsets of $M^n$ ordered by the inclusion relation $\subseteq$. We will use the notation $\mathcal{L}_n(M_\mathcal{R})$, specifying the module, when we work with more than one module at a time.

\begin{definition}
Let $M$ be a right $\mathcal R$-module and let $A,B\in\mathcal L_n$ be subgroups. We define the invariant $\mathrm{Inv}(M,A,B)$ to be the index $[A:A\cap B]$ if this if finite or $\infty$ otherwise.
\end{definition}

An \textbf{invariants condition} is a statement that a given invariant is greater than or equal to or less than a certain number. These invariant conditions can be expressed as sentences in $L_\mathcal R$. An \textbf{invariants statement} is a finite boolean combination of invariants conditions.

We are now ready to state the promised fundamental theorem of the model theory of modules.
\begin{theorem}(see \cite{Baur})\label{PPET}
Let $T$ be the theory of right $\mathcal R$-modules and $\phi(\overline x)$ be an $L_\mathcal R$ formula (possibly with parameters). Then we have
\begin{equation*}
T\vDash \forall \overline x (\phi(\overline x)\leftrightarrow\bigvee_{i=1}^m\left(\psi_i(\overline x)\wedge\bigwedge_{j=1}^{l_i}\neg\chi_{ij}(\overline x)\right)\wedge I),
\end{equation*}
where $I$ is an invariants statement and $\psi_i(\overline x),\chi_{ij}(\overline x)$ are $pp$-formulas.
\end{theorem}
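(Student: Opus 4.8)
The plan is to prove the equivalent statement that every $L_{\mathcal R}$-formula is, modulo $T$, a boolean combination of $pp$-formulas and invariants sentences; the displayed disjunctive form is then just a normal form for such a combination. I would argue by induction on formula complexity. Atomic formulas are already (quantifier-free) $pp$-formulas, and the class of formulas of the required shape is obviously closed under the boolean connectives, so the only substantive case is that of a formula $\exists y\,\theta(\overline x,y)$ with $\theta$ inductively a boolean combination of $pp$-formulas in $(\overline x,y)$ and invariants sentences. Writing the $pp$-part of $\theta$ in disjunctive normal form, distributing $\exists y$ over the disjunction, and pulling the invariants sentences (which do not involve $y$) outside the quantifier, it suffices to treat a single formula
\begin{equation*}
\exists y\Bigl(\phi(\overline x,y)\wedge\bigwedge_{i=1}^{k}\neg\psi_i(\overline x,y)\Bigr),
\end{equation*}
with $\phi,\psi_i$ $pp$-formulas; replacing each $\psi_i$ by the $pp$-formula $\psi_i\wedge\phi$, I may and do assume $\psi_i\Rightarrow\phi$ for all $i$.

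Next I would switch to a geometric description of the fibres. For a parameter tuple $\overline a$ the set $\phi(\overline a,M):=\{m\in M:M\vDash\phi(\overline a,m)\}$ is either empty or a coset of the fixed subgroup $H\le M$ obtained from $\phi$ by deleting its parameters and setting the $\overline x$-variables equal to $0$; likewise each nonempty $\psi_i(\overline a,M)$ is a coset of a fixed subgroup $H_i\le H$. Hence the formula above holds at $\overline a$ exactly when $\phi(\overline a,M)\ne\emptyset$ --- which is the $pp$-condition $\exists y\,\phi(\overline x,y)$ --- and $\phi(\overline a,M)\not\subseteq\bigcup_{i=1}^{k}\psi_i(\overline a,M)$, so the whole problem reduces to expressing the covering condition $\phi(\overline a,M)\subseteq\bigcup_i\psi_i(\overline a,M)$ in the desired form.

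The heart of the argument is a covering criterion. For a nonempty $J\subseteq\{1,\dots,k\}$ let $\varepsilon_J(\overline x)$ be the $pp$-formula $\exists y\bigwedge_{i\in J}\psi_i(\overline x,y)$, which holds at $\overline a$ precisely when $\bigcap_{i\in J}\psi_i(\overline a,M)\ne\emptyset$; in that case the intersection is a coset of $\bigcap_{i\in J}H_i$, and the index $d_J:=[H:\bigcap_{i\in J}H_i]=\mathrm{Inv}(M,H,\bigcap_{i\in J}H_i)$ enters. Using B.~H.~Neumann's covering lemma --- a coset covered by finitely many cosets of subgroups is already covered by those of finite index among them --- to discard the cosets of infinite index, and then applying the inclusion--exclusion principle to the finitely additive normalised index measure $\mu(D):=1/[H:H_D]$ on cosets $D$ of finite-index subgroups $H_D\le H$ (so that, after translating $\phi(\overline a,M)$ onto $H$, a finite union of such cosets equals $H$ iff its $\mu$-measure is $1$), one obtains
\begin{equation*}
\phi(\overline a,M)\subseteq\bigcup_{i=1}^{k}\psi_i(\overline a,M)\ \Longleftrightarrow\ \sum_{\emptyset\ne J\subseteq\{1,\dots,k\}}(-1)^{|J|+1}\,\frac{\varepsilon_J(\overline a)}{d_J}=1,
\end{equation*}
an identity in $\mathbb Q$ in which $\varepsilon_J(\overline a)\in\{0,1\}$ denotes the truth value and the terms with $d_J=\infty$ contribute $0$. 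In any fixed model $M\vDash T$ the finitely many indices $d_J$ have definite values, recorded by a suitable invariants statement $I$ (it is enough to know each $d_J$ exactly up to a large enough cutoff determined by the $\psi_i$); once $I$ is fixed, the displayed equation becomes a purely boolean condition on the truth values $\varepsilon_J(\overline a)$, hence a boolean combination of the $pp$-formulas $\varepsilon_J(\overline x)$. Thus $\exists y\bigl(\phi\wedge\bigwedge_i\neg\psi_i\bigr)$ is $T$-equivalent to $\exists y\,\phi(\overline x,y)$ together with the negation of that boolean combination of $pp$-formulas, under the invariants conditions packaged in $I$. Feeding this back into the induction and collecting the disjuncts and cases into disjunctive normal form yields a formula of the stated shape.

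I expect the covering criterion of the third paragraph to be the main obstacle: one must combine Neumann's lemma with inclusion--exclusion correctly in the presence of a mixture of finite- and infinite-index subgroups, and --- the conceptual point --- notice that the only ingredient of the covering condition that depends on the parameter tuple $\overline a$ is the $pp$-data recording which sub-intersections $\bigcap_{i\in J}\psi_i(\overline a,M)$ are nonempty, whereas all the index data $[H:\bigcap_{i\in J}H_i]$ is fixed in a given model and is therefore absorbed into the invariants statement $I$. The surrounding steps --- prenex and disjunctive-normal-form manipulations, the coset description of the fibres, and the final normal-form bookkeeping --- are routine.
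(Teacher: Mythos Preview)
The paper does not prove this theorem; it is stated with a citation to Baur and used as a black box. So there is no ``paper's own proof'' to compare against.

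Your proposal is essentially the standard Baur--Monk argument and is correct. The induction on complexity with the existential step as the only nontrivial case, the reduction to a single disjunct $\exists y\bigl(\phi\wedge\bigwedge_i\neg\psi_i\bigr)$, the coset description of the fibres, and the covering criterion via Neumann's lemma plus inclusion--exclusion on the normalised index measure are exactly the ingredients of the usual proof. Your key observation --- that the indices $d_J=[H:\bigcap_{i\in J}H_i]$ depend only on the model (hence are fixed by an invariants statement) while the $pp$-formulas $\varepsilon_J$ record the parameter-dependent nonemptiness data --- is precisely the conceptual core of the argument. One small point worth tightening: the implication ``measure $=1$ $\Rightarrow$ covering'' uses that, after passing to the common refinement $K=\bigcap_i H_i$ (of finite index once the infinite-index pieces are discarded), the measure simply counts $K$-cosets; you gesture at this but could make it explicit. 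Otherwise the argument is complete.
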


We may assume that $\chi_{ij}(M)\subseteq\psi_i(M)$ for each value of $i$ and $j$, otherwise we redefine $\chi_{ij}$ as $\chi_{ij}\wedge\psi_i$. When we work in a complete theory, the invariants statements will vanish and hence we get the following form.
\begin{theorem}
For each $n\geq 1$, every definable subset of $M^n$ can be expressed as a finite boolean combination of $pp$-subsets of $M^n$.
\end{theorem}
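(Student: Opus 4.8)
The plan is to deduce this statement directly from the Baur--Monk elimination (Theorem \ref{PPET}), by exploiting the fact that once the module $M$ is fixed we are reasoning inside its \emph{complete} theory $\mathrm{Th}(M)$, which extends the theory $T$ of all right $\mathcal R$-modules. First I would take a definable subset $D\subseteq M^n$ together with an $L_\mathcal R$-formula $\phi(\overline x)$ (with parameters from $M$) defining it, and apply Theorem \ref{PPET} to $\phi$. This produces $pp$-formulas $\psi_i(\overline x)$ and $\chi_{ij}(\overline x)$ and an invariants statement $I$ such that
\[
T\vDash\forall\overline x\left(\phi(\overline x)\leftrightarrow\bigvee_{i=1}^m\Big(\psi_i(\overline x)\wedge\bigwedge_{j=1}^{l_i}\neg\chi_{ij}(\overline x)\Big)\wedge I\right).
\]
Here $I$ is a parameter-free $L_\mathcal R$-sentence, since each invariant $\mathrm{Inv}(M,A,B)$ refers only to parameter-free $pp$-definable subgroups, so $I$ is a sentence that $\mathrm{Th}(M)$ must decide.

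Next I would split into the two cases according to the truth value of $I$ in $M$. If $M\vDash I$, then the displayed equivalence, read off in $M$, gives $D=\phi(M^n)=\bigcup_{i=1}^m\big(\psi_i(M^n)\cap\bigcap_{j=1}^{l_i}(M^n\setminus\chi_{ij}(M^n))\big)$, which is precisely a finite boolean combination of $pp$-sets. If instead $M\vDash\neg I$, then the conjunct $I$ fails in $M$, so the right-hand side of the equivalence defines $\emptyset$; hence $\phi(M^n)=\emptyset$, and $\emptyset$ is itself a (trivial) boolean combination of $pp$-sets, for instance $\emptyset=\theta(M^n)\setminus\theta(M^n)$ for any $pp$-formula $\theta$ (note $M^n$ is $pp$-definable, e.g.\ by the empty conjunction). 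In either case $D$ has the asserted form.

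There is essentially no obstacle here: the result is the routine passage from the uniform Baur--Monk elimination over the incomplete theory of all $\mathcal R$-modules to a fixed complete theory, in which every invariants statement acquires a definite truth value and hence drops out of the elimination. The only point meriting a word of care is the degenerate case $M\vDash\neg I$, which forces $\phi$ to define the empty set; this is exactly why the conclusion is phrased in terms of boolean combinations rather than the purely positive fragment, and it is dispatched as above. I would therefore present the argument in just a few lines as a corollary of Theorem \ref{PPET}.
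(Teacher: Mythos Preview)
Your proposal is correct and follows precisely the approach the paper indicates: the paper's entire justification for this theorem is the one-line remark that ``when we work in a complete theory, the invariants statements will vanish,'' and you have simply spelled out why---namely that $I$ is a sentence decided by $\mathrm{Th}(M)$, so either it holds and drops out of the conjunction, or it fails and forces $\phi(M^n)=\emptyset$. There is nothing to add; your write-up is a faithful elaboration of the intended argument.
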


Using this result together with the meet-semilattice structure of $\mathcal{L}_n$, we can express each definable subset of $M^n$ in a ``disjunctive normal form'' of $pp$-sets. Expressing a definable set as a disjoint union helps to break it down to certain low complexity fragments, each of which has a specific shape given by the normal form. A proof of this result can be found in \cite[Lemma\,3.2.1]{Perera}.
\begin{lemma}\label{REP}
Every definable subset of $M^n$ can be written as $\bigsqcup_{i=1}^t (A_i\setminus(\bigcup_{j=1}^{s_i} B_{ij}))$ for some $A_i,B_{ij}\in\mathcal{L}_n$.
\end{lemma}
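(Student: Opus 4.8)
The plan is to bootstrap from the boolean-combination form already available: applying Theorem~\ref{PPET} to the complete theory of $M$ (so the invariants statement is absent) writes every definable subset of $M^n$ as a finite \emph{union} $\bigcup_{i=1}^m C_i$, where each $C_i=\psi_i(M)\setminus\bigcup_{j=1}^{l_i}\chi_{ij}(M)$ with $\psi_i(M),\chi_{ij}(M)\in\mathcal L_n$. Call a set of the shape $A\setminus\bigcup_{j=1}^s B_j$ with $A,B_j\in\mathcal L_n$ a \emph{basic set}; then each $C_i$ is basic, so the lemma amounts to the assertion that the family $\mathcal D$ of finite \emph{disjoint} unions of basic sets contains every finite union of basic sets. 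I would prove this by showing $\mathcal D$ is closed under finite unions, which reduces via $D_1\cup D_2=D_1\sqcup(D_2\setminus D_1)$ to understanding differences.

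The first ingredient is that basic sets are closed under pairwise intersection: $(A\setminus\bigcup_j B_j)\cap(A'\setminus\bigcup_k B'_k)=(A\cap A')\setminus(\bigcup_j B_j\cup\bigcup_k B'_k)$, and $A\cap A'\in\mathcal L_n$ since $\mathcal L_n$ is a meet-semilattice (the conjunction of two $pp$-formulas is $pp$). From this, $\mathcal D$ is closed under finite intersection, because a disjoint union distributes over intersection and the cross terms are basic and pairwise disjoint. The second, more delicate, ingredient is that the difference of two basic sets lies in $\mathcal D$. Writing $C=A\setminus\bigcup_j B_j$ and $C'=A'\setminus\bigcup_{k=1}^s B'_k$, and harmlessly replacing each $B'_k$ by $B'_k\cap A'$, one has $C\setminus C'=(C\setminus A')\,\sqcup\,\bigcup_{k=1}^s(C\cap B'_k)$, the two outer parts being disjoint because $\bigcup_k B'_k\subseteq A'$. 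Here $C\setminus A'=A\setminus\big((A\cap A')\cup\bigcup_j B_j\big)$ is basic, and the inner union is disjointified by the standard device $\bigcup_k(C\cap B'_k)=\bigsqcup_k\big((A\cap B'_k)\setminus(\bigcup_j B_j\cup\bigcup_{l<k}B'_l)\big)$, whose terms are again basic since enlarging the subtracted family of $pp$-sets preserves the basic shape. Hence $C\setminus C'\in\mathcal D$, and then for any basic $C$ and any $D=\bigsqcup_i C_i\in\mathcal D$ we get $C\setminus D=C\cap\bigcap_i(C\setminus C_i)\in\mathcal D$ using closure under finite intersection.

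With these in hand, closure of $\mathcal D$ under finite unions follows: if $D_2=\bigsqcup_i C_i$ with $C_i$ basic, then $D_2\setminus D_1=\bigsqcup_i(C_i\setminus D_1)$ is a disjoint union of members of $\mathcal D$, hence itself in $\mathcal D$, and it is disjoint from $D_1$, so $D_1\cup D_2=D_1\sqcup(D_2\setminus D_1)\in\mathcal D$. Applying this inductively to $\bigcup_{i=1}^m C_i$ from the first paragraph exhibits the given definable set as a finite disjoint union of basic sets, which is the claim. The one genuinely fiddly point — and the step I expect to cost the most care — is the disjointification inside $C\setminus C'$: arranging the pieces to be pairwise disjoint while each stays of the form ``a $pp$-set minus a union of $pp$-sets''. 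Everything else is routine bookkeeping with De Morgan's laws and the meet-semilattice closure of $\mathcal L_n$.
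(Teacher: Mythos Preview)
Your argument is correct. The paper itself does not give a proof of this lemma; it only refers the reader to \cite[Lemma~3.2.1]{Perera}. Your approach---starting from the boolean combination provided by Theorem~\ref{PPET}, defining the class $\mathcal D$ of finite disjoint unions of basic sets, and then establishing closure of $\mathcal D$ under intersection, difference, and finally union via $D_1\cup D_2=D_1\sqcup(D_2\setminus D_1)$---is a clean and complete account of the standard disjointification procedure. The identities you use all check out; in particular the decomposition of $C\setminus C'$ into $(C\setminus A')$ and the disjointified pieces $(A\cap B'_k)\setminus(\bigcup_j B_j\cup\bigcup_{l<k}B'_l)$ is correct, and the key closure properties of $\mathcal L_n$ (being a meet-semilattice) are invoked exactly where needed. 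One tiny redundancy: in $C\setminus D=C\cap\bigcap_i(C\setminus C_i)$ the outer $C\cap$ is unnecessary since each $C\setminus C_i\subseteq C$, but this does no harm.
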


The following lemma is one of the important tools in our analysis.
\begin{lemma}\label{NL}
\textbf{Neumann's Lemma} (see \cite[Theorem\,2.12]{PreBk})\\
If $H$ and $G_i$ are subgroups of some group $(K,+)$ and a coset of $H$ is covered by a finite union of cosets of the $G_i$, then this coset of $H$ is in fact covered by the union of just those cosets of $G_i$ where $G_i$ is of finite index in $H$, i.e. where $[H:G_i]:=[H:H\cap G_i]$ is finite.
\begin{equation*}
c+H\subseteq \bigcup_{i\in I}c_i+G_i\ \ \ \Rightarrow\ \ \ c+H\subseteq \bigcup_{i\in I_0}c_i+G_i,
\end{equation*}
where $I_0=\{i\in I:[H:G_i]<\infty\}$.
\end{lemma}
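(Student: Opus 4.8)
The plan is to reduce the statement to a clean group-theoretic assertion and then isolate its essential content. First I would translate by $-c$ and intersect with $H$: from $c+H\subseteq\bigcup_{i\in I}(c_i+G_i)$ we get $H\subseteq\bigcup_{i\in I}\big((c_i-c+G_i)\cap H\big)$, and each set $(c_i-c+G_i)\cap H$ is either empty or a coset of the subgroup $H\cap G_i$ of $H$, whose index in $H$ is exactly the quantity $[H:G_i]$ of the statement. So it suffices to prove: if a group $H$ is covered by finitely many cosets of its own subgroups, then the cosets whose subgroups have finite index in $H$ already cover it.

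For this I would let $F$ be the intersection of those subgroups in the cover that have finite index in $H$ (with $F=H$ if there are none); then $[H:F]<\infty$, so $H$ is a finite disjoint union of cosets of $F$. Fix one such coset $D$. For any finite-index subgroup $G$ in the cover we have $F\subseteq G$, so $D$ lies inside a single coset of $G$; hence $D$ is either contained in one of the finite-index cosets of the cover, or disjoint from all of them. In the first case $D$ is already covered by the finite-index part. In the second case $D$ is covered by the cosets of the infinite-index subgroups; intersecting $D$ with each of those and translating $D$ onto $F$ realises $F$ as a finite union of cosets of the subgroups $F\cap G$ with $[H:G]=\infty$, and each such $F\cap G$ has infinite index in $F$ (it sits inside $G$, which has infinite index in $H$, while $[H:F]<\infty$). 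This contradicts the key fact $(\star)$: no group is the union of finitely many cosets of subgroups of infinite index. Hence the second case cannot occur, every coset $D$ of $F$ is covered by the finite-index cosets, and therefore so is $H$.

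It remains to prove $(\star)$, and I expect this to be the real obstacle. One argues by induction on the number of cosets in the supposed covering of a group $P$. A single coset equal to the whole group forces the subgroup to be $P$, so the base case is immediate. For the inductive step, fix a subgroup $Q$ occurring in the covering; since $[P:Q]=\infty$ there are infinitely many cosets of $Q$ in $P$ but only finitely many appear in the covering, so some coset $g+Q$ is disjoint from all cosets of $Q$ that occur and is therefore covered by the cosets of the remaining subgroups; intersecting with $g+Q$ then exhibits $Q$ as a union of strictly fewer cosets, of the subgroups $Q\cap Q_i$. The delicate point is that these intersections need not be of infinite index in $Q$, so to push the induction through one first normalises by passing from $P$ to its quotient by the intersection of all subgroups occurring (this keeps every index infinite and trivialises that intersection) and then argues on a counterexample with as few cosets as possible. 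Since the groups at hand are abelian there is also a soft proof of $(\star)$: such a group carries a finitely additive, translation-invariant probability measure (a Banach measure), under which every subgroup of infinite index — and hence any finite union of cosets of such subgroups — has measure zero, so it cannot be all of $P$.
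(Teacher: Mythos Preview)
The paper does not supply a proof of this lemma; it is quoted from \cite[Theorem\,2.12]{PreBk} and used as a black box. Your reduction---translate, intersect with $H$, then split $H$ into the finitely many cosets of $F=\bigcap\{G_i:[H:G_i]<\infty\}$---is clean and correct, and your amenability argument for $(\star)$ is valid in the abelian setting of the paper (the group is written $(K,+)$ and only modules are ever at issue), so the proposal as a whole is sound.

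One remark on your combinatorial sketch of $(\star)$: the minimality-plus-quotient manoeuvre does not quite close the gap you identified. Passing to $P/\bigcap H_i$ makes the global intersection trivial, but after you cover $Q$ by fewer cosets of the $Q\cap H_i$ you still cannot rule out that some $[Q:Q\cap H_i]$ is finite, and minimality then gives no contradiction. The classical fix (Neumann's own) is to induct on the number of \emph{distinct} subgroups rather than the number of cosets: once you have $g+Q\subseteq\bigcup_{H_i\neq Q}C_i$, every coset of $Q$ in the original cover is a translate of $g+Q$ and can therefore be replaced by translates of the $C_i$ with $H_i\neq Q$; this eliminates $Q$ entirely and drops the count of distinct subgroups by one, while the finite-index cosets in the new cover are exactly (translates of) the old ones. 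Either this or your Banach-measure argument finishes the job.
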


\section{Special Case: Additive Structure}\label{spcaseadd}
\subsection{The condition $\mathrm{T=T^{\aleph_0}}$}\label{TATA}

Let $M$ be a fixed right $\mathcal R$-module. For brevity we denote $Th(M)$ by $T$. We work with this fixed module throughout this section. If $X,Y\subseteq M^n$, $X,Y\neq\emptyset$, then we use the Minkowski sum notation $X+Y$ to denote the set $\{x+y:x\in X,y\in Y\}$. In case $X=\{x\}$, we use $x+Y$ to denote $X+Y$.

\begin{pro}\label{EC}
The following conditions are equivalent for a module $M$.
\begin{enumerate}
	\item $\mathrm{Inv}(M;A,B)$ is either equal to $1$ or $\infty$ for each $A,B\in\mathcal L_n$ such that $0\in A\cap B$, for each $n\geq 1$,
	\item $M\equiv M\oplus M$,
	\item $M\equiv M^{(\aleph_0)}$.
\end{enumerate}
\end{pro}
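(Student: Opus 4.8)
The plan is to establish the cycle of implications $(2)\Rightarrow(3)\Rightarrow(1)\Rightarrow(2)$, reducing everything to known facts about the invariants $\mathrm{Inv}(M;A,B)$ and the model-theoretic classification of modules up to elementary equivalence. Recall that by the Baur--Monk analysis (the ingredients behind Theorem~\ref{PPET}), two modules are elementarily equivalent if and only if all their invariants $\mathrm{Inv}(-;A,B)$ agree (as elements of $\mathbb{N}\cup\{\infty\}$, where only the actual finite value matters). So the whole proposition is really a statement comparing the invariants of $M$, $M\oplus M$, and $M^{(\aleph_0)}$.

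First I would record the behaviour of invariants under direct sums: for $pp$-subgroups $A,B$ of $(M\oplus M)^n \cong M^n\oplus M^n$ defined by the same $pp$-formula, one has $[A:A\cap B]_{M\oplus M} = \left([A:A\cap B]_M\right)^2$, and more generally the exponent is the cardinality of the index set for $M^{(\kappa)}$. This is the standard computation that a $pp$-pair functor commutes with direct sums (the quotient $\phi(M^{(\kappa)})/\psi(M^{(\kappa)}) \cong (\phi(M)/\psi(M))^{(\kappa)}$). Granting this, the implications $(2)\Rightarrow(1)$ and $(3)\Rightarrow(1)$ are immediate: if $M\equiv M\oplus M$ then for every pair $A\supseteq B$ (reduced to the case $0\in A\cap B$, i.e. genuine subgroups) we need $n = n^2$ in $\mathbb{N}\cup\{\infty\}$, forcing $n\in\{1,\infty\}$; the same argument with squaring replaced by the $\aleph_0$-th power gives $(3)\Rightarrow(1)$. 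Conversely, for $(1)\Rightarrow(2)$ and $(1)\Rightarrow(3)$: if every relevant invariant of $M$ is $1$ or $\infty$, then the corresponding invariant of $M\oplus M$ (resp. $M^{(\aleph_0)}$) is $1^2=1$ or $\infty$ (resp. $1^{\aleph_0}=1$ or $\infty$), so all invariants agree and elementary equivalence follows from the Baur--Monk criterion.

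One subtlety I would address carefully is the passage from "pairs $A,B$ with $0\in A\cap B$" (i.e. subgroups) to arbitrary $pp$-pairs appearing in the invariants statements: a general invariant compares a $pp$-definable subgroup with another, and conjunctions/translations can be absorbed, so it suffices to control indices $[A:A\cap B]$ for subgroups $A,B\in\mathcal{L}_n$ — which is exactly what condition~(1) asserts. I would also note that we may as well take $A\supseteq B$ by replacing $B$ with $A\cap B$, since $[A:A\cap B]$ only depends on $A\cap B$. The only mild obstacle is bookkeeping: making sure the direct-sum formula for invariants is invoked in the correct generality (all $n$ simultaneously, and for the countably infinite direct sum, where one must check the exponentiation $\kappa \mapsto |I|$ rule still reads $1^{\aleph_0}=1$ and $k^{\aleph_0}=\infty$ for $k\ge 2$). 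None of this is deep; the proposition is essentially a packaging of the Baur--Monk invariants together with the elementary observation that the monoid $\mathbb{N}\cup\{\infty\}$ under multiplication has $\{1,\infty\}$ as precisely the set of elements fixed by squaring (equivalently, by raising to any power $\ge 2$).
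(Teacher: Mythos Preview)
Your argument is correct and is exactly the standard one. Note, however, that the paper does not actually give a proof of this proposition: it is stated as a known fact from the model theory of modules (see, e.g., \cite[2.23, 2.25]{PreBk}) and then immediately used to define the condition $T=T^{\aleph_0}$. So there is no ``paper's own proof'' to compare against; your proposal supplies precisely the argument one would give, using the Baur--Monk characterisation of elementary equivalence by invariants together with the multiplicativity $\mathrm{Inv}(M^{(I)};\phi,\psi)=\mathrm{Inv}(M;\phi,\psi)^{|I|}$.
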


\begin{definition}
The theory $T=Th(M)$ is said to satisfy the condition $T=T^{\aleph_0}$ if either (and hence all) of the conditions of proposition \ref{EC} hold.
\end{definition}

We wish to add yet one more condition to the list. The rest of this section is devoted to formulating the condition and deriving its consequences.

We need to introduce some new notation to do this. Let us denote the set of all finite subsets of $\mathcal L_n\setminus\{\emptyset\}$ by $\mathcal P_n$ and the set of all finite antichains in $\mathcal L_n\setminus\{\emptyset\}$ by $\mathcal A_n$. Clearly $\mathcal A_n\subseteq\mathcal P_n$ for each $n\geq 1$. We use lowercase Greek letters to denote elements of $\mathcal A_n$ and $\mathcal P_n$.

\begin{definition}
A definable subset $A$ of $M^n$ will be called $pp$-\textbf{convex} if there is some $\alpha\in\mathcal P_n$ such that $A=\bigcup\alpha$.
\end{definition}

Neumann's lemma (\ref{NL}) takes the following simple form if we add the equivalent conditions of \ref{EC} to our hypotheses.
\begin{cor}\label{NLU}
Suppose $T=T^{\aleph_0}$. If $A\in\mathcal L_n$ and $\mathcal F\subseteq\mathcal L_n$ such that $A\subseteq\bigcup\mathcal F$, then $A\subseteq F$ for at least one $F\in\mathcal F$.
\end{cor}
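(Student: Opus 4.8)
The plan is to derive this from Neumann's Lemma (Lemma \ref{NL}) together with condition (1) of Proposition \ref{EC}, the point being that $T=T^{\aleph_0}$ upgrades Neumann's ``cosets of finite-index subgroups'' to ``cosets of supergroups of $H$'', and a coset of $H$ covered by cosets of groups each containing $H$ must lie inside one of them.

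First I would dispose of the degenerate cases: if $A=\emptyset$ the conclusion is immediate, so assume $A\neq\emptyset$, and discard from $\mathcal F$ any empty sets. Recall that every element of $\mathcal L_n$ is either empty or a coset of a $pp$-definable subgroup; thus, fixing any $a\in A$, we may write $A=a+H$ with $H\in\mathcal L_n$ a subgroup (so $0\in H$), and $F=c_F+G_F$ with $G_F\in\mathcal L_n$ a subgroup, for each $F\in\mathcal F$. Applying Neumann's Lemma in the form stated in \ref{NL} to the cover $a+H\subseteq\bigcup_{F\in\mathcal F}(c_F+G_F)$ yields $A\subseteq\bigcup_{F\in\mathcal F_0}(c_F+G_F)$, where $\mathcal F_0=\{F\in\mathcal F:[H:H\cap G_F]<\infty\}$; since $A\neq\emptyset$ we have $\mathcal F_0\neq\emptyset$. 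Now for each $F\in\mathcal F_0$, both $H$ and $G_F$ lie in $\mathcal L_n$ and contain $0$, so by condition (1) of Proposition \ref{EC} (valid because $T=T^{\aleph_0}$) the invariant $\mathrm{Inv}(M;H,G_F)=[H:H\cap G_F]$ equals $1$ or $\infty$; being finite it equals $1$, i.e. $H\subseteq G_F$. Finally, choose $F\in\mathcal F_0$ with $a\in c_F+G_F$ (possible since $a\in A\subseteq\bigcup_{F\in\mathcal F_0}(c_F+G_F)$); then $c_F+G_F=a+G_F\supseteq a+H=A$, which is the required conclusion.

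I do not expect a real obstacle here: the only step that carries any weight is the passage from ``$G_F$ of finite index in $H$'' to ``$G_F\supseteq H$'', which is precisely where the hypothesis $T=T^{\aleph_0}$ enters and is what makes this corollary a genuine strengthening of Neumann's Lemma. The minor points to be careful about are the reductions at the start (handling $A=\emptyset$ and discarding empty members of $\mathcal F$) so that the coset representations $A=a+H$ and $F=c_F+G_F$ are legitimate, and the observation that $a\in c_F+G_F$ forces $c_F+G_F=a+G_F$.
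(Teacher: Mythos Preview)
Your proof is correct and follows exactly the route the paper intends: the paper does not spell out a proof but introduces the corollary with the remark that Neumann's Lemma ``takes the following simple form if we add the equivalent conditions of \ref{EC} to our hypotheses,'' and you have supplied precisely those details. The one point worth noting is that Neumann's Lemma as stated in \ref{NL} assumes a \emph{finite} cover, so your argument (and the corollary itself) implicitly requires $\mathcal F$ to be finite; this is consistent with every application in the paper and with the surrounding notation $\mathcal P_n$ for finite subsets of $\mathcal L_n\setminus\{\emptyset\}$.
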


Under the same hypotheses, we want to show that for every $\alpha\in\mathcal P_n$ the $pp$-convex set $\bigcup\alpha$ is uniquely determined by the antichain $\beta\subseteq\alpha$ of all maximal elements in $\alpha$.

\begin{pro}\label{UNIQUE1}
Suppose that $T=T^{\aleph_0}$ holds. Let $A\subseteq M^n$ be a $pp$-convex set for some $n\geq 1$. Then there is a unique $\beta\in\mathcal{A}_n$ such that $A=\bigcup\beta$.
\end{pro}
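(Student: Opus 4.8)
The plan is to establish two things: that the set $\beta\subseteq\mathcal L_n$ of $\subseteq$-maximal elements of a given representation $A=\bigcup\alpha$ already satisfies $\bigcup\beta=A$, and that this antichain does not depend on the chosen $\alpha$. For the first point, suppose $A=\bigcup\alpha$ with $\alpha\in\mathcal P_n$, and let $\beta$ be the set of maximal members of $\alpha$ under inclusion. Since $\alpha$ is finite, every $C\in\alpha$ lies below some $F\in\beta$, so $\bigcup\alpha=\bigcup\beta$; thus $A=\bigcup\beta$ with $\beta\in\mathcal A_n$, which gives existence.

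For uniqueness, suppose $\beta,\beta'\in\mathcal A_n$ both satisfy $\bigcup\beta=A=\bigcup\beta'$. Take any $F\in\beta$. Then $F\subseteq A=\bigcup\beta'$, and since $\beta'$ is a finite subset of $\mathcal L_n$, Corollary \ref{NLU} (which is exactly where the hypothesis $T=T^{\aleph_0}$ enters) yields some $F'\in\beta'$ with $F\subseteq F'$. Applying the same argument to $F'\subseteq A=\bigcup\beta$ produces $F''\in\beta$ with $F'\subseteq F''$, so $F\subseteq F''$; as $\beta$ is an antichain, $F=F''$ and hence $F=F'$. This shows $F\in\beta'$, so $\beta\subseteq\beta'$, and by symmetry $\beta=\beta'$.

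The only real content is Corollary \ref{NLU}: without the hypothesis $T=T^{\aleph_0}$ a $pp$-subgroup contained in a finite union of $pp$-cosets need only be covered by those of finite index, and the argument collapses (indeed the conclusion is false — consider $\mathbb Z$, where a subgroup can be the union of two proper cosets of larger subgroups). Once \ref{NLU} is in hand the "sandwiching" step is the natural move, and finiteness of the antichains makes everything terminate. I expect no genuine obstacle beyond citing \ref{NLU} correctly; the proof is a short order-theoretic argument built on that corollary.
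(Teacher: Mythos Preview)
Your proof is correct and follows essentially the same approach as the paper: reduce to antichains, then use Corollary \ref{NLU} to show each element of one antichain is contained in an element of the other, and conclude by the antichain property. Your version spells out the sandwiching step ($F\subseteq F'\subseteq F''$ forces $F=F'=F''$) more explicitly than the paper, which simply says ``using that both $\alpha_1$ and $\alpha_2$ are antichains with the same union, the proof is complete.''
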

\begin{proof}
Let $\alpha_1,\alpha_2\in\mathcal P_n$ be such that $A=\bigcup\alpha_1=\bigcup\alpha_2$. Without loss of generality we may assume $\alpha_1,\alpha_2\in\mathcal{A}_n$. Let $\alpha_1=\{C_1,C_2,\hdots,C_l\}$ and $\alpha_2=\{D_1,D_2,\hdots,D_m\}$.

We have $D_j\subseteq\bigcup_{i=1}^lC_i$ for each $1\leq j\leq m$. Then by \ref{NLU}, we have $D_j\subseteq C_i$ for at least one $i$. By symmetry we also get that each $C_i$ is contained in a $D_j$. Using that both $\alpha_1$ and $\alpha_2$ are antichains with the same union, the proof is complete.
\end{proof}

This proposition shows that under the hypothesis $T=T^{\aleph_0}$ the set of $pp$-convex subsets of $M^n$ is in bijection with $\mathcal A_n$ for each $n\geq 1$. We shall often use this correspondence without mention. For $\alpha\in\mathcal A_n$, we define the \textbf{rank} of the $pp$-convex set $\bigcup\alpha$ to be the integer $|\alpha|$.

The set $\mathcal{A}_n$ can be given the structure of a poset by introducing the relation $\prec_n$ defined by $\beta\prec_n\alpha$ if and only if for each $B\in\beta$, there is some $A\in\alpha$ such that $B\subsetneq A$.

\begin{deflem}\label{UNIQUE2}
Assume that $T=T^{\aleph_0}$. We say that a definable subset $C$ of $M^n$ is a \textbf{cell} if there are $\alpha,\beta\in\mathcal A_n$ with $\beta\prec_n\alpha$ such that $C=\bigcup\alpha\setminus\bigcup\beta$. We denote the set of all cells contained in $M^n$ by $\mathcal C_n$. The antichains $\alpha$ and $\beta$, denoted by $P(C)$ and $N(C)$ respectively, are uniquely determined by the cell $C$. In other words, there is a bijection between the set $\mathcal C_n$ and the set of pairs of antichains strictly related by $\prec_n$. In case $|P(C)|=1$, we say that $C$ is a \textbf{block}. We denote the set of all blocks in $\mathcal C_n$ by $\mathcal B_n$.
\end{deflem}

\begin{proof}
Given any $\alpha,\beta\in\mathcal A_n$ such that $\beta\prec_n\alpha$ and $C=\bigcup\alpha\setminus\bigcup\beta$, the $pp$-convex set $\bigcup(\alpha\cup\beta)$ is determined by $C$. But this set is uniquely determined by the set of maximal elements in $\alpha\cup\beta$ by \ref{UNIQUE1}. Since $\beta\prec_n\alpha$, the required set of maximal elements is precisely $\alpha$. Furthermore, the set $\bigcup\alpha\setminus C=\bigcup\beta$ is $pp$-convex and thus is uniquely determined by $\beta$ by \ref{UNIQUE1} and this finishes the proof.
\end{proof}

We know from \ref{REP} that any definable subset of $M^n$ can be represented as a disjoint union of blocks. So it will be important for us to understand the structure of blocks in detail. A block is always nonempty since any finite union of proper $pp$-subsets cannot cover the given $pp$-set by \ref{UNIQUE1}. For each $B\in\mathcal B_n$, we use the notation $\overline B$ to denote the unique element of $P(B)$.

\begin{theorem}\label{MINK}
Let $M$ be an $\mathcal{R}$-module. Then $Th(M)=Th(M)^{\aleph_0}$ if and only if for each $B\in\mathcal B_n,\ n\geq 1$, we have $B+B-B=\overline B$. Under these conditions, we also get $B-B=\overline{B}$ whenever $\overline{B}$ is a subgroup.
\end{theorem}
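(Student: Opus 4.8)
The plan is to prove the two implications separately, with the forward direction resting on Neumann's Lemma (\ref{NL}) and the converse on an explicit counterexample block. I would begin by recording the shape of a block $B$ with $\overline B=A$: writing $A=a+A_0$ with $A_0$ a $pp$-subgroup of $M^n$, \ref{UNIQUE2} gives $B=A\setminus\bigcup_{i=1}^{s}C_i$, where the $C_i$ are the members of the antichain $N(B)$, each a nonempty $pp$-set strictly inside $A$ and hence a coset of a $pp$-subgroup $C_{i,0}$. I would note that a nonempty coset of $C_{i,0}$ sitting inside the coset $A$ of $A_0$ forces $C_{i,0}\le A_0$, and that $C_i\subsetneq A$ then forces $C_{i,0}\subsetneq A_0$ (otherwise $C_i$ would be a full $A_0$-coset inside $A$, so $C_i=A$). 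Since $B\subseteq A$, the inclusion $B+B-B\subseteq A+A-A=A=\overline B$ is automatic, so in either direction only the reverse inclusion is at issue.

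For the forward direction, assuming $T=T^{\aleph_0}$, I would use condition (1) of \ref{EC} to get $\mathrm{Inv}(M;A_0,C_{i,0})=[A_0:C_{i,0}]\in\{1,\infty\}$, hence $=\infty$ for every $i$ since $C_{i,0}\subsetneq A_0$. Then, given $x\in A$ and any $b_0\in B$ (a block is nonempty), I would observe that $b_0-x\in A_0$, and that finding $b\in B$ with $b+(b_0-x)\in B$ amounts to finding $b\in A$ avoiding the finite family of cosets $C_1,\dots,C_s$, $C_1-(b_0-x),\dots,C_s-(b_0-x)$, each of which is a coset of one of the infinite-index subgroups $C_{i,0}$; by Neumann's Lemma (\ref{NL}) the coset $A$ of $A_0$ cannot be covered by such a finite family, so such a $b$ exists, and then $x=b+b_0-(b+b_0-x)\in B+B-B$. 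This yields $B+B-B=\overline B$. Running the same argument with the translation $b\mapsto b-x$ in the case where $\overline B=A_0$ is a subgroup (so that $0\in A$ and $x\in A_0$) produces $b\in B$ with $b-x\in B$, giving $B-B=A_0=\overline B$.

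For the converse I would argue contrapositively. If $T\neq T^{\aleph_0}$, condition (1) of \ref{EC} supplies $pp$-subgroups $A'$, $B'$ of some $M^n$ with $1<\mathrm{Inv}(M;A',B')<\infty$, and I would set $A=A'$ and $C=A'\cap B'$, a $pp$-subgroup with $C\subsetneq A$ of finite index. Picking $c\in A\setminus C$, I would check that $B:=c+C$ is a block with $\overline B=A$: it equals $A\setminus\bigcup\beta$, where $\beta$ is the nonempty antichain of all cosets of $C$ in $A$ other than $c+C$, and $\beta\prec_n\{A\}$. Since $B$ is a coset of the subgroup $C$ one has $B+B-B=B$, while $0\in A\setminus B$ forces $B\subsetneq A=\overline B$; thus $B$ is a block with $B+B-B\neq\overline B$, so the hypothesis fails, and we conclude $T=T^{\aleph_0}$.

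I expect the covering step in the forward direction to be the only real obstacle: one must be careful that translating by $b_0-x\in A_0$ keeps each of $C_i$ and $C_i-(b_0-x)$ a coset of the subgroup $C_{i,0}$, so that Neumann's Lemma applies to the enlarged finite family viewed as a would-be cover of the single $A_0$-coset $A$, and that the indices being invoked are the ones computed inside $A_0$ rather than some larger group. The remaining steps are routine coset bookkeeping together with the uniqueness of $P(B)$ and $N(B)$ recorded in \ref{UNIQUE2}.
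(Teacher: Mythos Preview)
Your proof is correct and follows the same overall strategy as the paper: Neumann's Lemma for the forward implication and an explicit finite-index coset for the converse. The main difference is in the combinatorics of the forward direction. The paper, for $d\in D=\bigcup N(B)$, finds two auxiliary elements $x,y$ so that $d+x,\ d+y,\ d+x+y\in B$ and writes $d=(d+x)+(d+y)-(d+x+y)$; you instead fix a single $b_0\in B$ and, for an arbitrary $x\in A$, find one element $b$ avoiding the $2s$ cosets $C_i$ and $C_i-(b_0-x)$, then write $x=b+b_0-(b+b_0-x)$. Your version is a bit more economical (one search instead of two, and it handles the coset case uniformly without first reducing to the subgroup case), while the paper's version makes the role of the ``bad'' set $D$ more explicit. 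Both rest on exactly the same application of \ref{NL}.

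One small remark on the converse: you invoke the uniqueness of $P(B)$ and $N(B)$ from \ref{UNIQUE2} in your closing paragraph, but that uniqueness is only available under $T=T^{\aleph_0}$, which is precisely what you are \emph{not} assuming there. This is harmless for your argument, since all you actually use is that $c+C$ admits \emph{some} block representation $A\setminus\bigcup\beta$ with positive part $A$, and the paper's proof does the same; just be aware that $\overline B$ is being read as ``the $A$ in the exhibited representation'' rather than a canonically determined object.
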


\begin{proof}
Assume that $Th(M)=Th(M)^{\aleph_0}$ holds. Let $B\in\mathcal B_n$ be such that $N(B)=\{D_1,D_2,\hdots,D_l\}\prec P(B)=\{A\}$. Let $D=\bigcup N(B)$. We want to show that $B+B-B=A$. But clearly $B\subseteq B+B-B$. So it suffices to show that $D\subseteq B+B-B$.

First assume that $A$ is a subgroup of $M^n$. Let $d\in D$. Since $A\setminus (D-d)\neq\emptyset$, we can choose some $x\in A\setminus (D-d)$ by \ref{NLU}. Then $x+d\in (A+d)\setminus D=A\setminus D=B$, since $A$ is a subgroup. Again choose some $y\in A\setminus((D-d)\cup(D-d-x))$. Then $y+d\in (A+d)\setminus(D\cup(D-x))$ for similar reasons. Thus $y+d, y+x+d\in A\setminus D=B$. Now $d=(d+x)+(d+y)-(d+x+y)\in B+B-B$ and hence the conclusion follows.

In the case when $A$ is a coset of a $pp$-definable subgroup $G$, say $A=a+G$, let $C=D-a$. Then, by the first case, $G=C+C-C$. Now if $d\in A$, then $d-a\in G$. Hence there are $x,y,z\in C$ such that $d-a=x+y-z$. Thus $d=(x+a)+(y+a)-(z+a)\in B+B-B$ and this completes the proof in one direction.

For the converse, suppose that $Th(M)\neq Th(M)^{\aleph_0}$. Then there are two $pp$-definable subgroups $G,H$ of $M^n$ for some $n\geq 1$ such that $H\leq G$ and $1<[G:H]<\infty$. Let $[G:H]=k$ and let $H_1, H_2,\cdots, H_k$ be the distinct cosets of $H$ in $G$. Since $H$ is a $pp$-set, all the cosets $H_i$ are $pp$-sets as well. Now let $B=H_k=G\setminus\bigcup_{i=1}^{k-1}H_i$. Then $B$ is a nonempty block since $k>1$. But, since $B$ is a coset, $B+B-B=B\neq G$ which proves the result in the other direction.

Now we prove the last statement under the hypothesis $Th(M)=Th(M)^{\aleph_0}$. Let $B,A,D$ be as defined in the first paragraph of the proof and assume that $A$ is a subgroup of $M^n$. Given any $a\in A$, we choose $x\in A\setminus (D\cup(D-a))$, which is possible by \ref{NLU}. Then $x,x+a\in B$ and hence $a=(x+a)-x\in (B-B)$. This shows the inclusion $A\subseteq B-B$. We clearly have $(B-B)\subseteq (A-A)$ and $A-A=A$ since $A$ is a subgroup. This completes the proof.
\end{proof}

A map $f:B\rightarrow M^n$ is \textbf{linear} if $f(x+y-z)=f(x)+f(y)-f(z)$ for all $x,y,z\in B$ such that $x+y-z\in B$. We use the previous theorem to show that any linear map on $B$ can be extended uniquely to a linear map on $\overline B$.

\begin{lemma}\label{COLOURINJ}
Suppose that $T=T^{\aleph_0}$ holds. Then for each $n\geq 1$, each $B\in\mathcal B_n$ and each injective linear map $f:B\rightarrowtail M^n$, there exists a unique injective linear extension $\overline{f}:\overline B\rightarrowtail M^n$.
\end{lemma}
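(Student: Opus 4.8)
The plan is to build $\overline f$ by first isolating its ``linear part'' as a group homomorphism. After replacing $B$ by $B-b_0$ and $f$ by $x\mapsto f(x+b_0)$ for some fixed $b_0\in B$ --- which is readily checked to be again a block carrying an injective linear map --- I may assume that $\overline B$ is a $pp$-definable subgroup $G$ and that $0\in B$. The technical input needed repeatedly is a ``room'' principle: the complement $G\setminus B$ equals the finite union $\bigcup_{i=1}^{l}D_i$ of the cosets $D_i\in N(B)$, and each $D_i$ is a coset of a $pp$-definable subgroup $G_i\subsetneq G$; by Proposition \ref{EC}(1) each such $G_i$ has infinite index in $G$, so every coset of any $G_i$ is a proper $pp$-subset of $G$, and Corollary \ref{NLU} then guarantees that $G$ lies in no finite union of such cosets. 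Consequently, any finitely many constraints on an element of $G$, each of the form ``avoid a prescribed coset of some $G_i$'', can be met inside $B$. Uniqueness of the extension is then immediate from $B+B-B=\overline B$ (Theorem \ref{MINK}): a linear extension $g$ of $f$ must satisfy $g(x+y-z)=f(x)+f(y)-f(z)$ for $x,y,z\in B$, and this already determines $g$ on all of $G$.

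For existence I define $\delta:G\to M^{n}$ by $\delta(g):=f(p)-f(q)$, where $g=p-q$ with $p,q\in B$; such $p,q$ exist by the room principle (choose $p\in B$ with $p-g\in B$ and put $q=p-g$). This is independent of the chosen representation: if $g=p-q=p'-q'$ then $p+q'=p'+q$ holds with all four terms in $B$, and linearity of $f$ applied to the triple $(p,q',p')$ yields $f(p)-f(q)=f(p')-f(q')$. The crux is that $\delta$ is a group homomorphism. Given $g=p-q$ and $g'=p'-q'$, the room principle provides $t\in B$ with $p+p'-t\in B$ and $q+q'-t\in B$; then $g+g'=(p+p'-t)-(q+q'-t)$ is a legitimate representation of $g+g'$, while linearity of $f$ applied to the triples $(p,p',t)$ and $(q,q',t)$ gives $f(p+p'-t)=f(p)+f(p')-f(t)$ and $f(q+q'-t)=f(q)+f(q')-f(t)$, so that $\delta(g+g')=\delta(g)+\delta(g')$. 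Finally $\delta$ is injective: $\delta(g)=0$ forces $f(p)=f(q)$, hence $p=q$ by injectivity of $f$, hence $g=0$.

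It then suffices to set $\overline f(a):=f(0)+\delta(a)$ for $a\in G$. Since $\delta(b)=f(b)-f(0)$ for each $b\in B$ (using the representation $b=b-0$ with $0\in B$), the map $\overline f$ extends $f$; the homomorphism property of $\delta$, applied through $\delta(a_1+a_2-a_3)=\delta(a_1)+\delta(a_2)-\delta(a_3)$, shows $\overline f$ is linear on $G$; and injectivity of $\delta$ forces $\overline f$ to be injective, because $\overline f(a)=\overline f(a')$ gives $\delta(a-a')=0$. Untranslating recovers the required extension $\overline f:\overline B\rightarrowtail M^{n}$. The step I expect to be the main obstacle is the homomorphism property of $\delta$ --- equivalently, the well-definedness of the assignment $x+y-z\mapsto f(x)+f(y)-f(z)$ on $\overline B$: this is the only point at which two unrelated presentations of an element of $G$ must be forced to agree, and it works precisely because Theorem \ref{MINK} together with Corollary \ref{NLU} (i.e. the hypothesis $T=T^{\aleph_0}$) supply enough generic points of $B$; for a single coset of a finite-index subgroup the extension is no longer unique and may fail to exist.
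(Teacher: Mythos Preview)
Your proof is correct and follows essentially the same strategy as the paper: reduce to the subgroup case by translation, then use Corollary~\ref{NLU} repeatedly to find auxiliary elements of $B$ that allow you to define the extension and verify its properties. The organizational difference is that the paper fixes, once and for all, specific translates $x_i\in B$ (one for each $D_i\in N(B)$) and defines $\overline f$ piecewise on the $D_i$, then checks well-definedness, linearity, and injectivity by an explicit case split according to where $b$, $d$, and $b+d$ lie; you instead package the extension as $\overline f=f(0)+\delta$ where $\delta(g)=f(p)-f(q)$ for \emph{any} representation $g=p-q$ with $p,q\in B$, and reduce everything to showing $\delta$ is a well-defined group homomorphism. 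Your formulation is cleaner and avoids the case analysis, at the cost of needing one more ``generic'' element $t$ in the additivity step; both arguments draw on exactly the same consequence of $T=T^{\aleph_0}$, namely that $B$ survives the removal of any finite collection of cosets of the $G_i$.
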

\begin{proof}
Let $B=A\setminus\bigcup_{i=1}^mD_i$ be a block and assume that $A$ is a $pp$-definable subgroup. Let $D=\bigcup_{i=1}^mD_i$ and, for each $i$, let $H_i$ denote the subgroup of $A$ whose coset is $D_i$. Let $H=\bigcup_{i=1}^m H_i$. We choose and fix elements $x_i\in B$ sequentially depending on the earlier choices as follows. We choose $x_1\in A\setminus(D\cup H)$ and for each $1<i\leq m$, choose $x_i\in A\setminus(D\cup H\cup \bigcup_{j=1}^{i-1}(D+x_j))$. We can choose $x_i$ at each step by \ref{NLU}. Then we define $\overline{f}(d_i)=f(x_i+d_i)-f(x_i)$ for $d_i\in D_i$ and $\overline{f}(b)=f(b)$ for $b\in B$.

$\overline{f}$ is well-defined: Let $d\in D_i\cap D_j$ for some $j<i$. Then by the choice of $x_i$, $(x_i-x_j)\in B$. Also $x_i,x_i+d,x_j,x_j+d\in B$. Hence $f(x_i-x_j)$ is defined and is equal to both $f(x_i)-f(x_j)$ and $f(x_i+d)-f(x_j+d)$. Hence we see that $f(x_i+d)-f(x_i)=f(x_j+d)-f(x_j)$, which proves that $\overline{f}(d)$ is well-defined for each $d\in D$.

$\overline{f}$ is linear: Let $b\in B$ and $d\in D_i$. Then there are two possibilities namely, $b+d\in B$ or $b+d\in D_j$ for some $j$. In the former case we have $\overline{f}(b+d)=f(b+d)=f(b+x_i+d-x_i)=f(b)+f(x_i+d)-f(x_i)=\overline{f}(b)+\overline{f}(d)$ since $x_i+d,x_i,b\in B$, while in the latter case we have $\overline{f}(b+d)=f(b+d+x_j)-f(x_j)=f(b+d+x_j-x_i+x_i)-f(x_j)=f(b)+f(x_i+d)-f(x_i)+f(x_j)-f(x_j)= f(b)+f(x_i+d)-f(x_i)=\overline{f}(b)+\overline{f}(d)$ since $b,x_i,x_j,x_i+d,x_j+d\in B$.

Let $b\in D_k$ and $d\in D_i$. Then there are two possibilities namely, $b+d\in B$ or $b+d\in D_j$ for some $j$. In the former case we have $\overline{f}(b+d)=f(b+d)=f(b+x_k-x_k+d+x_i-x_i)=
f(b+x_k)-f(x_k)+f(d+x_i)-f(x_i)=\overline{f}(b)+\overline{f}(d)$ since $b+x_k,x_k,x_i,x_i+d\in B$, while in the latter case we have $\overline{f}(b+d)=f(b+d+x_j)-f(x_j)=f(b+x_k-x_k+d+x_i-x_i+x_j)-f(x_j)=
f(b+x_k)-f(x_k)+f(d+x_i)-f(x_i)+f(x_j)-f(x_j)=\overline{f}(b)+\overline{f}(d)$ since $b+x_k,x_k,x_i,x_i+d,x_j\in B$.

In the case when $b,d\in B$ and $b+d\in B$, the linearity of $\overline{f}$ follows from the linearity of $f$. When $b+d\in D_i$, $\overline{f}(b+d)=f(b+d+x_i)-f(x_i)=f(b)+f(d)+f(x_i)-f(x_i)=
\overline{f}(b)+\overline{f}(d)$ since $b,d,x_i\in B$. So we have showed in each case that $\overline{f}$ is linear.

$\overline{f}$ is injective: Without loss we may assume that $\overline f(0)=0$, otherwise we may consider the function $\overline f(-)-\overline f(0)$. Let $a\in A$ be such that $\overline{f}(a)=0$. Then if $a\in B$, then $f(a)=0$ and hence $a=0$ by injectivity of $f$. If $a\in D_i$, then $f(x_i+a)-f(x_i)=0$ and hence $f(x_i+a)=f(x_i)$. But then $x_i+a=x_i$ by injectivity of $f$ since both $x_i+a,x_i\in B$. This again implies that $a=0$.

$\overline{f}$ is unique: Let $h:A\rightarrow M^n$ be any linear injective extension of $f$. Then $h(b)=f(b)=\overline{f}(b)$ for each $b\in B$ and hence, for $d\in D_i$, we have $\overline{f}(d)=f(x_i+d)-f(x_i)=h(x_i+d)-h(x_i)=h(d)$ since $x_i+d,x_i\in B$ and $h$ is linear.

If $A$ is a nontrivial coset of some $pp$-definable subgroup $G$ of $M^n$, $D\subsetneq A$, $B=A\setminus D$ and we are given some linear map $f:B\rightarrowtail M^n$, we choose and fix some $b\in B$. Then clearly $A-b=G$ and we take $C=B-b$. Define $g:C\rightarrow M^n$ by setting $g(c)=f(c+b)-f(b)$. Now whenever $x,y\in C$ such that $x+y\in C$, we have $g(x+y)=f(x+y+b)-f(b)=f((x+b)-b+(y+b))-f(b)= f(x+b)-f(b)+f(y+b)-f(b)=g(x)+g(y)$ since $x+b,y+b,b\in B$ and $f$ is linear. Hence $g$ is linear on $C$. Therefore by the subgroup case, we have a unique linear injective extension of $g$ to $G$, say $\overline{g}$. Then we define $\overline{f}:A\rightarrow M^n$ by setting $\overline{f}(a)=\overline{g}(a-b)+f(b)$. It can be easily seen that $\overline{f}$ is indeed an extension of $f$. The uniqueness, linearity and injectivity of $\overline{f}$ follows from the uniqueness of $\overline{g}$. This argument completes the proof of this case and hence that of the lemma.
\end{proof}

\subsection{Local characteristics}\label{LC}

We fix some $n\geq 1$ and drop all the subscripts $n$. We also assume hereafter that $Th(M)=Th(M)^{\aleph_0}$ holds for some fixed right $\mathcal R$-module $M$. For brevity, we denote the sets $\mathcal L\setminus\{\emptyset\},\mathcal A\setminus\{\emptyset\},\hdots$ by $\mathcal L^*,\mathcal A^*,\hdots$ respectively.

\begin{definition}
Let $\mathcal D$ be a finite subset of $\mathcal L^*$. The smallest sub-meet-semilattice of $\mathcal L$ containing $\mathcal D$ will be called the $pp$-nest (or simply nest) corresponding to $\mathcal D$ and will be denoted by $\hat{\mathcal D}$. Note that $\hat{\mathcal D}$ is finite. In general, any finite sub-meet-semilattice of $\mathcal L$ will also be referred to as a $pp$-nest.
\end{definition}

\begin{definition}
For each finite subset $\mathcal F$ of $\mathcal L^*$ and $F\in\mathcal F$, we define the \textbf{$\mathcal F$-core} of $F$ to be the block $\mathrm{Core}_\mathcal F(F):=F\setminus\bigcup\{G: G\in\mathcal F, G\cap F\subsetneq F\}$.
\end{definition}

Let $D\subseteq M^n$ be definable. Then $D=\bigsqcup_{i=1}^m B_i$ for some $B_i\in\mathcal B$ by \ref{REP}. We say that $\mathcal D$ is the nest corresponding to this partition of $D$ if it is the nest corresponding to the finite family $\bigcup_{i=1}^m(P(B_i)\cup N(B_i))$. Every definable set can be partitioned canonically given a suitable nest, which is the content of the following lemma whose proof is omitted.

\begin{deflem}\label{CH1}
Suppose $D\subseteq M^n$ is definable and $\mathcal D$ is the nest corresponding to a given partition $D=\bigsqcup_{i=1}^m B_i$. For every nonempty $F\in\mathcal D$, $\mathrm{Core}_\mathcal D(F)\cap~D\neq\emptyset$ if and only if $\mathrm{Core}_\mathcal D(F)\subseteq D$. We define the \textbf{characteristic function} of the nest $\mathcal D$, $\delta_\mathcal D:\mathcal D\rightarrow \{0,1\}$, by $\delta_\mathcal D(F)=1$ if and only if $F\neq\emptyset$ and $\mathrm{Core}_\mathcal D(F)\subseteq D$. We denote the sets $\delta_\mathcal D^{-1}(1)$ and $\delta_\mathcal D^{-1}(0)$ by $\mathcal D^+$ and $\mathcal D^-$ respectively. Then $D=\bigcup_{F\in\mathcal D^+}\mathrm{Core}_\mathcal D(F)$.
\end{deflem}

\begin{illust}
If $B$ is a block with $P(B)=A$ and $N(B)=\{D_1,D_2\}$ such that $D_1\cap D_2\neq\emptyset$, then $\mathcal D=\{A,D_1,D_2,D_1\cap D_2\}$ is the corresponding nest. Clearly $B=\mathrm{Core}_\mathcal D(A)$ and hence $\mathcal D^+=\{A\}$.
\end{illust}

We will sometimes use another family of characteristic functions stated in the following definition.
\begin{definition}\label{CH2}
Given any $C\in\mathcal C$, we define the \textbf{characteristic function} of the cell $C$, $\delta(C):\mathcal L^*\rightarrow\{0,1\}$, as $\delta(C)(P)=1$ if $P\subseteq C$ and $\delta(C)(P)=0$ otherwise, for each $P\in\mathcal{L}^*$. When $P=\{a\}$, we write the expression $\delta(C)(a)$ instead of $\delta(C)(\{a\})$.
\end{definition}

The set $\mathcal A$ of antichains is ordered by the relation $\prec$ but can also be considered as a poset with respect to the natural inclusion ordering on the set of all $pp$-convex sets. For $\alpha,\beta\in\mathcal{A}$, we define $\alpha\wedge\beta$ to be the antichain corresponding (in the sense of \ref{UNIQUE1}) to $(\bigcup\alpha)\cap(\bigcup\beta)$ and $\alpha\vee\beta$ to be the antichain corresponding to $(\bigcup\alpha)\cup(\bigcup\beta)$. Since the intersection and union of two $pp$-convex sets are again $pp$-convex, the binary operations $\wedge,\vee:\mathcal A\times\mathcal A\rightarrow\mathcal A$ are well-defined. It can be easily seen that $\mathcal{A}$ is a distributive lattice with respect to these operations.

We want to understand the structure of any definable set ``locally'' in a neighbourhood of a point in $M^n$. The following lemma defines a class of sub-lattices of $\mathcal A$ which provides the necessary framework to define the concept of localization. The proof is an easy verification of an adjunction and is not given here.

\begin{deflem}
Fix some $a\in M^n$. Let $\mathcal{L}_a:=\{A\in\mathcal{L}:a\in A\}$ and $\mathcal{A}_a$ denote the set of all antichains in the meet-semilattice $\mathcal{L}_a$. Then $\mathcal{A}_a$ is a sub-lattice of $\mathcal{A}$. We denote the inclusion $\mathcal{A}_a\rightarrow\mathcal{A}$ by $\mathcal{I}_a$.
We also consider the map $\mathcal{N}_a:\mathcal{A}\rightarrow\mathcal{A}_a$ defined by $\alpha\mapsto \alpha\cap\mathcal{L}_a$. We call the antichain $\mathcal{N}_a(\alpha)$ the \textbf{localization} of $\alpha$ at $a$. Then $\mathcal{N}_a$ is a right adjoint to $\mathcal{I}_a$ if we consider the posets $\mathcal A$ and $\mathcal A_a$ as categories in the usual way, and the composite $\mathcal{N}_a\circ\mathcal{I}_a$ is the identity on $\mathcal{A}_a$. This in particular means that $\mathcal{A}_a$ is a reflective subcategory of $\mathcal{A}$. Furthermore, the map $\mathcal{N}_a$ not only preserves the meets of antichains, being a right adjoint, but it also preserves the joins of antichains.
\end{deflem}

Fix some $a\in M^n$. Let us denote the set of all finite subsets of $\mathcal{L}_a$ by $\mathcal P_a$ and let $\alpha\in\mathcal{P}_a$. We construct a simplicial complex $\mathcal{K}^a(\alpha)$ which determines the ``geometry'' of the intersection of elements of $\alpha$ around $a$. This construction is similar to the construction of the nerve of an open cover, except for the meaning of the ``triviality'' of the intersection. We know that a $pp$-set is finite if and only if it has at most $1$ element. We also know that $\bigcap\alpha\supseteq\{a\}$.

\begin{definition}\label{SIMPCOMP}
We associate an abstract simplicial complex $\mathcal K^a(\alpha)$ to each $\alpha\in\mathcal P_a$ by taking the vertex set $\mathcal V(\mathcal K^a(\alpha)):=\alpha\setminus\{a\}$. We say that a nonempty set $\beta\subseteq\alpha$ is a face of $\mathcal{K}^a(\alpha)$ if and only if $\bigcap\beta$ is infinite (i.e., strictly contains $a$). If the only element of $\alpha$ is $\{a\}$ or if $\alpha=\emptyset$, then we set $\mathcal{K}^a(\alpha)=\emptyset$, the empty complex.
\end{definition}

\begin{illust}
Consider the real vector space $\mathbb R_{\mathbb R}$. The theory of this vector space satisfies the condition $\rm T=T^{\aleph_0}$. We consider subsets of $\mathbb R^3$. If $\alpha$ denotes the antichain corresponding to the union of $3$ coordinate planes and $a$ is the origin, then $\mathcal K^a(\alpha)$ is a copy of $\partial\Delta^2$. The $2$-dimensional face of $\Delta^2$ is absent since the intersection of the coordinate planes does not contain the origin properly.
\end{illust}

Since $\beta_1\subseteq\beta_2\ \Rightarrow\ \bigcap\beta_2\subseteq\bigcap\beta_1$, $\mathcal{K}^a(\alpha)$ is indeed a simplicial complex. We tend to drop the superscript $a$ when it is clear from the context. To extend this definition to arbitrary elements of $\mathcal P$, we extend the notion of localization operator (at $a$) to $\mathcal{P}$ by setting $\mathcal{N}_a(\alpha)=\alpha\cap\mathcal{L}_a$ for each $\alpha\in\mathcal P$. Now we are ready to define a family of numerical invariants for convex subsets of $M^n$, which we call ``local characteristics''.

\begin{definition}
We define the function $\kappa_a:\mathcal{P}\rightarrow\mathbb{Z}$ by setting $\kappa_a(\alpha):= \chi(\mathcal{K}(\mathcal{N}_a(\alpha)))- \delta(\alpha)(a)$, where $\chi(\mathcal{K})$ denotes the Euler characteristic of the complex $\mathcal{K}$ as defined in \ref{Euler} and $\delta(\alpha)$ is the characteristic function of the set $\bigcup\alpha$ as defined in \ref{CH2}. The value $\kappa_a(\alpha)$ will be called the \textbf{local characteristic} of the antichain $\alpha$ at $a$.
\end{definition}

If we view the local characteristic $\kappa_a(\alpha)$ as a function of $a$ for a fixed antichain $\alpha$, the correction term $\delta(\alpha)(a)$ makes sure that $\kappa_a(\alpha)=0$ for all but finitely many values of $a$. This fact will be useful in the next section.

We want to show that the local characteristic satisfies the inclusion-exclusion principle for antichains.
\begin{theorem}\label{t1}
For $\alpha,\beta\in\mathcal{A}$, we have $\kappa_a(\alpha\vee\beta)+\kappa_a(\alpha\wedge\beta)=\kappa_a(\alpha)+\kappa_a(\beta)$.
\end{theorem}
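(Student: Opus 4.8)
The plan is to split the claimed identity into its two natural pieces — the Euler-characteristic term $\chi(\mathcal K(\mathcal N_a(\cdot)))$ and the correction term $\delta(\cdot)(a)$ — and to prove the harder (first) piece by a Mayer--Vietoris argument on a single simplicial complex built from a common $pp$-nest. The correction term is immediate: for $\gamma\in\mathcal A$ the number $\delta(\gamma)(a)\in\{0,1\}$ is $1$ exactly when $a\in\bigcup\gamma$, and since $\bigcup(\alpha\vee\beta)=(\bigcup\alpha)\cup(\bigcup\beta)$ and $\bigcup(\alpha\wedge\beta)=(\bigcup\alpha)\cap(\bigcup\beta)$ we get $\delta(\alpha\vee\beta)(a)=\max\{\delta(\alpha)(a),\delta(\beta)(a)\}$ and $\delta(\alpha\wedge\beta)(a)=\min\{\delta(\alpha)(a),\delta(\beta)(a)\}$; as $\max+\min$ equals the sum for $\{0,1\}$-valued numbers, $\delta(\alpha\vee\beta)(a)+\delta(\alpha\wedge\beta)(a)=\delta(\alpha)(a)+\delta(\beta)(a)$. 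Moreover $\mathcal N_a$ preserves both meets and joins of antichains and $\mathcal A_a$ is a sublattice of $\mathcal A$, so $\mathcal N_a(\alpha\vee\beta)=\mathcal N_a(\alpha)\vee\mathcal N_a(\beta)$ and dually for $\wedge$; replacing $\alpha,\beta$ by their localizations we may assume $\alpha,\beta\in\mathcal A_a$, and it remains to prove
\[
\chi(\mathcal K^a(\alpha\vee\beta))+\chi(\mathcal K^a(\alpha\wedge\beta))=\chi(\mathcal K^a(\alpha))+\chi(\mathcal K^a(\beta)).
\]

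Next I would establish a ``dominated vertex'' lemma: if $\mathcal F$ is a finite subset of $\mathcal L_a$ and $v\in\mathcal F$ satisfies $v\subsetneq w$ for some $w\in\mathcal F$, then $\mathcal K^a(\mathcal F)$ collapses onto its induced subcomplex $\mathcal K^a(\mathcal F\setminus\{v\})$, and hence $\chi(\mathcal K^a(\mathcal F))=\chi(\mathcal K^a(\mathcal F\setminus\{v\}))$ by Corollary \ref{EULHTPY}. The point is that any face $\sigma$ of $\mathcal K^a(\mathcal F)$ with $v\in\sigma$ has $\bigcap\sigma\subseteq v\subsetneq w$, so $\bigcap(\sigma\cup\{w\})=\bigcap\sigma$ is still infinite and $\sigma\cup\{w\}$ is again a face; thus $v$ is dominated by $w$, and deletion of a dominated vertex is an elementary strong collapse (if one prefers to avoid that terminology the statement can be obtained by induction using Corollary \ref{HMLGCONE}). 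Iterating over the non-maximal elements gives $\chi(\mathcal K^a(\mathcal F))=\chi(\mathcal K^a(\mathrm{max}\,\mathcal F))$ for every finite $\mathcal F\subseteq\mathcal L_a$, where $\mathrm{max}\,\mathcal F$ is the antichain of maximal members of $\mathcal F$.

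Now set $\widehat\alpha,\widehat\beta\subseteq\mathcal L_a$ equal to the $pp$-nests generated by $\alpha$ and $\beta$, let $\mathcal T:=\{x\wedge y:x\in\widehat\alpha,\ y\in\widehat\beta\}$, and put $\mathcal S_\alpha:=\widehat\alpha\cup\mathcal T$, $\mathcal S_\beta:=\widehat\beta\cup\mathcal T$, $V:=\mathcal S_\alpha\cup\mathcal S_\beta$. A routine check shows that $\mathcal S_\alpha,\mathcal S_\beta,V$ are sub-meet-semilattices of $\mathcal L_a$ with $\mathcal S_\alpha\cap\mathcal S_\beta=\mathcal T$, and — using Corollary \ref{NLU} to read off the antichain of a $pp$-convex set — that $\mathrm{max}\,\mathcal S_\alpha=\alpha$, $\mathrm{max}\,\mathcal S_\beta=\beta$, $\mathrm{max}\,\mathcal T=\alpha\wedge\beta$, $\mathrm{max}\,V=\alpha\vee\beta$. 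By the dominated-vertex lemma the identity to be proved becomes $\chi(\mathcal K^a(V))+\chi(\mathcal K^a(\mathcal T))=\chi(\mathcal K^a(\mathcal S_\alpha))+\chi(\mathcal K^a(\mathcal S_\beta))$. Here $\mathcal K^a(\mathcal S_\alpha)$ and $\mathcal K^a(\mathcal S_\beta)$ are induced subcomplexes of $\mathcal K^a(V)$ whose intersection is precisely $\mathcal K^a(\mathcal T)$, so by the inclusion--exclusion identity $\chi(\mathcal P\cup\mathcal Q)+\chi(\mathcal P\cap\mathcal Q)=\chi(\mathcal P)+\chi(\mathcal Q)$ for subcomplexes it suffices to show that the inclusion $\mathcal K^a(\mathcal S_\alpha)\cup\mathcal K^a(\mathcal S_\beta)\hookrightarrow\mathcal K^a(V)$ is a homotopy equivalence. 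Concretely, $\mathcal K^a(\mathcal S_\alpha)\cup\mathcal K^a(\mathcal S_\beta)$ consists of the faces of $\mathcal K^a(V)$ contained in $\mathcal S_\alpha$ or in $\mathcal S_\beta$, so one must collapse away the ``mixed'' faces — those using both a vertex of $\mathcal S_\alpha\setminus\mathcal T$ and a vertex of $\mathcal S_\beta\setminus\mathcal T$. Meet-closedness of $V$ is exactly what makes this plausible: for a mixed face $\sigma$ the $pp$-set $\bigcap\sigma$ lies in $\mathcal T$ and is infinite, hence is a vertex of $\mathcal K^a(V)$ sitting ``below'' $\sigma$.

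I expect this last collapse to be the crux. The obvious pairing $\sigma\leftrightarrow\sigma\cup\{\bigcap\sigma\}$ of mixed faces is not a legitimate matching, because deleting a vertex that coincides with $\bigcap\sigma$ may strictly enlarge the intersection; so I would instead induct on $|V\setminus\mathcal T|$, at each step removing a private vertex that is maximal in $V$ (this keeps $V\setminus\{v\}$ meet-closed and preserves the four ``$\mathrm{max}$'' equalities) and comparing the links of $v$ in $\mathcal K^a(V)$ and in $\mathcal K^a(\mathcal S_\alpha)\cup\mathcal K^a(\mathcal S_\beta)$, or else construct a discrete Morse function on $\mathcal K^a(V)$ whose critical cells are exactly the faces of $\mathcal K^a(\mathcal S_\alpha)\cup\mathcal K^a(\mathcal S_\beta)$. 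Everything else — the two reductions and the dominated-vertex lemma — is essentially bookkeeping.
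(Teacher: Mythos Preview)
Your reductions coincide with the paper's: the $\delta$-term and the passage to $\alpha,\beta\in\mathcal A_a$ are exactly how the paper begins, and your dominated-vertex lemma is Proposition~\ref{p1} (there proved by an explicit cone argument, but the content is identical). After that point the two arguments diverge. The paper does \emph{not} set up a Mayer--Vietoris decomposition; instead it runs a double induction, first on $|\beta|$ with $|\alpha|\le1$ (using Corollary~\ref{HMLGCONE} repeatedly to track what happens when a single vertex is adjoined), and then on $|\alpha|$ by building the missing faces of $\mathcal K(\alpha\cup\beta\cup\{A\})$ one at a time and checking a running identity by a direct $v_n$-count. That route is more computational but complete.

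Your Mayer--Vietoris reformulation is correct and genuinely different, but you have pushed the entire content into the crux and then not proved it. You are right that the naive matching $\sigma\leftrightarrow\sigma\cup\{\bigcap\sigma\}$ fails; however, your proposed repair by ``removing a private vertex that is maximal in $V$'' does not do what you claim. Such a vertex lies in $\alpha$ (or $\beta$), so deleting it changes $\alpha$, hence $\widehat\alpha$, $\mathcal T$, $\mathcal S_\alpha$, $\mathcal S_\beta$ and $V$ all at once; you cannot delete it from $V$ alone and still ``preserve the four $\max$ equalities''. If you rebuild everything from $\alpha\setminus\{v\}$ you are essentially back to the paper's induction on $|\alpha|$, and the link comparison you would then need is no easier than the original crux.

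There is, however, a short direct argument for the crux that your set-up makes available. Fix a linear order on the (disjoint) sets $\widehat\alpha\setminus\mathcal T$ and $\widehat\beta\setminus\mathcal T$. For a mixed face $\sigma$ let $x=\min(\sigma\cap(\widehat\alpha\setminus\mathcal T))$ and $y=\min(\sigma\cap(\widehat\beta\setminus\mathcal T))$, and set $f(\sigma)=\sigma\bigtriangleup\{x\wedge y\}$. Since $x,y\in\sigma$ one has $\bigcap\sigma\subseteq x\wedge y$, so $\bigcap f(\sigma)=\bigcap\sigma$ and $f(\sigma)$ is again a face; since $x\wedge y\in\mathcal T$ the sets $\sigma\cap(\widehat\alpha\setminus\mathcal T)$ and $\sigma\cap(\widehat\beta\setminus\mathcal T)$ are unchanged, so $f(\sigma)$ is mixed with the same $x,y$, whence $f$ is a fixed-point-free involution changing $|\sigma|$ by $\pm1$. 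Thus the mixed faces cancel in Euler characteristic and $\chi(\mathcal K^a(V))=\chi\bigl(\mathcal K^a(\mathcal S_\alpha)\cup\mathcal K^a(\mathcal S_\beta)\bigr)$, which finishes your argument. With this repair your route is shorter and more conceptual than the paper's double induction; without it the proposal stops exactly where the real work begins.
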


The rest of this section is devoted to the proof of this theorem. First we observe that it is sufficient to prove this theorem for $\alpha,\beta\in\mathcal{A}_a$. We also observe that it is sufficient to prove this theorem in the case when $\kappa_a$ is replaced by the function $\chi(\mathcal K(-))$ because $\kappa_a(\alpha)=\chi(\mathcal{K}(\alpha))-1$ whenever $a\in\bigcup\alpha$ and the cases when either $a\notin\bigcup\alpha$ or $a\notin\bigcup\beta$ are trivial. We write $\kappa_a$ as $\kappa$ for simplicity of notation.

The following proposition is the first step in this direction, which states that $\kappa(\alpha)$ is actually determined by the $pp$-convex set $\bigcup\alpha$.

\begin{pro}\label{p1}
Let $\alpha\in\mathcal{A}_a$ and $\beta\in\mathcal{P}_a$. If $\bigcup\alpha=\bigcup\beta$, then $\kappa(\alpha)=\kappa(\beta)$.
\end{pro}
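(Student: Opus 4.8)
The plan is to reduce the claim to a purely combinatorial statement about nerve-like simplicial complexes and then prove it by a homotopy-equivalence argument. Since $\alpha\in\mathcal A_a$ and $\beta\in\mathcal P_a$ with $\bigcup\alpha=\bigcup\beta$, Proposition \ref{UNIQUE1} tells us that $\alpha$ is exactly the antichain of maximal elements appearing among the members of $\beta$, so every element of $\beta$ is contained in some element of $\alpha$ and every element of $\alpha$ actually occurs in $\beta$. Because both $\alpha$ and $\beta$ lie in $\mathcal P_a$, the correction term $\delta(-)(a)$ is $1$ in both cases (both unions contain $a$), so it suffices to show $\chi(\mathcal K^a(\alpha))=\chi(\mathcal K^a(\beta))$.

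First I would handle the step of adding redundant sets one at a time. Write $\beta=\alpha\cup\{C_1,\dots,C_r\}$ where each $C_i$ is contained in some element of $\alpha$ (and some of the $C_i$ may themselves be comparable). It is enough to show that adding a single $pp$-set $C\in\mathcal L_a$ with $C\subseteq F_0$ for some $F_0\in\gamma$ does not change the Euler characteristic of $\mathcal K^a(\gamma)$, where $\gamma$ is any intermediate finite subset of $\mathcal L_a$ already containing $F_0$. So the core lemma is: if $\gamma\subseteq\mathcal L_a$ is finite, $C\in\mathcal L_a$, and $C\subseteq F_0$ for some $F_0\in\gamma$, then $\chi(\mathcal K^a(\gamma\cup\{C\}))=\chi(\mathcal K^a(\gamma))$ (if $C$ is already in $\gamma$ there is nothing to prove).

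For this core lemma the natural approach is to exhibit a homotopy equivalence $|\mathcal K^a(\gamma\cup\{C\})|\simeq|\mathcal K^a(\gamma)|$ and invoke Corollary \ref{EULHTPY}. Let $v$ denote the new vertex corresponding to $C$ in $\mathcal K^a(\gamma\cup\{C\})$. The key observation is that because $C\subseteq F_0$, for any $\beta'\subseteq\gamma$ we have $\bigcap(\beta'\cup\{C\})=C\cap\bigcap\beta'$, and this is infinite precisely when... here one has to be careful: $C\cap\bigcap\beta'$ infinite does \emph{not} follow from $\bigcap(\beta'\cup\{F_0\})$ infinite in general. The honest statement is that the link of $v$ in $\mathcal K^a(\gamma\cup\{C\})$ is the full subcomplex $\{\beta'\subseteq\gamma : C\cap\bigcap\beta' \text{ infinite}\}$, and one shows this link is a cone with apex $F_0$ — indeed if $C\cap\bigcap\beta'$ is infinite then it is infinite and contained in $F_0$, so it equals $C\cap\bigcap(\beta'\cup\{F_0\})$, hence $\beta'\cup\{F_0\}$ is also in the link. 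A cone is contractible, so the star of $v$ is contractible, its boundary (the link) is contractible, and collapsing the star gives the desired homotopy equivalence $|\mathcal K^a(\gamma\cup\{C\})|\simeq|\mathcal K^a(\gamma)|$; alternatively one can argue directly with the cone formula of Corollary \ref{HMLGCONE} applied to the subcomplex which is the link of $v$. Iterating over the $C_i$ finishes the proof.

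The main obstacle I anticipate is precisely the bookkeeping in the core lemma: one must verify carefully that the link of the new vertex is genuinely a cone (this uses that $C\subseteq F_0$ so that any infinite $pp$-set contained in $C$ is automatically contained in $F_0$, which makes $F_0$ an apex of the link), and that deleting a vertex whose link is contractible does not change the homotopy type of the realization. Both facts are standard in combinatorial topology, but stating them cleanly in the present simplicial-complex language — and making sure the edge case where $C$ itself might not create a new vertex, or where the resulting complex becomes empty, is covered — is where the care is needed. Once the cone structure of the link is in place, Corollary \ref{EULHTPY} (or directly Corollary \ref{HMLGCONE}) does the rest.
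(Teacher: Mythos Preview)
Your proposal is correct and follows essentially the same route as the paper: reduce to comparing $\chi(\mathcal K^a(\alpha))$ and $\chi(\mathcal K^a(\beta))$, induct by adding one redundant $pp$-set $C\subseteq F_0\in\gamma$ at a time, observe that the link of the new vertex is a cone with apex $F_0$ (hence contractible), and conclude that adjoining the cone over a contractible subcomplex does not change the homotopy type, so Corollary \ref{EULHTPY} applies. The paper phrases the homotopy equivalence slightly differently (comparing the pairs $\mathcal K_1\subseteq\mathcal K(\beta')$ and $\mathrm{Cone}(\mathcal K_1)\subseteq\mathcal K(\beta'\cup\{B\})$), but the content is the same.
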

\begin{proof}
It is clear that $\beta\supseteq\alpha$ since $\beta$ is finite. Hence $\mathcal{K}(\alpha)$ is a full sub-complex of $\mathcal{K}(\beta)$ (i.e. if $\beta'\in \mathcal K(\beta)$ and $\beta'\subseteq\alpha$, then $\beta'\in\mathcal K(\alpha)$). We can also assume that $\{a\}\notin\beta$. Note that every element $\beta\setminus\alpha$ is properly contained in at least one element of $\alpha$. We use induction on the size of $\beta\setminus\alpha$ to prove this result.

If $\beta\setminus\alpha=\emptyset$, then the conclusion is trivially true. For the inductive case, suppose $\alpha\subseteq\beta'\subsetneq\beta$ and the result has been proved for $\beta'$. Let $B\in\beta\setminus\beta'$. Since $\alpha$ is the set of maximal elements of $\beta$, there is some $A\in\alpha$ such that $A\supsetneq B$.

Consider the complex $\mathcal{K}_1=\{F\in\mathcal{K}(\beta'):(F\cup\{B\})\in\mathcal{K}(\beta'\cup\{B\})\}$ as a full sub-complex of $\mathcal{K}(\beta')$. Observe that whenever $B\in F\in\mathcal{K}(\beta'\cup\{B\})$, we have $(F\cup\{A\})\setminus\{B\}\in\mathcal{K}(\beta')$. As a consequence, $\mathcal K_1=\mathrm{Cone}(\mathcal K(\beta'\setminus\{A\}))$ where the apex of the cone is $A$. In particular, $\mathcal K_1$ is contractible.

Also note that $\mathcal{K}(\beta'\cup\{B\})=\mathcal{K}(\beta')\cup\mathrm{Cone}(\mathcal{K}_1)$, where the apex of the cone is $B$. Now we compare the pair $\mathcal K_1\subseteq\mathcal{K}(\beta')$ with another pair $\mathrm{Cone}(\mathcal K_1)\subseteq\mathcal K(\beta'\cup\{B\})$ of simplicial complexes. Observe the set equality $\mathcal{K}(\beta')\setminus\mathcal K_1=\mathcal K(\beta'\cup\{B\})\setminus \mathrm{Cone}(\mathcal K_1)$. Also both $\mathcal K_1$ and $\mathrm{Cone}(\mathcal K_1)$ are contractible. Thus we conclude that $\mathcal{K}(\beta'\cup\{B\})$ and $\mathcal{K}(\beta')$ are homotopy equivalent. Finally, an application of \ref{EULHTPY} completes the proof.
\end{proof}

Note that this result is very helpful for the computation of local characteristics as we get the equalities $\kappa(\alpha\vee\beta)=\kappa(\alpha\cup\beta)$ and $\kappa(\alpha\wedge\beta)=\kappa(\alpha\circ\beta)$ for all $\alpha,\beta\in\mathcal{A}_a$, where $\alpha\circ\beta=\{A\cap B: A\in\alpha, B\in\beta\}$. The vertices of $\mathcal{K}(\alpha\circ\beta)$ will be denoted by the elements from $\alpha\times\beta$.

We use induction twice, first on $|\beta|$ and then on $|\alpha|$, to prove the main theorem of this section. The following lemma is the first step of this induction.

\begin{lemma}
For $\alpha,\beta\in\mathcal{A}_a$ and $\left|\alpha\right|\leq 1$, we have $\kappa(\alpha\vee\beta)+\kappa(\alpha\wedge\beta)=\kappa(\alpha)+\kappa(\beta)$.
\end{lemma}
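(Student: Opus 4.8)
The statement to prove is the base case $|\alpha|\le 1$ of the inclusion–exclusion identity for $\kappa$. If $\alpha=\emptyset$ then $\bigcup\alpha=\emptyset$, so $\alpha\vee\beta=\beta$ and $\alpha\wedge\beta=\emptyset$, and since $\mathcal K(\emptyset)=\emptyset$ has $\chi=0$ the identity reads $\kappa(\beta)+0=0+\kappa(\beta)$, which is trivial. So I may assume $\alpha=\{A\}$ for a single $A\in\mathcal L_a$. By the reduction already made in the text I work with $\chi(\mathcal K(-))$ instead of $\kappa$ (both $a\in A$ and $a\in\bigcup\beta$, so the correction terms $\delta(-)( a)$ are all $1$ and cancel), and by Proposition \ref{p1} I may replace $\alpha\vee\beta$ by $\alpha\cup\beta=\{A\}\cup\beta$ and $\alpha\wedge\beta$ by $\alpha\circ\beta=\{A\cap B:B\in\beta\}$. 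Writing $\mathcal K=\mathcal K(\beta)$, the goal becomes
\begin{equation*}
\chi(\mathcal K(\{A\}\cup\beta))+\chi(\mathcal K(\{A\cap B:B\in\beta\}))=\chi(\Delta^0)+\chi(\mathcal K),
\end{equation*}
i.e. $\chi(\mathcal K(\{A\}\cup\beta))+\chi(\mathcal K(\alpha\circ\beta))=1+\chi(\mathcal K)$, since $\mathcal K(\{A\})$ is the one-point complex $\Delta^0$ with Euler characteristic $1$.

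**Key steps.** First I identify $\mathcal K(\{A\}\cup\beta)$ as obtained from $\mathcal K$ by adding one new vertex, namely $A$ (I may assume $A\neq\{a\}$ and $A\notin\beta$, the degenerate cases being immediate). A nonempty $F\subseteq\{A\}\cup\beta$ containing $A$ is a face of $\mathcal K(\{A\}\cup\beta)$ iff $A\cap\bigcap(F\setminus\{A\})$ is infinite, and the key observation is that this happens iff $\bigcap\{A\cap B:B\in F\setminus\{A\}\}$ is infinite, i.e. iff the corresponding set of vertices of $\mathcal K(\alpha\circ\beta)$ forms a face there. In other words, the link of the apex $A$ inside $\mathcal K(\{A\}\cup\beta)$ is (a relabelled copy of) the subcomplex $\mathcal K(\alpha\circ\beta)$ — more precisely, $\mathcal K(\alpha\circ\beta)$ sits inside $\mathcal K$ as the full subcomplex on those $B\in\beta$ with $A\cap B$ infinite, after suitably identifying possibly-equal intersections, and $\mathcal K(\{A\}\cup\beta)=\mathcal K\cup\mathrm{Cone}(\mathcal Q)$ where $\mathcal Q$ is that subcomplex and $A$ is the apex. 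Granting this, Corollary \ref{HMLGCONE} gives $\chi(\mathcal K\cup\mathrm{Cone}(\mathcal Q))+\chi(\mathcal Q)=\chi(\mathcal K)+1$, which is exactly the required identity once I check $\chi(\mathcal Q)=\chi(\mathcal K(\alpha\circ\beta))$.

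**The main obstacle.** The delicate point is the bookkeeping when the intersections $A\cap B$ are not all distinct, or when some $A\cap B$ already equals some $B'\in\beta$, or equals $\{a\}$: then $\alpha\circ\beta$, as a \emph{set}, can have fewer elements than $\beta$, and its vertex set need not literally be a subset of $\mathcal V(\mathcal K)$. I expect to handle this by passing through Proposition \ref{p1} again in the right direction: the multiset $\{A\cap B:B\in\beta\}$ and the set $\alpha\circ\beta$ have the same union, hence the same $\kappa$, so I am free to work with whichever representation makes the simplicial identification cleanest, and I should phrase the cone/link correspondence at the level of the finite subset $\mathcal N_a(\{A\}\cup\beta)$ of $\mathcal P_a$ rather than antichains. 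The cases $A\cap B=\{a\}$ for some $B$ (so $B$ is \emph{not} in the link of $A$) and $A\subseteq B$ or $B\subseteq A$ need a sentence each but cause no real trouble. Once the combinatorial dictionary "adding the vertex $A$ = coning off the link, and the link is $\mathcal K(\alpha\circ\beta)$" is set up carefully, the proof is a one-line appeal to Corollary \ref{HMLGCONE}.
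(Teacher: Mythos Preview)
Your approach is correct and is genuinely different from the paper's. The paper proves this lemma by induction on $|\beta|$: assuming the identity for $\beta$, it adds a new element $B$ and splits into two cases ($A\cap B=\{a\}$ versus $A\cap B\supsetneq\{a\}$). Case~II in particular unwinds into five separate applications of the cone formula (equations (3)--(8) in the paper) together with a somewhat delicate identification $(^A\mathcal K\cup\mathrm{Cone}(^A\mathcal K^B_B))\cong{}^A\mathcal K^B$. You bypass all of this by treating $\beta$ at once: $\mathcal K(\{A\}\cup\beta)=\mathcal K(\beta)\cup\mathrm{Cone}(\mathcal Q)$ with $\mathcal Q$ the link of $A$, and one appeal to Corollary~\ref{HMLGCONE} finishes as soon as $\chi(\mathcal Q)=\chi(\mathcal K(\alpha\circ\beta))$ is established. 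This is shorter and conceptually cleaner.

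One caveat about your handling of the ``main obstacle''. Proposition~\ref{p1} as stated does not directly give $\chi(\mathcal Q)=\chi(\mathcal K(\alpha\circ\beta))$, because $\mathcal Q$ is \emph{not} of the form $\mathcal K(\gamma)$ for any $\gamma\in\mathcal P_a$: its vertices are elements $B\in\beta$, and $F\subseteq\beta$ is a face iff $A\cap\bigcap F$ is infinite, which is not the defining condition for any $\mathcal K(\gamma)$. What you actually need is the following: the surjection $\phi:B\mapsto A\cap B$ from $\mathcal V(\mathcal Q)$ onto $\mathcal V(\mathcal K(\alpha\circ\beta))$ has the property that $F\in\mathcal Q\Leftrightarrow\phi(F)\in\mathcal K(\alpha\circ\beta)$, and any simplicial surjection with this property preserves $\chi$. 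This last fact is a two-line counting argument (group faces of $\mathcal Q$ by their image $G$; over each fibre $\phi^{-1}(w_i)$ the alternating sum of nonempty subsets is $-1$, so the contribution over $G$ is $(-1)^{|G|}$), or alternatively a homotopy argument in the spirit of Proposition~\ref{p1} (adding a duplicate vertex $B_2$ with $A\cap B_2=A\cap B_1$ gives $B_2$ a link that is a cone with apex $B_1$). Either way the gap is small, but you should state and prove this lemma rather than invoke Proposition~\ref{p1} for multisets.
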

\begin{proof}
The cases $\left|\alpha\right|=0$ and $\alpha=\{\{a\}\}$ are trivial. So we assume that $\alpha=\{A\}$ where $A$ is infinite. We can make similar non-triviality assumptions on $\beta$, namely there is at least one element in $\beta$ and all the elements of $\beta$ are infinite.

There are only two possible cases when $|\beta|=1$ and the conclusion holds true in both these cases. For example when $\beta=\{B\}$ and $A\cap B=\{a\}$, we have $\mathcal K(\alpha)\cong\mathcal K(\beta)\cong\Delta^0$, $\mathcal K(\alpha\circ\beta)$ is empty and $\mathcal K(\alpha\cup\beta)$ is disjoint union of two copies of $\Delta^0$. Hence the identity in the statement of the lemma takes the form $1+(-1)=0+0$.

Suppose for the inductive case that the result is true for $\beta$ i.e. $\kappa(\alpha\vee\beta)+\kappa(\alpha\wedge\beta)=\kappa(\alpha)+\kappa(\beta)$ holds. We want to show that the result holds for $\beta\cup\{B\}$ i.e. $\kappa(\alpha\vee(\beta\cup\{B\}))+\kappa(\alpha\wedge(\beta\cup\{B\}))=\kappa(\alpha)+\kappa(\beta\cup\{B\})$.

We introduce some superscript and subscript notations to denote new simplicial complexes obtained from the original. The following list describes them and also explains the rules to handle two or more scripts at a time.
\begin{itemize}
    \item Let $\mathcal K_0$ denote the complex $\mathcal K(\alpha)$, i.e. the complex consisting of only one vertex and $\mathcal{K}$ denote the complex $\mathcal{K}(\beta)$.
    \item Let $\mathcal{K}^S$ denote the complex $\mathcal{K}(\beta\cup S)$ for any finite $S\subseteq\mathcal{L}_a$ which contains only infinite elements. Also, $\mathcal K^{A,B}$ is a short hand for $\mathcal K^{\{A,B\}}$.
	\item Whenever $C$ is a vertex of $\mathcal{Q}$, the notation $\mathcal{Q}_C$ denotes the sub-complex $\{F\in\mathcal{Q}:C\notin F, F\cup\{C\}\in\mathcal{Q}\}$ of $\mathcal{Q}$.
	\item If $\mathcal Q=\mathcal K(\gamma)$ for some antichain $\gamma$ and $A\notin\gamma$, then the notation $^A\mathcal{Q}$ denotes the complex $\mathcal{K}(\{A\}\circ\gamma)$.
	\item The notation $^C\mathcal K^S_B$ means $^C((\mathcal K^S)_B)$. This describes the order of the scripts.
    \item The Euler characteristic of $^C\mathcal K^S_B$ will be denoted by $^C\chi^S_B$.
\end{itemize}

Using this notation, the inductive hypothesis is
\begin{equation}\label{1}\chi^A+\,^A\chi=\chi_0+\chi\end{equation} and our claim is \begin{equation}\label{2}\chi^{B,A}+\,^A\chi^B=\chi_0+\chi^B.\end{equation}

\textbf{Case I: $(A\cap B)=\{a\}.$} In this case, the faces of $\mathcal{K}^{A,B}$ not present in $\mathcal{K}^A$ are the faces of $\mathcal{K}^B$. Hence $b_n(\mathcal{K})-b_n(\mathcal{K}^B)=b_n(\mathcal{K}^A)-b_n(\mathcal{K}^{A,B})$ for all $n\geq 0$, where $b_n$ denotes the $n^{th}$ Betti number. Hence we get
\begin{equation*}\chi^{B,A}-\chi^A=\chi^B-\chi\end{equation*}
Also note that the hypothesis $(A\cap B)=\{a\}$ yields $H_*(^A\mathcal{K})=H_*(^A\mathcal{K}^B)$ since only infinite elements matter for the computations. It follows that equation (\ref{2}) holds in this case.

\textbf{Case II: $A\cap B\supsetneq\{a\}.$} Note that whenever $C$ is not a vertex of $\mathcal Q$, we have $\mathcal Q^C_C\subseteq\mathcal Q$ and $\mathcal Q\cup\mathrm{Cone}(\mathcal Q^C_C)=\mathcal Q^C$, where the apex of the cone is $C$. Hence corollary \ref{HMLGCONE} can be restated in this notation as the following identity. \begin{equation}\label{3}\chi(\mathcal{Q})+1=\chi(\mathcal{Q}^C)+\chi(\mathcal{Q}^C_C)\end{equation}
As particular cases of (\ref{3}), we get the following equalities.
\begin{equation}\label{4}\chi+1=\chi^B+\chi^B_B.\end{equation} \begin{equation}\label{5}\chi^A+1=\chi^{A,B}+\chi^{A,B}_B\end{equation} \begin{equation}\label{6}\chi^B_B+1=\chi^{A,B}_B+\chi^{A,B}_{A,B}\end{equation}
It can be checked that $\mathcal{K}^{A,B}_{A,B}\cong\ ^A\mathcal{K}^B_B$ via the map $F\mapsto \{\{C,A\}: C\in F\}$. This gives us the following equation.
\begin{equation}\label{8}\chi^{A,B}_{A,B}=\ ^A\chi^B_B\end{equation}
If we combine equations (\ref{1}),(\ref{4}),(\ref{5}),(\ref{6}) and (\ref{8}), it remains to prove the following to get equation (\ref{2}) in the claim.
\begin{equation}\label{7}^A\chi+1=\,^A\chi^B+\,^A\chi^B_B\end{equation}
Observe that the natural inclusion maps $i_1:\mathcal{K}_0\rightarrow\mathcal{K}^A$ and $i_2:\mathcal{K}\rightarrow\mathcal{K}^A$ are inclusions of sub-complexes and their images are disjoint. Furthermore, the set theoretic map $g:\mathcal{K}^A\setminus(Im(i_1)\sqcup Im(i_2))\rightarrow\,^A\mathcal{K}$ defined by $F\mapsto\{\sigma\subseteq F:A\in\sigma,|\sigma|=2\}$ is a bijection. Now consider the composition $^A\mathcal{K}^B\cong\mathcal{K}^{A,B}\setminus(i_1(\mathcal{K}_0)\sqcup i_2(\mathcal{K}^B))\xrightarrow{\pi_B}\mathcal{K}^A\setminus(i_1(\mathcal{K}_0)\sqcup i_2(\mathcal{K}))\cong\, ^A\mathcal{K}$, where $\pi_B(F)=F\setminus\{B\}$. The union of images (under this composition of maps) of those faces in $^A\mathcal K^B$ which contain $A\cap B$ is the sub-complex $^A\mathcal{K}^B_B$ of $^A\mathcal{K}$. Hence $(^A\mathcal{K}\cup \mathrm{Cone}(^A\mathcal{K}^B_B))\cong\,^A\mathcal{K}^B$, where the apex of the cone is $\{A,B\}$. An application of \ref{HMLGCONE} gives the required identity in equation (\ref{7}).
\end{proof}

We use definition \ref{Euler} of Euler characteristic to prove the second step in the proof of the main theorem since we do not have a proof using homological techniques. In this step, we allow the size of $\beta$ to be an arbitrary fixed positive integer and we use induction on the size of $\alpha$. The lemma just proved is the base case for this induction. Let $A$ be a new element of $\mathcal L_a$ to be added to $\alpha$ and assume the result is true for $\alpha$. Again we may assume that $A$ is infinite.

We construct the complex $\mathcal{K}(\alpha\cup\beta\cup\{A\})$ in steps starting with the complex $\mathcal{K}(\alpha\cup\beta)$ and the conclusion of the theorem holds for the latter by the inductive hypothesis. We do this in such a way that at each step $\mathcal{K}_1$ of the construction, the following identity is satisfied.
\begin{equation}\label{count}
\chi(\mathcal K(\alpha\cup\{A\})\cap\mathcal K_1)+\chi(\mathcal K(\beta))=\chi(\mathcal K(\alpha\cup\{A\}\cup\beta)\cap\mathcal K_1)+\chi(\mathcal K((\alpha\cup\{A\})\circ\beta)\cap\mathcal K_1)
\end{equation}
In this expression, $\mathcal K((\alpha\cup\{A\})\circ\beta)\cap\mathcal K_1$ denotes the subcomplex of $\mathcal K((\alpha\cup\{A\})\circ\beta)$ whose faces are appropriate projections of the faces of $\mathcal K_1$.

For the first step, we construct all the elements in $\mathcal{K}(\alpha\cup\{A\})$ not in $\mathcal K(\alpha)$. Let $\mathcal K_1'$ denote the resulting complex. No new faces of the complex $\mathcal K((\alpha\cup\{A\})\circ\beta)$ are constructed in this process. Hence, for each $n\geq 0$, we have
\begin{equation*}
v_n(\mathcal K_1')-v_n(\mathcal K(\alpha\cup\beta\cup\{A\}))=v_n(\mathcal K_1'\cap\mathcal{K}(\alpha\cup\{A\}))-v_n(\mathcal{K}(\alpha\cup\{A\})),
\end{equation*}
where $v_n(\mathcal Q)$ denotes the number $n$-dimensional faces of $\mathcal Q$. Hence equation (\ref{count}) is satisfied for $\mathcal K_1'$.

For the second step, we further construct all the faces corresponding to $\{A\}\circ\beta$. The conclusion in this case follows from the previous lemma.

Finally we construct the faces containing $A$ and intersecting both $\alpha$ and $\beta$. We construct a face $F$ of size $m+k$ whenever all the proper sub-faces of $F$ have already been constructed, where $F\cap(\alpha\cup\{A\})$ and $F\cap\beta$ have sizes $m$ and $k$ respectively. It is clear that $m,k\geq 1$.

Let the sub-complex of $\mathcal{K}(\alpha\cup\beta\cup\{A\})$ consisting of the already constructed faces be denoted by $\mathcal{K}$. We assume, for induction, that equation (\ref{count}) is true for $\mathcal K$. Let $g(F')=\{\sigma\subseteq F':|\sigma\cap(\alpha\cup\{A\})|=1,|\sigma\cap\beta|=1\}$ for $F'\in\mathcal{K}$. Let $\mathcal{K}_3=\bigcup_{F'\subsetneq F}g(F')$ and $\mathcal{K}_2=\bigcup_{F'\in\mathcal{K}}g(F')$. Note the inclusions $\mathcal K\subseteq\mathcal K_2\subseteq\mathcal{K}((\alpha\cup\{A\})\circ\beta)$.

The construction of the face $F$ changes $\chi(\mathcal{K})$ by $(-1)^{\mathrm{dim} F}=(-1)^{m+k-1}$, while the numbers $\chi(\mathcal{K}(\alpha\cup\{A\}))$ and $\chi(\mathcal{K}(\beta))$ are unaltered.

We calculate the change in the value of $\chi(\mathcal{K}_3)$ after the addition of $F$. The complex $g(F)$ is contractible. Hence its Euler characteristic is equal to $1$ by \ref{EULHTPY}.

Let $w_n$ denote the number of $n$-dimensional faces of $\mathcal{K}_3$. Recall that $\mathcal V(\mathcal K_3)= (\alpha\cup\{A\})\times\beta$. If $\mathrm{dim} F'=n+2$ for some $F'\in\mathcal K(\alpha\cup\beta\cup\{A\})$ such that $|F'\cap(\alpha\cup\{A\})|\geq 1,|F'\cap\beta|\geq 1$, then $\mathrm{dim}(g(F'))=n$. Therefore to calculate $w_n$, we choose total $n+2$ vertices from $F$, making sure that we choose at least one vertex from both $\alpha\cup\{A\}$ and $\beta$. Hence $w_n=\Sigma_{j=1}^{n+1}\binom{m}{j}\binom{k}{n+2-j}$. This number can be easily shown to be equal to $\binom{m+k}{n+2}-\binom{m}{n+2}-\binom{k}{n+2}$.

The maximum dimension of the face of $\mathcal{K}_3$ is equal to $m+k-3$. Hence the required change in the value of $\chi(\mathcal{K}_3)$ is $1-\Sigma_{n=0}^{m+k-3}(-1)^n[\binom{m+k}{n+2}-\binom{m}{n+2}-\binom{k}{n+2}]$.

To obtain equation (\ref{count}) for $\mathcal K\cup\{F\}$, we need to show that there is no change in the value of $\chi(\mathcal{K})+\chi(\mathcal K_3)$ after construction of $F$.

But we know that $\Sigma_{n=0}^{m+k}(-1)^n[\binom{m+k}{n}-\binom{m}{n}-\binom{k}{n}]=0$ since each of the three alternating sums is zero. This equation rearranges to give the required cancelation equation and completes the proof.

\subsection{Global characteristic}\label{GCDS}

Let the function $\kappa:\mathcal{A}\times M^n\rightarrow\mathbb{Z}$ be defined by $\kappa(\alpha,a)=\kappa_a(\alpha)$. Suppose $\alpha$ is a singleton. If $\bigcup\alpha$ is infinite, then $\kappa(\alpha,-)$ is the constant $0$ function and if $\alpha=\{a\}$, then $\kappa(\alpha,b)=0$ for all $b\neq a$ and $\kappa(\alpha,a)=-1$. For arbitrary $\alpha\in\mathcal{A}$, if $a\notin\bigcup\alpha$, then $\kappa(\alpha,a)=0$.

\begin{definitions}
For $\alpha\in\mathcal{A}$, we define the \textbf{set of singular points} of $\alpha$ to be the set $\mathrm{Sing}(\alpha):=\{a\in M^n: \kappa(\alpha,a)\neq 0\}$. $\mathrm{Sing}(\alpha)$ is always finite since all the singular points appear as singletons in the nest corresponding to $\alpha$. Using finiteness of $\mathrm{Sing}(\alpha)$, we define the \textbf{global characteristic} of $\alpha$ to be the sum $\Lambda(\alpha):=-\Sigma_{a\in M^n} \kappa(\alpha,a)$, which in fact is equal to the finite sum $\Lambda(\alpha)=-\Sigma_{a\in \mathrm{Sing}(\alpha)}\kappa(\alpha,a)$.
\end{definitions}

Fix some $a\in M^n$. Let $\alpha,\beta\in\mathcal{A}$ be such that $\beta\prec\alpha$. Then either $\mathcal{N}_a(\alpha)=\mathcal{N}_a(\beta)=\emptyset$ or $\mathcal{N}_a(\beta)\prec\mathcal{N}_a(\alpha)$. If $C:=\bigcup\alpha\setminus\bigcup\beta$ is a cell, we define the homology $H_*(C)$ to be the relative homology $H_*(\mathcal{K}(\mathcal{N}_a(\alpha\cup\beta));\mathcal{K}(\mathcal{N}_a(\beta)))$. In particular, the alternating sum of the Betti numbers of $H_*(C)$, denoted by $\chi_a(C)$, is equal to the difference $\chi(\mathcal{K}(\mathcal{N}_a(\alpha)))-\chi(\mathcal{K}(\mathcal{N}_a(\beta)))$ by \ref{p1} and \ref{LONGEXACT}. We also have the equation $\delta(C)=\delta(\alpha)-\delta(\beta)$. Hence if we define the local characteristic of $C$ as $\kappa_a(C):=\chi_a(C)-\delta(C)(a)$, we get the identity $\kappa_a(C)=\kappa_a(P(C))-\kappa_a(N(C))$. We define the extension of the function $\kappa$ to include all cells by setting $\kappa(C,a):=\kappa_a(C)$ for $a\in M^n,C\in\mathcal{C}$.

\begin{definitions}
We define the set of singular points $\mathrm{Sing}(C)$ for $C\in\mathcal{C}$ analogously by setting $\mathrm{Sing}(C):=\{a\in M^n:\kappa_a(C)\neq 0\}$. This set is finite since $\mathrm{Sing}(C)\subseteq \mathrm{Sing}(P(C))\cup \mathrm{Sing}(N(C))$. We also extend the definition of global characteristic for cells by setting $\Lambda(C):=-\Sigma_{a\in M^n} \kappa(C,a)$.
\end{definitions}

It is immediate that $\Lambda(C)=\Lambda(P(C))-\Lambda(N(C))$ for every $C\in\mathcal{C}$.

The main aim of this section is to prove that the global characteristic is additive in the following sense.
\begin{theorem}\label{t2}
If $\{B_i:1\leq i\leq l\}, \{B_j':1\leq j\leq m\}$ are two finite families of pairwise disjoint blocks such that $\bigsqcup_{i=1}^l B_i=\bigsqcup_{j=1}^m B_j'$, then $\Sigma_{i=1}^l\Lambda(B_i)=\Sigma_{j=1}^m\Lambda(B_j')$.
\end{theorem}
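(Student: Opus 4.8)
The statement says that $\Lambda$ is well-defined on definable sets, i.e.\ does not depend on the chosen decomposition into disjoint blocks. Since both families partition the same definable set $D$, the plan is to reduce to the existence of a common refinement. First I would observe that $\Lambda$ extends to cells and indeed to arbitrary definable sets by the obvious formula: writing $D=\bigsqcup_{i=1}^l B_i$ and recalling $\kappa(C,a)$ was extended to all cells, set $\kappa_a(D):=\sum_{i=1}^l\kappa_a(B_i)$ and $\Lambda(D):=-\sum_{a\in M^n}\kappa_a(D)$; the point of the theorem is exactly that this is independent of the partition. So I would fix the nest $\mathcal D$ associated (via \ref{CH1}) to the \emph{union} of both decompositions, i.e.\ to $\bigcup_{i}(P(B_i)\cup N(B_i))\cup\bigcup_j(P(B_j')\cup N(B_j'))$, and use the canonical partition $D=\bigcup_{F\in\mathcal D^+}\mathrm{Core}_{\mathcal D}(F)$ of \ref{CH1} as the common refinement. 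It then suffices to show that for a single block $B$ with nest $\mathcal D$ refining $P(B)\cup N(B)$, one has $\Lambda(B)=\sum_{F\in\mathcal D^+}\Lambda(\mathrm{Core}_{\mathcal D}(F))$, where $\mathcal D^+$ is computed with respect to $B$; summing over the blocks in each of the two original families then gives the equality of the two sides.

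Because $\Lambda$ is the (negative) sum over $a\in M^n$ of the local characteristics, and these sums are finite (the singular sets are finite), I would prove the refinement identity pointwise: fix $a\in M^n$ and show $\kappa_a(B)=\sum_{F\in\mathcal D^+}\kappa_a(\mathrm{Core}_{\mathcal D}(F))$. Localizing at $a$ replaces $\mathcal D$ by $\mathcal N_a(\mathcal D)\subseteq\mathcal L_a$ and each core by its localization, so this is a statement purely about the finite meet-semilattice $\mathcal N_a(\mathcal D)$ and the simplicial complex $\mathcal K(\mathcal N_a(\alpha))$ attached to $\alpha=P(B)$. Recall $\kappa_a(B)=\kappa_a(P(B))-\kappa_a(N(B))$ and $\kappa_a$ of a $pp$-convex set is $\chi(\mathcal K(\mathcal N_a(\alpha)))$ minus the correction term $\delta(\alpha)(a)$; meanwhile $\sum_{F\in\mathcal D^+}\mathrm{Core}_{\mathcal D}(F)$ is the $pp$-convex set $\bigcup P(B)$ with the part $\bigcup N(B)$ removed, decomposed into its cores. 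So the identity to check is that Euler characteristic (and the $\delta$-correction) is additive over the core decomposition of a $pp$-convex set. For the correction term this is immediate from $\delta(C)=\delta(\alpha)-\delta(\beta)$ and the fact that the cores partition the set. For the Euler-characteristic part I would argue by induction on $|\mathcal D|$, peeling off one maximal element of the nest at a time: adding a new top element $A$ to the nest splits off its core $\mathrm{Core}_{\mathcal D}(A)$, and the inclusion–exclusion identity of \ref{t1} together with \ref{p1} (which says $\kappa_a$ depends only on the $pp$-convex set, not on the antichain/nest presenting it) shows that the local characteristic of the part already built plus that of the new core equals the local characteristic of the enlarged union. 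Iterating down the nest assembles the full additivity.

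I expect the main obstacle to be purely bookkeeping rather than conceptual: matching up, for a fixed $a$, the three ways of slicing the relevant $pp$-convex set — as the block $B$, as $P(B)$ minus $N(B)$, and as a disjoint union of cores over $\mathcal D^+$ — and checking that the simplicial complexes $\mathcal K(\mathcal N_a(-))$ and the relative-homology definition of $\chi_a$ of a cell behave compatibly with all three. The one genuine input is that $\kappa_a$ is insensitive to which antichain/nest presents a given $pp$-convex set (Proposition \ref{p1}) and that it satisfies inclusion–exclusion (Theorem \ref{t1}); granting those, the induction on the size of the nest is routine. Summing the pointwise identity over the finite set $\mathrm{Sing}(B)\cup\bigcup_{F}\mathrm{Sing}(\mathrm{Core}_{\mathcal D}(F))$, negating, and then summing over the blocks in the two families yields $\sum_i\Lambda(B_i)=\sum_j\Lambda(B_j')$ as required.
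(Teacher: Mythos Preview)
Your proposal is correct and follows essentially the same route as the paper: reduce to the pointwise statement (the paper states this as Lemma~\ref{l1}), pass to a common refinement by taking the nest $\mathcal D$ generated by both families and using the cores $\{\mathrm{Core}_{\mathcal D}(F):F\in\mathcal D^+\}$, and then verify additivity of $\kappa_a$ over the core decomposition of a single block using Proposition~\ref{p1} and the inclusion--exclusion identity of Theorem~\ref{t1}. The only organisational difference is that the paper layers the nest from the bottom up (building a chain $\alpha_0\prec\alpha_1\prec\cdots\prec\alpha_v$ of antichains of successive minimal elements and invoking a multi-set inclusion--exclusion, Proposition~\ref{pr2}, at each level), whereas you peel off maximal elements one at a time; both inductions are routine once Theorem~\ref{t1} is in hand.
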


The proof of this theorem follows at once from the following local version.
\begin{lemma}\label{l1}
If $a\in M^n$ and $\{B_i:1\leq i\leq l\}, \{B_j':1\leq j\leq m\}$ are two finite families of pairwise disjoint blocks such that $\bigsqcup_{i=1}^l B_i=\bigsqcup_{j=1}^m B_j'$, then $\Sigma_{i=1}^l\kappa_a(B_i)=\Sigma_{j=1}^m\kappa_a(B_j')$.
\end{lemma}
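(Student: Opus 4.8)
The statement is local, so fix $a$ once and for all and abbreviate $\kappa_a$ to $\kappa$; write $D$ for the common value $\bigsqcup_{i}B_i=\bigsqcup_{j}B_j'$. The plan is to pass to a common refinement by a single nest and to reduce everything to Theorem \ref{t1}. Let $\mathcal D$ be the nest generated by the finite family $\bigcup_{i}(P(B_i)\cup N(B_i))\cup\bigcup_{j}(P(B_j')\cup N(B_j'))$. By \ref{CH1}, $D=\bigsqcup_{F\in\mathcal D^+}\mathrm{Core}_\mathcal D(F)$, and since $\mathcal D$ contains the nest of each of the two given partitions, it follows straight from the definitions that every block $B_i$ (and every $B_j'$) is a disjoint union of those $\mathcal D$-cores that it contains. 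Hence both $\Sigma_i\kappa(B_i)$ and $\Sigma_j\kappa(B_j')$ will equal $\Sigma_{F\in\mathcal D^+}\kappa(\mathrm{Core}_\mathcal D(F))$ once we establish:

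\emph{Claim.} If $B\in\mathcal B_n$ is a block and $\mathcal E$ is any nest containing $P(B)\cup N(B)$ (so that $B$ is a disjoint union of $\mathcal E$-cores), then $\kappa(B)=\Sigma\{\kappa(C):C\text{ an }\mathcal E\text{-core with }C\subseteq B\}$.

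I would prove the Claim by induction on $|\mathcal E|-|\mathcal D_B|$, where $\mathcal D_B$ is the nest of the trivial partition $\{B\}$; when $\mathcal E=\mathcal D_B$ the only $\mathcal E$-core inside $B$ is $B$ itself and there is nothing to prove. For the inductive step it suffices to add one $pp$-set $G$ at a time, i.e.\ to treat the passage from $\mathcal E$ to the nest $\mathcal E'$ generated by $\mathcal E\cup\{G\}$. Here the key combinatorial observation is that an $\mathcal E$-core $C=\mathrm{Core}_\mathcal E(F)$ meets every member of $\mathcal E$ either in all of $C$ or in $\emptyset$; consequently every member of $\mathcal E'$ meets $C$ in $G\cap C$ or in $\emptyset$, so $C$ is the disjoint union of the (at most two) $\mathcal E'$-cores $C\cap G$ and $C\setminus G$. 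Summing over the $\mathcal E$-cores $C\subseteq B$ and using $\kappa(\emptyset)=0$, the Claim therefore reduces, via the inductive hypothesis, to the single-block identity
\begin{equation*}
\kappa(B)=\kappa(B\cap G)+\kappa(B\setminus G)\qquad\text{for every block }B\in\mathcal B_n\text{ and every }pp\text{-set }G.
\end{equation*}

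It remains to prove this identity, which is where Theorem \ref{t1} enters. Write $P(B)=\{A\}$ and $N(B)=\beta$, so $\beta\prec\{A\}$ and $B=A\setminus\bigcup\beta$. Apart from the degenerate cases $A\cap G=\emptyset$, $B\cap G=\emptyset$, $B\subseteq G$ and $a\notin A$, which are immediate, $B\cap G$ is the block with $P(B\cap G)=\{A\cap G\}$ and $N(B\cap G)=\{A\cap G\}\wedge\beta$, while $B\setminus G$ is the block with $P(B\setminus G)=\{A\}$ and $N(B\setminus G)=\beta\vee\{A\cap G\}$, where $\wedge,\vee$ are the lattice operations on antichains; one verifies these presentations by computing the underlying $pp$-convex sets and using that each member of $\beta$ is contained in $A$ (so that $\prec$ is preserved). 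Using the identity $\kappa(\text{cell})=\kappa(P(\cdot))-\kappa(N(\cdot))$ we obtain
\begin{equation*}
\kappa(B\cap G)+\kappa(B\setminus G)=\kappa(\{A\})+\kappa(\{A\cap G\})-\kappa(\{A\cap G\}\wedge\beta)-\kappa(\{A\cap G\}\vee\beta),
\end{equation*}
and Theorem \ref{t1}, applied to the antichains $\{A\cap G\}$ and $\beta$, rewrites the last four terms as $\kappa(\{A\})-\kappa(\beta)=\kappa(B)$, which is what we want.

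The Euler-characteristic content of the argument is thus entirely absorbed by Theorem \ref{t1}; the real work (and the main obstacle) lies in the bookkeeping around nest refinement — checking that an $\mathcal E$-core splits into exactly the two $\mathcal E'$-cores $C\cap G$ and $C\setminus G$, that blocks are closed under intersecting with and subtracting a $pp$-set, and that the above presentations of $B\cap G$ and $B\setminus G$ as blocks are correct together with the correct treatment of the degenerate presentations. These steps are routine but need to be carried out carefully.
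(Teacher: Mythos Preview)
Your argument is correct and, after the shared opening move (pass to a common refining nest $\mathcal D$ and reduce to showing $\kappa(B)=\sum\kappa(\mathrm{Core}_{\mathcal D}(F))$ for a single block $B$), it follows a genuinely different inductive scheme from the paper's. The paper stratifies the nest by levels: it builds a $\prec$-chain $\alpha_0\prec\alpha_1\prec\cdots\prec\alpha_v$ of antichains of minimal elements, telescopes $\kappa(B)=\sum_t\kappa(C_t)$ with $C_t=\bigcup\alpha_t\setminus\bigcup\alpha_{t-1}$, and then invokes a separate inclusion--exclusion statement (Proposition~\ref{pr2}) to decompose each $\kappa(C_t)$ as a sum over cores. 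You instead refine the nest one generator at a time and observe that every core $C$ splits into at most the two blocks $C\cap G$ and $C\setminus G$, so the whole claim collapses to the clean identity $\kappa(C)=\kappa(C\cap G)+\kappa(C\setminus G)$, which is a single application of Theorem~\ref{t1}. Your route is more elementary in that it avoids stating Proposition~\ref{pr2} and the level decomposition, at the cost of the nest bookkeeping you flag; the paper's route is more structural and makes the stratification of $\mathcal D^+$ explicit, which is reused later in the paper's narrative.

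One small point of precision: your induction parameter ``$|\mathcal E|-|\mathcal D_B|$'' does not quite match the step you perform, since passing from $\mathcal E$ to the nest generated by $\mathcal E\cup\{G\}$ may enlarge the nest by more than one element. The fix is immediate---induct on the number of generators added (equivalently, list the elements of the target nest as $G_1,\dots,G_k$ and set $\mathcal E_{i+1}$ to be the nest generated by $\mathcal E_i\cup\{G_{i+1}\}$, noting each $\mathcal E_i$ stays inside the target since the target is already a nest)---but it should be stated that way.
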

\begin{proof}
It will be sufficient to show that both these numbers are equal to the sum $\Sigma_{B\in\mathcal{F}}\,\kappa_a(B)$ where $\mathcal{F}$ is any finite family of blocks finer than both the given families. We can in particular choose a finite $pp$-nest $\mathcal{D}$ containing all the elements in $\bigcup_{i=1}^l(P(B_i)\cup N(B_i))\cup\bigcup_{j=1}^m(P(B_j)\cup N(B_j))$ and set $\mathcal{F}=\{\mathrm{Core}_\mathcal{D}(D):D\in\mathcal{D}^+\}$. This involves partitioning every $B_i$ and $B_j'$ into smaller blocks of the form $\mathrm{Core}_\mathcal D(D)$ for $D\in\mathcal D^+$.

Thus it will be sufficient to show that if $\mathcal{F}$ is a finite family of blocks corresponding to cores of a $pp$-nest $\mathcal{D}$ such that $B=\bigcup \mathcal{F}\in\mathcal{B}$, then $\kappa_a(B)=\Sigma_{F\in\mathcal{F}}\kappa_a(F)$. Consider the sub-poset $\mathcal{H}$ of $\mathcal{L}$ containing all the elements of $\bigcup_{F\in\mathcal{F}}(P(F)\cup N(F))$. Then we construct the antichains $\{\alpha_s\}_{s\geq 0}$ in such a way that $\alpha_s$ is the set of all minimal elements of $\mathcal{H}\setminus\bigcup_{0\leq t<s}\alpha_t$. Then this process stops, say $\alpha_v$ is $P(B)$. Then we have a chain of antichains $\alpha_0\prec\alpha_1\prec\cdots\prec\alpha_v$. Now $\kappa_a(B)=\kappa_a(\alpha_v)-\kappa_a(\alpha_0)= \Sigma_{t=1}^v\kappa_a(\alpha_t)-\kappa_a(\alpha_{t-1})$. In other words, if $C_t$ denotes the cell $\bigcup\alpha_t\setminus\bigcup\alpha_{t-1}$ for $1\leq t\leq v$, then $\kappa_a(B)=\Sigma_{t=1}^v\kappa_a(C_t)$.

Now it remains to show that for each $1\leq t\leq v$, $\kappa_a(C_t)=\Sigma_{F\in\alpha_t} \kappa_a(\mathrm{Core}_\mathcal{D}(F))$. This follows from the following proposition by first choosing $A_j$ to consist of elements of $\alpha_t$ and then choosing $A_j$ to consist of elements of $\alpha_{t-1}$. Then by our construction of the chain and the definition of $\kappa_a(C_t)$, we get the required result.
\end{proof}

\begin{pro}\label{pr2}
For any $\alpha_j\in\mathcal{A},A_j=\bigcup\alpha_j,j\in[k]=\{1,2,\hdots,k\}$ where $k\geq 2$, we have $\kappa_a(\bigcup_{j\in[k]}A_j)=\Sigma_{S\subseteq[k],S\neq\emptyset}\kappa_a(\bigcap_{s\in S}A_s\setminus\bigcup_{t\notin S}A_t)$.
\end{pro}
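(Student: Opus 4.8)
The plan is to derive the proposition as a formal consequence of the modularity of $\kappa_a$ (Theorem \ref{t1}) together with the cell identity $\kappa_a(C)=\kappa_a(P(C))-\kappa_a(N(C))$ proved just before Theorem \ref{t2}; everything else is inclusion--exclusion bookkeeping in the Boolean lattice of subsets of $[k]$. Throughout, for a $pp$-convex set $Y$ I will write $\kappa_a(Y)$ for the value of $\kappa_a$ on the unique antichain with union $Y$ --- legitimate because, by Proposition \ref{p1}, $\kappa_a(\alpha)$ depends only on $\bigcup\alpha$ --- and I set $\kappa_a(\emptyset):=0$, which is consistent with $\chi(\emptyset)=0$ and $\delta(\emptyset)(a)=0$. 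For nonempty $S\subseteq[k]$ put $P_S:=\bigcap_{s\in S}A_s$, which is again $pp$-convex since intersections of $pp$-convex sets are $pp$-convex. The goal will be to expand each side of the claimed identity as a signed sum of the numbers $\kappa_a(P_R)$, $\emptyset\neq R\subseteq[k]$, and to check that the coefficients agree.

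First I would upgrade Theorem \ref{t1} to the full inclusion--exclusion formula
\begin{equation*}
\kappa_a\Bigl(\bigcup_{j\in[k]}A_j\Bigr)=\sum_{\emptyset\neq R\subseteq[k]}(-1)^{|R|+1}\kappa_a(P_R),
\end{equation*}
which handles the left-hand side. Since $\mathcal{A}$ is a distributive lattice under $\wedge,\vee$ and $\kappa_a$ is modular on it by Theorem \ref{t1}, this follows by the standard induction on $k$: write $\bigvee_{j\in[k]}\alpha_j=\bigl(\bigvee_{j<k}\alpha_j\bigr)\vee\alpha_k$, apply Theorem \ref{t1}, use distributivity to rewrite $\bigl(\bigvee_{j<k}\alpha_j\bigr)\wedge\alpha_k=\bigvee_{j<k}(\alpha_j\wedge\alpha_k)$, and apply the inductive hypothesis to the two $(k-1)$-fold joins, noting that $\bigwedge_{r\in R}\alpha_r$ corresponds to $\bigcap_{r\in R}A_r=P_R$. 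The same formula then holds for an arbitrary finite family of $pp$-convex sets.

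Next, for the right-hand side, I would fix a nonempty $S\subseteq[k]$ and write $N_S:=\bigcup_{t\in[k]\setminus S}P_{S\cup\{t\}}\subseteq P_S$, so that $C_S=P_S\setminus N_S$ is a cell with positive part $P_S$ and negative part $N_S$ (or $C_S=\emptyset$, in which case $P_S=N_S$). The cell identity gives $\kappa_a(C_S)=\kappa_a(P_S)-\kappa_a(N_S)$; applying the formula above to the union $N_S$ over the index set $[k]\setminus S$, and using $\bigcap_{u\in U}P_{S\cup\{u\}}=P_{S\cup U}$, yields
\begin{equation*}
\kappa_a(C_S)=\kappa_a(P_S)-\sum_{\emptyset\neq U\subseteq[k]\setminus S}(-1)^{|U|+1}\kappa_a(P_{S\cup U})=\sum_{U\subseteq[k]\setminus S}(-1)^{|U|}\kappa_a(P_{S\cup U}).
\end{equation*}
Summing over nonempty $S$ and reindexing by $R:=S\cup U$, so that $S$ runs over the nonempty subsets of $R$ and $U=R\setminus S$, I get
\begin{equation*}
\sum_{\emptyset\neq S\subseteq[k]}\kappa_a(C_S)=\sum_{\emptyset\neq R\subseteq[k]}\kappa_a(P_R)\sum_{\emptyset\neq S\subseteq R}(-1)^{|R\setminus S|}=\sum_{\emptyset\neq R\subseteq[k]}\kappa_a(P_R)\sum_{T\subsetneq R}(-1)^{|T|},
\end{equation*}
and $\sum_{T\subsetneq R}(-1)^{|T|}=(-1)^{|R|+1}$ since $\sum_{T\subseteq R}(-1)^{|T|}=0$ for $R\neq\emptyset$. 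This is precisely the expansion of the left-hand side, completing the proof.

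I do not expect a real obstacle here: the only substantive ingredient is Theorem \ref{t1}, and the remainder is the Möbius calculus on $\mathbb{P}([k])$. The two points needing care are that both uses of inclusion--exclusion are applied to honest finite families of $pp$-convex sets, and that degenerate cases do no harm: a term $\kappa_a(P_R)$ with $P_R=\emptyset$ simply contributes $0$ to both sides, and if $C_S=\emptyset$ (equivalently $P_S\subseteq A_{t_0}$ for some $t_0\notin S$) then $P_{S\cup U}=P_{S\cup U\cup\{t_0\}}$ whenever $t_0\notin U$, so the terms of the displayed sum for $\kappa_a(C_S)$ cancel in pairs and give $0$, as they should.
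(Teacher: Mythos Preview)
Your proof is correct and rests on the same key ingredient as the paper---Theorem \ref{t1}---but organises the induction differently. The paper restates Theorem \ref{t1} in the partition form $\kappa_a(A\cup B)=\kappa_a(A\setminus B)+\kappa_a(B\setminus A)+\kappa_a(A\cap B)$ and then inducts directly on $k$ from this base case, whereas you first upgrade Theorem \ref{t1} to the full alternating inclusion--exclusion formula for $\kappa_a$ of a union, expand each $\kappa_a(C_S)$ via the cell identity and that same formula applied to $N_S$, and finally match coefficients using the M\"obius identity $\sum_{T\subsetneq R}(-1)^{|T|}=(-1)^{|R|+1}$. Your route makes the bookkeeping completely explicit; the paper's is terser but leaves the inductive step to the reader. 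One small wrinkle: your parenthetical claim that $C_S=\emptyset$ is equivalent to $P_S\subseteq A_{t_0}$ for a single $t_0\notin S$ need not hold, since $P_S$ is merely $pp$-convex and Corollary \ref{NLU} applies only to single $pp$-sets; but this is harmless, because your main computation already gives $\kappa_a(C_S)=\kappa_a(P_S)-\kappa_a(N_S)=0$ whenever $P_S=N_S$, so the pairing argument in your last paragraph is not actually needed.
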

\begin{proof}
We observe that all the arguments on the right hand side of the above expression are cells or possibly empty sets and they form a partition of the cell in the argument of the left hand side. Then we restate theorem \ref{t1} as $\kappa_a((\bigcup\alpha)\cup(\bigcup\beta))= \kappa_a((\bigcup\alpha)\setminus(\bigcup\beta))+ \kappa_a((\bigcup\beta)\setminus(\bigcup\alpha))+ \kappa_a((\bigcup\alpha)\cap(\bigcup\beta))$. Since the set of $pp$-convex sets is closed under taking unions and intersections, a simple induction proves the proposition with \ref{t1} being the base case.
\end{proof}

Theorem \ref{t2} allows us to define the global characteristic for arbitrary definable sets.
\begin{definition}
Let $D\subseteq M^n$ be definable. Then we define the \textbf{global characteristic} $\Lambda(D)$ as the sum of global characteristics of any finite family of blocks partitioning $D$.
\end{definition}

\subsection{Preservation of global characteristics}\label{PTGC}

The aim of this section is to show that the global characteristic is preserved under definable isomorphisms.

\begin{theorem}\label{t3}
Suppose $D\in \mathrm{Def}(M^n)$ and $f:D\rightarrow M^n$ is a definable injection. Then $\Lambda(D)=\Lambda(f(D))$.
\end{theorem}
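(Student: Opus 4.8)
The plan is to reduce the statement to the case of a single block carrying an injective \emph{linear} map, where Lemma~\ref{COLOURINJ} applies, and then to argue that an injective linear map preserves precisely the combinatorial data that the local characteristics $\kappa_a$ see. First I would apply Lemma~\ref{REP} to the definable set $Gr(f)\subseteq M^{2n}$, writing it as a finite disjoint union of blocks $\Gamma_1,\dots,\Gamma_r$. Let $\pi_1,\pi_2\colon M^{2n}\to M^n$ be the coordinate projections; since each $\Gamma_k$ is part of a function graph, $\pi_1$ is injective on $\Gamma_k$. The key claim is that $\pi_1$ is in fact already injective on $\overline{\Gamma_k}$: its kernel on the $pp$-subgroup underlying $\overline{\Gamma_k}$ is a $pp$-definable subgroup $K$, and if $K$ were infinite then some coset of $K$ meeting $\Gamma_k$ would, apart from one point, be covered by the finitely many cosets forming $N(\Gamma_k)$; by Neumann's Lemma~\ref{NL} together with $T=T^{\aleph_0}$ (finite index forces equality of subgroups, while a singleton has infinite index in an infinite $K$), this is impossible. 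Hence $K=\{0\}$, so $\pi_1$ restricts to a linear bijection of $\overline{\Gamma_k}$ onto a coset $\overline{B_k}\in\mathcal L_n$; consequently $B_k:=\pi_1(\Gamma_k)$ is a block with $\overline{B_k}=\pi_1(\overline{\Gamma_k})$, while $f(B_k)=\pi_2(\Gamma_k)$, and on $B_k$ the map $f$ agrees with the $pp$-definable linear isomorphism $\overline f_k:=\pi_2\circ(\pi_1|_{\overline{\Gamma_k}})^{-1}\colon\overline{B_k}\to\pi_2(\overline{\Gamma_k})$, whose graph is the $pp$-set $\overline{\Gamma_k}$. Since $f$ is a function the $B_k$ partition $D$ and the $f(B_k)$ partition $f(D)$, so by the definition of the global characteristic of a definable set it suffices to prove $\Lambda(B)=\Lambda(\overline f(B))$ in the situation of the next paragraph.

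So let $B$ be a block with $N(B)=\{D_1,\dots,D_m\}$ and let $\overline f\colon\overline B\rightarrowtail M^n$ be a $pp$-definable injective linear map whose graph is a coset $\widehat\Gamma$ of a $pp$-subgroup. Then $\overline f(\overline B)=\pi_2(\widehat\Gamma)$ and $\overline f(D_i)=\pi_2(\widehat\Gamma\cap(D_i\times M^n))$ are $pp$-sets, being $\pi_2$-projections of $pp$-sets; moreover, being injective, $\overline f$ commutes with intersections of subsets of $\overline B$, so it carries the finite $pp$-nest generated by $\{\overline B,D_1,\dots,D_m\}$ isomorphically, as a meet-semilattice, onto the nest generated by $\{\overline f(\overline B),\overline f(D_1),\dots,\overline f(D_m)\}$, and it preserves cardinalities and membership of any fixed point of $\overline B$. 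In particular $f(B)=\overline f(B)=\overline f(\overline B)\setminus\bigcup_i\overline f(D_i)$ is again a block, with $\overline{f(B)}=\overline f(\overline B)$ and $N(f(B))=\{\overline f(D_i)\}$ (an antichain of proper cosets, by injectivity).

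It then remains to compare local characteristics. For $a\in\overline B$, the simplicial complexes $\mathcal K(\mathcal N_a(-))$ attached to the antichains $P(B)$, $N(B)$ and $P(B)\cup N(B)$ depend only on the intersection pattern, the cardinalities, and the $a$-membership of the sets in the nest above these antichains; by the previous paragraph $\overline f$ induces isomorphisms of all of them onto the corresponding complexes at $\overline f(a)$ for $f(B)$, so (using Proposition~\ref{p1} and Theorem~\ref{LONGEXACT}) $\chi_a(B)=\chi_{\overline f(a)}(f(B))$, and $\delta(B)(a)=\delta(f(B))(\overline f(a))$ since $\overline f$ maps $B$ onto $f(B)$. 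Hence $\kappa_a(B)=\kappa_{\overline f(a)}(f(B))$ for every $a\in\overline B$, while both sides vanish when $a\notin\overline B$, respectively $b\notin\overline f(\overline B)$, because $\mathrm{Sing}(B)\subseteq\overline B$ and $\mathrm{Sing}(f(B))\subseteq\overline f(\overline B)$. Since $\overline f$ restricts to a bijection $\overline B\to\overline f(\overline B)$, summing over $a$ gives $\Lambda(B)=-\sum_{a\in\overline B}\kappa_a(B)=-\sum_{b\in\overline f(\overline B)}\kappa_b(f(B))=\Lambda(f(B))$, and summing over the blocks $B_k$ yields $\Lambda(D)=\Lambda(f(D))$.

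The step I expect to be the real obstacle is the first one: a block of $Gr(f)$ does not project to a block of $D$ a priori, and it is exactly the Neumann's-Lemma argument, powered by $T=T^{\aleph_0}$, that removes the projection kernel and makes the reduction to the linear case legitimate. The remaining steps are essentially bookkeeping organized around the slogan that an injective linear map is an isomorphism of the ambient finite $pp$-nest, hence of the simplicial complexes computing $\kappa_a$ — with Lemma~\ref{COLOURINJ} supplying the linear extension $\overline f$ on which everything rests, and the observation that projections of $pp$-sets are $pp$-sets ensuring that the images $\overline f(\overline B),\overline f(D_i)$ live in $\mathcal L_n$ so that $f(B)$ is a genuine block.
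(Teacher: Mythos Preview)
Your proof is correct and follows the same strategy as the paper: decompose $Gr(f)$ into blocks, project to blocks of $D$ on which $f$ restricts to a $pp$-definable linear injection, then observe that such a map induces an isomorphism of the relevant simplicial complexes, whence $\kappa_a(B)=\kappa_{\overline f(a)}(f(B))$ and the global characteristics agree. The only difference is that you construct the linear extension $\overline f_k$ directly from the $pp$-closure $\overline{\Gamma_k}$ of the graph block (and justify, via Neumann's lemma and $T=T^{\aleph_0}$, that $\pi_1$ is injective on $\overline{\Gamma_k}$ --- a step the paper glosses over), whereas the paper obtains the extension by invoking Lemma~\ref{COLOURINJ}; so your closing appeal to that lemma is in fact superfluous in your own argument.
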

\begin{proof}
We first prove the local version which states that for any $a\in M^n$ and $B\in\mathcal{B}$ if $g:B\rightarrow M$ is a $pp$-definable injection, then $\kappa_a(B)=\kappa_{g(a)}(g(B))$. We observe that $\delta(B)(a)=\delta(g(B))(g(a))$. Lemma \ref{COLOURINJ} gives that the complex $\mathcal{K}(\mathcal{N}_a(\alpha))$ is isomorphic to the complex $\mathcal{K}(\mathcal{N}_{g(a)}(g[\alpha]))$ where $g[\alpha]=\{g(A):A\in\alpha\}$ and $\alpha$ is either $P(B)$ or $N(B)$. We conclude that $g(\mathrm{Sing}(B))=\mathrm{Sing}(g(B))$. Hence $\Lambda(B)=\Sigma_{a\in \mathrm{Sing}(B)}\kappa_a(B)=\Sigma_{a\in \mathrm{Sing}(B)}\kappa_{g(a)}(g(B))=\Sigma_{a\in \mathrm{Sing}(g(B))}\kappa_a(g(B))=\Lambda(g(B))$.

To prove the theorem, we consider any partition of $D$ into finitely many blocks $B_i,1\leq i\leq m$ such that $f\upharpoonright B_i$ is $pp$-definable. This is possible by an application of lemma \ref{REP}) to the set $Graph(f)$ followed projection of the finitely many blocks onto the first $n$ coordinates. Note that $D=\bigsqcup_{i=1}^m B_i \Rightarrow f(D)=\bigsqcup_{i=1}^m f(B_i)$ since $f$ is injective. Hence $\Lambda(f(D))=\Sigma_{i=1}^m\Lambda(f(B_i))=\Sigma_{i=1}^m\Lambda(B_i)=\Lambda(D)$, where the first and third equality follows by theorem \ref{t2} and the second equality follows from the previous paragraph.
\end{proof}

Now we are ready to prove a special case of the result promised at the end of section \ref{GRFOS}, which states that the Grothendieck ring of a right $\mathcal R$-module $M$ satisfying $M\equiv M^{(\aleph_0)}$ contains $\mathbb Z$ as a subgroup. This shows, in particular, that $K_0(M)$ is nontrivial in this case.
\begin{cor}\label{MAINRESULT}
Suppose $D\subseteq M^n$ is definable and $f:D\rightarrowtail D$ is a definable injection whose image is cofinite in the codomain, then $f$ is an isomorphism.
\end{cor}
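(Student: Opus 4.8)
The plan is to use the global characteristic $\Lambda$, now established to be a definable-isomorphism invariant, together with the observation that $\Lambda$ detects cardinality on finite sets. First I would isolate the two properties of $\Lambda$ that do all the work: it is additive over finite disjoint unions of definable sets --- immediate from Theorem \ref{t2} together with the definition of $\Lambda(D)$ for definable $D$ (partition each summand into blocks and take the union of the partitions) --- and it is preserved under definable injections, which is exactly Theorem \ref{t3}.

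Next I would compute $\Lambda$ on finite sets. A singleton $\{a\}\subseteq M^n$ is a $pp$-set, hence a block with $P(\{a\})=\{\{a\}\}$ and $N(\{a\})=\emptyset$; from the description of $\kappa(\alpha,-)$ for singleton antichains recorded just before the definition of $\mathrm{Sing}(\alpha)$ we have $\kappa(\{a\},a)=-1$ and $\kappa(\{a\},b)=0$ for $b\neq a$, so that $\Lambda(\{a\})=-\sum_{b\in M^n}\kappa(\{a\},b)=1$. Since a finite set $S\subseteq M^n$ of cardinality $k$ is a disjoint union of $k$ such singletons, additivity yields $\Lambda(S)=k=|S|$.

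With this in hand the corollary is short. Put $S:=D\setminus f(D)$, which is finite by hypothesis, and note that $D=f(D)\sqcup S$ is a disjoint union of definable sets, $f(D)$ being definable as the projection of $Gr(f)$ onto the last $n$ coordinates. Additivity gives $\Lambda(D)=\Lambda(f(D))+\Lambda(S)=\Lambda(f(D))+|S|$, whereas Theorem \ref{t3} gives $\Lambda(f(D))=\Lambda(D)$; comparing, $|S|=0$, so $f$ is surjective. Since $f$ is also injective and its graph is definable, the relational inverse of $f$ is definable as well, so $f\colon D\to D$ is a definable bijection, i.e.\ an isomorphism in the sense of Definition \ref{defiso}.

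I do not expect a genuine obstacle here: the heavy lifting is all done by Theorems \ref{t2} and \ref{t3}, the only computation needed is $\Lambda(\{a\})=1$, and the only point requiring a little care is the appeal to Theorem \ref{t2} to see that $\Lambda$ is well-defined and additive on the partition of $D$ given by the blocks of $f(D)$ together with the singletons of $S$. It is worth noting that this corollary is precisely the assertion that the onto pigeonhole principle (indeed a stronger cofinite version of it) holds for modules with $T=T^{\aleph_0}$, so that, via the proposition characterizing nontriviality of $K_0$, it already yields $K_0(M)\neq\{0\}$ in this case; the split embedding $\mathbb Z\hookrightarrow K_0(M)$ then follows from the additivity and injective-invariance of $\Lambda$.
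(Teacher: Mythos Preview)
Your proof is correct and takes essentially the same approach as the paper. The only cosmetic difference is that the paper first extends $f$ by the identity to a definable injection $g:M^n\to M^n$ and applies the same $\Lambda$-argument to $g$ (using $\Lambda(M^n)=0$), whereas you work directly with $D$; your version is arguably more direct.
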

\begin{proof}
We extend the function $f$ to an injective function $g:M^n\rightarrowtail M^n$ by setting $g(a)=f(a)$ if $a\in D$ and $g(a)=a$ otherwise. Now $F:=M^n\setminus Im(g)$ is finite; say it has $p$ elements. Further $\Lambda(Im(g))=\Lambda(M^n\setminus F)=\Lambda(M^n)-\Lambda(F)=-p$.

By theorem \ref{t3}, we get $\Lambda(M^n)=\Lambda(Im(g))$ since $g$ is definable injective. Hence $p=0$ and thus $g$ is an isomorphism. Since $g$ is the identity function outside $D$, we conclude that $f$ is a definable isomorphism.
\end{proof}

\subsection{Coloured global characteristics}\label{LCCC}

Let $P\in\mathcal{L}^*$ be fixed for this section. We develop the notion of localization at $P$ and local characteristic at $P$; we have developed these ideas earlier when $P$ is a singleton. After stating what we mean by a colour, we define the notion of a ``coloured global characteristic'' and outline the proof that these invariants are preserved under definable isomorphisms.

\begin{definition}
We use $\mathcal{L}_P$ to denote the meet-semilattice of all upper bounds of $P$ in $\mathcal{L}$, i.e. $\mathcal{L}_P:=\{A\in\mathcal{L}:A\supseteq P\}$. As usual, we denote the set of all finite antichains in this semilattice by $\mathcal{A}_P$.
\end{definition}

Since every element of $\mathcal{L}_P$ contains $P$, we may as well quotient out $P$ from each such element. Such a process is consistent with our earlier definition of localization since taking quotient with respect to a singleton set gives an isomorphic copy of the original set.

\begin{definitions}
We define the operator $\mathcal{Q}_P$ on the elements of $\mathcal{L}_P$ by setting $\mathcal{Q}_P(A):=p+\frac{A-p}{P-p}=\{a+(P-p):a\in A\}$ for any $p\in P$. We can clearly extend this operator to finite subsets of $\mathcal{L}_P$. Now let $\mathcal{L}_{(P)}:=\mathcal{Q}_P[\mathcal{L}_P]$. We use $\mathcal{A}_{(P)}$ to denote the set of all finite antichains in this semilattice.
\end{definitions}

It is easy to see that $\mathcal{A}_{(P)}=\mathcal{Q}_P[\mathcal{A}_P]$.

The appropriate analogue of the localization operator $\mathcal N_a:\mathcal A\rightarrow\mathcal A_a$ is a function $\mathcal{N}_P:\mathcal{A}\rightarrow\mathcal{A}_{(P)}$.
\begin{deflem}
For $\alpha\in\mathcal{A}$, we define $\mathcal{N}_P(\alpha):=\mathcal{Q}_P(\alpha\cap\mathcal{L}_P)$. As an operator on $pp$-convex sets, $\mathcal{N}_P$ preserves both unions and intersections.
\end{deflem}

The proof is easy and thus omitted.

Recall from definition \ref{SIMPCOMP} of $\mathcal K^a(\alpha)$ that the ``trivial intersections'' were precisely those which were empty or a singleton. On the other hand, ``nontrivial intersections'' were precisely those which contained the $pp$-set $\{a\}$ properly. As $\mathcal N_P$ takes values in $\mathcal A_{(P)}$, we get the correct notion of non-trivial intersections followed by the quotient operation so that the techniques developed for a singleton $P$ still remain valid. Now we are ready to state the analogue of definition \ref{SIMPCOMP}.

\begin{definition}
For $\alpha\in\mathcal{A}$, we define the \textbf{simplicial complex} of $\alpha$ \textbf{in the neighbourhood of} $P$ as the complex $\mathcal{K}(\mathcal{N}_P(\alpha))=\{\beta\subseteq\mathcal{N}_P(\alpha):|\bigcap\beta|=\infty\}$. For simplicity of notation, we denote this complex by $\mathcal{K}^P(\alpha)$.
\end{definition}

We can easily extend the notion of local characteristic at $P$ as follows.
\begin{definition}
We define the \textbf{local characteristic} of $\alpha$ at $P$ by $\kappa_P(\alpha):=\chi(\mathcal{K}^P(\alpha))-\delta(\alpha)(P)$.
\end{definition}

It can be observed that we recover the definition of the local characteristic at a point $a\in M$ by choosing $P=\{a\}$. The proofs of theorem \ref{t1} and lemma \ref{l1} go through if we replace $\kappa_a$ by $\kappa_P$. Thus we can define $\kappa_P(D)$ for arbitrary definable sets $D\subseteq M^n$.

We define the function $\kappa:\mathrm{Def}(M^n)\times\mathcal{L}^*\rightarrow\mathbb{Z}$ by setting $\kappa(D,P):=\kappa_P(D)$.

\begin{definition}
The \textbf{set of $\mathcal{L}$-singular elements} of a definable set $D\subseteq M^n$ is defined as the set $\mathrm{Sing}_\mathcal{L}(D):=\{P\in\mathcal{L}:\kappa(D,P)\neq0\}$.
\end{definition}

Fixing any partition of $D$ into blocks, it can be checked that the set $\mathrm{Sing}_\mathcal L(D)$ is contained in the nest corresponding to that partition and hence is finite. This finiteness will be used to define analogues of the global characteristic, which we call ``coloured global characteristics''.

\begin{definition}
For a given $P\in\mathcal{L}$, we define the \textbf{colour} of $P$ to be the set $\{A\in L:$ there is a bijection $f:A\cong P$ such that $Graph(f)$ is $pp$-definable $\}$. We denote the colour of $P$ by $[[P]]$.
\end{definition}

Note the significance of this definition. Theorem \ref{PPET} describes the $pp$-sets as fundamental definable sets and we are trying to classify definable sets up to definable isomorphism (definition \ref{defiso}). In fact it is sufficient to classify $pp$-sets up to $pp$-definable isomorphisms, which is the motivation behind the definition of a colour.

Let $\mathcal{X}$ denote the set of colours of elements from $\mathcal L$. We use letters $\mathfrak{A},\mathfrak{B},\mathfrak{C}$ etc. to denote the colours. It can be observed that $[[\emptyset]]$ is a singleton. We denote the colour of any singleton by $\mathfrak{U}$. We use $\mathcal{X}^*$ to denote $\mathcal{X}\setminus\{[[\emptyset]]\}$.

The global characteristic $\Lambda(D)$ is equal to $-\Sigma_{P\in\mathfrak{U}}\kappa_P(D)$ for each definable set $D$. This observation can be used to extend the notion of global characteristic.

\begin{definition}
For $\mathfrak{A}\in\mathcal{X}^*$, we define the \textbf{coloured global characteristic} with respect to $\mathfrak A$ for a definable set $D$ to be the integer $\Lambda_{\mathfrak{A}}(D):=-\Sigma_{P\in\mathfrak{A}}\kappa_P(D)$. This integer is well defined as it is equal to the finite sum $-\Sigma\{\kappa_P(D):P\in(\mathfrak{A}\cap \mathrm{Sing}_\mathcal{L}(D))\}$.
\end{definition}

The property of coloured global characteristics that we are looking for is stated in the following analogue of theorem \ref{t3}. The proof is analogous to that of \ref{t3} and thus is omitted.

\begin{theorem}\label{t4}
If $f:D\rightarrow D'$ is a definable bijection between definable sets $D,D'$, then $\Lambda_\mathfrak{A}(D)=\Lambda_\mathfrak{A}(D')$ for each $\mathfrak{A}\in\mathcal{X}^*$.
\end{theorem}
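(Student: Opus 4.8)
The plan is to reduce the global statement for an arbitrary definable bijection $f:D\to D'$ to a local statement about $pp$-definable injections on blocks, exactly mirroring the proof of Theorem \ref{t3}, but tracking the colour $\mathfrak A$ throughout. First I would establish the local version: for a block $B\in\mathcal B$, a $pp$-definable injection $g:B\to M^n$, and any $P\in\mathcal L^*$, the local characteristic at $P$ of $B$ equals the local characteristic at the appropriate ``image colour-representative'' of $g(B)$. The key input here is Lemma \ref{COLOURINJ}: the injection $g$ extends uniquely to an injective linear map $\overline g$ on $\overline B$, so $\overline g$ carries each element $A\in\mathcal L_P$ (with $A$ appearing in $P(B)$ or $N(B)$) to a $pp$-set $\overline g(A)$, and since $\overline g$ is a $pp$-definable bijection it preserves the property of an intersection being infinite. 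Hence $\overline g$ induces an isomorphism of simplicial complexes $\mathcal K^P(\alpha)\cong\mathcal K^{g(P)}(\overline g[\alpha])$ for $\alpha=P(B)$ and $\alpha=N(B)$, where $g(P)$ makes sense whenever $P\subseteq\overline B$ (and when $P\not\subseteq\overline B$ both local characteristics relevant to that $P$ vanish). The correction terms $\delta(B)(P)$ and $\delta(g(B))(g(P))$ also match. So $g$ sets up a bijection between $\mathrm{Sing}_{\mathcal L}(B)$ and $\mathrm{Sing}_{\mathcal L}(g(B))$ that, crucially, preserves colours: $P$ and $g(P)$ are $pp$-definably isomorphic via (a restriction of) $\overline g$, so $[[P]]=[[g(P)]]$.

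Given this colour-preserving correspondence, I would compute, for each fixed $\mathfrak A\in\mathcal X^*$,
\begin{equation*}
\Lambda_{\mathfrak A}(g(B))=-\sum_{P\in\mathfrak A}\kappa_P(g(B))=-\sum_{P\in\mathfrak A}\kappa_{g^{-1}(P)}(B)=-\sum_{Q\in\mathfrak A}\kappa_Q(B)=\Lambda_{\mathfrak A}(B),
\end{equation*}
where the reindexing $Q=g^{-1}(P)$ is legitimate because $g$ (via $\overline g$) permutes each colour class among the singular elements and contributes zero elsewhere. This is the block-level statement.

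To pass to a general definable bijection $f:D\to D'$, I would apply Lemma \ref{REP} to $\mathrm{Graph}(f)\subseteq M^{2n}$ to write it as a disjoint union of blocks, then project onto the first $n$ coordinates to obtain a partition $D=\bigsqcup_{i=1}^m B_i$ into blocks on each of which $f$ restricts to a $pp$-definable injection. Since $f$ is injective, $f(D)=\bigsqcup_{i=1}^m f(B_i)$ is again a disjoint union of blocks. Then additivity of the coloured global characteristic—which holds because the proofs of Theorem \ref{t1} and Lemma \ref{l1} go through verbatim with $\kappa_a$ replaced by $\kappa_P$, as already noted in the text—gives $\Lambda_{\mathfrak A}(D)=\sum_i\Lambda_{\mathfrak A}(B_i)$ and $\Lambda_{\mathfrak A}(D')=\sum_i\Lambda_{\mathfrak A}(f(B_i))$, and the block-level identity $\Lambda_{\mathfrak A}(B_i)=\Lambda_{\mathfrak A}(f(B_i))$ finishes the proof.

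The main obstacle I anticipate is the bookkeeping around elements $P\in\mathcal L^*$ that are \emph{not} contained in the block's positive part $\overline B$ (equivalently, not ``visible'' in the neighbourhood data of $B$): one must verify that such $P$ contribute $\kappa_P(B)=0$, and that under $g$ no visible $P$ can map to an invisible one or vice versa—this is where linearity and injectivity of the extension $\overline g$ from Lemma \ref{COLOURINJ} are doing real work. A secondary subtlety is checking that the induced map on colour classes is genuinely a bijection on each $\mathfrak A\cap\mathrm{Sing}_{\mathcal L}(B)$ rather than merely an injection, but this follows by applying the same argument to $g^{-1}$ on $g(B)$ (using that $g^{-1}$ is also $pp$-definable on the image block). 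None of this requires new ideas beyond those in the proof of Theorem \ref{t3}, which is why the paper omits the details; the plan above simply makes the colour-tracking explicit.
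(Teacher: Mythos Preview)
Your proposal is correct and follows essentially the same approach as the paper, which explicitly states that the proof is analogous to that of Theorem \ref{t3} and omits the details. You have correctly identified the two ingredients---the colour-preserving local identity $\kappa_P(B)=\kappa_{\overline g(P)}(g(B))$ coming from Lemma \ref{COLOURINJ}, and the reduction to blocks via Lemma \ref{REP} applied to $\mathrm{Graph}(f)$ together with additivity---and your remark that $[[P]]=[[\overline g(P)]]$ is exactly the extra observation needed to pass from $\Lambda$ to $\Lambda_{\mathfrak A}$.
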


\section{Special Case: Multiplicative Structure}\label{spcasemult}
\subsection{Monoid rings}\label{MRSK}

We need the notion of an algebraic structure called a \emph{monoid ring}.
\begin{definition}
Let $(A,\star,1)$ be a commutative monoid and $S$ be a commutative ring with unity. Then we define an $L_{ring}$-structure $(S[A],0,1,+,\cdotp)$ called a \textbf{monoid ring} as follows.
\begin{itemize}
	\item $S[A]:=\{\phi:A\rightarrow S :$ the set $\mathrm{Supp}(\phi)=\{a:\phi(a)\neq 0\}$ is finite$\}$
	\item $(\phi+\psi)(a):=\phi(a)+\psi(a)$ for $a\in A$
	\item $(\phi\cdotp\psi)(a):=\Sigma_{b\star c=a}\phi(b)\psi(c)$ for $a\in A$
\end{itemize}
An element $\phi$ of $S[A]$ can be represented as a formal sum $\Sigma_{a\in A}s_a a$ where $s_a=\phi(a)$.
\end{definition}

As an example, let $A=\mathbb N$ ,equivalently the monoid $\{X^n\}_{n\geq 0}$ considered multiplicatively. Then the monoid ring $S[A]=S[\mathbb N]\cong S[X]$, the polynomial ring in one variable with coefficients from $S$.

Let the symbols $\overline{\mathcal L},\overline{\mathcal A},\overline{\mathcal X},\hdots$ etc. denote the unions $\bigcup_{n=1}^\infty\mathcal L_n$, $\bigcup_{n=1}^\infty\mathcal A_n$, $\bigcup_{n=1}^\infty\mathcal X_n,\hdots$ respectively. We shall be especially concerned with the sets $\overline{\mathcal L}^*:=\overline{\mathcal L}\setminus\{\emptyset\}$ and $\overline{\mathcal X}^*:=\overline{\mathcal X}\setminus\{[[\emptyset]]\}$.

There is a binary operation $\times:\overline{\mathcal L}^*\times\overline{\mathcal L}^*\rightarrow \overline{\mathcal L}^*$ which maps a pair $(A,B)$ to the cartesian product $A\times B$. This map commutes with the operation $[[-]]$ of taking colour i.e., whenever $[[A_1]]=[[A_2]]$ and $[[B_1]]=[[B_2]]$, we have $[[A_1\times B_1]]=[[A_2\times B_2]]$. This allows us to define a binary operation $\star:\overline{\mathcal X}^* \times \overline{\mathcal X}^* \rightarrow \overline{\mathcal X}^*$ which takes a pair of colours $(\mathfrak A,\mathfrak B)$ to $[[A\times B]]$ for any $A\in\mathfrak A,B\in\mathfrak B$. The colour $\mathfrak U$ of singletons acts as the identity element for the operation $\star$. Hence $(\overline{\mathcal X}^*,\star,\mathfrak U)$ is a monoid.

Consider the maps $\Lambda_\mathfrak A:\widetilde{\mathrm{Def}}(M)\rightarrow\mathbb Z$ for $\mathfrak A\in\overline{\mathcal X}^*$ defined by $[D]\mapsto\Lambda_\mathfrak A(D')$ for any $D'\in[D]$. These maps are well defined due to theorem \ref{t4}. We can fix some $[D]\in\widetilde{\mathrm{Def}}(M)$ and look at the set $\mathrm{Supp}([D]):=\{\mathfrak A\in\overline{\mathcal X}^*:\Lambda_\mathfrak A(D)\neq 0\}$. This set is finite since it is contained in the finite set $\{[[P]]:P\in \mathrm{Sing}_{\overline{\mathcal L}}(D)\}$. This shows that the evaluation map $ev_{[D]}:\overline{\mathcal X}^*\rightarrow\mathbb Z$ defined by $\mathfrak A\mapsto\Lambda_{\mathfrak A}([D])$ for each $[D]\in\widetilde{\mathrm{Def}}(M)$ is an element of the monoid ring $\mathbb Z[\overline{\mathcal X}^*]$.

Let us consider an example. We take $\mathcal R$ to be an infinite skew-field (i.e. a (possibly non-commutative) ring in which every nonzero element has two-sided multiplicative inverse) and $M$ to be any nonzero $\mathcal R$-vector space. This example has been studied in detail in \cite{Perera}. In this case, we have $Th(M)=Th(M)^{\aleph_0}$. Using the notion of affine dimension, it can be shown that $\overline{\mathcal X}^*\cong\mathbb N$. It has been shown that $K_0(M)\cong\mathbb Z[X]\cong\mathbb Z[\mathbb N]$. The proof in \cite{Perera} explicitly shows that the semiring $\widetilde{\mathrm{Def}}(M)$ is cancellative and is isomorphic to the semiring of polynomials in $\mathbb Z[X]$ with non-negative leading coefficients.

We will prove that a similar fact holds for an arbitrary module $M$, i.e., the structure of the Grothendieck ring $K_0(M)$ is entirely determined by the monoid $\overline{\mathcal X}^*$.
\begin{theorem}\label{FINAL}
Let $M$ be a right $\mathcal R$-module satisfying $Th(M)=Th(M)^{\aleph_0}$. Then $K_0(M)\cong\mathbb Z[\overline{\mathcal X}^*]$. In particular, $K_0(M)$ is nontrivial for every nonzero module $M$.
\end{theorem}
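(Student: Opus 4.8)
The plan is to realise $K_0(M)$ as the monoid ring $\mathbb{Z}[\overline{\mathcal X}^*]$ by producing mutually inverse ring homomorphisms between the two. Going one way, I would use the coloured global characteristics to define $\Phi\colon\widetilde{\mathrm{Def}}(M)\to\mathbb{Z}[\overline{\mathcal X}^*]$ by $[D]\mapsto ev_{[D]}=\sum_{\mathfrak{A}\in\overline{\mathcal X}^*}\Lambda_{\mathfrak{A}}(D)\,\mathfrak{A}$; this is well defined by Theorem \ref{t4} and lands in the monoid ring since $\mathrm{Supp}([D])$ is finite. The first task is to check that $\Phi$ is a homomorphism of semirings: it sends $[\emptyset]$ to $0$; it is additive on disjoint unions because the coloured local characteristics $\kappa_P$ are (the coloured form of Lemma \ref{l1}), hence so are the $\Lambda_{\mathfrak{A}}$; it sends $1=[\{*\}]$ to $\mathfrak{U}$, a special case of the computation in the next paragraph; and it is multiplicative, $\Phi([A\times B])=\Phi([A])\Phi([B])$, because the coloured global characteristics are multiplicative under cartesian product, which is the principal business of this section and rests ultimately on Lemma \ref{ecdpl}. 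Granting this, Corollary \ref{GrRngAdj} produces a ring homomorphism $\overline\Phi\colon K_0(M)\to\mathbb{Z}[\overline{\mathcal X}^*]$. Going the other way, I would observe that $\mathfrak{A}\mapsto[A]$, for any $pp$-set $A$ of colour $\mathfrak{A}$, is a well-defined monoid homomorphism $(\overline{\mathcal X}^*,\star,\mathfrak{U})\to(K_0(M),\cdot,1)$ — well defined because a colour is a $pp$-definable isomorphism class, and a homomorphism because $[A\times B]=[A][B]$ and $[\{*\}]=1$ — and hence extends by the universal property of the monoid ring to a ring homomorphism $\Psi\colon\mathbb{Z}[\overline{\mathcal X}^*]\to K_0(M)$.

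Next I would prove the key computation $\Phi([A])=[[A]]$ for every nonempty $pp$-set $A$. Unwinding Definition \ref{SIMPCOMP} together with the quotient operator $\mathcal{Q}_P$, one finds that $\kappa_P(A)=-1$ when $P=A$ and $\kappa_P(A)=0$ otherwise: if $P\not\subseteq A$ then both $\mathcal{K}^P(\{A\})$ is empty and the correction term $\delta(\{A\})(P)$ is zero; if $P=A$ then $\mathcal{Q}_P(A)$ is a single point, so $\mathcal{K}^P(\{A\})$ is empty while $\delta(\{A\})(P)=1$; and if $P\subsetneq A$ then $T=T^{\aleph_0}$ forces $[A:P]=\infty$ (Proposition \ref{EC}), so $\mathcal{Q}_P(A)$ is an infinite vertex, $\mathcal{K}^P(\{A\})$ is a single vertex with $\chi=1$, again cancelling $\delta(\{A\})(P)=1$. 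Hence $\Lambda_{\mathfrak{B}}(A)=-\sum_{P\in\mathfrak{B}}\kappa_P(A)$ equals $1$ if $\mathfrak{B}=[[A]]$ and $0$ otherwise, i.e. $\Phi([A])=[[A]]$. It follows that $\overline\Phi(\Psi(\mathfrak{A}))=\overline\Phi([A])=\mathfrak{A}$ for each colour $\mathfrak{A}$, so $\overline\Phi\circ\Psi$ fixes a generating set of $\mathbb{Z}[\overline{\mathcal X}^*]$ and is therefore the identity.

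The remaining and most substantial step is $\Psi\circ\overline\Phi=\mathrm{id}_{K_0(M)}$, which I expect to be the main obstacle internal to the argument. Since both composites are additive and every definable set is a finite disjoint union of blocks (Lemma \ref{REP}), it suffices to show that $[B]=\sum_{\mathfrak{A}}\Lambda_{\mathfrak{A}}(B)\,[A_{\mathfrak{A}}]$ holds in $K_0(M)$ for each block $B$, where $A_{\mathfrak{A}}$ is a once-and-for-all chosen $pp$-set of colour $\mathfrak{A}$. Write $B=\overline B\setminus\bigcup_{i=1}^{l}D_i$, where $\overline B$ is the unique element of $P(B)$ and $N(B)=\{D_1,\dots,D_l\}$; since each $D_i\subsetneq\overline B$ we get $[\overline B]=[B]+[\bigcup_i D_i]$, and since $[X\cup Y]=[X]+[Y]-[X\cap Y]$ holds in the Grothendieck ring (a formal consequence of disjoint additivity, valid once one may subtract) inclusion–exclusion yields $[\bigcup_i D_i]=\sum_{\emptyset\neq S\subseteq[l]}(-1)^{|S|+1}[\bigcap_{s\in S}D_s]$, each $\bigcap_{s\in S}D_s$ being a $pp$-set. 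Replacing each nonempty $pp$-set $Q$ appearing here by $[Q]=[A_{[[Q]]}]$ expresses $[B]$ as a finite integer combination $\sum_{\mathfrak{A}}d_{\mathfrak{A}}[A_{\mathfrak{A}}]$. Applying the ring homomorphism $\overline\Phi$ and invoking the previous paragraph gives $\sum_{\mathfrak{A}}d_{\mathfrak{A}}\mathfrak{A}=\Phi([B])=\sum_{\mathfrak{A}}\Lambda_{\mathfrak{A}}(B)\mathfrak{A}$; as the colours form a $\mathbb{Z}$-basis of $\mathbb{Z}[\overline{\mathcal X}^*]$ we conclude $d_{\mathfrak{A}}=\Lambda_{\mathfrak{A}}(B)$, whence $[B]=\sum_{\mathfrak{A}}\Lambda_{\mathfrak{A}}(B)[A_{\mathfrak{A}}]=\Psi(\Phi([B]))$. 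The genuinely hard inputs here — preservation and multiplicativity of the coloured global characteristics — were discharged in the earlier sections; within the proof of the theorem the one point requiring care is this block identity, and in particular the observation that it may be verified inside the ring, where inclusion–exclusion is available.

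Putting the two halves together, $\overline\Phi$ and $\Psi$ are mutually inverse ring isomorphisms, so $K_0(M)\cong\mathbb{Z}[\overline{\mathcal X}^*]$. For the final assertion: $\overline{\mathcal X}^*$ is a nonempty monoid (it always contains the colour $\mathfrak{U}$ of singletons), so the structure map $\mathbb{Z}\to\mathbb{Z}[\overline{\mathcal X}^*]$ is injective onto the subring $\mathbb{Z}\cdot\mathfrak{U}$, and in particular $0\neq1$ in $\mathbb{Z}[\overline{\mathcal X}^*]$; hence $K_0(M)$ is nontrivial. This applies to every right $\mathcal{R}$-module $M$ satisfying $T=T^{\aleph_0}$, and in particular to every nonzero such $M$, proving Prest's conjecture in the special case.
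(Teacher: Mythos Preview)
Your proof is correct and takes a genuinely different route from the paper's. The paper establishes the isomorphism by first proving (Lemma \ref{INJEV}) that the induced map $\widetilde{ev}$ on the cancellative quotient is injective, via an explicit algorithm: given $[D_1],[D_2]$ with $ev_{[D_1]}=ev_{[D_2]}$, it manufactures $[X]$ so that $[D_i]+[X]$ lands in the class of some antichain in $\mathcal U$ (pairwise disjoint $pp$-sets), where injectivity of $ev$ is immediate; the reduction is driven by a complexity function $\Gamma$ on antichains. Surjectivity then follows from the sandwich $\mathbb N[\overline{\mathcal X}^*]\subseteq\mathrm{Im}(\widetilde{ev})\subseteq\mathbb Z[\overline{\mathcal X}^*]$ and the universal property of $K_0$. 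You instead build an explicit inverse $\Psi$ and check both composites are the identity; your key step is to express $[B]$ in $K_0(M)$ as an integer combination of classes of $pp$-sets by inclusion--exclusion, and then identify the coefficients with $\Lambda_{\mathfrak A}(B)$ by applying the ring homomorphism $\overline\Phi$ and using the clean computation $\Phi([A])=[[A]]$. This is more conceptual and avoids the complexity bookkeeping entirely; in fact, since the identity $\Psi\circ\overline\Phi=\mathrm{id}$ only uses additivity of $\overline\Phi$ and the value of $\overline\Phi$ on $pp$-classes, your argument shows that the multiplicativity of $ev$ (Theorem \ref{t5}) is not even strictly needed to obtain the ring isomorphism --- it falls out once $\Psi$ is known to be a ring map with a two-sided inverse. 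The paper's approach, on the other hand, yields the finer information that the cancellative semiring $\widetilde{\widetilde{\mathrm{Def}}(M)}$ itself embeds as a subsemiring of $\mathbb Z[\overline{\mathcal X}^*]$ containing $\mathbb N[\overline{\mathcal X}^*]$.
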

The proof of this theorem will occupy the next two sections.

\subsection{Multiplicative structure of $\widetilde{\mathrm{Def}}(M)$}\label{MULT}

Given $D_1\in \mathrm{Def}(M^n)$ and $D_2\in \mathrm{Def}(M^m)$, their cartesian product $D_1\times D_2\in \mathrm{Def}(M^{(n+m)})$. This shows that $\overline{\mathrm{Def}}(M)$ is closed under cartesian products. We want to show that the sets $\overline{\mathcal L}$, $\overline{\mathcal A}$, $\overline{\mathcal B}$ and $\overline{\mathcal C}$ are all closed under multiplication.

Let $P\in\mathcal L_n$ and $Q\in\mathcal L_m$. Then there are $pp$ formulas $\phi(\overline{x})$ and $\psi(\overline{y})$ defining those sets respectively. Without loss, we may assume that $\overline{x}\cap\overline{y}=\emptyset$. Now the formula $\rho(\overline{x},\overline{y})=\phi(\overline{x})\wedge\psi(\overline{y})$ is again a $pp$-formula and it defines the set $P\times Q\in\mathcal L_{n+m}$. This shows that the set $\overline{\mathcal L}$ is closed under multiplication.

Now we want to show that the product of two antichains $\alpha\in\mathcal A_n$ and $\beta\in\mathcal A_m$ is again an antichain in $\mathcal A_{n+m}$. We have natural projection maps $\pi_1:M^{n+m}\rightarrow M^n$ and $\pi_2:M^{n+m}\rightarrow M^m$ which project onto the first $n$ and the last $m$ coordinates respectively. First we observe that $(\bigcup\alpha)\times(\bigcup\beta)=\bigcup_{A\in\alpha}\bigcup_{B\in\beta}A\times B$. If either $A_1,A_2\in\alpha$ are distinct or $B_1,B_2\in\beta$ are distinct, then all the distinct elements from $\{A_i\times B_j\}_{i,j=1}^2$ are incomparable with respect to the inclusion ordering since at least one of their projections is so. Hence $\bigcup\alpha\times\bigcup\beta$ is indeed an antichain of the rank $|\alpha|\times|\beta|$. We will denote this antichain by $\alpha\times\beta$.

Given $C_1,C_2\in\overline{\mathcal C}$, we have $C_1\times C_2=\bigcup(\alpha_1\times\alpha_2)\setminus (\bigcup(\alpha_1\times\beta_2)\cup\bigcup(\beta_1\times\alpha_2))$ where $\alpha_i=P(C_i)$ and $\beta_i=N(C_i)$ for $i=1,2$. This shows that $C_1\times C_2\in\overline{\mathcal C}$ since $\overline{\mathcal A}$ is closed under both products and unions. Furthermore, we observe that $P(C_1\times C_2)=P(C_1)\times P(C_2)$. This in particular shows that the set $\overline{\mathcal B}$ of blocks is also closed under products.

\begin{lemma}\label{localcharmult}
Let $P,Q\in\overline{\mathcal L}$ and $\alpha,\beta\in\overline{\mathcal A}$. Then $\kappa_{P\times Q}(\alpha\times \beta)=-\kappa_P(\alpha)\kappa_Q(\beta)$.
\end{lemma}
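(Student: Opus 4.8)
The plan is to reduce the identity to the Euler characteristic formula for the disjunctive product, Lemma \ref{ecdpl} (as announced in the preamble to \ref{PRODSIMPCOMP}). Since $T=T^{\aleph_0}$, every $pp$-definable index is $1$ or $\infty$, so for $A\supseteq P$ the quotiented set $\mathcal{Q}_P(A)$ is a single point (when $A=P$) or infinite (otherwise), and likewise for $Q$ and for $P\times Q$, using $(P\times Q)-(p,q)=(P-p)\times(Q-q)$. First I would record that localization is compatible with products: $(\alpha\times\beta)\cap\mathcal{L}_{P\times Q}=(\alpha\cap\mathcal{L}_P)\times(\beta\cap\mathcal{L}_Q)$ and, under the canonical identification of cosets of $(P-p)\times(Q-q)$ with pairs of cosets, $\mathcal{Q}_{P\times Q}(A\times B)=\mathcal{Q}_P(A)\times\mathcal{Q}_Q(B)$; hence, writing $\mathcal{K}:=\mathcal{K}^P(\alpha)$, $\mathcal{Q}:=\mathcal{K}^Q(\beta)$, $\mathcal{G}:=\mathcal{K}^{P\times Q}(\alpha\times\beta)$ and $V_1:=\mathcal{V}(\mathcal{K})$, $V_2:=\mathcal{V}(\mathcal{Q})$, we obtain $\mathcal{N}_{P\times Q}(\alpha\times\beta)=\mathcal{N}_P(\alpha)\times\mathcal{N}_Q(\beta)$, in which $\overline{P\times Q}=\bar P\times\bar Q$ is the unique possibly finite element. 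Since $|\bigcap_i(\bar A_i\times\bar B_i)|=|\bigcap_i\bar A_i|\cdot|\bigcap_i\bar B_i|$, a nonempty $F\subseteq\mathcal{V}(\mathcal{G})=(\mathcal{N}_P(\alpha)\times\mathcal{N}_Q(\beta))\setminus\{\bar P\times\bar Q\}$ is a face of $\mathcal{G}$ if and only if the set $\pi_1(F)$ of first coordinates is a face of $\mathcal{K}$ or the set $\pi_2(F)$ of second coordinates is a face of $\mathcal{Q}$ (and $\pi_1(F)\in\mathcal{K}$ forces $\bar P\notin\pi_1(F)$). In particular the full subcomplex of $\mathcal{G}$ on $V_1\times V_2$ is exactly $\mathcal{K}\boxtimes\mathcal{Q}$, and no face of $\mathcal{G}$ meets both $\{\bar P\}\times V_2$ and $V_1\times\{\bar Q\}$.

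Next I would dispose of the degenerate cases. If $\mathcal{N}_P(\alpha)=\emptyset$ (or $\mathcal{N}_Q(\beta)=\emptyset$), then by Corollary \ref{NLU} $\delta(\alpha)(P)=0$, so $\kappa_P(\alpha)=0$ (the complex $\mathcal{K}^P(\alpha)$ being empty), and also $\mathcal{N}_{P\times Q}(\alpha\times\beta)=\emptyset$, so $\kappa_{P\times Q}(\alpha\times\beta)=0$ and the identity holds. Otherwise both localizations, and hence $\mathcal{N}_{P\times Q}(\alpha\times\beta)$, are nonempty, so $\delta(\alpha)(P)=\delta(\beta)(Q)=\delta(\alpha\times\beta)(P\times Q)=1$ and it suffices to prove $\chi(\mathcal{G})-1=-(\chi(\mathcal{K})-1)(\chi(\mathcal{Q})-1)$. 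If $V_1=\emptyset$, i.e. $\mathcal{N}_P(\alpha)=\{\bar P\}$ and $\mathcal{K}=\emptyset$, the face description makes $\pi_2$ a simplicial isomorphism $\mathcal{G}\cong\mathcal{Q}$, so $\chi(\mathcal{G})=\chi(\mathcal{Q})$ and the required identity is the triviality $\chi(\mathcal{Q})-1=-(0-1)(\chi(\mathcal{Q})-1)$; the case $V_2=\emptyset$ is symmetric.

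It then remains to treat $V_1\neq\emptyset\neq V_2$. Fix $\bar A_0\in V_1$ and $\bar B_0\in V_2$, and define $r\colon\mathcal{G}\to\mathcal{K}\boxtimes\mathcal{Q}$ on vertices by $r(\bar A,\bar B)=(\bar A,\bar B)$ when $\bar A\neq\bar P$ and $\bar B\neq\bar Q$, by $r(\bar P,\bar B)=(\bar A_0,\bar B)$, and by $r(\bar A,\bar Q)=(\bar A,\bar B_0)$. Using the face criterion (a face meeting $\{\bar P\}\times V_2$ has $\pi_2$-projection in $\mathcal{Q}$, one meeting $V_1\times\{\bar Q\}$ has $\pi_1$-projection in $\mathcal{K}$, and neither situation is destroyed by applying $r$) one checks that $r$ is simplicial, that $r\circ\iota=\mathrm{id}$ for the inclusion $\iota\colon\mathcal{K}\boxtimes\mathcal{Q}\hookrightarrow\mathcal{G}$, and that $F\cup\iota(r(F))$ is again a face of $\mathcal{G}$ for every face $F$; the last property yields (via the linear homotopy on each simplex) a homotopy $|\iota|\circ|r|\simeq\mathrm{id}_{|\mathcal{G}|}$, so $|r|$ is a homotopy equivalence and $\chi(\mathcal{G})=\chi(\mathcal{K}\boxtimes\mathcal{Q})$ by Corollary \ref{EULHTPY}. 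Finally Lemma \ref{ecdpl} gives $\chi(\mathcal{G})=\chi(\mathcal{K})+\chi(\mathcal{Q})-\chi(\mathcal{K})\chi(\mathcal{Q})$, and subtracting $1$ and factoring gives $\chi(\mathcal{G})-1=-(\chi(\mathcal{K})-1)(\chi(\mathcal{Q})-1)$, i.e. $\kappa_{P\times Q}(\alpha\times\beta)=-\kappa_P(\alpha)\kappa_Q(\beta)$. I expect the main obstacle to be the bookkeeping of the first paragraph — pinning down $\mathcal{N}_{P\times Q}(\alpha\times\beta)$ and the faces of $\mathcal{G}$ — together with the verification that $F\cup\iota(r(F))$ is always a face, which is the step where the two ``degenerate directions'' $\{\bar P\}\times V_2$ and $V_1\times\{\bar Q\}$ of $\mathcal{G}$ are folded onto the genuine part $\mathcal{K}\boxtimes\mathcal{Q}$.
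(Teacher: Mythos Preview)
Your proof is correct and, like the paper's, reduces to the disjunctive-product formula of Lemma~\ref{ecdpl}; but you are more careful about edge cases and, at the same time, more elaborate than necessary in the main case. The paper simply asserts $\mathcal K^{P\times Q}(\alpha\times\beta)\cong\mathcal K^P(\alpha)\boxtimes\mathcal K^Q(\beta)$ whenever $\delta(\alpha)(P)=\delta(\beta)(Q)=1$ and applies \ref{ecdpl}. Your separate treatment of $V_1=\emptyset$ (and symmetrically $V_2=\emptyset$) is a genuine improvement: if $P\in\alpha$ then $\mathcal N_P(\alpha)=\{\bar P\}$, so $\mathcal K^P(\alpha)=\emptyset$ while $\mathcal G\cong\mathcal K^Q(\beta)$, and the paper's claimed isomorphism is literally false there (the final identity survives only because $\chi(\emptyset)=0$ makes \ref{ecdpl} degenerate to the correct value). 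Conversely, your retraction argument in the case $V_1\neq\emptyset\neq V_2$ is unnecessary: since $\alpha$ is an \emph{antichain}, $V_1\neq\emptyset$ already forces $P\notin\alpha$, hence $\bar P\notin\mathcal N_P(\alpha)$ (and symmetrically $\bar Q\notin\mathcal N_Q(\beta)$), so in that case $\mathcal V(\mathcal G)=V_1\times V_2$ and $\mathcal G=\mathcal K\boxtimes\mathcal Q$ on the nose, with no extra vertices to fold away.
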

\begin{proof}
First assume that $\delta(\alpha)(P)=\delta(\beta)(Q)=1$. Then observe that
\begin{equation}\label{discomp}\mathcal K^{P\times Q}(\alpha\times\beta)\cong\mathcal K^P(\alpha)\boxtimes\mathcal K^Q(\beta).\end{equation}
Hence we have
\begin{eqnarray*}
\kappa_{P\times Q}(\alpha\times\beta)&=&\chi(\mathcal K^{P\times Q}(\alpha\times\beta))-1\\
    &=&\chi(\mathcal K^P(\alpha))+\chi(\mathcal K^Q(\beta))-\chi(\mathcal K^P(\alpha))\chi(\mathcal K^Q(\beta))-1\\
    &=&(\kappa_P(\alpha)+1)+(\kappa_Q(\beta)+1)-(\kappa_P(\alpha)+1)(\kappa_Q(\beta)+1)-1\\
    &=&-\kappa_P(\alpha)\kappa_Q(\beta)
\end{eqnarray*}
The first and third equality is by definition of the local characteristic and the second is by equation (\ref{ecdp}) of lemma \ref{ecdpl} applied to (\ref{discomp}).

In the remaining case when either $\delta(\alpha)(P)$ or $\delta(\beta)(Q)$ is $0$, we have $\delta(\alpha\times\beta)(P\times Q)=0$. Hence $\kappa_{P\times Q}(\alpha\times\beta)=0$ and  either $\kappa_P(\alpha)$ or $\kappa_Q(\beta)$ is $0$. This gives the necessary identity and thus completes the proof in all cases.
\end{proof}

The aim of this section is to prove the following theorem.
\begin{theorem}\label{t5}
The map $ev:\widetilde{\mathrm{Def}}(M)\rightarrow\mathbb Z[\overline{\mathcal X}^*]$ defined by $[D]\mapsto ev_{[D]}$ is a semiring homomorphism.
\end{theorem}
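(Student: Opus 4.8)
The plan is to verify that $ev$ respects each piece of semiring structure: the constants $0$ and $1$, addition, and multiplication. The map $ev$ sends $[D]$ to the function $\mathfrak A\mapsto\Lambda_{\mathfrak A}(D)$ in $\mathbb Z[\overline{\mathcal X}^*]$, and this is already known to be well defined by theorem \ref{t4}. For the constants, $ev_{[\emptyset]}$ is the zero function since every coloured global characteristic of the empty set vanishes, so $ev([\emptyset])=0$; and for a singleton $\{*\}$ one checks directly that $\Lambda_{\mathfrak U}(\{*\})=1$ while $\Lambda_{\mathfrak A}(\{*\})=0$ for $\mathfrak A\neq\mathfrak U$, so $ev([\{*\}])$ is the monoid-ring identity $1\cdot\mathfrak U$. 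Additivity is immediate from the additivity of the global characteristic (theorem \ref{t2} together with the definition of $\Lambda$ for arbitrary definable sets): if $D=D_1\sqcup D_2$ then $\Lambda_{\mathfrak A}(D)=\Lambda_{\mathfrak A}(D_1)+\Lambda_{\mathfrak A}(D_2)$ for every colour $\mathfrak A$, hence $ev_{[D_1\sqcup D_2]}=ev_{[D_1]}+ev_{[D_2]}$ as elements of the monoid ring.

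The substantive step is multiplicativity, i.e. showing $ev_{[D_1\times D_2]}=ev_{[D_1]}\cdot ev_{[D_2]}$ in $\mathbb Z[\overline{\mathcal X}^*]$. Unwinding the monoid-ring product, the coefficient of a colour $\mathfrak C$ on the right-hand side is $\sum_{\mathfrak A\star\mathfrak B=\mathfrak C}\Lambda_{\mathfrak A}(D_1)\Lambda_{\mathfrak B}(D_2)$, so I must prove
\begin{equation*}
\Lambda_{\mathfrak C}(D_1\times D_2)=\sum_{\mathfrak A\star\mathfrak B=\mathfrak C}\Lambda_{\mathfrak A}(D_1)\,\Lambda_{\mathfrak B}(D_2).
\end{equation*}
First reduce to blocks: partition $D_1=\bigsqcup_i B_i$ and $D_2=\bigsqcup_j B_j'$ into finitely many blocks, so that $D_1\times D_2=\bigsqcup_{i,j}B_i\times B_j'$ is a block partition (using that $\overline{\mathcal B}$ is closed under products, as established just before lemma \ref{localcharmult}); by additivity it suffices to treat a single product of blocks $B\times B'$. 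Next pass to the local (coloured) characteristics. Expanding $\Lambda_{\mathfrak C}(B\times B')=-\sum_{P\in\mathfrak C}\kappa_P(B\times B')$ and noting that the relevant $P$'s range over cartesian products $R\times S$ with $R$ a $pp$-set occurring in the nest of $B$ and $S$ one occurring in the nest of $B'$, I want to use the relation $\kappa_{R\times S}(B\times B')=-\kappa_R(B)\kappa_S(B')$. This is precisely lemma \ref{localcharmult} at the level of antichains; extending it from antichains to blocks requires combining $P(B\times B')=P(B)\times P(B')$ and $N(B\times B')$ being built from $P(B)\times N(B')$ and $N(B)\times P(B')$, together with the cell formula $\kappa_P(C)=\kappa_P(P(C))-\kappa_P(N(C))$ and a short inclusion–exclusion argument, so that $\kappa_{R\times S}(B\times B')=-\kappa_R(B)\,\kappa_S(B')$ holds for blocks as well. (One also uses that every $P$ in the nest of $B\times B'$ does split as such an $R\times S$, which follows from the product description of the defining antichains.)

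Granting that local identity, the computation is a bookkeeping step:
\begin{equation*}
\Lambda_{\mathfrak C}(B\times B')=-\sum_{P\in\mathfrak C}\kappa_P(B\times B')=\sum_{R\times S\in\mathfrak C}\kappa_R(B)\,\kappa_S(B'),
\end{equation*}
and then one groups the sum over pairs $(R,S)$ according to the colours $\mathfrak A=[[R]]$ and $\mathfrak B=[[S]]$; since $\mathfrak A\star\mathfrak B=[[R\times S]]=\mathfrak C$ and the colour of a product depends only on the colours of the factors, this reorganises as $\sum_{\mathfrak A\star\mathfrak B=\mathfrak C}\bigl(\sum_{R\in\mathfrak A}\kappa_R(B)\bigr)\bigl(\sum_{S\in\mathfrak B}\kappa_S(B')\bigr)=\sum_{\mathfrak A\star\mathfrak B=\mathfrak C}\Lambda_{\mathfrak A}(B)\Lambda_{\mathfrak B}(B')$, which is the desired coefficient. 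Finiteness of all sums involved is guaranteed by finiteness of $\mathrm{Sing}_{\overline{\mathcal L}}$ of each set, so no convergence issues arise. I expect the main obstacle to be the careful extension of lemma \ref{localcharmult} from antichains to blocks and cells — in particular handling the negative part $N(B\times B')$ and checking that the sign conventions in $\kappa_P(C)=\kappa_P(P(C))-\kappa_P(N(C))$ interact correctly with the $-\kappa_P\kappa_Q$ sign in lemma \ref{localcharmult} — since everything else is a formal consequence of theorems \ref{t2} and \ref{t4}.
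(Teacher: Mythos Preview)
Your proposal is correct and follows essentially the same approach as the paper: reduce multiplicativity to products of blocks via additivity, then use lemma \ref{localcharmult} together with the inclusion--exclusion identity $(\alpha_1\times\beta_2)\wedge(\beta_1\times\alpha_2)=\beta_1\times\beta_2$ and theorem \ref{t1} to handle the negative parts. The only organisational difference is that the paper carries out the inclusion--exclusion at the level of the global characteristics $\Lambda_{\mathfrak C}$ (proving the antichain case first, then cells), whereas you do it at the local level to get $\kappa_{R\times S}(B\times B')=-\kappa_R(B)\kappa_S(B')$ directly for blocks; both routes rest on the same identities.
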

\begin{proof} We have already seen that $ev$ is additive, since each $\Lambda_{\mathfrak A}$ is. So it remains to show that it is multiplicative.

We have observed that the set $[\overline{\mathcal A}]$ is a monoid with respect to cartesian product, the isomorphism class of a singleton being the identity for the multiplication. So we will first show that $ev:[\overline{\mathcal A}]\rightarrow\mathbb Z[\overline{\mathcal X}^*]$ is a multiplicative monoid homomorphism.

Let $\alpha,\beta\in\overline{\mathcal A}$ be fixed. Note that

\begin{equation}\label{singincl}S:=\mathrm{Sing}_{\overline{\mathcal L}}(\alpha\times\beta)\subseteq\{P\times Q:P\in \mathrm{Sing}_{\overline{\mathcal L}}(\alpha),Q\in \mathrm{Sing}_{\overline{\mathcal L}}(\beta)\}.\end{equation}

We need to show that $ev_{[\alpha]}\cdotp ev_{[\beta]}=ev_{[\alpha\times\beta]}$ as maps on $\overline{\mathcal X}^*$. This is equivalent to  $ev_{[\alpha\times\beta]}(\mathfrak C)=\sum_{\mathfrak A\star\mathfrak B=\mathfrak C}ev_{[\alpha]}(\mathfrak A) ev_{[\beta]}(\mathfrak B)$ for each $\mathfrak C\in\overline{\mathcal X}^*$. Using the definition of the evaluation map, it is enough to check that $\Lambda_{\mathfrak C}([\alpha\times\beta])=\sum_{\mathfrak A\star\mathfrak B=\mathfrak C}\Lambda_{\mathfrak A}([\alpha]) \Lambda_{\mathfrak B}([\beta])$ for each $\mathfrak C\in\overline{\mathcal X}^*$.

The left hand side of the above equation is
\begin{eqnarray*}
  \Lambda_{\mathfrak C}([\alpha\times\beta]) &=& -\sum_{R\in\mathfrak C}\kappa_R(\alpha\times\beta)\\
  &=& -\sum_{R\in(\mathfrak C\cap S)}\kappa_R(\alpha\times\beta)\\
  &=& \sum_{R\in(\mathfrak C\cap S)}\kappa_{\pi_1(R)}(\alpha)\kappa_{\pi_2(R)}(\beta)
\end{eqnarray*}
The last equality is given by the lemma \ref{localcharmult} since, by (\ref{singincl}), every $R\in\mathfrak C\cap S$ can be written as $R=\pi_1(R)\times\pi_2(R)$. The right hand side is
\begin{eqnarray*}
  \sum_{\mathfrak A\star\mathfrak B=\mathfrak C}\Lambda_{\mathfrak A}([\alpha])\Lambda_{\mathfrak B}([\beta]) &=& \sum_{\mathfrak A\star\mathfrak B=\mathfrak C}\left(-\sum_{P\in\mathfrak A}\kappa_P(\alpha)\right)\left(-\sum_{Q\in\mathfrak B}\kappa_Q(\beta)\right) \\
  \ &=& \sum_{\mathfrak A\star\mathfrak B=\mathfrak C}\sum_{P\in\mathfrak A,Q\in\mathfrak B}\kappa_P(\alpha)\kappa_Q(\beta)
\end{eqnarray*}

Using the definition of $\mathrm{Sing}_{\overline{\mathcal L}}(-)$, we observe that the final expressions on both sides are equal. This completes the proof that $ev$ is a multiplicative monoid homomorphism on $[\overline{\mathcal A}]$.

Now we will show that $ev$ is also multiplicative on the monoid $[\overline{\mathcal C}]$. Let $C_1,C_2$ be cells with $\alpha_i=P(C_i)$ and $\beta_i=N(C_i)$ for each $i=1,2$. Then $C_1\times C_2=\bigcup(\alpha_1\times\alpha_2)\setminus (\bigcup(\alpha_1\times\beta_2)\cup\bigcup(\beta_1\times\alpha_2))$. We also know that $ev_{[C]}=ev_{P(C)}-ev_{N(C)}$ for each cell $C$.

We need to show that $\Lambda_{\mathfrak C}(C_1\times C_2)=\sum_{\mathfrak A\star\mathfrak B=\mathfrak C}\Lambda_{\mathfrak A}([C_1])\Lambda_{\mathfrak B}([C_2])$ for each $\mathfrak C\in\overline{\mathcal X}^*$. Now we have
\begin{equation*}
    \Lambda_{\mathfrak C}(C_1\times C_2)=\Lambda_{\mathfrak C}(\alpha_1\times\alpha_2)-\Lambda_{\mathfrak C}((\alpha_1\times\beta_2)\vee(\beta_1\times\alpha_2))
\end{equation*}
and we also have
\begin{eqnarray*}
   \sum_{\mathfrak A\star\mathfrak B=\mathfrak C}\Lambda_{\mathfrak A}([C_1])\Lambda_{\mathfrak B}([C_2]) &=& \sum_{\mathfrak A\star\mathfrak B=\mathfrak C}(\Lambda_{\mathfrak A}([\alpha_1])-\Lambda_{\mathfrak A}([\beta_1]))(\Lambda_{\mathfrak B}([\alpha_2])-\Lambda_{\mathfrak B}([\beta_2])) \\
   &=& \Lambda_{\mathfrak C}(\alpha_1\times\alpha_2)+\Lambda_{\mathfrak C}(\beta_1\times\beta_2)-\Lambda_{\mathfrak C}(\beta_1\times\alpha_2)-\Lambda_{\mathfrak C}(\alpha_1\times\beta_2)
\end{eqnarray*}
Therefore we need to show
\begin{equation*}
   \Lambda_{\mathfrak C}((\alpha_1\times\beta_2)\vee(\beta_1\times\alpha_2))+\Lambda_{\mathfrak C}(\beta_1\times\beta_2)=\Lambda_{\mathfrak C}(\alpha_1\times\beta_2)+\Lambda_{\mathfrak C}(\beta_1\times\alpha_2).
\end{equation*}
This is true by theorem \ref{t1} since we have $(\alpha_1\times\beta_2)\wedge(\beta_1\times\alpha_2)=(\beta_1\times\beta_2)$.

In the last step, we show that $ev_{[D_1\times D_2]}=ev_{[D_1]}\cdotp ev_{[D_2]}$ for arbitrary definable sets $D_1,D_2$. Let $[D_1]=\sum_{i=1}^k[B_{1i}]$ and $[D_2]=\sum_{j=1}^l[B_{2j}]$ be obtained from any decompositions of $D_1$ and $D_2$ into blocks. Then $[D_1\times D_2]=\sum_{i=1}^{k}\sum_{j=1}^{l}[B_{1i}\times B_{2j}])$. For each $\mathfrak C\in\overline{\mathcal X}^*$, we have
\begin{eqnarray*}
  ev_{[D_1]}\cdotp ev_{[D_2]}(\mathfrak C) &=& \sum_{\mathfrak A\star\mathfrak B=\mathfrak C}\Lambda_{\mathfrak A}([D_1])\Lambda_{\mathfrak B}([D_2]) \\
   &=& \sum_{\mathfrak A\star\mathfrak B=\mathfrak C}\left(\sum_{i=1}^{k}\Lambda_{\mathfrak A}([B_{1i}])\right)\left(\sum_{j=1}^{l}\Lambda_{\mathfrak B}([B_{2j}])\right) \\
   &=& \sum_{i=1}^{k}\sum_{j=1}^{l}\sum_{\mathfrak A\star\mathfrak B=\mathfrak C}\Lambda_{\mathfrak A}([B_{1i}])\Lambda_{\mathfrak B}([B_{2j}]) \\
   &=& \sum_{i=1}^{k}\sum_{j=1}^{l}\Lambda_{\mathfrak C}([B_{1i}\times B_{2j}]) \\
   &=& \Lambda_{\mathfrak C}(\sum_{i=1}^{k}\sum_{j=1}^{l}[B_{1i}\times B_{2j}]) \\
   &=& ev_{[D_1\times D_2]}(\mathfrak C).
\end{eqnarray*}
This completes the proof showing $ev$ is a semiring homomorphism.
\end{proof}

\subsection{Computation of the Grothendieck ring}\label{COMPUTATION}

In the previous section, we showed that $ev:\widetilde{\mathrm{Def}}(M)\rightarrow\mathbb Z[\overline{\mathcal X}^*]$ is a semiring homomorphism. Since the codomain of this map is a ring, it factorizes through the unique homomorphism of cancellative semirings $\widetilde{ev}:\widetilde{\widetilde{\mathrm{Def}}(M)}\rightarrow\mathbb Z[\overline{\mathcal X}^*]$ where $\widetilde{\widetilde{\mathrm{Def}}(M)}$ is the quotient semiring of $\widetilde{\mathrm{Def}}(M)$ obtained as in theorem \ref{QUOCONST}. Our next aim is to prove the following lemma.

\begin{lemma}\label{INJEV}
The map $\widetilde{ev}:\widetilde{\widetilde{\mathrm{Def}}(M)}\rightarrow\mathbb Z[\overline{\mathcal X}^*]$ is injective.
\end{lemma}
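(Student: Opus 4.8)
The plan is to show that $\widetilde{ev}$ is injective by exhibiting an explicit semiring section, or rather by showing that the composite $\widetilde{\mathrm{Def}}(M)\xrightarrow{ev}\mathbb Z[\overline{\mathcal X}^*]$ already separates the $\thicksim$-equivalence classes appearing in $\widetilde{\widetilde{\mathrm{Def}}(M)}$. Equivalently, since $\widetilde{\widetilde{\mathrm{Def}}(M)}$ is the cancellative quotient, it suffices to prove that for definable sets $D,D'\subseteq\overline{\mathrm{Def}}(M)$ we have $ev_{[D]}=ev_{[D']}$ in $\mathbb Z[\overline{\mathcal X}^*]$ if and only if $[D]\thicksim[D']$, i.e. there is a definable set $E$ with $[D\sqcup E]=[D'\sqcup E]$. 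The forward direction is the only content; the reverse is immediate from additivity of $ev$. So the task reduces to: if $\Lambda_{\mathfrak A}(D)=\Lambda_{\mathfrak A}(D')$ for every colour $\mathfrak A\in\overline{\mathcal X}^*$, then $D$ and $D'$ become definably isomorphic after adding a common definable set.

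First I would decompose both $D$ and $D'$ into finite disjoint unions of blocks using Lemma \ref{REP}, and then refine further so that each block is a core $\mathrm{Core}_{\mathcal D}(F)$ of a suitable common nest; a block $\mathrm{Core}_{\mathcal D}(F)$ has a single top $pp$-set $\overline B=F$ and its colour is $[[F]]$, and by Lemma \ref{COLOURINJ} (extension of injective linear maps) one can move a core block of colour $\mathfrak A$ by a definable bijection onto a standard model of that colour. The key computation is to identify the image of $ev$: for a single block $B$ with $\overline B\in\mathfrak A$, the coloured global characteristic $\Lambda_{\mathfrak B}(B)$ is $0$ unless $\mathfrak B\preceq\mathfrak A$ in the natural order on colours coming from $pp$-inclusion, and $\Lambda_{\mathfrak A}(B)=1$ (the top core contributes $\kappa_{\overline B}(B)=-1$, hence $\Lambda_{\mathfrak A}(B)=1$, as the illustrations after Deflem \ref{CH1} indicate). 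Thus the monomials $ev_{[B]}$ for core blocks are "unitriangular" with respect to this partial order, so a finite $\mathbb Z_{\geq 0}$-combination of them is determined by its image in $\mathbb Z[\overline{\mathcal X}^*]$: one reads off the leading coefficient (the count of top-colour blocks), subtracts, and induces downward.

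The heart of the argument is therefore a downward induction on colours. Suppose $ev_{[D]}=ev_{[D']}$. Write $[D]=\sum_i [B_i]$ and $[D']=\sum_j[B'_j]$ as sums of core-block classes. Pick a colour $\mathfrak A$ maximal in $\mathrm{Supp}(ev_{[D]})$; its coefficient counts the number of blocks among the $B_i$ (resp. $B'_j$) of colour exactly $\mathfrak A$, and these counts agree. Match them up: each such $B_i$ is definably isomorphic to $\overline{B_i}$ by Lemma \ref{COLOURINJ} and $\overline{B_i}$ is definably isomorphic to $\overline{B'_j}$ since they share a colour, so $[B_i]+[\,\overline{B_i}\setminus B_i\,]=[\,\overline{B_i}\,]=[\,\overline{B'_j}\,]=[B'_j]+[\,\overline{B'_j}\setminus B'_j\,]$. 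The "deficiency" blocks $\overline{B_i}\setminus B_i$ have strictly smaller colours (they are unions of proper $pp$-subsets), so after adding the common set $E$ assembled from all these deficiencies we have replaced the top-colour blocks by strictly-lower-colour ones while preserving $ev$ (up to the same additive shift on both sides). Iterating over the finitely many colours in the supports terminates, yielding a definable set $E$ with $D\sqcup E$ and $D'\sqcup E$ definably isomorphic, i.e. $[D]\thicksim[D']$ in $\widetilde{\mathrm{Def}}(M)$, which is exactly injectivity of $\widetilde{ev}$.

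The main obstacle I anticipate is the bookkeeping in the induction: making precise the partial order on $\overline{\mathcal X}^*$ induced by $pp$-inclusion, checking that $\Lambda_{\mathfrak B}$ of a block indeed vanishes above the block's own colour and equals $1$ at that colour (which requires examining the core structure and the local characteristics carefully, using that $\kappa_P$ is supported on the singular set and that the top core contributes $-1$), and ensuring the deficiency sets really do have strictly smaller colours so the recursion is well-founded. Once these are in place, the "unitriangularity" argument is formal; but verifying the vanishing/normalization of $\Lambda_{\mathfrak B}(B)$ is where the real work lies, and it is what pins down that $ev$ lands in, and surjects onto, the part of $\mathbb Z[\overline{\mathcal X}^*]$ one expects.
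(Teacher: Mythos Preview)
Your overall strategy---reduce both sides to disjoint unions of simple pieces by adding common definable sets, then match---is the same spirit as the paper's proof. However, the specific induction you propose is not well-founded, and the normalization you rely on is false.

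The key failure is the claim that a block $B$ with $\overline B\in\mathfrak A$ satisfies $\Lambda_{\mathfrak A}(B)=1$, and that the ``deficiency'' $\bigcup N(B)$ consists of strictly smaller colours. Both fail because a proper $pp$-subset of $\overline B$ can lie in the \emph{same} colour $\mathfrak A$. Concretely, take $M=\mathbb Z_p^{(\aleph_0)}$ as a $\mathbb Z_p$-module: then $T=T^{\aleph_0}$, multiplication by $p$ is a $pp$-isomorphism $M\to pM$, so $[[M]]=[[pM]]$ even though $pM\subsetneq M$. For the block $B=M\setminus pM$ one computes $\kappa_M(B)=-1$ and $\kappa_{pM}(B)=+1$, whence $\Lambda_{[[M]]}(B)=0$, not $1$; indeed $ev_{[B]}=\delta_{[[M]]}-\delta_{[[pM]]}=0$. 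Thus the ``leading coefficient'' does not count top-colour blocks, and the deficiency $pM$ has not moved to a strictly smaller colour. Your proposed partial order on $\overline{\mathcal X}^*$ coming from $pp$-inclusion is therefore not antisymmetric, and the downward induction on colours does not terminate.

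The paper circumvents this by inducting not on colours but on a \emph{complexity function} $\Gamma(\alpha)$ defined as the maximal chain length in the nest generated by $\alpha$. One first reduces $[D_1],[D_2]$ to classes of $pp$-convex sets by adding $\sum[N(B_i)]$ on both sides; then, given antichains $\alpha,\beta$ with equal evaluation, one peels off the maximal elements of the nest (writing $\bigcup\alpha=\bigsqcup_i\mathrm{Core}_{\alpha_i}(A_i)$ and adding the complements $N(\mathrm{Core}_{\alpha_i}(A_i))$ to both sides), which strictly decreases $\Gamma$ regardless of whether colours repeat. This terminates at antichains in $\mathcal U=\{\alpha:\text{pairwise disjoint}\}$, where $\Lambda_{\mathfrak A}(\alpha)=|\alpha\cap\mathfrak A|$ is genuinely a count and injectivity of $ev$ is immediate. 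The moral is that the correct well-founded measure lives in the lattice $\mathcal L$, not in the monoid $\overline{\mathcal X}^*$.
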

\begin{proof}
We will prove this lemma in several steps. First we will identify a subset of $\overline{\mathrm{Def}}(M)$ where the restriction of the evaluation function is injective.

Let $\mathcal U=\{\alpha\in\overline{\mathcal A}:A_1\cap A_2=\emptyset$ for all distinct $A_1,A_2\in\alpha\}$. Then it can be easily checked that $\Lambda_{\mathfrak A}(\alpha)=|\alpha\cap\mathfrak A|$ for each $\mathfrak A\in\overline{\mathcal X}^*$ and $\alpha\in\mathcal U$. Hence if $ev_{[\alpha]}=ev_{[\beta]}$ for any $\alpha,\beta\in\mathcal U$, then we have $[\alpha]=[\beta]$. This proves that the map $ev$ is itself injective on $\mathcal U$.

Given any $[D_1],[D_2]\in\widetilde{\mathrm{Def}}(M)$ such that $ev_{[D_1]}=ev_{[D_2]}$, we will find some $[X]\in\widetilde{\mathrm{Def}}(M)$ such that $[D_1]+[X]=[\alpha']$ and $[D_2]+[X]=[\beta']$ for some $\alpha',\beta'\in\mathcal U$. Then we get $ev_{[\alpha']}=ev_{[D_1]}+ev_{[X]}=ev_{[D_2]}+ev_{[X]}=ev_{[\beta']}$ and hence we will be done by the previous paragraph.

\textbf{Claim:} It is sufficient to assume $[D_1],[D_2]\in[\overline{\mathcal A}]$.

Let $[D_1]=\sum_{i=1}^k[B_{1i}]$ and $[D_2]=\sum_{j=1}^l[B_{2j}]$ be obtained from any decompositions of $D_1$ and $D_2$ into blocks. We have $[P(B)]=[B]+[N(B)]$ for any $B\in\overline{\mathcal B}$. Therefore if we choose $[Y]=\sum_{i=1}^k[N(B_{1i})]+\sum_{j=1}^l[N(B_{2j})]$, we get $[D_1]+[Y]=\sum_{i=1}^l[P(B_{1i})]+ \sum_{j=1}^l[N(B_{2j})]$ and $[D_2]+[Y]=\sum_{i=1}^l[N(B_{1i})]+\sum_{j=1}^l[P(B_{2j})]$. Hence both $[D_1]+[Y],[D_2]+[Y]\in[\overline{\mathcal A}]$. This finishes the proof of the claim.

Now let $\alpha,\beta\in\overline{\mathcal A}$ be such that $ev_{[\alpha]}=ev_{[\beta]}$. We describe an algorithm which terminates in finitely many steps and yields some $[X]$ such that $[\alpha]+[X],[\beta]+[X]\in[\mathcal U]$. Before stating the algorithm, we define a \textbf{complexity function} $\Gamma:\overline{\mathcal A}\rightarrow\mathbb N$. For each antichain $\alpha$, the complexity $\Gamma(\alpha)$ is defined to be the maximum of the lengths of chains in the smallest nest corresponding to $\alpha$, where the length of a chain is the number of elements in it. Note that $\Gamma(\alpha)\leq 1$ if and only if $\alpha\in\mathcal U$.

Let $\alpha=\{A_1,A_2,\hdots,A_k\}$ be any enumeration and let $\alpha_i=\{A_1,A_2,\hdots,A_i\}$ for each $1\leq i\leq k$. Similarly choosing an enumeration $\beta=\{B_1,B_2,\hdots,B_l\}$, we define $\beta_j$ for each $1\leq j\leq l$. Then we observe that $\bigcup\alpha=\bigsqcup_{i=1}^k \mathrm{Core}_{\alpha_i}(A_i)$ and $\bigcup\beta=\bigsqcup_{j=1}^l \mathrm{Core}_{\beta_j}(B_j)$. Now each $\mathrm{Core}_{\alpha_i}(A_i)$ is a block, which can be completed to a $pp$-set if we take its (disjoint) union with $N(\mathrm{Core}_{\alpha_i}(A_i))$. This can be written as the equation $[A_i]=[\mathrm{Core}_{\alpha_i}(A_i)]+ [N(\mathrm{Core}_{\alpha_i}(A_i))]$. If $\bigcup\alpha\subseteq M^n$, we consider $M^{nk}$ and inject $\mathrm{Core}_{\alpha_i}(A_i)$ in the obvious way into the $i^{th}$ copy of $M^n$ in $M^{nk}$ for each $i$. This gives us a definable set definably isomorphic to $\bigcup\alpha$. The advantage of this decomposition is that we can also add an isomorphic copy of $N(\mathrm{Core}_{\alpha_i}(A_i))$ at the appropriate place for each $i$ and obtain a new antichain representing $\sum_{i=1}^k [A_i]$.

Repeating the same procedure for $\beta$ yields a representative of $\sum_{j=1}^l [B_j]$. In order to maintain the evaluation function on both sides, we add disjoint copies of the antichains $N(\mathrm{Core}_{\alpha_i}(A_i))$, $N(\mathrm{Core}_{\beta_j}(B_j))$ to both sides. So we choose $[W]=\sum_{i=1}^k[N(\mathrm{Core}_{\alpha_i}(A_i))]+ \sum_{j=1}^l [N(\mathrm{Core}_{\beta_j}(B_j))]$, hence $[\alpha]+[W],[\beta]+[W]$ are both in $[\overline{\mathcal A}]$ so that the particular antichains $\alpha',\beta'$ in these classes we constructed above satisfy $\Gamma((\bigcup\alpha')\sqcup(\bigcup\beta'))<\Gamma((\bigcup\alpha)\sqcup(\bigcup\beta))$. The inequality holds since we isolate the maximal elements of the nest corresponding to $(\bigcup\alpha)\cup(\bigcup\beta)$ in the process.

We repeat this process, inducting on the complexity of the antichains, till the disjoint union of the pair of antichains in the output lies in $\mathcal U$. Since the complexity decreases at each step, this algorithm terminates in finitely many steps. The required $[X]$ is the sum of the $[W]$'s obtained at each step. This finishes the proof of the injectivity of the map $\widetilde{ev}$.
\end{proof}

Finally we are ready to prove theorem \ref{FINAL} regarding the structure of the Grothendieck ring $K_0(M)$.

\begin{proof} (of Theorem \ref{FINAL}) It is easy to observe that the image of $\mathcal U$ under the evaluation map is the monoid semiring $\mathbb N[\overline{\mathcal X}^*]$. The Grothendieck ring $K_0(\mathbb N[\overline{\mathcal X}^*])$ is clearly isomorphic to the monoid ring $\mathbb Z[\overline{\mathcal X}^*]$.

Since the map $\widetilde{ev}$ is injective by lemma \ref{INJEV} and $\mathbb N[\overline{\mathcal X}^*]\subseteq Im(\widetilde{ev})\subseteq\mathbb Z[\overline{\mathcal X}^*]$, we have $K_0(M)=K_0(Im(\widetilde{ev}))\cong\mathbb Z[\overline{\mathcal X}^*]$ by the universal property of $K_0$ in theorem \ref{GRCONSTR}.
\end{proof}

\section{General Case}\label{gencase}
\subsection{Finite indices of $pp$-pairs}\label{TNTA}

So far we have considered the Grothendieck ring of a right $\mathcal R$-module $M$ whose theory $T:=Th(M)$ satisfies $T=T^{\aleph_0}$. From this section onwards we remove this condition and work with an arbitrary right $\mathcal R$-module $M$.

We continue to use the notations $\mathcal L_n,\mathcal P_n,\mathcal A_n,\mathcal X_n$ to denote the set of all $pp$-subsets of $M^n$, the set of all finite subsets of $\mathcal L_n$, the set of all finite antichains in $\mathcal L_n$ and the set of all $pp$-isomorphism classes (colours) in $\mathcal L_n$ respectively. We still use the representation theorem \ref{REP}, but lemma \ref{NLU} is unavailable to obtain the uniqueness - proposition \ref{UNIQUE1}. As a result we do not have a bijection between the set of all $pp$-convex sets, which we denote by $\mathcal O_n$, and the set $\mathcal A_n$. The elements of the set $\mathcal C_n:=\{(\bigcup\alpha)\setminus(\bigcup\beta)| \alpha,\beta\in\mathcal A_n,\ \bigcup\beta\subsetneq\bigcup\alpha\}$ will be called cells. The cells allowing a representation of the form $P\setminus\bigcup\beta$ for some $P\in\mathcal L_n$ and $\beta\in\mathcal A$ such that $P\subsetneq\bigcup\beta$ will be called blocks and the set of all blocks in $\mathcal C_n$ is denoted by $\mathcal B_n$.

Let $(-)^\circ:\mathcal L_n\rightarrow \mathcal L_n$ denote the function which takes a coset $P$ to the subgroup $P^\circ:=P-p$, where $p\in P$ is any element. We use $\mathcal L_n^\circ$ to denote the image of this function, i.e. the set of all $pp$-definable subgroups. Let $\sim_n$ denote a relation on $\mathcal L_n^\circ$ defined by $P\sim_nQ$ if and only if $[P:P\cap Q]+[Q:P\cap Q]<\infty$. This is the \textbf{commensurability relation} and it can be easily checked to be an equivalence relation. We can extend this relation to all elements of $\mathcal L_n$ using the same definition if we set the index $[P:Q]:=[P^\circ:P^\circ\cap Q^\circ]$ for all $P,Q\in\mathcal L_n$. Let $\mathcal Y_n$ denote the set of all commensurability equivalence classes of $\mathcal L_n$ (\textbf{bands} for short). We use capital bold letters $\mathbf{P},\mathbf{Q},\cdots$ etc. to denote bands. The equivalence class (band) of $P$ will be denoted by the corresponding bold letter $\mathbf{P}$.

Now we fix some $n\geq 1$ and drop all the subscripts as usual. Note that, in the special case, a band is just the collection of all cosets of a $pp$-subgroup. In particular any two distinct elements of a band are disjoint. This `discreteness' has been exploited heavily in all the proofs for the special case. We need to work hard to set up the technical machinery for defining the local characteristics. The proofs for the general case will be similar to those for the special case once we obtain the required discreteness condition.

Let $\mathbf{P}\in\mathcal Y$. It can be easily checked that if $P,Q\in\mathbf{P}$ and $P\cap Q\neq\emptyset$ then $P\cap Q\in\mathbf{P}$ i.e. $\mathbf{P}$ is closed under intersections which are nonempty. By definition of the index, it is also clear that if $P\in\mathbf{P}$ and $a\in M^n$, then $a+P\in\mathbf{P}$. Let $\mathcal A(\mathbf{P}),\mathcal P(\mathbf{P})$ and $\mathcal O(\mathbf{P})$ denote the sets of all finite antichains in $\mathbf{P}$, finite subsets of $\mathbf{P}$ and unions of finite subsets of $\mathbf{P}$ respectively.

We have the following analogue of proposition \ref{UNIQUE1} for $pp$-convex sets. The proof is omitted as it is similar to the $\mathrm{T=T^{\aleph_0}}$ case.

\begin{pro}
Let $X\in\mathcal O$. Then the set $S(X):=\{\mathbf{P}\in\mathcal Y: \exists \alpha\in\mathcal A\ (P\in~\alpha,$ $ \bigcup\alpha=X)\}$ is finite. Furthermore for any two $\alpha,\beta\in\mathcal A$ such that $\bigcup\alpha=\bigcup\beta=X$ and each $\mathbf{P}\in S(X)$, we have $\bigcup(\alpha\cap\mathbf{P})=\bigcup(\beta\cap\mathbf{P})$. Thus $X$ is uniquely determined by the family $\{X_\mathbf{P}:=\bigcup(\alpha\cap\mathbf{P}) \in\mathcal O(\mathbf{P})\mid\mathbf{P}\in S(X)\}$ for any $\alpha\in\mathcal A$ such that $\bigcup\alpha=X$.
\end{pro}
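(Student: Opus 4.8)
The plan is to adapt the proof of Proposition \ref{UNIQUE1} to the band-graded setting, using the original form of Neumann's Lemma \ref{NL} in place of its strengthening \ref{NLU}, which is no longer available. Recall that \ref{NL} says that if a coset $P\in\mathcal L$ is covered by finitely many cosets then the sub-family consisting of those cosets of finite index in $P$ already covers $P$. So the first step is: fix $X\in\mathcal O$ and antichains $\alpha,\beta\in\mathcal A$ with $\bigcup\alpha=\bigcup\beta=X$, fix a band $\mathbf P$, take any $P\in\alpha\cap\mathbf P$, and apply \ref{NL} to the cover $P\subseteq\bigcup\beta$ to obtain $P\subseteq\bigcup\{Q\in\beta:[P:Q]<\infty\}$.

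The genuinely new point, absent in the $T=T^{\aleph_0}$ case where distinct elements of a band are disjoint, is that a $Q\in\beta$ with $[P:Q]<\infty$ need not lie in $\mathbf P$: it may lie in a band that strictly dominates $\mathbf P$. To manage this I would order $\mathcal Y$ by declaring $\mathbf P\preceq\mathbf Q$ iff $[P:Q]<\infty$ for some (equivalently, any) $P\in\mathbf P$ and $Q\in\mathbf Q$ — intuitively, $\mathbf P$ sits inside $\mathbf Q$ up to finite index — check that this is a well-defined partial order, and then run the whole argument by downward induction on it. Since $[P:Q]<\infty$ is precisely the statement that $\mathbf P\preceq(\text{band of }Q)$, every $Q$ surviving the application of \ref{NL} lies either in $\mathbf P$ itself or in a band of $S(X)$ strictly above $\mathbf P$.

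The base case is $\mathbf P$ maximal in $S(X)$: here the alternative ``band of $Q$ strictly above $\mathbf P$'' cannot occur, so $P\subseteq\bigcup(\beta\cap\mathbf P)$; running this for every $P\in\alpha\cap\mathbf P$ and symmetrising gives $\bigcup(\alpha\cap\mathbf P)=\bigcup(\beta\cap\mathbf P)$, so this set is an invariant of $X$. For the inductive step one may assume $\bigcup(\alpha\cap\mathbf Q)=\bigcup(\beta\cap\mathbf Q)=:X_\mathbf Q$ for all $\mathbf Q\succ\mathbf P$ in $S(X)$; the covering then yields $P\subseteq\bigcup(\beta\cap\mathbf P)\cup\bigcup_{\mathbf Q\succ\mathbf P}X_\mathbf Q$, and one must show that the contribution of $P$ to $X$ ``at level $\mathbf P$'', i.e. modulo $\bigcup_{\mathbf Q\succ\mathbf P}X_\mathbf Q$, is computed the same way from $\alpha$ as from $\beta$. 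Here one exploits that within the single band $\mathbf P$ all $pp$-subgroups are commensurable, so that $\bigcup(\alpha\cap\mathbf P)$ and $\bigcup(\beta\cap\mathbf P)$ are each a finite union of cosets of one common $pp$-subgroup lying in $\mathbf P$, a setting close enough to the $T=T^{\aleph_0}$ situation that the clean comparison of \ref{UNIQUE1} via \ref{NL} goes through.

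Once the band-wise comparison is in place, finiteness of $S(X)$ is immediate, since $S(X)$ must then coincide with the finite set of bands occurring in any one antichain $\alpha$ with $\bigcup\alpha=X$; and the concluding assertion $X=\bigcup_{\mathbf P\in S(X)}X_\mathbf P$ is then trivial. I expect the inductive step to be the main obstacle: making the ``modulo larger bands'' bookkeeping precise while preserving the antichain condition is delicate, because intersecting with — or subtracting off — the sets $X_\mathbf Q$ for $\mathbf Q\succ\mathbf P$ need not return a $pp$-convex set, forcing one to re-antichain at each stage. This is the one place where the general case is materially harder than its $T=T^{\aleph_0}$ prototype, and presumably why the authors omit the details.
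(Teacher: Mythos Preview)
Your strategy—replace Corollary \ref{NLU} by the full Neumann Lemma \ref{NL} and run a downward induction on the partial order $\mathbf P\preceq\mathbf Q\Longleftrightarrow[P:Q]<\infty$ on bands—is exactly the adaptation of Proposition \ref{UNIQUE1} that the paper has in mind (it omits the proof as ``similar to the $T=T^{\aleph_0}$ case''). Your verification that $\preceq$ is a partial order is correct, and your base case for $\preceq$-maximal bands goes through verbatim.

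There is, however, a genuine obstruction that your sketch of the inductive step does not meet, and it is not merely bookkeeping: the proposition \emph{as literally stated}, i.e.\ for arbitrary antichains $\alpha,\beta\in\mathcal A$, is false. Take $M=\mathbb Z_2$ (the $2$-adics as a $\mathbb Z$-module) and $n=3$; set $H_1=\{(a,b,c):2\mid a\}$, $H_2=\{(a,b,c):2\mid b\}$, $K=\{(a,-a,0):a\in\mathbb Z_2\}$ and $P=(1,0,0)+K$. Then $H_1,H_2$ lie in the top band while $P$ lies in a strictly smaller band $\mathbf K$; one checks directly that $P\subseteq H_1\cup H_2$ and that $P,H_1,H_2$ are pairwise incomparable. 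Hence $\alpha=\{H_1,H_2\}$ and $\beta=\{H_1,H_2,P\}$ are both antichains with union $X=H_1\cup H_2$, yet $\bigcup(\alpha\cap\mathbf K)=\emptyset\neq P=\bigcup(\beta\cap\mathbf K)$. The culprit is that $\beta$ is \emph{redundant}: $P$ is covered by the remaining elements. So the statement must be read for irredundant antichains (equivalently, antichains whose members are maximal $pp$-sets in $X$), and your inductive step has to use irredundancy explicitly. Once you do, your back-and-forth argument immediately yields that irredundant $\alpha,\beta$ meet the same set of bands (if $\mathbf P$ met $\alpha$ but not $\beta$, feeding $P\in\alpha\cap\mathbf P$ through $\beta$ via \ref{NL} and back through $\alpha$ gives $P\subseteq\bigcup(\alpha\setminus\{P\})$, contradicting irredundancy), which already secures the finiteness of $S(X)$; the remaining equality $\bigcup(\alpha\cap\mathbf P)=\bigcup(\beta\cap\mathbf P)$ then reduces, as you say, to a comparison of two finite unions of cosets of a common subgroup in $\mathbf P$ modulo the already-determined higher-band part $Y$, and irredundancy is again what rules out a coset on one side being entirely absorbed into $Y$.
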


Given some $X\in\mathcal O(\mathbf{P})$ there could be two different $\alpha,\beta\in\mathcal A(\mathbf{P})$ such that $\bigcup\alpha=\bigcup\beta=X$. The nests corresponding to such antichains could have entirely different (semilattice) structures. The following proposition gives us a way to obtain an antichain $\alpha$ representing $X$ such that if $A,B\in\alpha$ and $A\neq B$, then $A\cap B=\emptyset$.

\begin{pro}
Let $X\in\mathcal O(\mathbf{P})$. Then for any $\alpha\in\mathcal A(\mathbf{P})$ such that $\bigcup\alpha=X$, there is some $\mathbf{P}(\alpha)\in\mathbf{P}^\circ$ such that $X$ is a finite union of distinct cosets of $\mathbf{P}(\alpha)$.
\end{pro}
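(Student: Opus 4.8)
The plan is to build $\mathbf{P}(\alpha)$ as the intersection of the ``linear parts'' of the cosets occurring in $\alpha$. Write $\alpha=\{A_1,\dots,A_k\}$ (the case $\alpha=\emptyset$, hence $X=\emptyset$, being handled by taking for $\mathbf{P}(\alpha)$ any member of $\mathbf{P}^\circ$), and for each $i$ let $A_i^\circ\in\mathcal L^\circ$ be the $pp$-definable subgroup with $A_i=a_i+A_i^\circ$. Since all the $A_i$ lie in the single band $\mathbf{P}$, the subgroups $A_1^\circ,\dots,A_k^\circ$ are pairwise commensurable. Put $\mathbf{P}(\alpha):=\bigcap_{i=1}^k A_i^\circ$; this is $pp$-definable because a conjunction of $pp$-formulas is again a $pp$-formula, so $\mathbf{P}(\alpha)\in\mathcal L^\circ$.

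First I would check that $\mathbf{P}(\alpha)\in\mathbf{P}^\circ$, i.e.\ that $\mathbf{P}(\alpha)$ is commensurable with the members of $\mathbf{P}$. For this it suffices to show that a finite intersection of pairwise commensurable $pp$-subgroups has finite index in each of them. This follows by an easy induction on $k$ from two elementary facts about an abelian group: (i) if $H\sim K$ then $[H:H\cap K]$ and $[K:H\cap K]$ are both finite, by the very definition of commensurability; and (ii) the intersection of two finite-index subgroups is again of finite index. Since $\mathbf{P}(\alpha)$ is then commensurable with $A_1^\circ\in\mathbf{P}$, it lies in the band $\mathbf{P}$, whence $\mathbf{P}(\alpha)\in\mathbf{P}^\circ$.

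It remains to see that $X=\bigcup_{i=1}^k A_i$ is a finite union of distinct cosets of $\mathbf{P}(\alpha)$. By the previous step each index $[A_i^\circ:\mathbf{P}(\alpha)]$ is finite, so each coset $A_i=a_i+A_i^\circ$ is the union of the finitely many cosets of $\mathbf{P}(\alpha)$ that it contains; taking the union over $i$ exhibits $X$ as a finite union of cosets of $\mathbf{P}(\alpha)$, and discarding repetitions (two cosets of a subgroup being equal or disjoint) shows $X$ is in fact a disjoint union of finitely many distinct such cosets, which is slightly stronger than claimed.

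I do not expect a genuine obstacle. The only point needing care is the second step --- that a finite intersection of pairwise commensurable subgroups remains in the band --- and this is exactly the place where the hypothesis that $\alpha$ is an antichain (more precisely, a finite subset) of the single band $\mathbf{P}$, rather than of $\mathcal L$, is used; it reduces to the routine index computations (i) and (ii) above.
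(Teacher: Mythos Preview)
Your proposal is correct and takes essentially the same approach as the paper: both define $\mathbf{P}(\alpha)=\bigcap\{Q^\circ:Q\in\alpha\}$ and use that this intersection remains in the band $\mathbf{P}$. The paper's proof is a one-liner invoking the earlier observation that $\mathbf{P}$ is closed under nonempty intersections, whereas you spell out the commensurability argument and the coset decomposition explicitly; the content is the same.
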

\begin{proof}
Choose $\mathbf{P}(\alpha)=\bigcap\{Q^\circ:Q\in\alpha\}$ and observe that $\mathbf{P}(\alpha)\in\mathbf{P}$ since $\mathbf{P}$ is closed under finite nonempty intersections.
\end{proof}

The previous two propositions together imply that we can always find a `nice' antichain representing the given $pp$-convex set. The following definition describes what we mean by this.
\begin{definition}
A finite set $\alpha\in\mathcal P$ is said to be in \textbf{discrete form} if $\alpha\cap\mathbf{P}$ consists of finitely many cosets of a fixed element of $\mathbf{P}^\circ$, denoted $\mathbf{P}(\alpha)$, for each $\mathbf{P}\in\mathcal Y$. The set of all finite sets $\alpha\in\mathcal P$ in discrete form will be denoted by $\mathcal P^d$ and the set of all antichains in discrete form will be denoted by $\mathcal A^d$.
\end{definition}

We would like to define the local characteristics for the elements of $\mathcal P^d$ as before and show that they satisfy the conclusion of theorem \ref{t1}. We will restrict our attention only to those $\alpha\in\mathcal P^d$ such that $\alpha=\hat{\alpha}$ (i.e. the nest corresponding to $\alpha$ is $\alpha$ itself). We denote the set of all such finite sets by $\hat{\mathcal P}^d$. Since we will deal with finite index subgroup pairs in $\mathcal L^\circ$, we will need more conditions on compatibility of $P$ and $\alpha$ as stated in the following definition.

\begin{definition}
A finite family $\mathcal F$ of elements of $\mathcal P$ is called \textbf{compatible} if $\mathcal F\subseteq\hat{\mathcal P}^d$ and for all $\alpha,\beta\in\mathcal F$ and $\mathbf{P}\in\mathcal Y$, we have $\mathbf{P}(\alpha)=\mathbf{P}(\beta)$ whenever $\mathbf{P}\cap\alpha,\mathbf{P}\cap\beta\neq\emptyset$. Furthermore, we say that $P\in\mathcal L$ is \textbf{compatible with} a finite family $\mathcal F$ of elements of $\mathcal P$ if $\mathcal F$ is compatible and $P\in\bigcup\mathcal F$.
\end{definition}

It is very easy to observe that given any finite family $\{X_1,X_2,\hdots,X_k\}$ of $pp$-convex sets, we can obtain a compatible family $\{\alpha_1,\alpha_2,\hdots,\alpha_k\}$ of antichains such that $\bigcup\alpha_i=X_i$ for each $i$. Finally we are ready to define the local characteristics in this set-up.

\begin{definition}
Let $P\in\mathcal L$ be compatible with a family $\mathcal F$ and let $\alpha\in\mathcal F$. We associate an abstract simplicial complex $\mathcal K^P(\alpha)$ with the pair $(\alpha,P)$ by setting $\mathcal K^P(\alpha):=\{\beta\subseteq\alpha: \beta\neq\emptyset,\,\bigcap\beta\supsetneq P\}$. We define the \textbf{local characteristic} $\kappa_P$ by the formula $\kappa_P(\alpha):=\chi(\mathcal K^P(\alpha))-\delta(\alpha)(P)$.
\end{definition}

Now we are ready to state the analogue of theorem \ref{t1} and it has essentially the same proof. The previous statement is justified because we have carefully developed the idea of a compatible family to avoid finite index pairs of $pp$-subgroups. Since we achieve discreteness simultaneously for any finite family of antichains, no changes in the proof of theorem \ref{t1} are necessary.

\begin{theorem}\label{t1general}
Let $X,Y\in\mathcal O$. Then $X\cup Y,X\cap Y\in\mathcal O$. For any compatible family $\mathcal F:=\{\alpha_1,\alpha_2,\beta_1,\beta_2\}$ such that $\bigcup\alpha_1=X$, $\bigcup\alpha_2=Y$, $\bigcup\beta_1=X\cup Y$ and $\bigcup\beta_2=X\cap Y$ and any $P\in\mathcal L$ compatible with $\mathcal F$, we have
\begin{equation*}
    \kappa_P(\alpha_1)+\kappa_P(\alpha_2)=\kappa_P(\beta_1)+\kappa_P(\beta_2).
\end{equation*}
\end{theorem}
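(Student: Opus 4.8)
The plan is to reproduce the proof of Theorem~\ref{t1} almost verbatim: the hypothesis that $\mathcal F$ is a \emph{compatible} family is exactly what restores, in the general setting, the ``discreteness'' that was automatic when $T=T^{\aleph_0}$ (there each band is just the set of cosets of a fixed $pp$-subgroup, hence a pairwise-disjoint family, whereas now that disjointness within each band is furnished band by band by the common subgroup $\mathbf P(\alpha)=\mathbf P(\beta)$). The closure statement itself is routine: a finite union of $pp$-convex sets is $pp$-convex, so $X\cup Y\in\mathcal O$; and if $\bigcup\alpha_1=X$, $\bigcup\alpha_2=Y$, then $X\cap Y=\bigcup\{A\cap B:A\in\alpha_1,\,B\in\alpha_2\}$ with each $A\cap B\in\mathcal L$ since $\mathcal L$ is a meet-semilattice, so $X\cap Y\in\mathcal O$.

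For the identity I would first carry out the two reductions of theorem~\ref{t1}. The complex $\mathcal K^P(\alpha)$ only involves the elements of $\alpha$ strictly containing $P$, so one localizes at $P$ by discarding the rest (the analogue of applying $\mathcal N_a$), and after splitting off the correction terms $\delta(-)(P)$ — which are dealt with directly in the few degenerate cases — it suffices to prove an inclusion--exclusion identity for $\chi\!\left(\mathcal K^P(-)\right)$ among $\alpha_1,\alpha_2,\beta_1,\beta_2$. Next I would prove the analogue of proposition~\ref{p1}, namely that inside the compatible family $\kappa_P(\alpha)$ depends only on $\bigcup\alpha$; the cone/contractibility argument of \ref{p1} goes through because discrete form makes the set of ``maximal elements'' in each band unambiguous, and one finishes with the homotopy invariance \ref{EULHTPY}. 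Using this one may compute $\kappa_P(\beta_1)$ from the finite set $\alpha_1\cup\alpha_2$ and $\kappa_P(\beta_2)$ from $\alpha_1\circ\alpha_2=\{A\cap B:A\in\alpha_1,\,B\in\alpha_2\}$, so that the claim reduces to $\chi\!\left(\mathcal K^P(\alpha_1\cup\alpha_2)\right)+\chi\!\left(\mathcal K^P(\alpha_1\circ\alpha_2)\right)=\chi\!\left(\mathcal K^P(\alpha_1)\right)+\chi\!\left(\mathcal K^P(\alpha_2)\right)$. Finally this last identity is established by the double induction of theorem~\ref{t1}: first the base lemma for $|\alpha_1|\le 1$, an induction on $|\alpha_2|$ assembled from the cone identity \ref{HMLGCONE} (in the form of equation~(\ref{3})), and then an induction on $|\alpha_1|$ carried out with the counting definition of $\chi$ together with the binomial identity $\sum_n(-1)^n\!\left[\binom{m+k}{n}-\binom{m}{n}-\binom{k}{n}\right]=0$. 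None of these steps uses $T=T^{\aleph_0}$ beyond the discreteness now supplied by compatibility.

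The main obstacle is the bookkeeping this sketch glosses over: the auxiliary objects that appear — the union $\alpha_1\cup\alpha_2$, the $\circ$-product $\alpha_1\circ\alpha_2$, their nests, and the vertex sets of all the subcomplexes involved — must be realized \emph{simultaneously} inside one compatible family, so that within each band every element in sight is a coset of a single common subgroup. Neumann's lemma~\ref{NL} is what makes this possible: given a $pp$-set covered by finitely many others, it restricts which bands can contribute a covering coset, so only finitely many bands are relevant and one may choose, band by band, a common $\mathbf P(\cdot)$ for all the antichains at once. Once this compatible refinement is fixed the topological arguments are literally those of theorem~\ref{t1}, and the proof terminates exactly as there.
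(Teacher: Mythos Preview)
Your proposal is correct and matches the paper's approach exactly: the paper states that theorem~\ref{t1general} ``has essentially the same proof'' as theorem~\ref{t1}, since the notion of a compatible family was set up precisely so that ``no changes in the proof of theorem~\ref{t1} are necessary.'' Your write-up in fact supplies more of the bookkeeping (the analogue of proposition~\ref{p1}, the passage to $\alpha_1\cup\alpha_2$ and $\alpha_1\circ\alpha_2$, and the observation that compatibility keeps all auxiliary objects in discrete form with a common $\mathbf P(\cdot)$ per band) than the paper itself spells out.
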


We observe that the set $\overline{\mathcal A^d}$ is closed under cartesian products and thus we have the following analogue of lemma \ref{localcharmult} with the same proof.
\begin{lemma}\label{localcharmultgeneral}
Let $P,Q\in\overline{\mathcal L}$ be compatible with $\{\alpha,\beta\}\subseteq\overline{\mathcal A^d}$. Then
\begin{equation*}
\kappa_{P\times Q}(\alpha\times \beta)=-\kappa_P(\alpha)\kappa_Q(\beta).
\end{equation*}
\end{lemma}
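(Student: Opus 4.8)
The plan is to mimic the proof of Lemma \ref{localcharmult} verbatim, the only extra bookkeeping being the compatibility hypothesis, which is exactly what guarantees that the simplicial complexes $\mathcal K^P(\alpha)$ and $\mathcal K^Q(\beta)$ make sense (i.e. are built from the correct notion of ``nontrivial intersection'' relative to a fixed band representative). Since $\{\alpha,\beta\}\subseteq\overline{\mathcal A^d}$ and $P$ is compatible with $\{\alpha\}$ while $Q$ is compatible with $\{\beta\}$, one first checks that $\alpha\times\beta\in\overline{\mathcal A^d}$ and that $P\times Q$ is compatible with $\{\alpha\times\beta\}$: the band of $A\times B$ for $A\in\alpha$, $B\in\beta$ is determined by the bands of $A$ and $B$, and the fixed subgroup $\mathbf P(\alpha)\times\mathbf Q(\beta)$ serves as $(\mathbf P\times\mathbf Q)(\alpha\times\beta)$. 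Hence $\mathcal K^{P\times Q}(\alpha\times\beta)$ is well-defined.

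Next I would establish the key combinatorial identity
\begin{equation*}
\mathcal K^{P\times Q}(\alpha\times\beta)\cong\mathcal K^P(\alpha)\boxtimes\mathcal K^Q(\beta),
\end{equation*}
under the hypothesis $\delta(\alpha)(P)=\delta(\beta)(Q)=1$. This is because a face $F\subseteq(\alpha\times\beta)$, say with $\pi_1(F)=F_1\subseteq\alpha$ and $\pi_2(F)=F_2\subseteq\beta$, satisfies $\bigcap F\supsetneq P\times Q$ if and only if $\bigl(\bigcap F_1\bigr)\times\bigl(\bigcap F_2\bigr)\supsetneq P\times Q$, which holds exactly when $\bigcap F_1\supsetneq P$ \emph{or} $\bigcap F_2\supsetneq Q$; this ``or'' is precisely the defining condition of the disjunctive product. (The compatibility hypothesis is what ensures that ``$\supsetneq$'' is the right test, since all the cosets involved live over the fixed subgroups $\mathbf P(\alpha)$ and $\mathbf Q(\beta)$.) Then, exactly as in Lemma \ref{localcharmult}, I apply Lemma \ref{ecdpl} to the right-hand side:
\begin{align*}
\kappa_{P\times Q}(\alpha\times\beta)&=\chi\bigl(\mathcal K^P(\alpha)\boxtimes\mathcal K^Q(\beta)\bigr)-1\\
&=\chi(\mathcal K^P(\alpha))+\chi(\mathcal K^Q(\beta))-\chi(\mathcal K^P(\alpha))\chi(\mathcal K^Q(\beta))-1\\
&=(\kappa_P(\alpha)+1)+(\kappa_Q(\beta)+1)-(\kappa_P(\alpha)+1)(\kappa_Q(\beta)+1)-1\\
&=-\kappa_P(\alpha)\kappa_Q(\beta).
\end{align*}

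Finally I would dispose of the degenerate case in which $\delta(\alpha)(P)=0$ or $\delta(\beta)(Q)=0$: then $P\times Q\not\subseteq(\bigcup\alpha)\times(\bigcup\beta)$, so $\delta(\alpha\times\beta)(P\times Q)=0$ and $\kappa_{P\times Q}(\alpha\times\beta)=0$, while simultaneously one of $\kappa_P(\alpha)$, $\kappa_Q(\beta)$ is $0$ (a local characteristic vanishes whenever its defining point is not in the convex set, since then the associated complex is empty). So both sides are $0$. I do not expect a genuine obstacle here: the whole content is that the compatibility machinery of this section has been set up precisely so that the simplicial-complex argument of the special case transfers unchanged, and the one nontrivial verification — the isomorphism $\mathcal K^{P\times Q}(\alpha\times\beta)\cong\mathcal K^P(\alpha)\boxtimes\mathcal K^Q(\beta)$ — is a direct unwinding of definitions using that a product of cosets strictly contains $P\times Q$ iff one of the factors strictly contains the corresponding point.
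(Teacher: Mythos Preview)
Your proposal is correct and follows essentially the same approach as the paper: the paper simply states that the set $\overline{\mathcal A^d}$ is closed under cartesian products and that the lemma ``has the same proof'' as Lemma \ref{localcharmult}, which is precisely what you carry out, together with the (appropriate) extra verification that $P\times Q$ is compatible with $\{\alpha\times\beta\}$. Your explicit justification of the disjunctive-product isomorphism and the degenerate case is more detailed than what the paper provides, but the underlying argument is identical.
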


\subsection{The invariants ideal}\label{II}

Once again, we use the notations $\overline{\mathcal L},\overline{\mathcal X}$ to denote the unions $\bigcup_{n=1}^{\infty}\mathcal L_n,$ $\bigcup_{n=1}^{\infty}\mathcal X_n$ etc. and set $\overline{\mathcal L}^*=\overline{\mathcal L}\setminus\{\emptyset\}, \overline{\mathcal X}^*=\overline{\mathcal X}\setminus\{[[\emptyset]]\}$ where $[[-]]:\overline{\mathcal L}\rightarrow\overline{\mathcal X}$ is the map taking a $pp$-set to its colour. Now, $\overline{\mathcal X}^*$ is a multiplicative monoid and we consider the monoid ring $\mathbb Z[\overline{\mathcal X}^*]$.

In the case when $\mathrm{T\neq T^{\aleph_0}}$, there are $P,Q\in\mathcal L_n$ such that $1<\mathrm{Inv}(M;P,Q)<\infty$ for each $n\geq 1$. We can assume without loss that $0\in Q\subseteq P$. Now we define an ideal of the monoid ring, called \textbf{the invariants ideal}, which encodes these invariants. The following proposition is the motivation.

\begin{pro}\label{partitionfurther}
Let $\mathbf{P}\in\mathcal Y_n$ and $X\in\mathcal O(\mathbf{P})$. For any $\alpha,\beta\in\mathcal A^d_n$ with $\bigcup\alpha=\bigcup\beta=X$, we have
\begin{equation*}
    [\mathbf{P}(\alpha):\mathbf{P}(\beta)]|\alpha\cap\mathbf{P}| =[\mathbf{P}(\beta):\mathbf{P}(\alpha)]|\beta\cap\mathbf{P}|
\end{equation*}
\end{pro}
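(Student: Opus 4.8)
The plan is to compare the two representations of the same $pp$-convex set $X$ coset-by-coset after passing to a common refinement. First I would introduce $R := \mathbf{P}(\alpha)\cap\mathbf{P}(\beta)$, which lies in $\mathbf{P}^\circ$ since $\mathbf{P}^\circ$ is closed under nonempty intersections (it is a subgroup of the band $\mathbf{P}$, which is closed under nonempty intersections). Then $\mathbf{P}(\alpha)$ is a disjoint union of $[\mathbf{P}(\alpha):R]$ cosets of $R$, so $X$, being a disjoint union of $|\alpha\cap\mathbf{P}|$ cosets of $\mathbf{P}(\alpha)$, is a disjoint union of exactly $[\mathbf{P}(\alpha):R]\,|\alpha\cap\mathbf{P}|$ cosets of $R$. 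Since each coset of $R$ is contained in a unique coset of $\mathbf{P}(\alpha)$ meeting $X$, and a coset of $R$ either lies entirely inside $X$ or is disjoint from $X$ (both $\mathbf{P}(\alpha)$-cosets and $R$-cosets are either contained in or disjoint from $X$ because $X$ is a union of $\mathbf{P}(\alpha)$-cosets), this count is exact with no overcounting.

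The key step is therefore the counting identity: the number of distinct cosets of $R$ whose union is $X$ is an intrinsic invariant of $X$ (and $R$), independent of which antichain we started from. Running the same computation with $\beta$ in place of $\alpha$ gives that $X$ is a disjoint union of $[\mathbf{P}(\beta):R]\,|\beta\cap\mathbf{P}|$ cosets of $R$. Equating the two counts yields
\begin{equation*}
[\mathbf{P}(\alpha):R]\,|\alpha\cap\mathbf{P}| = [\mathbf{P}(\beta):R]\,|\beta\cap\mathbf{P}|.
\end{equation*}
Finally I would observe that $[\mathbf{P}(\alpha):R] = [\mathbf{P}(\alpha):\mathbf{P}(\alpha)\cap\mathbf{P}(\beta)] = [\mathbf{P}(\beta):\mathbf{P}(\alpha)]$ by the convention $[P:Q]:=[P^\circ:P^\circ\cap Q^\circ]$ for the index of a $pp$-pair (all three of $\mathbf{P}(\alpha),\mathbf{P}(\beta),R$ already being subgroups), and symmetrically $[\mathbf{P}(\beta):R] = [\mathbf{P}(\alpha):\mathbf{P}(\beta)]$. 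Substituting these into the displayed equation gives exactly the claimed identity.

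The main obstacle is making rigorous the claim that a coset of $R$ is either contained in $X$ or disjoint from it, and that the decomposition of $X$ into $R$-cosets is genuinely disjoint; this rests on the elementary fact that for subgroups $R\leq G$ of an abelian group, any coset of $G$ is partitioned into cosets of $R$, together with the fact that $X=\bigcup\alpha\cap\mathbf{P}$ is by hypothesis a disjoint union of $\mathbf{P}(\alpha)$-cosets. One should also check that $R$ is nonzero index compatible, i.e. that $[\mathbf{P}(\alpha):R]$ and $[\mathbf{P}(\beta):R]$ are finite: this is immediate since $\mathbf{P}(\alpha),\mathbf{P}(\beta)\in\mathbf{P}$ are commensurable, so $[\mathbf{P}(\alpha):\mathbf{P}(\alpha)\cap\mathbf{P}(\beta)]<\infty$ by the definition of the band $\mathbf{P}$. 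Everything else is bookkeeping with finite cardinalities, so no homological or model-theoretic input is needed here beyond the structural facts about bands recalled in this subsection.
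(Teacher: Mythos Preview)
Your approach is exactly the paper's: both partition $X$ into cosets of $R=\mathbf{P}(\alpha)\cap\mathbf{P}(\beta)$ and count two ways. However, your final substitution has the indices swapped: by the convention $[P:Q]:=[P^\circ:P^\circ\cap Q^\circ]$ one has $[\mathbf{P}(\alpha):R]=[\mathbf{P}(\alpha):\mathbf{P}(\beta)]$ (not $[\mathbf{P}(\beta):\mathbf{P}(\alpha)]$) and symmetrically $[\mathbf{P}(\beta):R]=[\mathbf{P}(\beta):\mathbf{P}(\alpha)]$; with the correct identifications your counting identity $[\mathbf{P}(\alpha):R]\,|\alpha\cap\mathbf{P}|=[\mathbf{P}(\beta):R]\,|\beta\cap\mathbf{P}|$ becomes the stated proposition directly.
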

\begin{proof}
Partition those cosets of both $\mathbf{P}(\alpha)$ and of $\mathbf{P}(\beta)$ which are contained in $X$ into cosets of $\mathbf{P}(\alpha)\cap\mathbf{P}(\beta)$ to get the required equality.
\end{proof}

\begin{definition}
Let $\delta_{\mathfrak A}:\overline{\mathcal X}^*\rightarrow\mathbb Z$ denote the characteristic function of the colour $\mathfrak A$ for each $\mathfrak A\in\overline{\mathcal X}^*$. We define \textbf{the invariants ideal $\mathcal J$} of the monoid ring $\mathbb Z[\overline{\mathcal X}^*]$ to be the ideal generated by the set
\begin{equation*}
    \{\delta_{[[P]]}=[P:Q]\delta_{[[Q]]}: P,Q\in\overline{\mathcal L},\ P\supseteq Q,\ \mathrm{Inv}(M;P,Q)<\infty\}.
\end{equation*}
\end{definition}

The main aim of this section is to prove the following theorem.
\begin{theorem}\label{FINALgeneral}
For every right $\mathcal R$-module $M$, we have
\begin{center}
$K_0(M)\cong\mathbb Z[\overline{\mathcal X}^*]/\mathcal J$.
\end{center}
\end{theorem}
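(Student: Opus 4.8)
The plan is to run the argument of Theorem \ref{FINAL} essentially unchanged, with $\mathbb Z[\overline{\mathcal X}^*]$ systematically replaced by the quotient $\mathbb Z[\overline{\mathcal X}^*]/\mathcal J$; the only genuinely new phenomenon is that the coloured global characteristic is no longer a well-defined element of the monoid ring, and Proposition \ref{partitionfurther} says exactly how much it moves --- by a generator of $\mathcal J$. Concretely, for any definable $D\subseteq M^n$ I would choose a partition of $D$ into blocks governed by a compatible family $\mathcal F\subseteq\hat{\mathcal P}^d$ (which always exists), define $\Lambda_{\mathfrak A}(D):=-\sum_{P\in\mathfrak A}\kappa_P(D)$ using the local characteristics $\kappa_P$ of Section \ref{TNTA}, and collate these into an element $ev_{[D]}$. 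Using Theorem \ref{t1general} in place of Theorem \ref{t1} and Lemma \ref{localcharmultgeneral} in place of Lemma \ref{localcharmult}, the chain of results \ref{l1}--\ref{t5} transcribes: additivity of $\Lambda_{\mathfrak A}$ over block decompositions, invariance under definable bijections (the analogue of Theorem \ref{t4}, where the block-hull extension of a linear injection must be argued inside a compatible family so that no finite-index pair of $pp$-subgroups is split), and multiplicativity of $ev$.

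The crux is well-definedness. A $pp$-convex set $X$ lying in a band $\mathbf P$ can be written as $\bigcup\alpha=\bigcup\beta$ with $\alpha,\beta\in\mathcal A^d$ and $\mathbf P(\alpha)\neq\mathbf P(\beta)$, and there is no canonical choice of subgroup inside $\mathbf P$, so one cannot simply normalise. Proposition \ref{partitionfurther} shows that the two candidate contributions differ precisely by a relation of the form $\delta_{[[P]]}=[P:Q]\,\delta_{[[Q]]}$, i.e.\ by a generator of $\mathcal J$. Consequently $ev_{[D]}$ is a well-defined element of $\mathbb Z[\overline{\mathcal X}^*]/\mathcal J$, independent of the chosen compatible family and of the block decomposition, and one obtains a semiring homomorphism $ev:\widetilde{\mathrm{Def}}(M)\to\mathbb Z[\overline{\mathcal X}^*]/\mathcal J$ that is constant on definable-isomorphism classes. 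Since the target is a ring, hence cancellative, $ev$ factors through the quotient of Theorem \ref{QUOCONST} to give $\widetilde{ev}:\widetilde{\widetilde{\mathrm{Def}}(M)}\to\mathbb Z[\overline{\mathcal X}^*]/\mathcal J$.

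Next I would show $\widetilde{ev}$ is injective, following Lemma \ref{INJEV}. As there it reduces to classes coming from $\overline{\mathcal A}$, and on the family $\mathcal U$ of antichains in discrete form with pairwise disjoint members $\Lambda_{\mathfrak A}$ is just the number of members of colour $\mathfrak A$; two such antichains with the same image differ only by the index identifications, so $\widetilde{ev}$ is injective on $[\mathcal U]$. The reduction of an arbitrary antichain pair to $\mathcal U$ is by the same complexity-decreasing algorithm; the one extra subtlety is that completing a $\mathrm{Core}$ inside a band to a $pp$-set may require re-partitioning cosets of $\mathbf P(\alpha)$ into cosets of $\mathbf P(\alpha)\cap\mathbf P(\beta)$, which alters $ev$ only by an element of $\mathcal J$, hence not at all after projecting to the quotient. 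Finally, $ev(\mathcal U)$ is the image of $\mathbb N[\overline{\mathcal X}^*]$ in $\mathbb Z[\overline{\mathcal X}^*]/\mathcal J$, which generates the whole quotient ring; thus $\mathrm{Im}(\widetilde{ev})$ sits between this sub-semiring and $\mathbb Z[\overline{\mathcal X}^*]/\mathcal J$, and applying $K_0$ together with its universal property (Corollary \ref{GrRngAdj} and Theorem \ref{GRCONSTR}) yields $K_0(M)=K_0(\mathrm{Im}(\widetilde{ev}))\cong\mathbb Z[\overline{\mathcal X}^*]/\mathcal J$.

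The main obstacle is the bookkeeping around compatible families: one must be able to put every finite collection of $pp$-convex sets arising in a given step into discrete form \emph{simultaneously} and with matching subgroups $\mathbf P(\alpha)$ across the family, and then verify that the only non-canonical datum --- the choice of $\mathbf P(\alpha)$ within each band --- contributes exactly the generators of $\mathcal J$ and no further relations, so that $\widetilde{ev}$ is an isomorphism onto its image and not merely a surjection modulo something larger. Once this is in place, each remaining step is a transcription of the $T=T^{\aleph_0}$ proof. Finally Proposition \ref{eleequivmod} follows, since the monoid $\overline{\mathcal X}^*$ and the indices $[P:Q]$, hence the ideal $\mathcal J$, are all determined by $Th(M)$.
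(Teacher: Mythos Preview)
Your proposal is correct and follows essentially the same route as the paper: transcribe the $T=T^{\aleph_0}$ argument using compatible families in discrete form, invoke Proposition \ref{partitionfurther} to show that the ambiguity in $ev_{[D]}$ is exactly a generator of $\mathcal J$, and then rerun the injectivity algorithm of Lemma \ref{INJEV} on $\mathcal U$. The only organizational difference is that the paper factors the target through intermediate ``colour class groups'' $\mathcal R(\widetilde{\mathfrak A})$ indexed by an equivalence $\approx$ on colours (grouping colours that share a band), so that each global characteristic $\Lambda_{\widetilde{\mathfrak A}}$ lands in a single summand before being assembled into $\mathbb Z[\overline{\mathcal X}^*]/\mathcal J$; you work directly in the quotient ring, which is equivalent but slightly less convenient for the injectivity step on $\mathcal U$, where the paper uses compatibility to force $\mathbf P(\alpha')=\mathbf P(\beta')$ band-by-band and read off equal cardinalities --- a point you allude to but should make explicit.
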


We have proved this theorem when $\mathrm{T=T^{\aleph_0}}$ since the invariants ideal is trivial in that case.

Let $\overline{\mathcal Y}=\bigcup_{n=1}^\infty\mathcal Y_n$. Given $\mathfrak A\in\overline{\mathcal X}^*$, we define $\mathcal Y(\mathfrak A):=\{\mathbf{P}\in\overline{\mathcal Y}:\mathbf{P}\cap\mathfrak A\neq\emptyset\}$. In order to define the global characteristics in this case, we need to find the set over which they vary. Let $\mathfrak A,\mathfrak B\in\overline{\mathcal X}^*$. We say that $\mathfrak A\approx\mathfrak B$ if and only if $\mathcal Y(\mathfrak A)\cap\mathcal Y(\mathfrak B)\neq\emptyset$. This relation is reflexive and symmetric. We use $\approx$ again to denote its transitive closure. The $\approx$-equivalence class of $\mathfrak A$ will be denoted by $\widetilde{\mathfrak A}$.

\begin{definition}
Let $\mathfrak A\in\overline{\mathcal X}^*$. Define the \textbf{colour class group} $\mathcal R(\widetilde{\mathfrak A})$ as the quotient of the free abelian group $\mathbb Z\langle \delta_{\mathfrak A}:\mathfrak A\in\widetilde{\mathfrak A}\rangle$ by the subgroup $\mathcal J(\widetilde{\mathfrak A})$ generated by the relations $\{\delta_{[[P]]}=[P:Q]\delta_{[[Q]]}: P,Q\in\bigcup\widetilde{\mathfrak A},\ P\supseteq Q\}$.
\end{definition}

It can be observed that the underlying abelian group of the monoid ring $\mathbb Z[\overline{\mathcal X}^*]$ is formed by taking the quotient of the direct sum of the free abelian groups $\mathbb Z\langle \delta_{\mathfrak A}:\mathfrak A\in\widetilde{\mathfrak A}\rangle$, one for each equivalence class of colours, by the multiplicative relations of the monoid $\overline{\mathcal X}^*$. Furthermore, the set $\bigcup\{\mathcal J(\widetilde{\mathfrak A}):\widetilde{\mathfrak A}\in\overline{\mathcal X}^*\}$ generates the ideal $\mathcal J$ in this ring.

The discussion in the previous paragraph suggests to us to isolate the information in the evaluation map into different global characteristics, one for each colour class. These maps take values in the corresponding colour class group. We define the \textbf{global characteristic} $\Lambda_{\widetilde{\mathfrak A}}$ corresponding to $\widetilde{\mathfrak A}$ as the function $\overline{\hat{\mathcal P}^d}\rightarrow\mathcal R(\widetilde{\mathfrak A})$ given by $\alpha\mapsto-\sum_{\mathfrak A\in\widetilde{\mathfrak A}}\left(\sum_{P\in\mathfrak A}\kappa_P(\alpha)\right)\delta_{\mathfrak A}$.

The following result is an easy corollary of proposition \ref{partitionfurther}. It states that the global characteristics depend only on the $pp$-convex sets and not on their representations as antichains.
\begin{cor}\label{glogeneralconvex}
Let $X\in\overline{\mathcal O}$ and $\alpha,\beta\in\overline{\hat{\mathcal P}^d}$ be such that $\bigcup\alpha=\bigcup\beta=X$. Then $\Lambda_{\widetilde{\mathfrak A}}(\alpha)=\Lambda_{\widetilde{\mathfrak A}}(\beta)$ for each $\mathfrak A\in\overline{\mathcal X}^*$.
\end{cor}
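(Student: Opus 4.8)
The content of the corollary is that the coloured global characteristic $\Lambda_{\widetilde{\mathfrak A}}$, which is a priori defined on discrete forms $\alpha\in\overline{\hat{\mathcal P}^d}$, factors through the map $\alpha\mapsto\bigcup\alpha$ onto $\overline{\mathcal O}$; this is the general-case analogue of Proposition~\ref{p1}. The one new feature beyond the $\mathrm{T=T^{\aleph_0}}$ case is that a single $pp$-convex set $X$ has many discrete forms, differing by the base subgroup $\mathbf P(\alpha)\in\mathbf P^\circ$ chosen in each band $\mathbf P$ occurring in $X$, and Proposition~\ref{partitionfurther} is precisely the tool that measures the resulting ambiguity. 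The plan is to connect any two discrete forms of $X$ by a finite chain of \emph{refinements} and to show that each step leaves $\Lambda_{\widetilde{\mathfrak A}}$ unchanged in $\mathcal R(\widetilde{\mathfrak A})$; here I call $\gamma\in\overline{\hat{\mathcal P}^d}$ a refinement of $\alpha$ when $\bigcup\gamma=\bigcup\alpha$ and $\mathbf P(\gamma)\subseteq\mathbf P(\alpha)$ for every band $\mathbf P$, so that $\gamma\cap\mathbf P$ is got from $\alpha\cap\mathbf P$ by cutting each coset of $\mathbf P(\alpha)$ into its $[\mathbf P(\alpha):\mathbf P(\gamma)]$ sub-cosets of $\mathbf P(\gamma)$.

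\textbf{Reduction.} Given discrete forms $\alpha,\beta$ of $X$, in each band $\mathbf P$ occurring in $X$ the subgroups $\mathbf P(\alpha),\mathbf P(\beta)$ are commensurable, so $\mathbf P(\alpha)\cap\mathbf P(\beta)\in\mathbf P^\circ$; the discrete form $\gamma$ of $X$ whose base subgroup in each band is $\mathbf P(\alpha)\cap\mathbf P(\beta)$ (closed up under intersections, which only introduces elements of strictly larger bands) is then a common refinement of both, the needed checks being the same routine ones used in the special case. A discrete form of $X$ with prescribed base subgroups is moreover \emph{unique} — a union of cosets of a fixed subgroup is exactly the set of those cosets it contains — so this $\gamma$ is symmetric in $\alpha$ and $\beta$. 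Hence it suffices to prove $\Lambda_{\widetilde{\mathfrak A}}(\alpha)=\Lambda_{\widetilde{\mathfrak A}}(\gamma)$ whenever $\gamma$ refines $\alpha$, and then to apply this twice along $\alpha\rightsquigarrow\gamma\rightsquigarrow\beta$.

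\textbf{The core step.} Fix a refinement $\gamma$ of $\alpha$. Since $\bigcup\gamma=\bigcup\alpha$ we have $\delta(\gamma)(Q)=\delta(\alpha)(Q)$ for every $pp$-set $Q$, so only Euler characteristics need comparing. The heart of the step is the local claim that $\kappa_Q(\gamma)=\kappa_Q(\alpha)$ for every $pp$-set $Q$ not cut by the refinement, while $\kappa_{P'}(\gamma)=\kappa_P(\alpha)$ whenever $P'$ is one of the sub-cosets of $\mathbf P(\gamma)$ into which a coset $P$ of $\mathbf P(\alpha)$ making up $X$ in a band $\mathbf P$ is cut. Both halves assert that refining a band does not change the homotopy type of the localized complexes $\mathcal K^{Q}(-)$ (the copy $\mathcal K^{P}(\alpha)$ being replaced by the copies $\mathcal K^{P'}(\gamma)$ of the same homotopy type), and they are established by exactly the cone constructions and appeals to homotopy invariance, Corollary~\ref{EULHTPY}, that prove Proposition~\ref{p1} and Theorem~\ref{t1general} — this is what the discreteness apparatus of \S\ref{TNTA}--\ref{II} was set up to make available. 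Granting the claim, and recalling that a coset $P$ of $\mathbf P(\alpha)$ has colour $[[\mathbf P(\alpha)]]$ while each sub-coset $P'$ has colour $[[\mathbf P(\gamma)]]$, one finds, grouping by band,
\begin{equation*}
\Lambda_{\widetilde{\mathfrak A}}(\gamma)-\Lambda_{\widetilde{\mathfrak A}}(\alpha)\;=\;\sum_{\mathbf P}c_{\mathbf P}\bigl(\delta_{[[\mathbf P(\alpha)]]}-[\mathbf P(\alpha):\mathbf P(\gamma)]\,\delta_{[[\mathbf P(\gamma)]]}\bigr),\qquad c_{\mathbf P}\in\mathbb Z,
\end{equation*}
the sum running over the bands whose base subgroup is cut by the refinement; Proposition~\ref{partitionfurther} is precisely what certifies the multiplicity $[\mathbf P(\alpha):\mathbf P(\gamma)]$. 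Each summand is a $\mathbb Z$-multiple of a defining relation of $\mathcal J(\widetilde{\mathfrak A})$ — note $\mathbf P(\alpha)\supseteq\mathbf P(\gamma)$ and $[[\mathbf P(\alpha)]]\approx[[\mathbf P(\gamma)]]$, both meeting the band $\mathbf P$ — so the right-hand side lies in $\mathcal J(\widetilde{\mathfrak A})$ and $\Lambda_{\widetilde{\mathfrak A}}(\gamma)=\Lambda_{\widetilde{\mathfrak A}}(\alpha)$ in $\mathcal R(\widetilde{\mathfrak A})$, which finishes the proof.

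The only genuine obstacle I expect is the homotopy-invariance local claim of the last paragraph: one must check that the extra faces produced by refining a band assemble into contractible cones and do not perturb the homotopy type of the relevant localized complexes, the point being that the map on vertices induced by refinement is not a bijection (a $pp$-set can strictly contain a sub-coset $P'$ without containing the coset $P\supseteq P'$). This is of exactly the same character as the cone bookkeeping carried out in full for Proposition~\ref{p1} and Theorem~\ref{t1}, and is why the corollary counts as ``easy'' once Proposition~\ref{partitionfurther}, which contributes only the elementary counting of sub-cosets, is in hand.
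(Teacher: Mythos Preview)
Your approach is correct and is essentially the argument the paper has in mind, though the paper gives no details beyond calling it ``an easy corollary of Proposition~\ref{partitionfurther}''. The reduction to a common refinement and the identification of the difference $\Lambda_{\widetilde{\mathfrak A}}(\gamma)-\Lambda_{\widetilde{\mathfrak A}}(\alpha)$ as a $\mathbb Z$-combination of the defining relations of $\mathcal J(\widetilde{\mathfrak A})$ is exactly the mechanism, and Proposition~\ref{partitionfurther} supplies precisely the coset count $[\mathbf P(\alpha):\mathbf P(\gamma)]$ you need.

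Two minor points. First, the phrase ``only introduces elements of strictly larger bands'' in your reduction is garbled (bands are not ordered); what you want is that closing $\alpha\cup\beta$ under intersection produces elements whose bands are already determined by $X$, so the resulting nest is again a discrete form of $X$ with the prescribed base subgroups. Second, your local claim $\kappa_{P'}(\gamma)=\kappa_P(\alpha)$ is indeed the heart of the matter, and it is worth recording \emph{why} the discrete-form nest hypothesis makes the Proposition~\ref{p1} machinery applicable: for $P'\in\gamma$ and any $Q\in\alpha$ with $Q\supsetneq P$, there is a \emph{unique} sub-coset $Q'\in\gamma$ with $Q'\supseteq P'$, because closure of $\gamma$ under intersection together with discreteness forces $\mathbf P(\gamma)\subseteq\mathbf Q(\gamma)$ whenever such containments occur (otherwise $P'\cap Q'_i$ would produce elements of $\gamma$ in band $\mathbf P$ that are proper sub-cosets of $P'$, violating discreteness). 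This is what lets the vertex map between $\mathcal K^{P}(\alpha)$ and $\mathcal K^{P'}(\gamma)$ behave well and the cone argument go through. Your instinct that this is ``of exactly the same character'' as Proposition~\ref{p1} is right, but the coherence of the refinement is the point that deserves a sentence.
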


This finishes the technical setup for the general case when the theory $T$ of the module $M$ does not necessarily satisfy $T=T^{\aleph_0}$. The antichains in discrete form behave as if the theory satisfies $T=T^{\aleph_0}$, the bands allow us to go down (via intersections) so that any finite family can be converted to a compatible family and the notion of compatibility allows us to do appropriate local analysis. The local data can be pasted together using the information coded in the colour class groups.

Now we give some important definitions and state results from the special case $\mathrm{T=T^{\aleph_0}}$ in a form compatible with the general case. The proofs of these results are omitted since they are similar to their special counterparts; the basic ingredients are provided by lemma \ref{NLU}, theorem \ref{t1general}, lemma \ref{localcharmultgeneral} and corollary \ref{glogeneralconvex}. The necessary change is to deal only with antichains which are in discrete form.

Since cells are the difference sets of two $pp$-convex sets, we can obtain a compatible family $\{\alpha,\beta\}$ for any $C\in\overline{C}$ such that $C=\bigcup\alpha\setminus\bigcup\beta$.

\begin{definition}
Let $C\in\overline{\mathcal C}$ and $\mathfrak A$ be a colour. We define the global characteristic $\Lambda_{\widetilde{\mathfrak A}}(C):=\Lambda_{\widetilde{\mathfrak A}}(\alpha)-\Lambda_{\widetilde{\mathfrak A}}(\beta)\in\mathcal R(\widetilde{\mathfrak A})$ for any compatible family $\{\alpha,\beta\}$ representing $C$.
\end{definition}

The following theorem is the analogue of theorem \ref{t2} and uses the inductive version of \ref{t1general} in its proof.
\begin{theorem}\label{t2general}
If $\{B_i:1\leq i\leq l\}, \{B_j':1\leq j\leq m\}$ are two finite families of pairwise disjoint blocks such that $\bigsqcup_{i=1}^l B_i=\bigsqcup_{j=1}^m B_j'$, then $\Sigma_{i=1}^l\Lambda_{\widetilde{\mathfrak A}} (B_i)=\Sigma_{j=1}^m\Lambda_{\widetilde{\mathfrak A}}(B_j')$ for every $\mathfrak A\in\overline{\mathcal X}^*$.
\end{theorem}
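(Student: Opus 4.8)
The plan is to reduce the statement to a purely local assertion, in exact parallel with the proof of Theorem \ref{t2} in the special case. The global characteristic $\Lambda_{\widetilde{\mathfrak A}}$ of a block is, by definition, a $\mathcal R(\widetilde{\mathfrak A})$-valued sum over $pp$-sets $P$ of the local terms $\kappa_P$ (weighted by the generators $\delta_{[[P]]}$), and this sum is finite because $\mathrm{Sing}_{\overline{\mathcal L}}$ of a block is finite. So it suffices to prove, for each fixed $P\in\overline{\mathcal L}$ compatible with a suitable family, that $\Sigma_{i=1}^l\kappa_P(B_i)=\Sigma_{j=1}^m\kappa_P(B_j')$; summing this identity against $\delta_{[[P]]}$ over all $P$ in a colour $\mathfrak A$, and then over $\mathfrak A\in\widetilde{\mathfrak A}$, yields the theorem. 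Thus the real content is the analogue of Lemma \ref{l1}.

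First I would fix $P$ and choose a single compatible family $\mathcal F\subseteq\hat{\mathcal P}^d$ that simultaneously represents all of $P(B_i),N(B_i),P(B_j'),N(B_j')$ in discrete form and is compatible with $P$; this is possible by the observation (made just before Theorem \ref{t1general}) that any finite collection of $pp$-convex sets can be put into a compatible family, since one goes down to a common subgroup $\mathbf{P}(\alpha)$ inside each band. Within such a family the antichains behave "discretely" — distinct elements of a band are disjoint — exactly as in the $T=T^{\aleph_0}$ case. I would then pick a common refinement: a finite $pp$-nest $\mathcal D$ (in discrete form) containing all the relevant $pp$-sets, and the family $\mathcal F_0=\{\mathrm{Core}_\mathcal D(D):D\in\mathcal D^+\}$ of core-blocks, which is finer than both given partitions. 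The key reduction is that $\kappa_P$ of each $B_i$ (resp.\ $B_j'$) equals the sum of $\kappa_P$ over the cores into which it is partitioned; granting that, both sides of the desired identity equal $\Sigma_{F\in\mathcal F_0}\kappa_P(F)$ and we are done.

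The remaining step — that $\kappa_P(B)=\Sigma_{F\in\mathcal F}\kappa_P(F)$ whenever a block $B$ is partitioned into cores $F$ of a nest — is proved exactly as in Lemma \ref{l1}: build a $\prec$-chain of antichains $\alpha_0\prec\alpha_1\prec\cdots\prec\alpha_v$ through the relevant sub-poset of $\overline{\mathcal L}$ (all in discrete form), write $\kappa_P(B)$ as the telescoping sum $\Sigma_t(\kappa_P(\alpha_t)-\kappa_P(\alpha_{t-1}))$ of cell-contributions, and then apply the inclusion–exclusion identity of Theorem \ref{t1general} in its inductive form (the analogue of Proposition \ref{pr2}) to split each cell-contribution into the contributions of the individual cores. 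Here one uses Corollary \ref{glogeneralconvex} / Proposition \ref{partitionfurther} to know $\kappa_P$ on $pp$-convex sets is independent of the chosen discrete representative, so the telescoping is well defined.

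The main obstacle is bookkeeping rather than a new idea: one must verify that all the antichains produced along the way — the chain $\alpha_0\prec\cdots\prec\alpha_v$, the cores, the cells, and their pairwise unions and intersections — can be chosen to lie in a single compatible family containing $P$, so that Theorem \ref{t1general} actually applies at each step and $\kappa_P$ is unambiguously defined throughout. Once discreteness is secured simultaneously for the whole finite configuration, every manipulation from the special case carries over verbatim, which is why the authors only state the result and defer the (routine but careful) details.
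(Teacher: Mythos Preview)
Your proposal is correct and follows essentially the same route as the paper: the paper says only that the proof ``uses the inductive version of \ref{t1general}'', and what you have written is precisely the unpacking of that remark along the lines of Lemma \ref{l1} and Proposition \ref{pr2}, with the additional care of placing everything in a single compatible family so that discreteness holds throughout.

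One small imprecision: you invoke Corollary \ref{glogeneralconvex} / Proposition \ref{partitionfurther} to justify that $\kappa_P$ is independent of the discrete representative, but that corollary is about the \emph{global} characteristic $\Lambda_{\widetilde{\mathfrak A}}$, not the local $\kappa_P$; the local $\kappa_P$ genuinely can change when you refine the representative (the difference is absorbed by $\mathcal J(\widetilde{\mathfrak A})$ only after summing). This does not matter for your argument, however, because --- as you correctly say in your final paragraph --- once a single compatible family is fixed for the entire configuration, $\kappa_P$ is defined on the nose for every antichain in that family, and the telescoping and inclusion--exclusion steps take place entirely within it. So the appeal to \ref{glogeneralconvex} is simply unnecessary rather than wrong.
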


This theorem allows us to extend the definition of global characteristics to all sets in $\overline{\mathrm{Def}}(M)$. Moreover the following theorem, the proof of which is an easy adaptation of that of theorem \ref{t3}, states that each of them is preserved under definable bijections.

\begin{theorem}\label{t3general}
Suppose $D\in \mathrm{Def}(M^n)$ and $f:D\rightarrow M^n$ is a definable injection. Then $\Lambda_{\widetilde{\mathfrak A}}(D)=\Lambda_{\widetilde{\mathfrak A}}(f(D))$ for each colour class $\widetilde{\mathfrak A}$.
\end{theorem}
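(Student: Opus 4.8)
The plan is to imitate the proof of Theorem~\ref{t3} line for line, replacing each ingredient by its general-case counterpart. First I would reduce to blocks: applying Lemma~\ref{REP} to $\mathrm{Graph}(f)\subseteq M^{2n}$ and projecting the resulting blocks onto the first $n$ coordinates, exactly as in the proof of Theorem~\ref{t3}, we get a partition $D=\bigsqcup_{i=1}^{m}B_i$ into blocks with each $f\upharpoonright B_i$ a $pp$-definable injection, and injectivity of $f$ gives $f(D)=\bigsqcup_{i=1}^{m}f(B_i)$. By Theorem~\ref{t2general} we have $\Lambda_{\widetilde{\mathfrak A}}(D)=\sum_{i=1}^{m}\Lambda_{\widetilde{\mathfrak A}}(B_i)$ and $\Lambda_{\widetilde{\mathfrak A}}(f(D))=\sum_{i=1}^{m}\Lambda_{\widetilde{\mathfrak A}}(f(B_i))$, so the theorem reduces to the case of a single block $B$ and a $pp$-definable injection $g\colon B\rightarrowtail M^n$.

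For the block case I would establish the local identity $\kappa_P(B)=\kappa_{g_\ast(P)}(g(B))$ for every $P$ in a nest corresponding to a partition of $B$, in particular for every $P\in\mathrm{Sing}_{\overline{\mathcal L}}(B)$, where $g_\ast$ is the general-case analogue of the extension furnished by Lemma~\ref{COLOURINJ}. Since $\mathrm{Graph}(g)$ is a coset of a $pp$-subgroup, the construction of Lemma~\ref{COLOURINJ} can be run inside the bands (so that the auxiliary elements $x_i$ are chosen using discreteness within a fixed band in place of Corollary~\ref{NLU}), producing a unique linear injective extension $g_\ast$ of $g$ to the $pp$-coset $\overline B$ in a representation $B=\overline B\setminus\bigcup\beta$. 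Choosing compatible discrete-form representatives $\alpha,\beta$ of the two $pp$-convex sets whose difference is $B$, the map $g_\ast$ carries them to compatible discrete-form representatives of the two $pp$-convex sets whose difference is $g(B)$, and, being a linear bijection, it carries the ``nontrivial'' intersections (those properly containing $P$) precisely onto those properly containing $g_\ast(P)$; hence $g_\ast$ induces simplicial isomorphisms $\mathcal K^P(\alpha)\cong\mathcal K^{g_\ast(P)}(g_\ast[\alpha])$ and $\mathcal K^P(\beta)\cong\mathcal K^{g_\ast(P)}(g_\ast[\beta])$, and together with $\delta(B)(P)=\delta(g(B))(g_\ast(P))$ this gives $\kappa_P(B)=\kappa_{g_\ast(P)}(g(B))$. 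It follows that $P\mapsto g_\ast(P)$ restricts to a bijection $\mathrm{Sing}_{\overline{\mathcal L}}(B)\to\mathrm{Sing}_{\overline{\mathcal L}}(g(B))$.

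To finish the block case, note that $g_\ast\upharpoonright P$ has $pp$-definable graph, so $[[P]]=[[g_\ast(P)]]$; thus $g_\ast$ fixes every colour and hence every colour class $\widetilde{\mathfrak A}$. Moreover $g_\ast$ is linear, so $[P_1:P_2]=[g_\ast(P_1):g_\ast(P_2)]$ whenever $P_2\subseteq P_1$, and consequently the substitution $\delta_{[[P]]}\mapsto\delta_{[[g_\ast(P)]]}$ respects the index relations and induces the identity on each colour class group $\mathcal R(\widetilde{\mathfrak A})$. Feeding the bijection $P\mapsto g_\ast(P)$ and the equalities $\kappa_P(B)=\kappa_{g_\ast(P)}(g(B))$ into the defining formula $\Lambda_{\widetilde{\mathfrak A}}(B)=-\sum_{\mathfrak A\in\widetilde{\mathfrak A}}\bigl(\sum_{P\in\mathfrak A}\kappa_P(B)\bigr)\delta_{\mathfrak A}$ transports it term by term onto $\Lambda_{\widetilde{\mathfrak A}}(g(B))$, so $\Lambda_{\widetilde{\mathfrak A}}(B)=\Lambda_{\widetilde{\mathfrak A}}(g(B))$; summing over the blocks of the partition from the first step completes the proof.

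The main obstacle is exactly the block-level step, that is, the general-case analogue of Lemma~\ref{COLOURINJ}: one must extend the $pp$-definable injection off the block while retaining control of the band structure and of the finite indices of $pp$-pairs, and then verify that the induced map is not merely a bijection of the relevant simplicial data but is also compatible with the colour class groups, i.e. that it descends to the identity modulo the invariants ideal. This is the one point where the invariants data, absent from the setting of Theorem~\ref{t3}, genuinely intervenes; once it is dealt with, the remaining bookkeeping runs parallel to Sections~\ref{PTGC} and~\ref{LCCC}.
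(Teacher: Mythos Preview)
Your proposal is correct and follows essentially the same route the paper indicates: the paper does not give a detailed proof of Theorem~\ref{t3general} but simply says it is ``an easy adaptation of that of theorem~\ref{t3}'', with the only change being to work with antichains in discrete form. Your plan reproduces the two-step structure of the proof of Theorem~\ref{t3} (reduce to a single block with a $pp$-definable injection via Lemma~\ref{REP} applied to $\mathrm{Graph}(f)$, then prove the local identity and sum), correctly swaps in Theorem~\ref{t2general} for Theorem~\ref{t2}, and correctly identifies the two genuinely new points---adapting Lemma~\ref{COLOURINJ} using discreteness within a band in lieu of Corollary~\ref{NLU}, and checking that the bijection $P\mapsto g_\ast(P)$ is compatible with the colour class groups $\mathcal R(\widetilde{\mathfrak A})$ (equivalently, descends modulo the invariants ideal). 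That last compatibility check is precisely what the paper's remark ``the necessary change is to deal only with antichains which are in discrete form'' together with Corollary~\ref{glogeneralconvex} is pointing at, and you have unpacked it accurately.
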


Let $ev:\overline{\mathrm{Def}}(M)\rightarrow\mathbb Z[\overline{\mathcal X}^*]/\mathcal J$ be the map defined by $D\mapsto \sum\{\Lambda_{\widetilde{\mathfrak A}}(D):\widetilde{\mathfrak A}\in\overline{\mathcal X}^*/\approx\}$. This map is well defined since the sum is finite for every $D$ for reasons similar to those for the special case. Furthermore $ev_{D_1}=ev_{D_2}$ whenever $D_1$ and $D_2$ are definably isomorphic since $\Lambda_{\widetilde{\mathfrak A}}(D_1)=\Lambda_{\widetilde{\mathfrak A}}(D_2)$  for each colour class $\widetilde{\mathfrak A}$. In fact $ev$ is a semiring homomorphism. The proof of the following theorem is analogous to that of theorem \ref{t5}.

\begin{theorem}\label{t5general}
The map $ev:\widetilde{\mathrm{Def}}(M)\rightarrow\mathbb Z[\overline{\mathcal X}^*]/\mathcal J$ defined by $[D]\mapsto ev_{[D]}$ is a semiring homomorphism.
\end{theorem}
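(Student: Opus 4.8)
The plan is to follow the proof of Theorem \ref{t5} line by line, substituting the colour-class-group-valued global characteristics $\Lambda_{\widetilde{\mathfrak{A}}}$ for the integer-valued $\Lambda_{\mathfrak{A}}$, the quotient ring $\mathbb{Z}[\overline{\mathcal{X}}^*]/\mathcal{J}$ for $\mathbb{Z}[\overline{\mathcal{X}}^*]$, and working throughout with antichains in discrete form and compatible families in place of arbitrary antichains. First I would dispose of additivity and the unit: each $\Lambda_{\widetilde{\mathfrak{A}}}$ is additive on disjoint unions of blocks by Theorem \ref{t2general}, hence additive on all of $\overline{\mathrm{Def}}(M)$ via Lemma \ref{REP}, and summing over the finitely many colour classes relevant to a given definable set shows $ev$ is additive; that $ev$ sends the class of a singleton to the identity $\delta_{\mathfrak{U}}=1$ of the quotient is the direct computation of $\kappa_P(\{\{*\}\})$.

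The bulk of the work is multiplicativity, organized, exactly as in Theorem \ref{t5}, in three layers of increasing generality: $pp$-convex sets, then cells, then arbitrary definable sets. For the first layer, fix $\alpha,\beta\in\overline{\mathcal{A}^d}$; then $\alpha\times\beta$ again lies in $\overline{\mathcal{A}^d}$ and represents $\bigcup\alpha\times\bigcup\beta$, and by Corollary \ref{glogeneralconvex} the value $ev_{[\alpha\times\beta]}$ is independent of this choice of representative. Checking $ev_{[\alpha\times\beta]}=ev_{[\alpha]}\cdot ev_{[\beta]}$ colour-class-component by colour-class-component, one expands the left-hand side using the identity $\kappa_{P\times Q}(\alpha\times\beta)=-\kappa_P(\alpha)\kappa_Q(\beta)$ of Lemma \ref{localcharmultgeneral} together with the singular-set inclusion $\mathrm{Sing}_{\overline{\mathcal{L}}}(\alpha\times\beta)\subseteq\{P\times Q:P\in\mathrm{Sing}_{\overline{\mathcal{L}}}(\alpha),\ Q\in\mathrm{Sing}_{\overline{\mathcal{L}}}(\beta)\}$, precisely as in Theorem \ref{t5}; the right-hand side expands in the monoid ring quotient via $[[P\times Q]]=[[P]]\star[[Q]]$. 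For the second layer, write $C_1\times C_2=\bigcup(\alpha_1\times\alpha_2)\setminus(\bigcup(\alpha_1\times\beta_2)\cup\bigcup(\beta_1\times\alpha_2))$ with $\alpha_i=P(C_i)$ and $\beta_i=N(C_i)$, choosing one compatible family containing all the antichains that occur (possible, since finitely many $pp$-convex sets always admit compatible representatives); then the identity $(\alpha_1\times\beta_2)\wedge(\beta_1\times\alpha_2)=\beta_1\times\beta_2$ and the inclusion--exclusion property $\Lambda_{\widetilde{\mathfrak{A}}}(X)+\Lambda_{\widetilde{\mathfrak{A}}}(Y)=\Lambda_{\widetilde{\mathfrak{A}}}(X\cup Y)+\Lambda_{\widetilde{\mathfrak{A}}}(X\cap Y)$ obtained by summing Theorem \ref{t1general} over singular points and colour classes reduce the claim to the $pp$-convex case. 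Finally, for arbitrary $D_1,D_2$, decompose each into blocks via Lemma \ref{REP}, use $[D_1\times D_2]=\sum_{i,j}[B_{1i}\times B_{2j}]$, and combine the block case with additivity and the bilinearity of multiplication in $\mathbb{Z}[\overline{\mathcal{X}}^*]/\mathcal{J}$, as in the last step of the proof of Theorem \ref{t5}.

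The main obstacle is not the homological or combinatorial content, which is supplied by the general-case analogues already recorded, but the bookkeeping around colours, bands and the ideal $\mathcal{J}$. One must verify that the cartesian product of bands is well defined and that $[P\times R:Q\times R]=[P:Q]$ for commensurable $P\supseteq Q$, so that multiplying a generator $\delta_{[[P]]}-[P:Q]\delta_{[[Q]]}$ of $\mathcal{J}$ by any $\delta_{[[R]]}$ yields again a generator; equivalently, that $\mathcal{J}$ is genuinely an ideal and that the relation $\approx$ on colours is compatible with $\star$. One must also keep track of the fact --- Corollary \ref{glogeneralconvex}, ultimately Proposition \ref{partitionfurther} --- that passing between two discrete-form representatives of a $pp$-convex set changes the relevant $\delta_{\mathfrak{A}}$'s only by elements of $\mathcal{J}$, which is exactly what makes the evaluation map land in the quotient and forces the multiplicativity computations above to close up. Once these compatibility checks are in place, every display in the proof of Theorem \ref{t5} transcribes with the decorations $\widetilde{(-)}$ inserted, and no genuinely new argument is required.
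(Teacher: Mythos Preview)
Your proposal is correct and follows exactly the approach the paper intends: the paper's own proof consists of the single sentence ``The proof of the following theorem is analogous to that of theorem \ref{t5},'' and you have faithfully unpacked what that analogy entails, including the three-layer induction ($pp$-convex sets, cells, arbitrary definable sets), the use of Lemma \ref{localcharmultgeneral} and Theorem \ref{t1general} in place of their special-case counterparts, and the bookkeeping needed to ensure the computations descend to the quotient by $\mathcal J$.
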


The final step in the proof of \ref{FINALgeneral} is the following analogue of lemma \ref{INJEV}.
\begin{lemma}\label{INJEVgeneral}
The map $\widetilde{ev}:\widetilde{\widetilde{\mathrm{Def}}(M)}\rightarrow\mathbb Z[\overline{\mathcal X}^*]/\mathcal J$ is injective.
\end{lemma}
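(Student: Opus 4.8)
The plan is to run the proof of Lemma \ref{INJEV} once more, the one genuinely new feature being that the invariants ideal $\mathcal J$ must be absorbed. As there, the argument has two ingredients: a distinguished collection of definable sets on which $\widetilde{ev}$ is visibly injective, and a reduction showing that any two classes $[D_1],[D_2]\in\widetilde{\widetilde{\mathrm{Def}}(M)}$ with $ev_{[D_1]}=ev_{[D_2]}$ can, after adding a common summand, be represented inside that collection; the special case then finishes.

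For the first ingredient, let $\mathcal U^d\subseteq\overline{\mathcal A^d}$ be the set of antichains in discrete form whose distinct members are pairwise disjoint. Since disjoint members produce no new element under intersection, each such $\alpha$ is its own nest, so $\mathcal U^d\subseteq\overline{\hat{\mathcal P}^d}$, and every member of $\alpha$ is a single coset. For $\alpha\in\mathcal U^d$ one checks from the definitions that the only $pp$-sets $P$ with $\kappa_P(\alpha)\neq 0$ are the members of $\alpha$, each giving $\kappa_P(\alpha)=-1$; hence $\Lambda_{\widetilde{\mathfrak A}}(\alpha)=\sum_{\mathfrak A\in\widetilde{\mathfrak A}}|\alpha\cap\mathfrak A|\,\delta_{\mathfrak A}$ and $ev_{[\alpha]}$ is the image in $\mathbb Z[\overline{\mathcal X}^*]/\mathcal J$ of $\sum_{\mathfrak A}|\alpha\cap\mathfrak A|\,\delta_{\mathfrak A}$; in particular $ev$ carries $[\mathcal U^d]$ onto the image of $\mathbb N[\overline{\mathcal X}^*]$ in $\mathbb Z[\overline{\mathcal X}^*]/\mathcal J$. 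To prove $\widetilde{ev}$ injective on $[\mathcal U^d]$, take $\alpha,\beta\in\mathcal U^d$ with $ev_{[\alpha]}=ev_{[\beta]}$. Any coset of a $pp$-subgroup $P$ is $pp$-definably isomorphic to $P$, and it splits as a disjoint union of $[P:Q]$ cosets of any commensurable $pp$-subgroup $Q\subseteq P$, each again $pp$-definably isomorphic to $Q$; using these moves, and passing to a large enough cartesian power of $M$ to keep everything disjoint, one refines $\alpha$ and $\beta$, band by band, to cosets of a common subgroup of each band. Proposition \ref{partitionfurther} then shows that the refined families differ only by the relations generating $\mathcal J$, and those relations are exactly the coset-splittings just described, so $[\alpha]=[\beta]$ already in $\widetilde{\mathrm{Def}}(M)$.

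For the second ingredient I would copy the reduction in the proof of Lemma \ref{INJEV}, inserting a re-discretization wherever a new finite family of antichains appears --- always possible, since any finite family of $pp$-convex sets has a compatible, hence discrete-form, system of representatives. First, by Lemma \ref{REP} and the identity $[P(B)]=[B]+[N(B)]$ for a block $B$, after adding a common summand both $[D_1]$ and $[D_2]$ lie in $[\overline{\mathcal A^d}]$. Then one runs the complexity-reduction algorithm: with $\Gamma(\alpha)$ the maximal length of a chain in the smallest nest of $\alpha$, one peels off the maximal elements $A_i$ of the nest of $(\bigcup\alpha)\cup(\bigcup\beta)$, replacing each $A_i$ by $\mathrm{Core}_{\alpha_i}(A_i)$ and restoring $[A_i]=[\mathrm{Core}_{\alpha_i}(A_i)]+[N(\mathrm{Core}_{\alpha_i}(A_i))]$ with the complementary pieces (in discrete form) added to both sides, so that $\widetilde{ev}$ is unaffected while $\Gamma$ strictly decreases. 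After finitely many steps the pair lies in $\mathcal U^d$, and the first ingredient applies. With Theorem \ref{t5general} and the universal property of $K_0$ (Theorem \ref{GRCONSTR}), the containments of the image of $\mathbb N[\overline{\mathcal X}^*]$ inside $Im(\widetilde{ev})\subseteq\mathbb Z[\overline{\mathcal X}^*]/\mathcal J$ give $K_0(M)=K_0(Im(\widetilde{ev}))\cong\mathbb Z[\overline{\mathcal X}^*]/\mathcal J$, which is Theorem \ref{FINALgeneral}.

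The hard part is the injectivity of $\widetilde{ev}$ on $[\mathcal U^d]$: one must turn the purely numerical generators $\delta_{[[P]]}=[P:Q]\delta_{[[Q]]}$ of $\mathcal J$ into honest definable bijections and keep the disjointness bookkeeping straight, which is precisely why the refinement is carried out inside a larger power of $M$ and why Proposition \ref{partitionfurther} is needed to match the two antichains band by band. By contrast, once ``discrete form'' is propagated through the compatibility machinery (Theorem \ref{t1general}, Lemma \ref{localcharmultgeneral}, Corollary \ref{glogeneralconvex}), the complexity reduction is a routine transcription of the $T=T^{\aleph_0}$ argument.
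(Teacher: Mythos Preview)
Your proposal is correct and follows essentially the same route as the paper's proof: define $\mathcal U$ as the antichains in discrete form with pairwise disjoint members, verify injectivity of $\widetilde{ev}$ on $[\mathcal U]$ by passing to a compatible pair $\alpha',\beta'$ (cosets of a common subgroup per band) so that the counts per colour class must agree modulo the relations in $\mathcal J$, then run the complexity-reduction algorithm of Lemma~\ref{INJEV} verbatim, inserting a discretisation step whenever new antichains are produced. One minor remark: your invocation of Proposition~\ref{partitionfurther} is slightly misplaced --- that proposition compares two discrete-form representations of the \emph{same} $pp$-convex set, and what you actually use is its corollary (Corollary~\ref{glogeneralconvex}) to pass from $\alpha$ to the compatible $\alpha'$ without changing $ev$; the matching of $\alpha'$ with $\beta'$ then comes directly from compatibility ($\mathbf{P}(\alpha')=\mathbf{P}(\beta')$) and the structure of $\mathcal R(\widetilde{\mathfrak A})$, exactly as the paper argues.
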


\begin{proof} The proof of this lemma needs some modification of the first paragraph of the proof of lemma \ref{INJEV} in order to incorporate the invariants ideal. Let $\mathcal U:= \{\alpha\in\overline{\mathcal A^d}: A_1\cap A_2=\emptyset$ for all distinct $A_1,A_2\in\alpha\}$.

If $ev_{[\alpha]}=ev_{[\beta]}$ for some $\alpha,\beta\in\mathcal U$, then we can obtain two antichains $\alpha'\in[\alpha]\cap\mathcal U,\beta'\in[\beta]\cap\mathcal U$ such that $\bigcup\alpha=\bigcup\alpha',\bigcup\beta=\bigcup\beta'$ and $\{\alpha',\beta'\}$ is compatible. Hence we have $\Lambda_{\widetilde{\mathfrak A}}(\alpha)=\Lambda_{\widetilde{\mathfrak A}}(\alpha')$, $\Lambda_{\widetilde{\mathfrak A}}(\beta)=\Lambda_{\widetilde{\mathfrak A}}(\beta')$ for each colour class $\widetilde{\mathfrak A}$. Observe that the equalities, if considered in the codomain ring, are modulo the invariants ideal. Now $\Lambda_{\widetilde{\mathfrak A}}(\alpha')=|\alpha'\cap(\bigcup\widetilde{\mathfrak A})|\delta_{[[\mathbf{P}(\alpha')]]}$, where $\mathbf{P}$ is the only band (if exists) such that $\mathbf{P}\cap\alpha'\cap(\bigcup\widetilde{\mathfrak A})\neq\emptyset$. Since $\mathbf{P}(\alpha')=\mathbf{P}(\beta')$ for each such colour class by the definition of compatibility, we get $|\alpha\cap(\bigcup\widetilde{\mathfrak A})|=|\beta\cap(\bigcup\widetilde{\mathfrak A})|$ for each colour class $\widetilde{\mathfrak A}$.

A definable isomorphism can be easily constructed between the $pp$-convex sets represented by $\alpha'$ and $\beta'$, which are the sets represented by $\alpha$ and $\beta$ respectively. The rest of the proof is similar to the proof of \ref{INJEV}.
\end{proof}

\begin{proof} (Theorem \ref{FINALgeneral}) We have shown that the map $\widetilde{ev}$ is injective in the previous lemma. Then we observe that the sets of the form $\bigcup\alpha$ for some $\alpha\in\mathcal U$ are capable of producing every element of the quotient ring $\mathbb Z[\overline{\mathcal X}^*]/\mathcal J$ of the form $\sum n_{\mathfrak A}\delta_{\mathfrak A}+\mathcal J$, where the nonzero coefficients are positive. This completes the proof by an argument similar to the proof of theorem \ref{FINAL}.
\end{proof}

Since the Grothendieck ring is a quotient ring, we do not necessarily know if it is nontrivial. But the following corollary of theorem \ref{FINALgeneral} shows this result, proving Prest's conjecture in full generality.
\begin{cor}\label{MAINRESULTgeneral}
If $M$ is a nonzero right $\mathcal R$-module, then there is a split embedding $\mathbb Z\rightarrowtail K_0(M)$.
\end{cor}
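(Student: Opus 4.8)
The plan is to deduce this from Theorem \ref{FINALgeneral}: since $K_0(M)\cong\mathbb Z[\overline{\mathcal X}^*]/\mathcal J$, it is enough to show that the ring structure map $\iota\colon\mathbb Z\to\mathbb Z[\overline{\mathcal X}^*]/\mathcal J$, which sends $1$ to the class of $\delta_{\mathfrak U}$ (the unit of the monoid ring, $\mathfrak U$ being the colour of singletons), admits a group retraction. As with Corollary \ref{MAINRESULT} in the case $T=T^{\aleph_0}$, the retraction should come from projecting onto the ``constant part''; the new issue, since $\mathcal J$ may now be nontrivial, is to identify that part correctly.

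First I would pin down the colour class $\widetilde{\mathfrak U}$. A $pp$-set $P$ is commensurable with a singleton $\{a\}$ exactly when $P^\circ$ is finite, so the band of any singleton is the band $\mathbf N$ of all finite nonempty $pp$-sets; hence $\mathcal Y(\mathfrak U)=\{\mathbf N\}$, and unwinding the definition of $\approx$ gives $\widetilde{\mathfrak U}=\{[[P]]:P\in\mathbf N\}$, the set of colours of finite $pp$-sets. (When $T=T^{\aleph_0}$ there are no finite nontrivial $pp$-subgroups, so $\widetilde{\mathfrak U}=\{\mathfrak U\}$ and $\mathcal J=0$, which recovers the earlier situation.)

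Next I would compute the colour class group $\mathcal R(\widetilde{\mathfrak U})$ and show it is isomorphic to $\mathbb Z$ with generator the class of $\delta_{\mathfrak U}$. On the free abelian group $\mathbb Z\langle\delta_{\mathfrak A}:\mathfrak A\in\widetilde{\mathfrak U}\rangle$ define $\varphi$ by $\delta_{[[P]]}\mapsto|P|$ for each finite $pp$-set $P$; this is well defined because $pp$-isomorphic sets are equipollent. For finite $pp$-sets $Q\subseteq P$ one has $[P:Q]=|P^\circ|/|Q^\circ|=|P|/|Q|$, so $\varphi$ annihilates every defining relation $\delta_{[[P]]}-[P:Q]\delta_{[[Q]]}$ of $\mathcal J(\widetilde{\mathfrak U})$ and descends to $\bar\varphi\colon\mathcal R(\widetilde{\mathfrak U})\to\mathbb Z$ with $\bar\varphi(\delta_{\mathfrak U})=1$. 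Conversely, applying the relation with $Q$ a singleton contained in $P$ shows $\delta_{[[P]]}=|P|\,\delta_{\mathfrak U}$ already in $\mathcal R(\widetilde{\mathfrak U})$, so $\delta_{\mathfrak U}$ generates this group and $\bar\varphi$ is an isomorphism.

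Finally I would assemble the pieces: as noted in Section \ref{II}, each generating relation of $\mathcal J$ together with its monoid translates lies inside a single block $\mathbb Z\langle\delta_{\mathfrak B}:\mathfrak B\in\widetilde{\mathfrak B}\rangle$ (if $Q\subseteq P$ and $[P:Q]<\infty$ then $P$ and $Q$, and likewise $A\times P$ and $A\times Q$, are commensurable), so $\mathbb Z[\overline{\mathcal X}^*]/\mathcal J\cong\bigoplus_{\widetilde{\mathfrak A}}\mathcal R(\widetilde{\mathfrak A})$ as abelian groups, and the image of $\iota$ is precisely the summand $\mathcal R(\widetilde{\mathfrak U})$. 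Composing the projection onto this summand with $\bar\varphi$ yields a group homomorphism $\rho$ with $\rho\circ\iota=\mathrm{id}_{\mathbb Z}$, so $\iota$, and hence $\mathbb Z\rightarrowtail K_0(M)$, is a split embedding. I expect the genuine content to be Step 3: showing $\mathcal R(\widetilde{\mathfrak U})$ stays torsion-free of rank one even though $\widetilde{\mathfrak U}$ may contain many colours, i.e. that the invariants relations, while collapsing the colour of each finite $pp$-set $P$ onto $|P|\cdot 1$, impose no relation on $\delta_{\mathfrak U}$ itself — this rests entirely on the identity $[P:Q]=|P|/|Q|$ for finite $pp$-pairs, which is exactly what makes ``cardinality'' descend to the quotient.
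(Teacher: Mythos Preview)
Your approach is essentially the paper's: identify $\widetilde{\mathfrak U}$ as the class of colours of finite $pp$-sets, show $\mathcal R(\widetilde{\mathfrak U})\cong\mathbb Z$ via cardinality, and project onto this summand. Your direct-sum argument (that every monoid translate $\delta_{\mathfrak A}(\delta_{[[P]]}-[P:Q]\delta_{[[Q]]})$ is again a defining relation in some $\mathcal J(\widetilde{\mathfrak B})$ because $[A\times P:A\times Q]=[P:Q]$) is correct and in fact makes explicit what the paper glosses over.

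The one point where the paper goes further: it shows the retraction $\pi_0$ is a \emph{ring} homomorphism, not just a group map, by observing that $\mathfrak A\star\mathfrak B\in\bigcup\widetilde{\mathfrak U}$ if and only if both $\mathfrak A,\mathfrak B\in\bigcup\widetilde{\mathfrak U}$ (a product of $pp$-sets is finite iff both factors are). Your argument yields only an abelian-group splitting. If ``split embedding'' is read in the category of rings, you should add this one observation; it costs a single sentence and upgrades your $\rho$ to a ring retraction.
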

\begin{proof}
Consider the colour class $\widetilde{\mathfrak U}$, where $\mathfrak U$ is the identity element of the monoid $\overline{\mathcal X}^*$. A $pp$-set $P$ is an element of $\bigcup\widetilde{\mathfrak U}$ if and only if $P$ is finite. Finite sets enjoy the special property that two finite sets are isomorphic to each other if and only their cardinalities are equal. Furthermore, every such isomorphism is definable. In particular, $\mathcal R(\widetilde{\mathfrak U})\cong\mathbb Z$ if $M$ is a nonzero module. Next we observe that the set $\bigcup\widetilde{\mathfrak U}$ is closed under multiplication and hence the colour class group $\mathcal R(\widetilde{\mathfrak U})$ can be given the structure of a quotient of the monoid ring $\mathbb Z[\bigcup\widetilde{\mathfrak U}]$ with certain relations, where the multiplicative relations of the monoid ring are finitary and hence already present in the relations for $\mathcal R(\widetilde{\mathfrak U})$. We have thus described the ring structure of $\mathcal R(\widetilde{\mathfrak U})$ and this ring is naturally a subring of $K_0(M)$.

To complete the proof, we show that the map $\pi_0:K_0(M)\rightarrow\mathcal R(\widetilde{\mathfrak U})$ given by $\sum_{\widetilde{\mathfrak A}\in(\overline{\mathcal X}^*/\approx)} n_{\widetilde{\mathfrak A}}\delta_{\widetilde{\mathfrak A}}\mapsto n_{\widetilde{\mathfrak U}}\delta_{\widetilde{\mathfrak U}}$ is a surjective ring homomorphism.

The map $\pi_0$ is clearly an additive group homomorphism. Note that the multiplicative monoid $\bigcup\widetilde{\mathfrak U}$ is a sub-monoid of $\overline{\mathcal X}^*$. Also note that $\mathcal J(\widetilde{\mathfrak A})\cap\mathcal J(\widetilde{\mathfrak B})=\emptyset$ if $\widetilde{\mathfrak A}\neq\widetilde{\mathfrak B}$. Furthermore, $\mathfrak A\star\mathfrak B\in\bigcup\widetilde{\mathfrak U}$ if and only if $\mathfrak A,\mathfrak B\in\bigcup\widetilde{\mathfrak U}$. Thus the coefficient of $\delta_{\widetilde{\mathfrak U}}$ in the product of two elements of $K_0(M)$ is determined by the coefficient of $\delta_{\widetilde{\mathfrak U}}$ of the individual elements. Hence $\pi_0$ is also multiplicative. The surjectivity is clear. This completes the proof.
\end{proof}

Now we can give a proof that the Grothendieck ring of a module is an invariant of its theory.

\begin{proof} (Proposition \ref{eleequivmod})
Elementarily equivalent modules have isomorphic lattices of $pp$-sets and they also satisfy the same invariant conditions (see \cite[Corollary\,2.18]{PreBk}). Hence theorem \ref{FINALgeneral} yields the result.
\end{proof}

\section{Applications}\label{appl}
\subsection{Pure embeddings and Grothendieck rings}\label{pure}

We will investigate some categorical properties of Grothendieck rings of modules in this section. The main aim is to prove the following theorem.

\begin{theorem}\label{puresurj}
Let $i:N\rightarrow M$ be a pure embedding of right $\mathcal R$-modules such that the theory of $M$ satisfies $Th(M)=Th(M)^{\aleph_0}$. Then $i$ induces a surjective ring homomorphism $I:K_0(M)\twoheadrightarrow K_0(N)$.
\end{theorem}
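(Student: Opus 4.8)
The plan is to use the characterization of $K_0$ furnished by Theorem \ref{FINAL}: since $Th(M)=Th(M)^{\aleph_0}$, we have $K_0(M)\cong\mathbb Z[\overline{\mathcal X}^*(M)]$, and so a ring homomorphism out of $K_0(M)$ is the same data as a monoid homomorphism out of $\overline{\mathcal X}^*(M)$. On the $N$ side we do \emph{not} assume $Th(N)=Th(N)^{\aleph_0}$, so we must land in $K_0(N)\cong\mathbb Z[\overline{\mathcal X}^*(N)]/\mathcal J(N)$ via Theorem \ref{FINALgeneral}. The first step is to understand how a pure embedding $i:N\to M$ interacts with the $pp$-lattices: purity means precisely that for every $pp$-formula $\phi$ and every tuple $\overline a$ from $N$, $N\vDash\phi(\overline a)\iff M\vDash\phi(\overline a)$, so $i$ induces, for each $n$, an inclusion-preserving map $\mathcal L_n(N)\to\mathcal L_n(M)$, $\phi(N^n)\mapsto\phi(M^n)$ on parameter-free $pp$-subgroups, and this map respects finite indices and meets. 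Hence it descends to a map on colours $\overline{\mathcal X}^*(N)\to\overline{\mathcal X}^*(M)$; I would check this is a \emph{monoid} homomorphism (it clearly respects $\times$ and sends singletons to singletons). The induced ring map goes $\mathbb Z[\overline{\mathcal X}^*(N)]\to\mathbb Z[\overline{\mathcal X}^*(M)]$ in the \emph{wrong} direction, so the real content is to produce a map the other way.

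The natural candidate for $I$ is restriction of definable sets: a definable subset $D\subseteq M^n$, being a boolean combination of $pp$-sets over parameters, need not restrict to a definable subset of $N^n$ if its parameters lie outside $N$; but by elementary equivalence of $Th(M)$ with its restriction and the fact that $K_0$ depends only on the theory (Proposition \ref{eleequivmod}), one may first reduce to parameter-free data or, better, work at the level of colours. Concretely, I would define $I:K_0(M)\to K_0(N)$ on generators by sending the class $[\phi(M^n)]$ of a parameter-free $pp$-subgroup to $[\phi(N^n)]$, i.e. $\delta_{[[\phi(M^n)]]}\mapsto\delta_{[[\phi(N^n)]]}+\mathcal J(N)$, and extend multiplicatively and additively. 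Since $\overline{\mathcal X}^*(M)$ as a monoid is generated (under $\star$) by colours of $pp$-subgroups, and via Theorem \ref{FINAL} $K_0(M)=\mathbb Z[\overline{\mathcal X}^*(M)]$ is the monoid ring on exactly these colour classes, it suffices to check this assignment is well-defined on colours: if $\phi_1(M^{n})$ and $\phi_2(M^{m})$ are $pp$-isomorphic over $M$, then $\phi_1(N^n)$ and $\phi_2(N^m)$ are $pp$-isomorphic over $N$. This follows because a $pp$-definable bijection is given by a $pp$-formula in $n+m$ free variables, and purity transfers the sentence asserting that this formula is the graph of a bijection from $\phi_1$ to $\phi_2$ (a first-order, indeed $pp$-plus-invariants, statement) down to $N$ — here one uses that such graph-of-bijection assertions are expressible using $pp$-formulas and invariants conditions, and that pure embeddings reflect invariants conditions that hold in $M$ of the form $\mathrm{Inv}(M;A,B)=1$. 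This well-definedness, together with multiplicativity of $[[-\times-]]$, gives a monoid homomorphism $\overline{\mathcal X}^*(M)\to\mathbb Z[\overline{\mathcal X}^*(N)]/\mathcal J(N)$, hence by the universal property of the monoid ring (and of $K_0$, Corollary \ref{GrRngAdj}) a ring homomorphism $I:K_0(M)\to K_0(N)$.

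For surjectivity, I would argue that every generator of $K_0(N)\cong\mathbb Z[\overline{\mathcal X}^*(N)]/\mathcal J(N)$ is hit: the monoid $\overline{\mathcal X}^*(N)$ is generated by colours $[[\phi(N^n)]]$ of $pp$-subgroups, and each such $\phi(N^n)$ is the restriction $i^{-1}(\phi(M^n))$ of the $pp$-subgroup $\phi(M^n)$, whose colour class in $K_0(M)$ maps to $\delta_{[[\phi(N^n)]]}$ by construction; since these generate the target ring and $I$ is a ring homomorphism, $I$ is onto. The main obstacle I anticipate is the \emph{well-definedness} step: one must be careful that $pp$-isomorphism over $M$ really descends to $pp$-isomorphism over $N$ — a priori a bijection $\phi_1(M^n)\to\phi_2(M^m)$ with $pp$-graph might, upon restriction, fail to be surjective onto $\phi_2(N^m)$, and one needs Neumann's Lemma (\ref{NL}) together with purity (to transfer the relevant finite-index and covering data) to see that surjectivity is preserved; injectivity is automatic. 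A secondary subtlety, which the hypothesis $Th(M)=Th(M)^{\aleph_0}$ is designed to smooth over, is that without it the colour monoid of $M$ would itself carry an invariants ideal and the bookkeeping of indices under restriction (which can only \emph{decrease} finite indices, by purity) would have to be reconciled with $\mathcal J(N)$; here $\mathcal J(M)=0$ so $K_0(M)$ is a free monoid ring and no such reconciliation on the source is needed.
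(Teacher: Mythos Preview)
Your approach is essentially the same as the paper's: identify $K_0(M)\cong\mathbb Z[\overline{\mathcal X}^*(M)]$ via Theorem \ref{FINAL}, produce a monoid homomorphism $\overline{\mathcal X}^*(M)\to\overline{\mathcal X}^*(N)$ by $[[\phi(M^n)]]\mapsto[[\phi(N^n)]]$ (equivalently $A\mapsto A\cap N^n$), extend to a surjection of monoid rings, and compose with the quotient $\mathbb Z[\overline{\mathcal X}^*(N)]\twoheadrightarrow K_0(N)$ from Theorem \ref{FINALgeneral}. The paper packages the first step as the lattice surjection of Lemma \ref{purelat} and then invokes the functoriality of integral monoid rings (Proposition \ref{monringfunc}), whereas you unpack the same content at the level of $pp$-formulas.

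One correction: your anticipated obstacle is not real, and Neumann's Lemma is not needed. If $\theta(\bar x,\bar y)$ is a $pp$-formula whose solution set in $M$ is the graph of a bijection $\phi_1(M^n)\to\phi_2(M^m)$, then $\theta(N)$ is automatically the graph of a bijection $\phi_1(N^n)\to\phi_2(N^m)$: totality and surjectivity follow because $\exists\bar y\,\theta(\bar x,\bar y)$ and $\exists\bar x\,\theta(\bar x,\bar y)$ are themselves $pp$-formulas, so purity transfers their truth on tuples from $N$ directly; single-valuedness and injectivity in $N$ are inherited from $M$ since $\theta(N)\subseteq\theta(M)$. There is no covering argument involved, and no need to invoke invariants conditions beyond this. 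With this simplification your well-definedness step is immediate, and the rest of your outline matches the paper's proof.
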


This theorem will be proved using a series of results of functorial nature. We begin with the definition of a pure embedding.

\begin{definition}
Let $M$ be a right $\mathcal R$-module. A submodule $N\leq M$ is called a \textbf{pure submodule} if, for each $n$, $A\cap N^n\in\mathcal L_n^\circ(N)$ for every $A\in\mathcal L_n^\circ(M)$.\\
A monomorphism $i:N\rightarrow M$ is said to be a \textbf{pure monomorphism} if $iN$ is a pure submodule of $M$.
\end{definition}

The following lemma states that a pure embedding induces a map of lattices of $pp$-formulas.
\begin{lemma}\label{purelat}(see \cite[Lemma\,3.2.2]{PrePSL})
If $i:N\rightarrow M$ is a pure embedding then, for each $n$, the natural map $\overline{i}:\mathcal L_n^\circ(M)\rightarrow\mathcal L_n^\circ(N)$ given by $\overline{i}(A)=A\cap N^n$ is a surjection of lattices.
\end{lemma}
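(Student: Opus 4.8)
The plan is to check, in turn, that $\overline i\colon A\mapsto A\cap N^n$ is well defined (i.e.\ lands in $\mathcal L_n^\circ(N)$), that it preserves meets and joins, and that it is onto. Everything will rest on the defining property of a pure embedding: $\phi(N)=\phi(M)\cap N^n$ for every parameter-free $pp$-formula $\phi(\overline x)$ in $n$ variables (a standard fact, cf.\ \cite{PreBk}; in the notation here it reads $\overline i(\phi(M))=\phi(N)$). The inclusion $\phi(N)\subseteq\phi(M)\cap N^n$ is automatic, since $pp$-formulas are existential and hence preserved along $i$; the reverse inclusion is the substance of purity, and it is what I would invoke at the steps below.

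Well-definedness is then immediate: any $A\in\mathcal L_n^\circ(M)$ equals $\phi(M)$ for some parameter-free $pp$-formula $\phi$ (a $pp$-definable subgroup, containing $0$, is defined by a parameter-free $pp$-formula), so $A\cap N^n=\phi(M)\cap N^n=\phi(N)\in\mathcal L_n^\circ(N)$. Meet-preservation uses no purity: the meet in $\mathcal L_n^\circ$ is intersection (defined by the conjunction of the two defining $pp$-formulas), so $\overline i(A\wedge B)=(A\cap B)\cap N^n=(A\cap N^n)\cap(B\cap N^n)=\overline i(A)\wedge\overline i(B)$. For surjectivity, given $C\in\mathcal L_n^\circ(N)$ one picks a parameter-free $pp$-formula $\phi$ with $\phi(N)=C$ and sets $A:=\phi(M)\in\mathcal L_n^\circ(M)$; then $\overline i(A)=\phi(M)\cap N^n=\phi(N)=C$, which settles this case.

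The delicate point, and the one I expect to be the main obstacle, is join-preservation. One first observes that the join of $A,B$ in $\mathcal L_n^\circ$ is the Minkowski sum $A+B$: it is a $pp$-definable subgroup — if $\phi_A,\phi_B$ define $A,B$, then $\psi(\overline x):=\exists\overline z\,(\phi_A(\overline z)\wedge\phi_B(\overline x-\overline z))$ is again $pp$ (by closure of $pp$-formulas under conjunction and existential quantification) and defines $A+B$ — and it lies inside every subgroup containing $A$ and $B$. Then $\overline i(A\vee B)=(A+B)\cap N^n=\psi(M)\cap N^n=\psi(N)$ by purity, and evaluating $\psi$ inside $N$ (where $\overline z$ ranges over $N^n$ and $\phi_A(N)=A\cap N^n$, $\phi_B(N)=B\cap N^n$) gives $\psi(N)=(A\cap N^n)+(B\cap N^n)=\overline i(A)\vee\overline i(B)$. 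The hard part is exactly this last move: from knowing that a tuple of $N^n$ decomposes as a sum from $A+B$ computed in $M$, one must extract a witness $\overline z$ for that decomposition which already lies in $N^n$, and this is precisely the nontrivial direction of purity applied to $\psi$. Assembling these four facts shows $\overline i$ is a surjective lattice homomorphism, as claimed.
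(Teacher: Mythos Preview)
Your argument is correct and is the standard proof of this fact; the paper itself does not give a proof but simply cites \cite[Lemma\,3.2.2]{PrePSL}. One small remark: the paper's \emph{definition} of purity is stated as ``$A\cap N^n\in\mathcal L_n^\circ(N)$ for every $A\in\mathcal L_n^\circ(M)$'', which a priori says only that the intersection is \emph{some} $pp$-definable subgroup of $N^n$, not that it equals $\phi(N)$ for the \emph{same} $\phi$ defining $A$; you correctly invoke the equivalent (and more commonly used) characterisation $\phi(N)=\phi(M)\cap N^n$ and cite \cite{PreBk} for it, which is exactly what is needed to make well-definedness, surjectivity, and especially join-preservation go through.
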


Now we state the following result about integral monoid rings.
\begin{pro}\label{monringfunc}(see \cite[II,\,Proposition\,3.1]{Lang})
Let $\Phi:A\rightarrow B$ be a homomorphism of monoids. Then there exists a unique homomorphism $h:\mathbb Z[A]\rightarrow\mathbb Z[B]$ such that $h(x)=\Phi(x)$ for all $x\in A$ and $h(1)=1$. Furthermore, $h$ is surjective if $\Phi$ is so.
\end{pro}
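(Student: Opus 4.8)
The plan is to build $h$ directly as the $\mathbb{Z}$-linear extension of $\Phi$ and then verify the ring axioms, uniqueness and surjectivity straight from the definition of the monoid ring. Concretely, for $\phi\in\mathbb{Z}[A]$ regarded as a finitely supported function, I would set $h(\phi)\in\mathbb{Z}[B]$ by $h(\phi)(b):=\sum_{a\in\Phi^{-1}(b)}\phi(a)$, equivalently $h(\phi)=\sum_{a\in A}\phi(a)\,\Phi(a)$. This is well defined: the defining sum runs over the finite set $\mathrm{Supp}(\phi)\cap\Phi^{-1}(b)$, and $\mathrm{Supp}(h(\phi))\subseteq\Phi(\mathrm{Supp}(\phi))$ is finite, so $h(\phi)$ genuinely lies in $\mathbb{Z}[B]$. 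Additivity of $h$ is immediate from the formula, the identity $h(x)=\Phi(x)$ for $x\in A$ is built in, and $h(1)=h(1_A)=\Phi(1_A)=1_B$ because $\Phi$ preserves identities.

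The one step requiring care is multiplicativity. For $\phi,\psi\in\mathbb{Z}[A]$ and $c'\in B$, expanding the convolution product and reindexing over fibres of $\Phi$ gives
\[
h(\phi\cdot\psi)(c')=\sum_{c\in\Phi^{-1}(c')}\ \sum_{a\star b=c}\phi(a)\psi(b)=\sum_{(a,b):\,\Phi(a\star b)=c'}\phi(a)\psi(b),
\]
while, using that $\Phi$ is a monoid homomorphism (so $\Phi(a)\star\Phi(b)=c'$ precisely when $\Phi(a\star b)=c'$),
\[
\bigl(h(\phi)\cdot h(\psi)\bigr)(c')=\sum_{a'\star b'=c'}\Bigl(\sum_{a\in\Phi^{-1}(a')}\phi(a)\Bigr)\Bigl(\sum_{b\in\Phi^{-1}(b')}\psi(b)\Bigr)=\sum_{(a,b):\,\Phi(a)\star\Phi(b)=c'}\phi(a)\psi(b).
\]
The two right-hand sides coincide, so $h(\phi\cdot\psi)=h(\phi)\cdot h(\psi)$; all the sums are finite because $\phi$ and $\psi$ have finite support. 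I expect this bookkeeping over the fibres of $\Phi$ to be the main (in fact only) obstacle, and it is exactly the place where the homomorphism property of $\Phi$ is used.

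For uniqueness, any ring homomorphism $h'\colon\mathbb{Z}[A]\to\mathbb{Z}[B]$ with $h'|_A=\Phi$ satisfies $h'\bigl(\sum_a\phi(a)a\bigr)=\sum_a\phi(a)\,h'(a)=\sum_a\phi(a)\,\Phi(a)=h(\phi)$, since $A$ generates $\mathbb{Z}[A]$ as an abelian group and $h'$ is additive; hence $h'=h$. Finally, if $\Phi$ is surjective then every $b\in B$ equals $\Phi(a)=h(a)$ for some $a\in A$, so the image of $h$ is an additive subgroup of $\mathbb{Z}[B]$ containing $B$, and since $B$ generates $\mathbb{Z}[B]$ as an abelian group this image is all of $\mathbb{Z}[B]$.
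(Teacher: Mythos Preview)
Your proof is correct and is the standard direct verification. The paper does not actually prove this proposition; it merely cites Lang's \emph{Algebra} for it, so there is nothing to compare against beyond noting that your argument is exactly the expected one.
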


\begin{cor}
A pure embedding $i:N\rightarrow M$ induces a surjective homomorphism $\mathfrak i:\mathbb Z[\overline{\mathcal X}^*(M)]\twoheadrightarrow\mathbb Z[\overline{\mathcal X}^*(N)]$ of rings.
\end{cor}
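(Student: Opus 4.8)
The plan is to reduce everything to Proposition \ref{monringfunc} by constructing a surjective homomorphism of multiplicative monoids $\Phi:\overline{\mathcal X}^*(M)\to\overline{\mathcal X}^*(N)$; then $\mathfrak i$ is the ring homomorphism it induces, and $\mathfrak i$ is surjective because $\Phi$ is. To define $\Phi$ I would first note that every colour in $\overline{\mathcal X}^*(M)$ contains a $pp$-definable subgroup: if $P=p+P^\circ$ is any nonempty $pp$-set in a colour $\mathfrak A$, then translation by $p$ is a bijection $P^\circ\to P$ with $pp$-definable graph $\{(x,y):x\in P^\circ,\ y-x-p=0\}$, so $\mathfrak A=[[P^\circ]]$ with $P^\circ\in\mathcal L^\circ_k(M)$ for the appropriate $k$. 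I would then set $\Phi(\mathfrak A):=[[\,\overline i(P^\circ)\,]]$, where $\overline i(P^\circ)=P^\circ\cap N^k\in\mathcal L^\circ_k(N)$ is the lattice map of Lemma \ref{purelat}; since $\overline i(P^\circ)$ is again a subgroup, it is nonempty, so $\Phi(\mathfrak A)\in\overline{\mathcal X}^*(N)$.

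The one substantive point — and the main obstacle — is that $\Phi$ does not depend on the chosen subgroup representative. Suppose $A\subseteq M^{n}$ and $A'\subseteq M^{n'}$ are $pp$-definable subgroups with $[[A]]=[[A']]$, witnessed by a $pp$-definable bijection $f:A\to A'$ whose graph $\Gamma\subseteq M^{n+n'}$ is a $pp$-set. Translating $\Gamma$ by one of its points, I may assume $\Gamma$ is a $pp$-definable subgroup of $M^{n+n'}$, namely the graph of a group isomorphism with first projection $A$ and second projection $A'$. Purity then finishes the job: $\overline i(\Gamma)=\Gamma\cap N^{n+n'}$ is a $pp$-definable subgroup of $N^{n+n'}$, and I claim it is the graph of a group isomorphism $\overline i(A)\to\overline i(A')$. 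Injectivity is inherited from $f$; the key claim is that $\overline i(\Gamma)$ projects \emph{onto} $\overline i(A)$ in the first coordinate, i.e. that $f$ maps $A\cap N^{n}$ into $N^{n'}$. This holds because for $a\in A\cap N^{n}$ the $pp$-formula $\exists\bar w\ \Gamma(a,\bar w)$ is true in $M$, hence true in $N$ by purity, and its unique witness is $f(a)$; applying the same argument to the graph of $f^{-1}$ gives surjectivity. Hence $[[\overline i(A)]]=[[\overline i(A')]]$ in $N$, which is exactly well-definedness of $\Phi$.

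Once $\Phi$ is well defined, that it is a homomorphism of monoids is routine: the trivial subgroup of $M^{1}$ is sent to the trivial subgroup of $N^{1}$, so $\Phi(\mathfrak U)=\mathfrak U$; and from $\overline i(A\times B)=(A\times B)\cap N^{n+m}=(A\cap N^{n})\times(B\cap N^{m})=\overline i(A)\times\overline i(B)$ we read off $\Phi(\mathfrak A\star\mathfrak B)=\Phi(\mathfrak A)\star\Phi(\mathfrak B)$. Surjectivity of $\Phi$ is immediate from the surjectivity part of Lemma \ref{purelat}: given a $pp$-definable subgroup $C\subseteq N^{n}$, choose $A\in\mathcal L^\circ_n(M)$ with $\overline i(A)=C$; then $\Phi([[A]])=[[C]]$. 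Applying Proposition \ref{monringfunc} to $\Phi$ produces the desired surjective ring homomorphism $\mathfrak i:\mathbb Z[\overline{\mathcal X}^*(M)]\twoheadrightarrow\mathbb Z[\overline{\mathcal X}^*(N)]$.
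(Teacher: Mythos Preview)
Your argument is correct and follows exactly the route the paper takes: pick subgroup representatives of colours, push them across via the lattice map $\overline i$ of Lemma \ref{purelat}, and then invoke Proposition \ref{monringfunc}. The paper's proof suppresses the well-definedness verification that you spell out (that a $pp$-bijection between subgroup representatives restricts, by purity, to a $pp$-bijection on the $N$-side); your treatment of this point is the only real addition, and it is sound.
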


\begin{proof}
Observe that every colour $\mathfrak A\in\overline{\mathcal X}^*$ has a representative in $\overline{\mathcal L}^\circ:=\bigcup_{n=1}^\infty\mathcal L_n^\circ$. Thus we get an induced surjective homomorphism $\overline{\mathcal X}^*(M)\twoheadrightarrow\overline{\mathcal X}^*(N)$ of the colour monoids using lemma \ref{purelat}. Then proposition \ref{monringfunc} yields the required surjective map of the integral monoid rings.
\end{proof}

\begin{proof} (Theorem \ref{puresurj})
Observe that since $Th(M)=Th(M)^{\aleph_0}$ holds, theorem \ref{FINAL} gives $K_0(M)\cong\mathbb Z[\overline{\mathcal X}^*(M)]$. By theorem \ref{FINALgeneral}, we have $K_0(N)\cong\mathbb Z[\overline{\mathcal X}^*(N)]/\mathcal J(N)$. Let $\pi:\mathbb Z[\overline{\mathcal X}^*(N)]\twoheadrightarrow K_0(N)$ denote the natural quotient map. Take $I=\pi\circ\mathfrak i$, where $\mathfrak i$ is the map from the previous corollary, to finish the proof.
\end{proof}

We will see an example at the end of the next section to see that theorem \ref{puresurj} fails if $Th(M)\neq Th(M)^{\aleph_0}$.

Recall that the notation $M^{(\aleph_0)}$ denotes the direct sum of countably many copies of a module $M$. It follows immediately from \cite[Lemma\,2.23(c)]{PreBk}) that the lattices $\mathcal L_1(M)$ and $\mathcal L_1(M^{(\aleph_0)})$ are isomorphic and $T:=Th(M^{(\aleph_0)})$ satisfies $T=T^{\aleph_0}$. We summarize these observations in the following corollary of theorem \ref{puresurj}.

\begin{cor}
Let $i_n:M\rightarrow M^{(\aleph_0)}$ denote the natural embedding of $M$ onto the $n^{th}$ component of $M^{(\aleph_0)}$. Then $i_n$ induces the natural quotient map $K_0(M^{(\aleph_0)})=\mathbb Z[\overline{\mathcal X}^*(M)]\twoheadrightarrow\mathbb Z[\overline{\mathcal X}^*(M)]/\mathcal J(M)=K_0(M)$.
\end{cor}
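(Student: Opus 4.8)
The plan is to apply Theorem \ref{puresurj} to the embedding $i_n$ and then identify the resulting surjection with the quotient map by unwinding its construction.

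First I would observe that $i_n$ is a pure embedding: its image $i_n(M)$ is a direct summand of $M^{(\aleph_0)}$, and direct summands are pure submodules. Since $Th(M^{(\aleph_0)})=Th(M^{(\aleph_0)})^{\aleph_0}$, Theorem \ref{puresurj} applies with $M^{(\aleph_0)}$ as the ambient module and yields a surjective ring homomorphism $I\colon K_0(M^{(\aleph_0)})\twoheadrightarrow K_0(M)$. Tracing through the proof of that theorem, $I=\pi\circ\mathfrak i$, where $\mathfrak i\colon\mathbb Z[\overline{\mathcal X}^*(M^{(\aleph_0)})]\twoheadrightarrow\mathbb Z[\overline{\mathcal X}^*(M)]$ is the map of monoid rings (Proposition \ref{monringfunc}) induced by the surjection of colour monoids coming from the lattice map $\overline i\colon\overline{\mathcal L}^\circ(M^{(\aleph_0)})\to\overline{\mathcal L}^\circ(M)$, $A\mapsto A\cap i_n(M)^{(-)}$, of Lemma \ref{purelat}, and $\pi\colon\mathbb Z[\overline{\mathcal X}^*(M)]\twoheadrightarrow K_0(M)=\mathbb Z[\overline{\mathcal X}^*(M)]/\mathcal J(M)$ is the quotient supplied by Theorem \ref{FINALgeneral}. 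Under the identification $K_0(M^{(\aleph_0)})\cong\mathbb Z[\overline{\mathcal X}^*(M^{(\aleph_0)})]$ of Theorem \ref{FINAL} (legitimate since $\mathcal J(M^{(\aleph_0)})=0$ because $Th(M^{(\aleph_0)})=Th(M^{(\aleph_0)})^{\aleph_0}$), it remains to show that $\mathfrak i$ is an isomorphism; then $I$ coincides with $\pi$, which is the asserted quotient map.

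The key step is therefore to upgrade the surjection $\overline i$ to an isomorphism of lattices compatible with colours. Surjectivity is Lemma \ref{purelat}. For injectivity I would use that parameter-free $pp$-formulas commute with direct sums, $\phi(M^{(\aleph_0)})=\phi(M)^{(\aleph_0)}$, which follows from \cite[Lemma\,2.23]{PreBk} (already invoked for $\mathcal L_1$ and valid for each $\mathcal L_n$ by the same argument): if $\phi,\psi$ are $pp$-formulas with $\phi(M)=\psi(M)$ then $\phi(M^{(\aleph_0)})=\psi(M^{(\aleph_0)})$, so $\overline i$ is injective. The same direct-sum stability shows compatibility with colours: a $pp$-definable bijection $f\colon A\cong B$ between $pp$-subgroups of $(M^{(\aleph_0)})^k$ restricts, via $Gr(f)\cap (M^k\times M^k)$, to a $pp$-definable bijection $A\cap M^k\cong B\cap M^k$, and conversely every $pp$-definable bijection on $M^k$ extends back up; hence the induced monoid map $\overline{\mathcal X}^*(M^{(\aleph_0)})\to\overline{\mathcal X}^*(M)$ is a bijection, and being a monoid homomorphism it is a monoid isomorphism. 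Applying $\mathbb Z[-]$ gives that $\mathfrak i$ is a ring isomorphism.

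Combining: $I=\pi\circ\mathfrak i$ becomes, after the identifications of Theorems \ref{FINAL} and \ref{FINALgeneral}, precisely the natural quotient $\mathbb Z[\overline{\mathcal X}^*(M)]\twoheadrightarrow\mathbb Z[\overline{\mathcal X}^*(M)]/\mathcal J(M)=K_0(M)$, which is the claim. The step I expect to require the most care is verifying that $\overline i$ is bijective \emph{and} that this bijection is compatible both with the colour structure and with the evaluation maps that define $\mathfrak i$ and $\pi$; the rest is bookkeeping with the already-established identifications.
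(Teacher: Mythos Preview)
Your approach is correct and is exactly what the paper intends: the corollary is stated without explicit proof, as a summary of the preceding observations (the lattice isomorphism $\mathcal L_n(M)\cong\mathcal L_n(M^{(\aleph_0)})$ from \cite[Lemma\,2.23(c)]{PreBk} and $Th(M^{(\aleph_0)})=Th(M^{(\aleph_0)})^{\aleph_0}$) combined with Theorem \ref{puresurj}. You have simply unpacked these steps in detail, including the colour-compatibility check that the paper leaves implicit.
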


For a ring $\mathcal R$, let $\rm Mod\mbox{-}\mathcal R$ denote the category of right $\mathcal R$-modules. The theory $Th({\rm Mod\mbox{-}\mathcal R})$ is not a complete theory. But we may take a canonical complete theory extending it as follows. Recall that Grothendieck rings of elementarily equivalent modules are isomorphic by proposition \ref{eleequivmod}. Equivalently, $K_0(M)$ is determined by $Th(M)$ which, in turn, is determined by its invariants conditions (theorem \ref{FINALgeneral}).
\begin{definition}
Let $P$ be a direct sum of one model of each complete theory of right $\mathcal R$-modules. Then $T^*=Th(P)$ is referred to as \textbf{the largest complete theory of right $\mathcal R$-modules}.
\end{definition}
Thus every right $\mathcal R$-module is elementarily equivalent to a direct summand of some model of $Th(P)$. Now we note the following result without proof and define the Grothendieck ring of the module category.
\begin{deflem}(see \cite[6.1.1,\,6.1.2]{Perera})\label{GrRngModCat}
Let $T^*$ denote the largest complete theory of right $\mathcal R$-modules. Then $T^*=(T^*)^{\aleph_0}$. Furthermore if $P_1$ and $P_2$ are both direct sums of one model of each complete theory of right $\mathcal R$-modules, then $K_0(P_1)\cong K_0(P_2)$. We define the \textbf{Grothendieck ring of the module category}, denoted $K_0({\rm Mod\mbox{-}\mathcal R})$, to be the Grothendieck ring of the largest complete theory of right $\mathcal R$-modules.
\end{deflem}

As a consequence of theorem \ref{puresurj}, we state a result connecting Grothendieck rings of individual modules with that of the module category.
\begin{cor}
Let $M$ be a right $\mathcal R$-module. Then $K_0(M)$ is a quotient of $K_0({\rm Mod\mbox{-}\mathcal R})$.
\end{cor}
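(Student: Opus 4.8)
The plan is to invoke Theorem \ref{puresurj} together with the structure theorem for the largest complete theory. First I would recall from Definition and Lemma \ref{GrRngModCat} that $K_0(\mathrm{Mod\mbox{-}}\mathcal R)$ is by definition $K_0(P)$, where $P$ is a direct sum of one model of each complete theory of right $\mathcal R$-modules, and that $Th(P)=Th(P)^{\aleph_0}$. In particular $P$ is a module whose theory satisfies the $T=T^{\aleph_0}$ condition, so Theorem \ref{FINAL} applies to it; this is exactly the hypothesis needed to feed $P$ into Theorem \ref{puresurj}.

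Next I would produce a pure embedding $M\rightarrow P'$ for a suitable $P'$ with $Th(P')=Th(P')^{\aleph_0}$ and $K_0(P')\cong K_0(\mathrm{Mod\mbox{-}}\mathcal R)$. The natural candidate is to note that $M$ is elementarily equivalent to a direct summand of some model $N$ of $Th(P)$: indeed $M\equiv M'$ for some direct summand $M'$ of $N$, and the inclusion $M'\hookrightarrow N$ is a split, hence pure, embedding. Since $N\equiv P$ we have $Th(N)=Th(N)^{\aleph_0}$, and by Proposition \ref{eleequivmod} both $K_0(N)\cong K_0(P)=K_0(\mathrm{Mod\mbox{-}}\mathcal R)$ and $K_0(M)\cong K_0(M')$. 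Applying Theorem \ref{puresurj} to the pure embedding $M'\rightarrow N$ gives a surjective ring homomorphism $K_0(N)\twoheadrightarrow K_0(M')$, which under the above isomorphisms is a surjection $K_0(\mathrm{Mod\mbox{-}}\mathcal R)\twoheadrightarrow K_0(M)$. Hence $K_0(M)$ is a quotient of $K_0(\mathrm{Mod\mbox{-}}\mathcal R)$.

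The main obstacle is a bookkeeping one rather than a conceptual one: one must be careful that the module to which Theorem \ref{puresurj} is applied genuinely satisfies $Th(-)=Th(-)^{\aleph_0}$ and genuinely has Grothendieck ring isomorphic to $K_0(\mathrm{Mod\mbox{-}}\mathcal R)$. The cleanest route is to observe that it suffices to take $N$ itself to be a direct sum of one model of each complete theory of right $\mathcal R$-modules with $M$ (or $M'$) as one of the summands; such an $N$ still satisfies $Th(N)=Th(N)^{\aleph_0}$ by Definition and Lemma \ref{GrRngModCat}, is elementarily equivalent to $P$, and visibly admits a split embedding from the chosen copy of $M'$. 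After that, the only thing to check is that the composite of the surjection from Theorem \ref{puresurj} with the isomorphisms supplied by Proposition \ref{eleequivmod} is a ring homomorphism, which is immediate since each arrow in the composite is one. This completes the proof.
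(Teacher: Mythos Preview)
Your proposal is correct and follows essentially the same route as the paper: locate a module $M'\equiv M$ as a direct summand of a model of the largest complete theory $T^*$, apply Theorem \ref{puresurj} to the resulting split (hence pure) embedding, and transport along the isomorphisms furnished by Proposition \ref{eleequivmod} and Lemma \ref{GrRngModCat}. The paper works directly with the fixed $P$ rather than an arbitrary model $N$ of $T^*$, but this is only a cosmetic difference.
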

\begin{proof} Let $T^*$ be the largest complete theory of right $\mathcal R$-modules. Then lemma \ref{GrRngModCat} gives that, for any $P\models T^*$, $Th(P)=T^*$ satisfies $T^*=(T^*)^{\aleph_0}$ and we also have $K_0(P)\cong K_0({\rm Mod\mbox{-}\mathcal R})$.

By the definition of $T^*$, there is a module $M'$ elementarily equivalent to $M$ such that $M'$ is a direct summand of $P$. Since the embedding $M'\rightarrowtail P$ is pure, we get a surjective homomorphism $K_0(P)\twoheadrightarrow K_0(M')$. Thus the required quotient map is the composite $K_0(\mathrm{Mod\mbox{-}\mathcal R})\cong K_0(P)\twoheadrightarrow K_0(M')\cong K_0(M)$, where the last isomorphism is obtained from proposition \ref{eleequivmod}.
\end{proof}

\subsection{Torsion in Grothendieck rings}\label{tors}

As an application of the structure theorem for Grothendieck rings, theorem \ref{FINALgeneral}, we provide an example of a module whose Grothendieck ring contains a nonzero torsion element (i.e. a nonzero element $a$ such that $na=0$ for some $n\geq 1$). We also calculate the Grothendieck ring $K_0(\mathbb Z_\mathbb Z)$.

\begin{definition}
The \textbf{ring of $p$-adic integers}, denoted $\mathbb Z_p$, is the inverse limit of the system $\hdots\twoheadrightarrow\mathbb Z/p^n\mathbb Z\twoheadrightarrow\hdots\twoheadrightarrow\mathbb Z/p^2\mathbb Z\twoheadrightarrow\mathbb Z/p\mathbb Z\twoheadrightarrow 0$.\\
\end{definition}

The ring $\mathbb Z_p$ is a commutative local PID with the ideal structure given by
\begin{equation*}
\mathbb Z_p\supsetneq p\mathbb Z_p\supsetneq\hdots\supsetneq p^n\mathbb Z_p\supsetneq\hdots\supsetneq 0.
\end{equation*}

In particular, $\mathbb Z_p$ is a commutative noetherian ring and hence satisfies the hypothesis of the following proposition.
\begin{pro}(see \cite[p.19,\,Ex.\,2(ii)]{PreBk})
If $\mathcal R$ is a commutative noetherian ring then the $pp$-definable subgroups of the module $\mathcal R_\mathcal R$ are precisely the finitely generated ideals of $\mathcal R$.
\end{pro}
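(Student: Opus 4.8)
The plan is to prove the two inclusions separately; the only conceptual input beyond elementary manipulations is the standard fact that pp-formulas are preserved under $\mathcal R$-module homomorphisms, together with the observation that, because $\mathcal R$ is \emph{commutative}, every right-multiplication map on $\mathcal R_\mathcal R$ is such a homomorphism.

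First I would show that every finitely generated ideal $I=a_1\mathcal R+\dots+a_k\mathcal R$ is pp-definable. The formula $\phi(x):=\exists y_1\cdots\exists y_k\,\big(x\cdot 1+\sum_{j=1}^k y_j(-a_j)=0\big)$ is a parameter-free pp-formula in the sense of the definition above, and evaluating it in $\mathcal R_\mathcal R$ gives $\phi(\mathcal R_\mathcal R)=\{\sum_j y_ja_j:y_j\in\mathcal R\}=\sum_j\mathcal R a_j=\sum_j a_j\mathcal R=I$, where commutativity is used for the middle equality. Since $\mathcal R$ is noetherian every ideal is finitely generated, so this already shows that every ideal of $\mathcal R$ occurs as a pp-definable subgroup of $\mathcal R_\mathcal R$.

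For the converse I would start from an arbitrary parameter-free pp-formula $\phi(x)$ and set $H=\phi(\mathcal R_\mathcal R)$; by the lemma above $H$ is a subgroup, so it suffices to see that $H$ is closed under multiplication by each $r\in\mathcal R$, whence $H$ is an ideal and, $\mathcal R$ being noetherian, a finitely generated one. For fixed $r$, the map $\mu_r\colon\mathcal R_\mathcal R\to\mathcal R_\mathcal R$, $x\mapsto xr$, is an $\mathcal R$-module endomorphism, since $\mu_r(xs)=xsr=xrs=\mu_r(x)s$ by commutativity; as pp-formulas are preserved by module homomorphisms, $a\in H$ forces $ar=\mu_r(a)\in H$. (One can also argue directly: if $\overline{b}$ witnesses the existential quantifiers in $\phi(a)$, then multiplying each conjunct $ar_i+\sum_j b_js_{ij}=0$ on the right by $r$ and commuting shows that $\overline{b}\,r$ witnesses $\phi(ar)$.) Combining the two directions yields the statement. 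I do not anticipate any genuine obstacle here: the only points needing care are the systematic use of commutativity, both to identify $\mathcal R a_j$ with $a_j\mathcal R$ and to realise right multiplication as an endomorphism, and the fact that the noetherian hypothesis is invoked precisely so that ``ideal'' and ``finitely generated ideal'' become the same notion, making the two described classes literally coincide.
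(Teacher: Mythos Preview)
Your argument is correct and is exactly the standard one. Note, however, that the paper does not actually supply a proof of this proposition: it is quoted verbatim as an exercise from \cite[p.19,\,Ex.\,2(ii)]{PreBk} and used as a black box in the computation of $K_0(\mathbb Z_p)$. So there is nothing to compare against; your write-up simply fills in the omitted details, and the two key points you isolate---that the $a_j$ enter the pp-formula as ring scalars (via the function symbols $m_{a_j}$) rather than as module parameters, and that commutativity is precisely what makes each $\mu_r$ an $\mathcal R$-endomorphism so that pp-subgroups become ideals---are the right ones.
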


It can be observed that the maps $t_n:\mathbb Z_p\rightarrow p^n\mathbb Z_p$ which are `multiplication by $p^n$' are $pp$-definable isomorphisms for each $n\geq 1$. Thus a simple computation shows that the monoid of colours, $\overline{\mathcal X}^*(\mathbb Z_p)$, is isomorphic to the monoid $\mathbb N$.

If $X$ denotes the class of $\mathbb Z_p$ in $K_0(\mathbb Z_p)$, then the invariants ideal $\mathcal J(\mathbb Z_p)$ is generated by the relations $\{X=p^nX:n\geq 1\}$. The relation $(p^n-1)X=0$ is an integral multiple of the relation $(p-1)X=0$ for each $n\geq 1$. Thus $\mathcal J(\mathbb Z_p)$ is principal and generated by the single relation $(p-1)X=0$. We summarize this discussion as the following corollary to theorem \ref{FINALgeneral}.
\begin{cor}\label{p-adic}
Let $\mathbb Z_p$ denote the ring $p$-adic integers. Then \center{$K_0(\mathbb Z_p)\cong\mathbb Z[X]/\langle(p-1)X\rangle$.}
\end{cor}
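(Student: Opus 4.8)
The plan is to derive Corollary \ref{p-adic} directly from Theorem \ref{FINALgeneral}, which reduces the problem to a concrete computation of the monoid $\overline{\mathcal X}^*(\mathbb Z_p)$ and the invariants ideal $\mathcal J(\mathbb Z_p)$. First I would recall from the cited result that, since $\mathbb Z_p$ is commutative noetherian, the $pp$-definable subgroups of the module $(\mathbb Z_p)_{\mathbb Z_p}$ in one variable are exactly the finitely generated ideals, i.e. $\mathbb Z_p, p\mathbb Z_p, p^2\mathbb Z_p, \dots, 0$. Because $\mathbb Z_p$ is a PID whose ideal lattice in one variable is a chain, and because the same local structure propagates to finite powers, every $pp$-definable subgroup (in any number of variables) is $pp$-isomorphic to a finite direct power of these, so the colour monoid is generated by the colour of $p^0\mathbb Z_p=\mathbb Z_p$ together with the colour $\mathfrak U$ of a singleton; here one uses that multiplication by $p^n$ gives a $pp$-definable isomorphism $\mathbb Z_p\xrightarrow{\sim} p^n\mathbb Z_p$ (its graph $\{(x,y): y=p^n x\}$ is $pp$). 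Hence all the $p^n\mathbb Z_p$ lie in a single colour, and a short argument with affine dimension / direct powers identifies $\overline{\mathcal X}^*(\mathbb Z_p)$ with the free commutative monoid $\mathbb N$ on the single generator $X:=[[\mathbb Z_p]]$.

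Next I would compute $\mathcal J(\mathbb Z_p)$. By definition it is generated by the relations $\delta_{[[P]]}=[P:Q]\,\delta_{[[Q]]}$ for $pp$-pairs $Q\subseteq P$ with finite index. The only finite-index pairs among the one-variable subgroups are $p^m\mathbb Z_p\supseteq p^n\mathbb Z_p$ with $n\geq m$, of index $p^{n-m}$; and in finite powers the indices that arise are products of these. Since all of $p^m\mathbb Z_p,p^n\mathbb Z_p$ have the same colour $X$, the generating relation $\delta_{p^m\mathbb Z_p}=p^{n-m}\delta_{p^n\mathbb Z_p}$ becomes $X = p^{n-m} X$, i.e. $(p^{n-m}-1)X=0$, for all $n\geq m\geq 0$. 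So $\mathcal J(\mathbb Z_p)$ is generated inside $\mathbb Z[X]$ by $\{(p^k-1)X : k\geq 1\}$. I would then observe that $(p^k-1)X$ is an integer multiple of $(p-1)X$: indeed $p^k-1=(p-1)(p^{k-1}+\cdots+1)$, so $(p^k-1)X=(p^{k-1}+\cdots+1)\cdot(p-1)X$. Hence every generator lies in the ideal generated by the single element $(p-1)X$, and conversely $(p-1)X$ is itself one of the generators ($k=1$), so $\mathcal J(\mathbb Z_p)=\langle (p-1)X\rangle$ is principal.

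Assembling these two computations, Theorem \ref{FINALgeneral} gives
\[
K_0(\mathbb Z_p)\;\cong\;\mathbb Z[\overline{\mathcal X}^*(\mathbb Z_p)]/\mathcal J(\mathbb Z_p)\;\cong\;\mathbb Z[X]/\langle (p-1)X\rangle,
\]
which is the claim; note in passing that the class of $X$ (and of $(p-2)X$, etc.) is a nonzero $(p-1)$-torsion element, witnessing the existence of torsion in Grothendieck rings of modules.

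The main obstacle I anticipate is not the ideal bookkeeping, which is elementary, but rather justifying cleanly that the colour monoid really is $\mathbb N$ rather than something larger — in particular, checking that no new colours are introduced by passing to $M^n$ for $n>1$, and that every $pp$-definable subgroup of $(\mathbb Z_p)^n$ is $pp$-isomorphic to a product of the chain ideals. This is where one must invoke the structure of $pp$-formulas over a commutative PID (or, model-theoretically, the fact that $\mathbb Z_p$ has an especially simple lattice of $pp$-formulas) together with the $pp$-isomorphisms $t_n$; the paper phrases this as ``a simple computation,'' and I would keep the argument at that level, citing the description of one-variable $pp$-subgroups and noting that the multi-variable case reduces to it coordinatewise.
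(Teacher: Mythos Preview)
Your proposal is correct and follows essentially the same route as the paper: identify the one-variable $pp$-subgroups via the commutative-noetherian proposition, use the $pp$-isomorphisms $t_n$ to collapse all nonzero ideals to a single colour $X$ so that $\overline{\mathcal X}^*\cong\mathbb N$, and then observe that the generators $(p^k-1)X$ of $\mathcal J$ are all integer multiples of $(p-1)X$. Your explicit concern about the multi-variable colour computation is precisely the step the paper also elides as ``a simple computation.''
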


Consider the split (hence pure) embedding $i:\mathbb Z_p^{(2)}\rightarrowtail\mathbb Z_p^{(3)}$ of $\mathbb Z_p$-modules given by $(a,b)\mapsto(a,b,0)$, where $M^{(k)}$ denotes the direct sum of $k$ copies of $M$. We want to show that this embedding witnesses the failure of theorem \ref{puresurj} since the theory $T:=Th(\mathbb Z_p^{(3)})$ of the target module doesn't satisfy the condition $T=T^{\aleph_0}$. The following proposition is helpful for the calculation of Grothendieck rings.

\begin{pro}(see \cite[Lemma\,2.23]{PreBk})
If $\phi(x)$ and $\psi(x)$ denote $pp$-formulas, then
\begin{enumerate}
  \item $\phi(M\oplus N)=\phi(M)\oplus\phi(N)$,
  \item $\mathrm{Inv}(M\oplus N;\phi,\psi)=\mathrm{Inv}(M;\phi,\psi)\mathrm{Inv}(N;\phi,\psi)$.
\end{enumerate}
\end{pro}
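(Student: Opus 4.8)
The plan is to prove (1) first and then deduce (2) by a short index computation. For (1), I would write a parameter-free $pp$-formula $\phi(x)$ in the explicit form $\exists y_1\cdots\exists y_m\,\theta(x,\overline y)$, where $\theta$ is a finite conjunction of $\mathcal R$-linear equations $xr_i+\sum_j y_js_{ij}=0$. The key observation is that all the operations appearing in $\theta$ (addition, negation, and the scalar maps $m_r$) are computed coordinatewise in $M\oplus N$; hence for any tuple $(x,\overline y)$ of elements of $M\oplus N$, writing $x=(x_M,x_N)$ and $y_j=((y_j)_M,(y_j)_N)$, the quantifier-free formula $\theta((x_M,x_N),\overline y)$ holds in $M\oplus N$ if and only if $\theta(x_M,\overline y_M)$ holds in $M$ and $\theta(x_N,\overline y_N)$ holds in $N$. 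The only point that needs a word is that the existential witnesses may be chosen independently in the two summands: if $a=(a_M,a_N)\in M\oplus N$ and there are $\overline b$ in $M$ with $\theta(a_M,\overline b)$ and $\overline c$ in $N$ with $\theta(a_N,\overline c)$, then the tuple $\overline y$ whose $j$-th entry is $(b_j,c_j)$ witnesses $\phi(a)$ in $M\oplus N$, while conversely any witness for $\phi(a)$ in $M\oplus N$ projects to witnesses in each summand. This gives $\phi(M\oplus N)=\phi(M)\oplus\phi(N)$ as subgroups of $(M\oplus N)^n\cong M^n\oplus N^n$. (For $pp$-formulas with parameters one reduces to the parameter-free case, the relevant situation here being the one where $\phi$ and $\psi$ define subgroups.)

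For (2), recall that $\mathrm{Inv}(M;\phi,\psi)$ is by definition the index $[\phi(M):\phi(M)\cap\psi(M)]$, and that $\phi(M)\cap\psi(M)=(\phi\wedge\psi)(M)$ where $\phi\wedge\psi$ is again a $pp$-formula. Applying (1) to both $\phi$ and $\phi\wedge\psi$ yields $\phi(M\oplus N)=\phi(M)\oplus\phi(N)$ and $(\phi\wedge\psi)(M\oplus N)=(\phi\wedge\psi)(M)\oplus(\phi\wedge\psi)(N)$ as nested subgroups of $M^n\oplus N^n$. Then I would invoke the elementary group-theoretic fact that for subgroups $A'\leq A$ and $B'\leq B$ one has $(A\oplus B)/(A'\oplus B')\cong (A/A')\oplus(B/B')$, whence $[A\oplus B:A'\oplus B']=[A:A']\cdot[B:B']$ with the usual conventions on $\infty$. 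Taking $A=\phi(M)$, $A'=(\phi\wedge\psi)(M)$, $B=\phi(N)$, $B'=(\phi\wedge\psi)(N)$ gives exactly $\mathrm{Inv}(M\oplus N;\phi,\psi)=\mathrm{Inv}(M;\phi,\psi)\,\mathrm{Inv}(N;\phi,\psi)$.

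There is no real obstacle here: the statement is a standard fact whose proof is entirely bookkeeping. The one place deserving care is the independence of existential witnesses in (1) — one must check that a witness in $M\oplus N$ is not forced to be correlated between the two coordinates, which can fail for arbitrary definable sets but holds for $pp$-sets precisely because the defining formula is a positive existential conjunction of equations, each of which decomposes coordinatewise. Everything else is index multiplicativity for direct sums.
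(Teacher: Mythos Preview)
Your proof is correct and follows the standard argument. The paper does not actually prove this proposition; it merely cites it from \cite[Lemma\,2.23]{PreBk}, and your argument is essentially the one found there.
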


It is clear that the induced map $\mathfrak{i}:\mathbb Z[\overline{\mathcal X}^*(\mathbb Z_p^{(3)})]\rightarrow\mathbb Z[\overline{\mathcal X}^*(\mathbb Z_p^{(2)})]$ is the identity map on $\mathbb Z[X]$ since $\mathbb Z[\overline{\mathcal X}^*(\mathbb Z_p^{(k)})]\cong K_0(\mathbb Z_p^{(\aleph_0)})\cong\mathbb Z[X]$ for any $k\geq 1$. Further the previous proposition shows that $\mathcal J(\mathbb Z_p^{(k)})=\langle(p^k-1)X\rangle$ for any $k\geq 1$. Since $\mathcal J(\mathbb Z_p^{(3)})\nsubseteq\mathcal J(\mathbb Z_p^{(2)})$, there is no surjective map $K_0(\mathbb Z_p^{(3)})\twoheadrightarrow K_0(\mathbb Z_p^{(2)})$.

\textbf{The abelian group of integers}: Since the ring $\mathbb Z$ is a commutative PID, the $pp$-definable subgroups of the module $\mathbb Z_\mathbb Z$ are precisely the ideals $n\mathbb Z$ for $n\geq 0$. Thus the monoid $\overline{\mathcal X}^*(\mathbb Z)$ is isomorphic to $\mathbb N$. Furthermore if $X$ denotes the class of $\mathbb Z$ in $K_0(\mathbb Z)$, the invariants ideal is generated by the relations $X=nX$ for each $n\geq 1$. This forces $\mathcal J(\mathbb Z)=\langle X\rangle$ and thus $K_0(\mathbb Z_\mathbb Z)\cong\mathbb Z$.

\subsection{Representing definable sets uniquely}\label{CDT}

We fix some $\mathcal R$-module $M$ whose theory $T$ satisfies the condition $T=T^{\aleph_0}$ and some $n\geq 1$. As usual we drop all the subscripts $n$ and write $\mathcal L\setminus\{\emptyset\},\mathcal A\setminus\{\emptyset\},\hdots$ as $\mathcal L^*,\mathcal A^*,\hdots$ respectively.

The $pp$-elimination theorem for the model theory of modules (theorem \ref{PPET}) states that every definable set can be written as a finite disjoint union of blocks. But this representation is far from being unique in any sense. On the other hand we have unique representations for $pp$-convex sets (proposition \ref{UNIQUE1}) and cells (lemma \ref{UNIQUE2}). We exploit these ideas to achieve a unique representation for every definable set - an expression as a disjoint union of cells. This result will be called the `cell decomposition theorem'.

We begin by defining some terms useful to describe the cell decomposition theorem.
\begin{definition}
Let $\mathcal F=\{C_j\}_{j=1}^l\subseteq\mathcal C$ be a family of pairwise disjoint cells. If there is a permutation $\sigma$ of $[l]$ such that $P(C_{\sigma(j+1)})\prec N(C_{\sigma(j)})$ for $1\leq j\leq l-1$, then we say that the family $\mathcal F$ is a \textbf{tower of cells}. We call the number $l$ the \textbf{height} of the tower. We denote the set of all finite towers of cells by $\mathcal T$. We define a function $\zeta:\mathcal T\rightarrow \mathbb N$ which assigns its height to a tower.
\end{definition}

\begin{definition}
Let $\alpha_i\in\mathcal A^*$ for $1\leq i\leq k$. If $\alpha_{i+1}\prec\alpha_{i}$ for each $1\leq i\leq k-1$, we say that $\overline\alpha=\{\alpha_i\}_{i=1}^k$ is a $\prec$-\textbf{chain}. We denote the set of all finite $\prec$-chains in $\mathcal A^*$ by $\mathcal W$. We define a function $\omega:\mathcal W\rightarrow \mathbb N$, which assigns \textbf{height} to each $\prec$-chain, by $\omega(\overline\alpha)=\lceil(\frac{|\overline\alpha|}{2})\rceil$ where $\lceil q\rceil$ is the smallest integer larger than or equal to $q$.
\end{definition}

The following proposition states that towers and chains are two different ways of expressing the same kind of object.
\begin{pro}
There is a bijection $\Phi:\mathcal T\rightarrow\mathcal W$ preserving height i.e., $\omega(\Phi(\mathcal F))=\zeta(\mathcal F)$ for every $\mathcal F\in\mathcal T$.
\end{pro}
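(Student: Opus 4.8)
The plan is to unpack the two definitions and match data on each side. A tower of cells $\mathcal F=\{C_j\}_{j=1}^l$ comes equipped with a permutation $\sigma$ arranging the cells so that $P(C_{\sigma(j+1)})\prec N(C_{\sigma(j)})$; each cell $C$ carries the pair of antichains $(P(C),N(C))$ with $N(C)\prec P(C)$ by \ref{UNIQUE2}. Laying these out along $\sigma$, a tower of height $l$ yields the sequence of antichains
\[
P(C_{\sigma(1)})\succ N(C_{\sigma(1)})\succeq P(C_{\sigma(2)})\succ N(C_{\sigma(2)})\succeq\cdots\succ N(C_{\sigma(l)}),
\]
but the definition of a tower only asserts $P(C_{\sigma(j+1)})\prec N(C_{\sigma(j)})$, not equality, so the `joints' between consecutive cells need not coincide. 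The first step is therefore to observe that, given a tower, consecutive cells $C_{\sigma(j)}$ and $C_{\sigma(j+1)}$ are disjoint and can be `stacked' by inserting the intermediate antichain: since $P(C_{\sigma(j+1)})\prec N(C_{\sigma(j)})$, the set $\bigcup N(C_{\sigma(j)})\setminus\bigcup P(C_{\sigma(j+1)})$ is itself a cell, and one checks that replacing the pair $C_{\sigma(j)},C_{\sigma(j+1)}$ by the three cells with joints $P(C_{\sigma(j)})\succ N(C_{\sigma(j)})\succ P(C_{\sigma(j+1)})\succ N(C_{\sigma(j+1)})$ keeps a tower; iterating, every tower `refines' canonically to one in which all joints agree, i.e. a strictly decreasing $\prec$-chain. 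Conversely a $\prec$-chain $\alpha_1\succ\alpha_2\succ\cdots\succ\alpha_k$ gives cells $C_i=\bigcup\alpha_{2i-1}\setminus\bigcup\alpha_{2i}$ for $i\le\lfloor k/2\rfloor$ (with a final `block-type' cell $\bigcup\alpha_k\setminus\emptyset=\bigcup\alpha_k$ if $k$ is odd), and these are pairwise disjoint and assembled in the order given, hence form a tower; this defines $\Phi^{-1}$.

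The second step is to verify that $\Phi$ and $\Phi^{-1}$ are mutually inverse, which amounts to the uniqueness statements already available: by \ref{UNIQUE2} a cell determines its pair $(P(C),N(C))$, and by \ref{UNIQUE1} an antichain is determined by its $pp$-convex union, so the $\prec$-chain read off from a tower is well-defined and the tower read off from a $\prec$-chain recovers the original cells. The order in which cells sit in the tower is forced because disjointness together with the $\prec$-relations pins down $\sigma$ up to the irrelevant freedom already quotiented out in the definition of `tower' (an unordered family), so $\Phi$ is genuinely a bijection of the sets $\mathcal T$ and $\mathcal W$, not merely of ordered versions.

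The third step is the height bookkeeping. A $\prec$-chain of length $|\overline\alpha|=k$ produces $\lceil k/2\rceil$ cells when $k$ is odd (the last cell being a block of the form $\bigcup\alpha_k$) and $k/2$ cells when $k$ is even, which is exactly $\zeta(\mathcal F)=\lceil k/2\rceil=\omega(\overline\alpha)$; conversely the canonical refinement of a tower of height $l$ into a chain produces a chain of length $2l$ (or $2l-1$ if the bottom cell of the tower is a block, i.e. $N$ of the lowest cell is empty), and in either case $\omega=\lceil 2l/2\rceil=l$ or $\lceil(2l-1)/2\rceil=l$. So $\omega(\Phi(\mathcal F))=\zeta(\mathcal F)$ throughout.

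The main obstacle I expect is not any single computation but getting the bottom-of-the-tower / parity conventions exactly consistent: whether a `block' cell $\bigcup\alpha\setminus\emptyset$ is allowed to terminate a tower, and correspondingly whether a $\prec$-chain is allowed to have odd length, has to be handled uniformly so that the ceiling function in the definition of $\omega$ lines up with $\zeta$ in every case. Once the convention on the empty antichain (as a legitimate value of $N$) is fixed, the refinement-of-a-tower argument and its inverse are routine, relying only on the closure of $pp$-convex sets under unions and intersections and on the uniqueness results \ref{UNIQUE1} and \ref{UNIQUE2}.
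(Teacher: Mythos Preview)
Your inverse map $\Phi^{-1}$ (pairing off consecutive chain entries into cells, with a final block if the length is odd) and your height bookkeeping in step~3 are correct and coincide with the paper's construction. The problem is in step~1.

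The refinement detour is both unnecessary and does not work as written. It is unnecessary because the relation $\prec$ is already strict (each $B\in\beta$ is required to lie \emph{properly} in some $A\in\alpha$), so the tower condition $P(C_{\sigma(j+1)})\prec N(C_{\sigma(j)})$ gives a genuine strict inequality at every joint; the displayed sequence should read $\succ$ throughout, not $\succeq$. Hence
\[
P(C_{\sigma(1)}),\;N(C_{\sigma(1)}),\;P(C_{\sigma(2)}),\;N(C_{\sigma(2)}),\;\ldots
\]
is already a $\prec$-chain in $\mathcal A^*$ of length $2l$ (or $2l-1$ if $N(C_{\sigma(l)})=\emptyset$), and this direct listing is exactly the paper's $\Phi$. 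The refinement also fails formally: after inserting the gap cell $\bigcup N(C_{\sigma(j)})\setminus\bigcup P(C_{\sigma(j+1)})$, adjacent cells in the new family share a common antichain as $N$ of one and $P$ of the next, so the strict tower condition fails and your ``refined tower'' is not a tower at all. Inserting $l-1$ extra cells would also change the very height you are trying to preserve.

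Simply delete the refinement paragraph and define $\Phi$ as the direct listing above; then your step~2 (mutual inverses via \ref{UNIQUE1} and \ref{UNIQUE2}) and step~3 go through, and the argument matches the paper's.
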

\begin{proof} Let $\mathcal F=\{C_j\}_{j=1}^l$ be a tower of cells with height $l$. Without loss, we may assume that $P(C_j)\prec N(C_{j+1})$, i.e., the associated permutation is the identity. We first define a non-negative integer $k$ as follows.

\begin{math}
k=\begin{cases}
    0, & \mbox{if } l=0, \\
    2l+1, &\mbox{if } l>0\mbox{ and } N(C_l)=\emptyset,\\
    2l+2, &\mbox{if } l>0\mbox{ and } N(C_l)\neq\emptyset.
\end{cases}
\end{math}

For each $1\leq i\leq k$, we define an antichain $\alpha_i\in\mathcal A^*$ as follows.

\begin{math}
\alpha_i=\begin{cases}
    P(C_j), & \mbox{if } i=2j+1,\\
    N(C_j), &\mbox{if } i=2j+2.
\end{cases}
\end{math}

Then $\overline\alpha=\{\alpha_i\}_{i=1}^k$ is clearly a $\prec$-chain and the map $\Phi(\mathcal F):=\overline{\alpha}$ can be easily checked to be injective.

To prove surjectivity, let $\overline\beta\in\mathcal W$. We modify $\overline\beta$ to obtain a $\prec$-chain $\overline\beta'=\{\beta_i'\}_{i=1}^{2\omega(\overline\beta)}$ in $\mathcal A$ as follows.

\begin{math}
\beta_i'=\begin{cases}
    \beta_i, & \mbox{if } 1\leq i\leq |\overline\beta|,\\
    \emptyset, &\mbox{if } |\overline\beta|\neq 2\omega(\overline\beta)\mbox{ and } i=2\omega(\overline\beta).
\end{cases}
\end{math}

Then $|\overline\beta'|$ is an even integer. We define $C'_j=\bigcup\beta_{2j+1}\setminus\bigcup\beta_{2j+2}$ for $1\leq j\leq |\overline\beta'|/2$. The family $\mathcal F':=\{C'_j\}_{j=1}^{|\overline\beta'|/2}$ clearly satisfies $\Phi(\mathcal F')=\overline{\beta}$.

The height preservation property is easy to check from the explicit constructions above.
\end{proof}

\begin{pro}
Let $\{A_i\}_{i=1}^m\in\mathcal P$ and $B\in\mathcal B$ be such that $B\subseteq\bigcup_{i=1}^m A_i$. Then $\overline B\subseteq\bigcup_{i=1}^m A_i$.
\end{pro}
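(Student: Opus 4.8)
The plan is to reduce the claim to a single application of Neumann's lemma in the sharpened form available under $T=T^{\aleph_0}$, namely Corollary \ref{NLU}, using the defining feature of a \emph{block}: after deleting a union of \emph{proper} $pp$-subsets it is one single $pp$-set.

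First I would fix notation: write $\overline B$ for the unique element of $P(B)$ and $N(B)=\{D_1,\dots,D_l\}$ (with $l\ge 0$), so that $B=\overline B\setminus\bigcup_{j=1}^{l}D_j$, and recall that, since $N(B)\prec P(B)$ in the sense used in \ref{UNIQUE2}, each $D_j$ is a \emph{proper} $pp$-subset of $\overline B$. The key elementary observation is the inclusion
\[
\overline B\ \subseteq\ \Big(\bigcup_{j=1}^{l}D_j\Big)\cup\Big(\bigcup_{i=1}^{m}A_i\Big),
\]
which is immediate: a point of $\overline B$ either lies in some $D_j$, or else it lies in $\overline B\setminus\bigcup_j D_j=B\subseteq\bigcup_i A_i$.

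Next I would apply Corollary \ref{NLU} to the $pp$-set $\overline B\in\mathcal L$ together with the finite family $\{D_1,\dots,D_l,A_1,\dots,A_m\}\subseteq\mathcal L$: it forces $\overline B$ to be contained in a single member of this family. Since $D_j\subsetneq\overline B$ excludes the possibility $\overline B\subseteq D_j$ for every $j$, we conclude $\overline B\subseteq A_i$ for some $i$, hence in particular $\overline B\subseteq\bigcup_{i=1}^{m}A_i$, as required. (This argument in fact yields the slightly stronger statement that $\overline B$ is contained in one of the $A_i$.)

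I do not expect a genuine obstacle; the only thing to watch is that $\overline B$ is a bona fide element of $\mathcal L$ rather than merely a $pp$-convex set — precisely what distinguishes a block among cells — so that \ref{NLU} applies verbatim with no prior decomposition of $\overline B$. One could instead try to route through $\overline B=B+B-B$ from Theorem \ref{MINK}, but that is messier, since a combination $x+y-z$ of points of $B$ sitting in possibly different $A_i$'s need not obviously land in one $A_i$; the Neumann's-lemma argument sidesteps this entirely.
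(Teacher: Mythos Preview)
Your proof is correct and follows essentially the same approach as the paper: both observe $\overline B\subseteq\bigcup N(B)\cup\bigcup_i A_i$, apply Corollary \ref{NLU} to force $\overline B$ into a single member of the covering family, and rule out the $D_j$'s since $N(B)\prec P(B)$. The paper's write-up is terser but the argument is identical, including the implicit stronger conclusion $\overline B\subseteq A_i$ for a single $i$.
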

\begin{proof}
We have $\overline B=B\cup\bigcup N(B)$. Hence $\overline B\subseteq\bigcup_{i=1}^m A_i\cup\bigcup N(B)$. By \ref{NLU}, $\overline B\subseteq A_i$ for some $i$, or $\overline B\subseteq D$ for some $D\in N(B)$. The latter case is not possible since $N(B)\prec P(B)=\{\overline B\}$. Hence the result.
\end{proof}

\begin{lemma}\label{CHAINANTITOWER}
Let $D\in \mathrm{Def}(M^n)$. Then there is a unique $pp$-convex set $\overline D$ which satisfies $D\subseteq\bigcup\alpha\ \Rightarrow\ \overline D\subseteq\bigcup\alpha$ for every $\alpha\in\mathcal A$.
\end{lemma}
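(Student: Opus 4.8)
The plan is to realize $\overline{D}$ as the least $pp$-convex set containing $D$ and to exhibit it concretely by means of a block decomposition. First I would apply Lemma \ref{REP} to write $D=\bigsqcup_{i=1}^{m}B_i$ with each $B_i\in\mathcal B$, and set $\overline{D}:=\bigcup_{i=1}^{m}\overline{B_i}$, where $\overline{B_i}$ denotes the unique element of $P(B_i)$. Being a finite union of $pp$-sets, $\overline{D}$ is $pp$-convex; moreover $D\subseteq\overline{D}$, since $B_i=\overline{B_i}\setminus\bigcup N(B_i)\subseteq\overline{B_i}$ for each $i$.

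Next I would check the defining implication. Suppose $\alpha\in\mathcal A$ satisfies $D\subseteq\bigcup\alpha$. Then each $B_i\subseteq\bigcup\alpha$, and since $\mathcal A\subseteq\mathcal P$ the immediately preceding proposition, applied with the family $\{A_j\}$ taken to be the elements of $\alpha$ and with the block $B_i$, yields $\overline{B_i}\subseteq\bigcup\alpha$. Taking the union over $i$ gives $\overline{D}\subseteq\bigcup\alpha$, as required. This same argument in fact shows that \emph{every} $pp$-convex set containing $D$ contains $\overline{D}$, so $\overline{D}$ is the least $pp$-convex set containing $D$; in particular it is independent of the chosen decomposition of $D$ into blocks.

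For uniqueness I would argue that any $pp$-convex set $Z$ with $D\subseteq Z$ that satisfies the same implication must equal $\overline{D}$: writing $\overline{D}=\bigcup\beta$ and feeding $D\subseteq\overline{D}$ into the implication for $Z$ gives $Z\subseteq\overline{D}$, while writing $Z=\bigcup\gamma$ and feeding $D\subseteq Z$ into the implication for $\overline{D}$ gives $\overline{D}\subseteq Z$. There is no real obstacle here: essentially all the work is carried by the immediately preceding proposition, which is the consequence of Neumann's lemma packaged in \ref{NLU}. The only points that need a little care are that $\overline{D}$ is genuinely $pp$-convex (so that it is itself of the form $\bigcup\alpha$), that one is entitled to apply the preceding proposition to antichains since they are finite and hence lie in $\mathcal P$, and that one records the property $D\subseteq\overline{D}$ explicitly, as this is what pins $\overline{D}$ down among $pp$-convex sets.
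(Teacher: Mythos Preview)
Your proof is correct and follows essentially the same approach as the paper: define $\overline{D}$ from a block decomposition as $\bigcup_i\overline{B_i}$, and use the immediately preceding proposition (the Neumann-lemma consequence) to pass from $B_i\subseteq\bigcup\alpha$ to $\overline{B_i}\subseteq\bigcup\alpha$. The only difference is organizational: the paper first compares two block decompositions directly to establish well-definedness and then checks the implication, whereas you establish the implication first and deduce independence of the decomposition from the characterization of $\overline{D}$ as the least $pp$-convex set containing $D$; your ordering is arguably cleaner, and your explicit remark that $D\subseteq\overline{D}$ is a point the paper leaves implicit.
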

\begin{proof} Let $D=\bigsqcup_{i=1}^m B_i=\bigsqcup_{j=1}^l B'_j$ be any two representations of $D$ as disjoint unions of blocks.

\textbf{Claim}: $\bigcup_{i=1}^m \overline{B_i}=\bigcup_{j=1}^l \overline{B'_j}$

Proof of the claim: We have $B_i\subseteq\bigsqcup_{i=1}^m B_i=\bigsqcup_{j=1}^l B'_j\subseteq\bigcup_{j=1}^l \overline{B'_j}$ for each $i$. Hence $\overline{B_i}\subseteq\bigcup_{j=1}^l \overline{B'_j}$ by the previous proposition. Therefore $\bigcup_{i=1}^m \overline{B_i}\subseteq\bigcup_{j=1}^l \overline{B'_j}$. The reverse containment is by symmetry and hence the claim.

Now we define $\overline D=\bigcup_{i=1}^m \overline{B_i}$. By the claim, this $pp$-convex set is uniquely defined.

Let $\alpha\in\mathcal A$ be such that $D\subseteq\bigcup\alpha$. But $D=\bigsqcup_{i=1}^m B_i$. Hence $B_i\subseteq\bigcup\alpha$ for each $i$. By arguments similar to the proof of the claim, we get $\bigcup_{i=1}^m \overline{B_i}\subseteq\bigcup\alpha$ i.e., $\overline D\subseteq\bigcup\alpha$.
\end{proof}

The assignment $D\mapsto\overline{D}$, where $\overline{D}$ is the $pp$-convex set obtained from the lemma, defines a closure operator $\mathrm{Def}(M^n)\rightarrow\mathcal A_n$. This closure operation is extremely useful in proving the cell decomposition theorem.

\begin{theorem}\label{CDT1}
\textbf{Cell Decomposition Theorem}: There is a bijection between the set $\mathrm{Def}(M^n)$ of all definable subsets of $M^n$ and the set $\mathcal T$ of towers of cells.
\end{theorem}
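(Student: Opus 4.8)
The plan is to move the problem from towers of cells to $\prec$-chains via the height-preserving bijection $\Phi\colon\mathcal{T}\to\mathcal{W}$ already constructed, and then to exhibit a pair of mutually inverse maps between $\mathrm{Def}(M^n)$ and $\mathcal{W}$. In one direction, send a $\prec$-chain $\overline{\alpha}=\{\alpha_1,\dots,\alpha_k\}$ (so $\alpha_{i+1}\prec\alpha_i$, and hence $\bigcup\alpha_1\supseteq\bigcup\alpha_2\supseteq\cdots$) to the alternating difference $U(\overline{\alpha}):=\bigcup\alpha_1\setminus(\bigcup\alpha_2\setminus(\cdots\setminus\bigcup\alpha_k))$; unwinding the nested complement of a descending chain, $U(\overline{\alpha})=\bigsqcup_{i\text{ odd}}(\bigcup\alpha_i\setminus\bigcup\alpha_{i+1})$, which under $\Phi^{-1}$ is exactly the disjoint union $\bigsqcup_j C_j$ of the associated tower. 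In the other direction, send a definable $D$ to the $\prec$-chain obtained by iterating the closure operator $\overline{(\,\cdot\,)}$ of Lemma \ref{CHAINANTITOWER}: put $D_0=D$, $\alpha_{i+1}=\overline{D_i}$, and $D_{i+1}=\bigcup\alpha_{i+1}\setminus D_i$, halting at the first $k$ with $D_k=\emptyset$, and set $\Psi(D)=\{\alpha_1,\dots,\alpha_k\}$.

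First I would check that $\Psi$ is well defined, i.e. that the iteration terminates and that each $\alpha_{i+1}\prec\alpha_i$ strictly (so $\Psi(D)$ is a genuine element of $\mathcal{W}$). Fix a partition of $D$ into blocks and the corresponding nest $\mathcal{D}$; then inductively each $D_i$ is a union of cores $\mathrm{Core}_\mathcal{D}(F)$, and since distinct nonempty cores are disjoint and $\overline{\mathrm{Core}_\mathcal{D}(F)}=F$, the set $\bigcup\alpha_{i+1}=\overline{D_i}$ is again a union of elements of $\mathcal{D}$ and $\alpha_{i+1}\subseteq\mathcal{D}$. If $A\in\alpha_i$ then $\mathrm{Core}_\mathcal{D}(A)\subseteq D_{i-1}$, hence $\mathrm{Core}_\mathcal{D}(A)\cap D_i=\emptyset$, whence $A\not\subseteq\overline{D_i}=\bigcup\alpha_{i+1}$; this forces $\alpha_{i+1}\prec\alpha_i$, and strictly $\prec$-decreasing chains inside the finite semilattice $\mathcal{D}$ have bounded length, so the process stops. (This runs parallel to the termination argument for the algorithm in the proof of Lemma \ref{INJEV}.) Next I would verify $U\circ\Psi=\mathrm{id}$: by construction $D_{i-1}$ and $D_i$ partition $\bigcup\alpha_i$, since $D_{i-1}\subseteq\bigcup\overline{D_{i-1}}=\bigcup\alpha_i$ and $D_i=\bigcup\alpha_i\setminus D_{i-1}$, so $D_{i-1}=\bigcup\alpha_i\setminus D_i$, and telescoping from $i=1$ down to $D_k=\emptyset$ yields $D=\bigcup\alpha_1\setminus(\bigcup\alpha_2\setminus(\cdots\setminus\emptyset))=U(\Psi(D))$.

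The substantive step is $\Psi\circ U=\mathrm{id}$, which is precisely the uniqueness assertion, and I expect it to be the main obstacle. Given a $\prec$-chain $\overline{\alpha}$ with $D=U(\overline{\alpha})$, I would show $\overline{D}=\alpha_1$ by induction on $k=|\overline{\alpha}|$. The inclusion $\overline{D}\subseteq\bigcup\alpha_1$ is immediate since $D\subseteq\bigcup\alpha_1$. For the converse, fix $A\in\alpha_1$; from $\alpha_2\prec\alpha_1$ together with Corollary \ref{NLU} one gets $A\not\subseteq\bigcup\alpha_2$ (otherwise $A$ would lie in an element of $\alpha_2$ strictly below an element of $\alpha_1$, contradicting the antichain property), so $A\setminus\bigcup\alpha_2$ is a nonempty block whose closure is $A$ (its negatives $A\cap B$, $B\in\alpha_2$, are proper in $A$), and this block is contained in $D$. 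By the block-closure proposition preceding Lemma \ref{CHAINANTITOWER}, every $pp$-convex set containing $D$ then contains $A$, so $\bigcup\alpha_1\subseteq\overline{D}$ and thus $\overline{D}=\alpha_1$. Consequently $D_1=\bigcup\alpha_1\setminus D=\bigcup\alpha_2\setminus(\cdots\setminus\bigcup\alpha_k)=U(\{\alpha_2,\dots,\alpha_k\})$, and the inductive hypothesis gives $\Psi(D)=\overline{\alpha}$. Finally I would assemble the pieces: $\Psi$ and $U$ are mutually inverse bijections $\mathrm{Def}(M^n)\leftrightarrow\mathcal{W}$, so composing with $\Phi^{-1}$ produces the desired bijection $\mathrm{Def}(M^n)\leftrightarrow\mathcal{T}$, sending $D$ to the tower whose cells are $\bigcup\alpha_{2j-1}\setminus\bigcup\alpha_{2j}$ for the chain $\Psi(D)$.
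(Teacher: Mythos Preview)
Your proposal is correct and takes essentially the same approach as the paper: both iterate the closure operator $D\mapsto\overline{D}$ of Lemma~\ref{CHAINANTITOWER} to peel off successive layers, with your half-steps $D_{i+1}=\bigcup\overline{D_i}\setminus D_i$ producing the $\prec$-chain while the paper's full steps $D_j=D_{j-1}\setminus\bigl(\overline{D_{j-1}}\setminus\overline{\overline{D_{j-1}}\setminus D_{j-1}}\bigr)$ produce the tower directly (your $D_{2j}$ is the paper's $D_j$). The paper simply asserts that the two assignments are mutually inverse, whereas you supply the argument in full, so your induction showing $\overline{U(\overline{\alpha})}=\bigcup\alpha_1$ is genuinely additional content rather than a different route.
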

\begin{proof}
Let $D\in \mathrm{Def}(M^n)$. We construct a tower $\mathcal F$ of cells by defining a nested sequence $\{D_j\}_{j\geq 0}$ of definable subsets of $D$ as follows.

We set $D_0:=D$ and, for each $j>0$, we set $D_j:=D_{j-1}\setminus C_j$, where  $C_j:=\overline{D_{j-1}}\setminus (\overline{\overline{D_{j-1}}\setminus D_{j-1}})$ is a cell. We stop this process when we obtain $D_j=\emptyset$ for the first time. This process must terminate because the elements of the antichains involved in this process belong to some finite nest containing a fixed decomposition of $D$ into blocks.

In the converse direction, we assign $\bigcup\mathcal F\in \mathrm{Def}(M^n)$ to $\mathcal F\in\mathcal T$.

It is easy to verify that the two assignments defined above are actually inverses of each other.
\end{proof}

The following corollary combines theorem \ref{CDT1} with lemma \ref{CHAINANTITOWER} and gives a combinatorial representation theorem for $\mathrm{Def}(M^n)$, which roughly states that every definable subset of $M^n$ can be represented uniquely as a finite $\prec$-chain in the free distributive lattice $\mathcal A$ over the meet semilattice $\mathcal L$.
\begin{cor}
There is a bijection between the set $\mathcal W_n$ of finite chains in $\mathcal A_n^*$ and $\mathrm{Def}(M^n)$.
\end{cor}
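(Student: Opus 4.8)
The plan is simply to compose two bijections that are already in hand. The Cell Decomposition Theorem (Theorem \ref{CDT1}) gives a bijection between $\mathrm{Def}(M^n)$ and the set $\mathcal{T}_n$ of finite towers of cells in $M^n$, and the proposition of this subsection establishing the height-preserving bijection $\Phi:\mathcal{T}\to\mathcal{W}$ supplies a bijection between towers of cells and finite $\prec$-chains in $\mathcal{A}^*$. Reading both of these at the fixed ambient dimension $n$, their composite is a bijection $\mathrm{Def}(M^n)\to\mathcal{W}_n$, which is exactly what the corollary asserts.

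First I would check that the parameter $n$ is respected throughout: the recursion in the proof of Theorem \ref{CDT1} only ever forms $pp$-convex subsets of $M^n$ (via the closure operator of Lemma \ref{CHAINANTITOWER} together with intersections), so the cells $C_j$ it produces all satisfy $P(C_j),N(C_j)\in\mathcal{A}_n$; likewise the construction of $\Phi$ merely relabels the antichains $P(C_j),N(C_j)$ as the successive terms of a $\prec$-chain, so it carries $\mathcal{T}_n$ onto $\mathcal{W}_n$. Having recorded this, the remaining step is the one-line composition: the assignment $D\mapsto\Phi(\mathcal{F}_D)$, where $\mathcal{F}_D$ is the canonical tower attached to $D$ by Theorem \ref{CDT1}, is a bijection because it is a composite of bijections, with inverse sending a $\prec$-chain $\overline\alpha$ to $\bigcup\Phi^{-1}(\overline\alpha)\in\mathrm{Def}(M^n)$.

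I do not expect any genuine obstacle; all the content has been absorbed into Theorem \ref{CDT1} and the proposition defining $\Phi$. If an explicit formula is wanted, the chain associated with $D$ is $P(C_1)\succ N(C_1)\succ P(C_2)\succ N(C_2)\succ\cdots$, truncated or padded with a final $\emptyset$ exactly as in the proof of the bijection $\Phi$, where $D_0=D$, $C_j=\overline{D_{j-1}}\setminus\overline{\,\overline{D_{j-1}}\setminus D_{j-1}\,}$ and $D_j=D_{j-1}\setminus C_j$; conversely a $\prec$-chain $\alpha_1\succ\cdots\succ\alpha_{2k}$ corresponds to $\bigsqcup_{j=1}^{k}\bigl(\bigcup\alpha_{2j-1}\setminus\bigcup\alpha_{2j}\bigr)$. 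That these two assignments are mutually inverse is immediate from the two cited results, so no further calculation is required.
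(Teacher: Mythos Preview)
Your proposal is correct and is exactly the argument the paper intends: the corollary is presented without its own proof, only with the remark that it ``combines theorem \ref{CDT1} with lemma \ref{CHAINANTITOWER}'', and your composition of the bijection $\mathrm{Def}(M^n)\leftrightarrow\mathcal T_n$ from Theorem \ref{CDT1} with the height-preserving bijection $\Phi:\mathcal T\to\mathcal W$ from the earlier proposition is precisely how one cashes this out. Your check that everything stays inside $M^n$ is an appropriate sanity remark, and the explicit formulas you give are the ones implicit in the constructions of $\Phi$ and of the canonical tower in the proof of Theorem \ref{CDT1}.
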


\subsection{Connectedness}\label{C}

We fix a right $\mathcal R$-module $M$ satisfying $Th(M)=Th(M)^{\aleph_0}$ and some $n\geq 1$. We drop all the subscripts $n$ as usual.

Recall that every global characteristic of a definable set is preserved under definable isomorphisms (Theorem \ref{t4}). In this section we describe what we mean by the statement that a definable subset of a (finite power of a) module is connected. The property of being connected is not preserved under definable isomorphisms. We prove a (topological) property of connected sets which states that a definable connected set $A$ contained in another definable set $B$ is in fact contained in a connected component of $B$.

Let $\mathcal{F},\mathcal{F}'\subseteq\mathcal{B}$ be two finite families of disjoint blocks such that $\bigcup\mathcal{F}=\bigcup\mathcal{F}'$. Then we say that $\mathcal{F}'$ is a \textbf{refinement} of $\mathcal{F}$ if for each $F'\in\mathcal{F}'$, there is a unique $F\in\mathcal{F}$ such that $F'\subseteq F$. Recall from \ref{CH1} that if $\bigcup\mathcal{F}\in\mathcal{B}$ and if $\mathcal{D}$ is the corresponding nest, then $\{\mathrm{Core}_\mathcal{D}(D)\}_{D\in\mathcal{D}^+}$ is a refinement of $\mathcal{F}$, where $\mathcal{D}^+$ is the set $\delta_{\mathcal{D}}^{-1}\{1\}$. We use this property of nests to attach a digraph with each of them.

\begin{definition}
Let $\mathcal D$ be a nest corresponding to a fixed finite family of pairwise disjoint blocks. We define a \textbf{digraph structure} $\mathcal{H}(\mathcal{D}^+)$ on the set $\mathcal{D}^+$. The pair $(F_1, F_2)$ of elements of $\mathcal{D}^+$ will be said to constitute an arrow in the digraph if $F_1\subsetneq F_2$ and $F_1\subseteq F\subseteq F_2$ for some $F\in\mathcal{D}^+$ if and only if $F=F_1$ or $F=F_2$.
\end{definition}

If $\bigcup_{F\in\mathcal{D}^+}\mathrm{Core}_\mathcal{D}(F)\in\mathcal{B}$, then $\mathcal{D}^+$ is an upper set and in particular $\mathcal{H}(\mathcal{D}^+)$ is \textbf{weakly connected} i.e., its underlying undirected graph is connected. It seems natural to use this property to define the connectedness of a definable set.

\begin{definition}\label{conn}
Let $D\in\mathrm{Def}(M^n)$ be represented as $D=\bigcup\mathcal F$, where $\mathcal F\subseteq\mathcal B$ be a finite family of pairwise disjoint blocks and let $\mathcal{D}$ denote the nest corresponding to $\mathcal F$. We say that $D$ is \textbf{connected} if and only if the digraph $\mathcal{H}(\mathcal{D}'^+)$ is weakly connected for some nest $\mathcal{D}'$ containing $\mathcal{D}$.
\end{definition}

Note the existential clause in this definition. Let $\mathcal F,\mathcal F'$ be two finite families of pairwise disjoint blocks with $\bigcup\mathcal F=\bigcup\mathcal F'$ and let $\mathcal D,\mathcal D'$ denote the nests corresponding to them. If $\mathcal F'$ refines $\mathcal F$, then the number of weakly connected components of $\mathcal H(\mathcal D'^+)$ is bounded between $0$ and the number of weakly connected components of $\mathcal H(\mathcal D^+)$. This observation allows us to define the following invariant.

\begin{definition}
We define the \textbf{number of connected components} of $D$, denoted $\lambda(D)$, for every nonempty definable set $D$ to be the least number of weakly connected components of $\mathcal H(\mathcal D^+)$, where $\mathcal D$ varies over nests refining a fixed partition of $D$ into disjoint blocks. We set $\lambda(\emptyset)=0$.
\end{definition}

In the discussion on connectedness, we have treated blocks as if they are the basic connected sets. Note that a definable set $D$ is connected if and only if $\lambda(D)=1$. We denote the set of all connected definable subsets of $M^n$ by $\mathbf{Con}_n$. We tend to drop the suffix $n$ if it is clear from the context. We have $\mathbf B_n\subseteq\mathbf{Con}_n$ as expected.

\begin{illust}
Consider the vector space $\mathbb R_\mathbb R$. The $pp$-definable subsets of the plane, $\mathbb R^2$, are points and lines and the plane.

Note that if a definable subset of $\mathbb R^2$ is topologically connected, then it is connected according to definition \ref{conn}. But the converse is not true. The set $B=\{(x,0):x\neq 0\}$ is not topologically connected, but $B\in\mathbf{Con}$ since $B$ is a block.

If $D$ denotes the union of two coordinate axes with the origin removed, then the number of topologically connected components of $D$ is $4$, whereas $\lambda(D)=2$.
\end{illust}

\begin{rmk}
If $X$ is a `nice' topological space (e.g., a manifold), then the rank $\beta_0$ of the homology group $H_0(X)$ is the number of (path) connected components of $X$. To note the analogy, consider $P\in \mathcal L_n$ and $\alpha\in\mathcal L_P$. If $\alpha\neq\emptyset$, then $\beta_0(\mathcal K^P(\alpha))=\lambda(\bigcup\alpha\setminus P)$. Note that the `deleted neighbourhood' of $P$ in $\alpha$, i.e., the set $\bigcup\alpha\setminus P$, occurs in this correspondence since the `non-deleted neighbourhood' $\bigcup\alpha$ is connected.
\end{rmk}

Topologically connected sets satisfy the following property. If a connected set $A$ is contained in another set $B$, then $A$ is actually contained in a connected component of $B$. We have a similar result here.

\begin{theorem}\label{topconn}
Let $A,B_i\in\mathbf{Con}$ for $1\leq i\leq m$ be such that $\lambda(\bigcup_{i=1}^mB_i)=m$. If $A\subseteq\bigcup_{i=1}^mB_i$, then $A\subseteq B_i$ for a unique $i$.
\end{theorem}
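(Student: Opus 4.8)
The plan is to pass to a single, sufficiently fine $pp$-nest $\mathcal D$ that simultaneously witnesses the connectedness of $A$ and of each $B_i$ and the equality $\lambda\bigl(\bigcup_{i=1}^m B_i\bigr)=m$, and then to argue entirely at the level of the digraphs $\mathcal H(\mathcal D^+)$ attached to nests. Write $U:=\bigcup_{i=1}^m B_i$. First I would fix a partition into blocks of each of $A, B_1,\dots,B_m, U$, together with, for each of these sets, a nest realizing the relevant minimal number of weakly connected components (value $1$ for $A$ and for each $B_i$, since $A,B_i\in\mathbf{Con}$, and value $m$ for $U$). Let $\mathcal D_0$ be a nest containing all of these finitely many nests, and let $\mathcal D\supseteq\mathcal D_0$ be any further refinement. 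By the monotonicity of the number of weakly connected components under refinement of the nest (noted after definition \ref{conn}), together with the trivial fact that a nonempty set contributes at least one component, the digraph $\mathcal H(\mathcal D^+_A)$ and each $\mathcal H(\mathcal D^+_{B_i})$ are weakly connected, while $\mathcal H(\mathcal D^+_U)$ has exactly $m$ weakly connected components. Here, for a definable $D$ that is a union of $\mathcal D$-cores I write $\mathcal D^+_D:=\{F\in\mathcal D:\mathrm{Core}_\mathcal D(F)\subseteq D\}$, so that $D=\bigsqcup_{F\in\mathcal D^+_D}\mathrm{Core}_\mathcal D(F)$ by \ref{CH1} and the refinement discussion preceding the definition of $\lambda$; in particular $\mathcal D^+_U=\bigcup_{i=1}^m\mathcal D^+_{B_i}$ (each $\mathcal D$-core inside $U$ lies inside exactly one $B_i$, the cores being pairwise disjoint) and $\mathcal D^+_A\subseteq\mathcal D^+_U$.

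The key combinatorial fact I would isolate is: if $\mathcal S\subseteq\mathcal D^+_U$ is such that the Hasse digraph of the poset $(\mathcal S,\subseteq)$ is weakly connected, then $\mathcal S$ is contained in a single weakly connected component of $\mathcal H(\mathcal D^+_U)$. Indeed, any comparability $F\subseteq F'$ inside $\mathcal S$ is a comparability inside $\mathcal D^+_U$, and in the Hasse digraph of a finite poset two comparable elements are joined by a directed path running along a maximal chain of the interval between them; hence any zig-zag of covering relations in $\mathcal S$ witnessing weak connectedness transports to a walk in $\mathcal H(\mathcal D^+_U)$.

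Applying this fact with $\mathcal S=\mathcal D^+_{B_i}$ shows that each $\mathcal D^+_{B_i}$ (nonempty, since $B_i\neq\emptyset$) lies inside one weakly connected component of $\mathcal H(\mathcal D^+_U)$. Since these $m$ nonempty sets cover $\mathcal D^+_U$, which has exactly $m$ components, a pigeonhole argument forces each $\mathcal D^+_{B_i}$ to coincide with a distinct component; in particular the $\mathcal D^+_{B_i}$ are pairwise disjoint, equivalently the $B_i$ are pairwise disjoint. Applying the fact again with $\mathcal S=\mathcal D^+_A$ — which is nonempty because $\lambda(A)=1\neq 0=\lambda(\emptyset)$ forces $A\neq\emptyset$ — shows $\mathcal D^+_A$ lies in one component, i.e.\ $\mathcal D^+_A\subseteq\mathcal D^+_{B_i}$ for some $i$; then $A=\bigcup_{F\in\mathcal D^+_A}\mathrm{Core}_\mathcal D(F)\subseteq\bigcup_{F\in\mathcal D^+_{B_i}}\mathrm{Core}_\mathcal D(F)=B_i$. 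Uniqueness of $i$ follows at once from $A\neq\emptyset$ and the pairwise disjointness of the $B_i$.

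The step I expect to need the most care is the simultaneous choice of $\mathcal D$: one must check that a nest fine enough to realize $\lambda(U)=m$ does not over-refine and destroy the connectedness of $A$ or of some $B_i$ — this is exactly where monotonicity of the component count under refinement, combined with nonemptiness to forbid a drop to $0$, is used, and it is why it matters that $\lambda(D)$ is defined as the minimum over all refinements. The transport-of-zig-zags lemma is elementary order theory, but it should be stated explicitly, since the Hasse digraph of $(\mathcal D^+_{B_i},\subseteq)$ and the restriction of $\mathcal H(\mathcal D^+_U)$ to $\mathcal D^+_{B_i}$ are genuinely different digraphs (the covering relations of the larger poset are finer).
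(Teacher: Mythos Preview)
Your proof is correct and follows essentially the same route as the paper's: choose a single nest $\mathcal D$ containing nests for $A$ and all the $B_i$, and argue that the weakly connected piece of $\mathcal H(\mathcal D^+_U)$ supported on $A$ must sit inside one of the $m$ components. The paper's three-sentence proof is much terser and glosses over exactly the points you take care with --- why the common nest can be taken to witness both $\lambda(U)=m$ and the connectedness of $A$ and each $B_i$ simultaneously (your monotonicity-plus-nonemptiness argument), and why the induced sub-digraphs, whose covering relations differ from those of the ambient poset, still land in a single component (your transport-of-zig-zags lemma) --- so your version is a genuine expansion rather than a different argument.
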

\begin{proof}
Let $\mathcal D$ be a nest containing the nests corresponding to some fixed families of blocks partitioning $A$ and all the $B_i$. The restriction of the digraph $\mathcal H(\mathcal D^+)$ to $A$ is a subdigraph of $\mathcal H(\mathcal D^+)$. Since the former is weakly connected, it is a sub-digraph of exactly one of the $m$ weakly connected components of the latter.
\end{proof}

\subsection{Remarks and questions}\label{rmkcom}

Consider the structure of the proof of the special case of the main theorem. Manipulation of different lattice-like structures is one of the important themes in this paper. The partial quantifier elimination result for theories of modules (theorem \ref{PPET}) makes the meet-semi-lattice $\mathcal L_n$, of $pp$-definable sets, the basic object of study. The lattice of antichains $\mathcal A_n$ is the free distributive lattice on $\mathcal L_n$ and simplicial methods are natural for studying the `set-theoretic geometry' associated with antichains. The local processes in $\mathrm{Def}(M^n)$ are similar to, but independent from, the local processes in $\mathrm{Def}(M^m)$ when $n\neq m$ and these different `dimensions' start to interact with each other only when we are concerned with the multiplicative structure. The fact that the $pp$-sets are closed under projections is not directly relevant to the technique.

Note that the model-theoretic condition $\rm T=T^{\aleph_0}$ is equivalent to the lattice-theoretic statement that every element of $\mathcal L_n$ considered as an element of the lattice $\mathcal A_n$ is `join-irreducible'. The unique representation theorem (theorem \ref{CDT1}) relies solely on this fact and in particular this is a statement about lattices of sets. We would like to know if this idea can be expressed in some more abstract setting.

The algebraic K-theory functor ${\rm K_0:Ring\rightarrow Ab}$ is covariant, whereas the model-theoretic Grothendieck ring functor $K_0$ is contravariant on pure embeddings (theorem \ref{puresurj}). Note that $K_0(M)$ depends on $\overline{\mathcal L}(M)$ in a covariant way and the assignment $M\mapsto\overline{\mathcal L}(M)$ is contravariant. This strongly suggests that the answer to the following question is positive.

\begin{que}
Is there a way to define the Grothendieck ring for a sequence $(L_n)_{n\geq 0}$ of meet-semi-lattices (with inclusion and projection maps) under certain conditions in a way that is abstractly similar to the technique used in the proof of theorem \ref{FINAL}?
\end{que}

A more specific question could be asked for model-theoretic Grothendieck rings.
\begin{que}
Are there any structures admitting some form of quantifier elimination, whose Grothendieck rings can be computed using a similar technique?
\end{que}

Though there are modules with additive torsion elements in Grothendieck rings (corollary \ref{p-adic}), we believe that there are no examples with non-trivial multiplicative torsion elements (i.e. elements $a\in K_0(M)$ such that $a^n=1$ for some $n>1$).
\begin{conj}
There are precisely two units (namely $\pm 1$) in the Grothendieck ring $K_0(M)$ of a nonzero module $M$.
\end{conj}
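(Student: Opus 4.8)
Since this statement is recorded here as a conjecture, I outline what I regard as the most promising line of attack. The plan is to start from the structure theorem \ref{FINALgeneral}, which presents $K_0(M)$ as $\mathbb{Z}[\overline{\mathcal{X}}^*]/\mathcal{J}$. The case $T=T^{\aleph_0}$, where $\mathcal{J}=0$ and $K_0(M)\cong\mathbb{Z}[\overline{\mathcal{X}}^*]$ by \ref{FINAL}, should be handled first: if one can show that the colour monoid $\overline{\mathcal{X}}^*$ is cancellative and torsion-free (meaning $\mathfrak{A}^{k}=\mathfrak{B}^{k}$ forces $\mathfrak{A}=\mathfrak{B}$), then $\overline{\mathcal{X}}^*$ embeds in its torsion-free abelian group of differences $G$, $G$ carries a translation-invariant linear order, and a leading-term argument shows that every unit of $\mathbb{Z}[G]$ has the form $\pm g$ with $g\in G$; a unit of $\mathbb{Z}[\overline{\mathcal{X}}^*]$ is then $\pm g$ with both $g$ and $g^{-1}$ in $\overline{\mathcal{X}}^*$, hence $\pm\mathfrak{U}$. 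Cancellativity amounts to the assertion that a $pp$-definable isomorphism $A\times C\cong B\times C$ of $pp$-definable subgroups forces $A\cong B$; I would seek this via a Krull--Schmidt style uniqueness for decompositions of $pp$-subgroups, using the dimension theory that is available when $T=T^{\aleph_0}$.

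For a general module I would equip $\overline{\mathcal{X}}^*$ with an $\mathbb{N}$-valued ``dimension'' homomorphism $\dim$ satisfying $\dim^{-1}(0)=\{\mathfrak{U}\}$, turning $\mathbb{Z}[\overline{\mathcal{X}}^*]$ into an $\mathbb{N}$-graded ring whose degree-zero part is $\mathbb{Z}$. The generators of $\mathcal{J}$ relate colours $[[P]]$ and $[[Q]]$ with $P$ and $Q$ commensurable, hence of equal dimension, so $\mathcal{J}$ is a homogeneous ideal and $K_0(M)$ becomes $\mathbb{N}$-graded with $K_0(M)_0\cong\mathbb{Z}$, the copy of $\mathcal{R}(\widetilde{\mathfrak{U}})$ appearing in the proof of \ref{MAINRESULTgeneral}. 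Projecting onto degree zero gives a ring map $\pi_0\colon K_0(M)\twoheadrightarrow\mathbb{Z}$, so any unit $u$ satisfies $\pi_0(u)=\pm1$; replacing $u$ by $-u$ if needed, $u=\mathfrak{U}+u_+$ with $u_+$ supported in positive degrees, and likewise its inverse is $\mathfrak{U}+v_+$. A useful auxiliary observation is that each colour class group $\mathcal{R}(\widetilde{\mathfrak{A}})$ has torsion-free rank at most one: the defining relations of $\mathcal{J}$ always have a ``monic side'' $\delta_{[[P]]}=[P:Q]\,\delta_{[[Q]]}$, which forces every colour in a class to be an integral multiple of every sufficiently small one, so $\mathcal{R}(\widetilde{\mathfrak{A}})$ is assembled from a subgroup of $\mathbb{Q}$ together with cyclic torsion.

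The crux is to prove $u_+=0$. If $K_0(M)$ were a domain this would follow at once from the grading, but additive torsion is genuinely present --- for instance $K_0(\mathbb{Z}_p)\cong\mathbb{Z}[X]/\langle(p-1)X\rangle$ by Corollary \ref{p-adic} --- so a priori a unit could be inhomogeneous, as $1+X$ is in $\mathbb{Z}[X]/\langle X^{2}\rangle$. The intended argument is a descending induction on the top degree $d$ occurring in $u_+$: the top-degree component of $uv=\mathfrak{U}$ is a product of homogeneous pieces equal to zero, and one wants to rule this out by showing that in each positive degree the colour class groups contain ``enough'' torsion-free directions, exploiting that the invariants ideal only ever replaces a colour by an integral multiple of another and never annihilates a basis colour outright (compare $\mathbb{Z}[X]/\langle2X-2X^{2}\rangle$, which does have the nontrivial unit $1-2X$ but is \emph{not} of the form $K_0(M)$, precisely because that relation lacks a monic side). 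Controlling the interaction between the monoid multiplication on $\overline{\mathcal{X}}^*$ and the cyclic torsion introduced by $\mathcal{J}$ --- in particular ruling out that a product of a torsion element with a non-torsion element of a different colour class can conspire to be a unit correction --- is the principal obstacle, and it is the reason the statement stands here only as a conjecture.
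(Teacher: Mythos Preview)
The paper does not prove this statement: it appears in the closing section as an open conjecture, preceded only by the remark that no examples of non-trivial multiplicative torsion are known. There is therefore no proof in the paper for your outline to be compared against, and you have correctly identified the status of the statement.

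Your sketch is a sensible plan and isolates the real obstacles. Two points are worth flagging. First, the existence of an $\mathbb{N}$-valued monoid homomorphism $\dim$ on $\overline{\mathcal{X}}^*$ with $\dim^{-1}(0)=\{\mathfrak{U}\}$ is itself non-obvious for a general module: such a grading presupposes that $\overline{\mathcal{X}}^*$ has no non-trivial invertible elements and behaves well under the equivalence $\approx$, and there is no dimension theory in the paper beyond the vector-space case. Second, the cancellativity statement ``$A\times C\cong B\times C$ $pp$-definably implies $A\cong B$ $pp$-definably'' that you invoke for the special case is not established anywhere in the paper and would be a substantial result in its own right. Your observation that every generator of $\mathcal{J}$ has a monic side, and hence that the torsion in each colour class group is of a restricted ``divisibility'' type, is the right structural fact to exploit; as you say, the difficulty is controlling how this torsion interacts with the monoid multiplication across different colour classes, and that is precisely why the statement remains a conjecture.
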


\subsection*{Acknowledgement}

I would like to thank Prof.~Mike Prest for introducing me to this topic, for valuable long discussions and for careful reading of the paper.

\end{document}